\numberwithin{equation}{section}
\theoremstyle{definition}
\theoremstyle{plain}
\newtheorem{thm}{Theorem}[section]
\newtheorem{Prop}[thm]{Proposition}
\newtheorem{lem}[thm]{Lemma}
\newtheorem{cor}[thm]{Corollary}
\newtheorem{Thm}{Theorem}[section]
\newtheorem{Rmk}[thm]{Remark}
\newtheorem{Lem}[thm]{Lemma}
\newtheorem{Def}[thm]{Definition}
\def\N {\mathbf{N}}
\def\T {\mathbb{T}}
\def\Z {\mathbf{Z}}
\def\R {\mathbf{R}}
\def\cA {\mathcal{A}}
\def\cB {\mathcal{B}}
\def\cC {\mathcal{C}}
\def\cD {\mathcal{D}}
\def\cG {\mathcal{G}}
\def\cL {\mathcal{L}}
\def\cM {\mathcal{M}}
\def\cN {\mathcal{N}}
\def\cZ {\mathcal{Z}}
\def\cR {\mathcal{R}}
\def\cT {\mathcal{T}}
\def\cW {\mathcal{W}}
\def\eps {{\varepsilon}}
\def\e {{\varepsilon}}
\def\k {{\kappa}}
\def\indc {{\bf 1}}
\def\d {{\partial}}
\def\cN {\mathcal{N}}
\def\cK {\mathcal{K}}
\def\cT {\mathcal{T}}
\newcommand{\ba}{\begin{aligned}}
\newcommand{\ea}{\end{aligned}}
\newcommand{\be}{\begin{equation}}
\newcommand{\ee}{\end{equation}}
\newcommand{\petit}{\xi}
\newcommand{\cO}{\mathcal{O}}
\newcommand{\cV}{\mathcal{V}}
\newcommand{\brown}{\Xi}
\def\R {\mathbb{R}}
\def\N {\mathbb{N}}
\def\Z {\mathbb{Z}}
\numberwithin{equation}{section}
\begin{document}


\title[]{Derivation of an Ornstein-Uhlenbeck process for a massive particle in a rarified gas of   particles}
\author{Thierry Bodineau, Isabelle Gallagher and Laure Saint-Raymond}
\address{
CMAP, Ecole Polytechnique, CNRS, Universit\'e Paris-Saclay, Route de Saclay, 91128 Palaiseau, France
}
\date{\today}

\begin{abstract}
We consider the statistical motion of a  convex rigid body in a gas 
 of $N$ smaller (spherical) atoms close to thermodynamic equilibrium. 
Because the rigid body is much bigger and heavier, it undergoes a lot of collisions leading to small deflections.
We prove that its velocity is described, in a suitable limit, by an Ornstein-Uhlenbeck process.
 
The strategy of proof relies on Lanford's arguments \cite{lanford} together with the pruning procedure from \cite{BGSR1} to reach diffusive times, much larger than the mean free time. 
 Furthermore, we need to introduce a modified dynamics to avoid pathological collisions of atoms with the rigid body: these  collisions, due to the geometry of the rigid body, require developing a new type of trajectory analysis.

\end{abstract}

\maketitle

\section{Introduction}

The first observation of the erratic motion of fragments of  pollen particles in a liquid   is attributed to the botanist Brown.
Following this observation, a lot of attention was devoted to understanding the physical mechanisms behind these fluctuations leading ultimately to the mathematical theory of Brownian motion. We refer to the review paper \cite{Dup} for a historical overview. 
The macroscopic motion of the massive particle is due to the fact that it undergoes many collisions with the atoms of the fluid and even though the microscopic dynamics is deterministic the motion observed on a macroscopic scale appears to be stochastic.
In a seminal paper, Holley \cite{Holley} studied a one-dimensional deterministic dynamics of a large particle
interacting with a bath of atoms represented by independent particles with a small mass. 
Each collision with an atom leads to a small deflection of the large particle and as the atoms are initially randomly distributed the successive collisions lead ultimately to a Brownian motion for the large particle displacement. This result was generalized to higher dimensions by D\"urr, Goldstein,  Lebowitz in  \cite{DGL_1} and to a particle which has a convex body in \cite{DGL_2}. The latter model follows asymptotically a
generalized Ornstein-Uhlenbeck diffusion jointly on the velocity and on the angular momentum.
Even though the atoms do not interact  one with the other, recollisions may occur between the large particle and some atoms,      leading to a memory effect. Asymptotically when the mass of the atoms vanishes, this effect was shown to be irrelevant in 
\cite{Holley, DGL_1, DGL_2} and the limiting dynamics is a Markov process.
Similar results were derived in \cite{Dobson_legoll_lelievre} when the gas is not at equilibrium. 
Note that in some different regimes or in presence of boundaries,   recollisions may have a macroscopic impact even when the body is in contact with an ideal gas.
This is for example the case in one-dimensional models where the correlations can be important 
\cite{DT} or in models of friction
\cite{Cavallaro_Marchioro_Pulvirenti, Cavallaro_Marchioro, KL}.

In this paper, we extend the framework studied in \cite{DGL_2}  to the case of a large particle with convex shape in contact with a gas of interacting atoms modelled by hard spheres in the Boltzmann-Grad scaling.  We prove  that the distribution of this particle is close to the solution of    a linear Boltzmann equation whose underlying process is asymptotically an Ornstein-Uhlenbeck process.

\subsection{The microscopic model}
\label{microscopicmodel}

We consider, in~$d = 2$ space dimensions,~$N$ spherical particles of mass~$m \ll 1$ and diameter~$\eps$ (from now on called atoms), and one  massive particle (the rigid body) of mass~$M=1$ and size~$\eps/\alpha$ with $\eps \ll \alpha \ll 1$. More precisely the rigid body is a strictly convex body~$\Sigma$, which is rescaled by a factor~$\eps/\alpha$, and which is allowed to translate and rotate.
The dynamics takes place in the periodic domain ${\mathbb T}^2 = [0,1]^2$.

We denote by~$\hat V_N:=(\hat v_1,\dots ,\hat v_N) \in {\mathbb R}^{2 N}$ the collection of velocities of all the atoms, and by~$X_N:=(x_1,\dots ,x_N) \in{\mathbb T}^{2 N}$ the positions of their centers.
Without loss of generality, we assume that the atoms have no angular momentum, as spherical particles do not exchange any angular momentum. 

The rigid body is described by the position and velocity~$(X,V) \in{\mathbb T}^2 \times {\mathbb R}^2$  of its center of mass~$G$, and by its orientation and its angular velocity $(\Theta, \hat \Omega) \in  {\mathbb S} \times {\mathbb R}$.
 If~$P$ is a point on the boundary of the rigid body, we denote by  $n$ the unit outward normal vector to the rigid body at point~$P$ and we 
 locate~$P$ by a vector~$  \displaystyle  r :=\frac\eps\alpha  {GP} \in {\mathbb R}^2$.
 Since the rigid body is not deformable, the position and the normal are obtained by 
applying a rotation~$R_\Theta$ of angle~$\Theta$ (see Figure \ref{fig: rigid body})
\begin{equation}
\label{defRtheta}
r_\Theta = R_\Theta   r \quad  \hbox{ with } \quad   r  \in \d   \Sigma \, , \quad n_\Theta=R_\Theta   n  \, .
\end{equation}
In particular, we write
 \begin{equation}\label{parametrizationboundary}
X_P:= X +\frac\eps\alpha r_\Theta \, .
\end{equation}
The velocity of~$P$ is
\begin{equation}
\label{vitesse locale}
V_P:=  V + \frac\eps\alpha \hat\Omega   r_\Theta ^\perp\, , 
\,  \mbox{ with } \, r^\perp:=(- r_2,   r_1) \, .
\end{equation}
 The  boundary $\d \Sigma$ of the body~$  \Sigma$  is described from now on by its arc-length which we denote by~$  \sigma \in [0,L]$ where~$L $ is the perimeter of~$\d  \Sigma$.  
We further assume that the  curvature~$\sigma \mapsto   \kappa (\sigma) $ of $\d \Sigma $ never  vanishes and we denote  
\begin{equation}\label{defkappa}
  \kappa_{min} := \min_{\sigma \in [0,L]} \;  \kappa (\sigma) \, .
\end{equation}
Finally we assume that $\d \Sigma$
 is included in a   ball of radius
\begin{equation}\label{defrmax}
r_{max} := \max_{\sigma \in  [0,L]} \; r(\sigma)\,.
\end{equation}

\begin{figure} [h] 
\centering
\includegraphics[width=4cm]{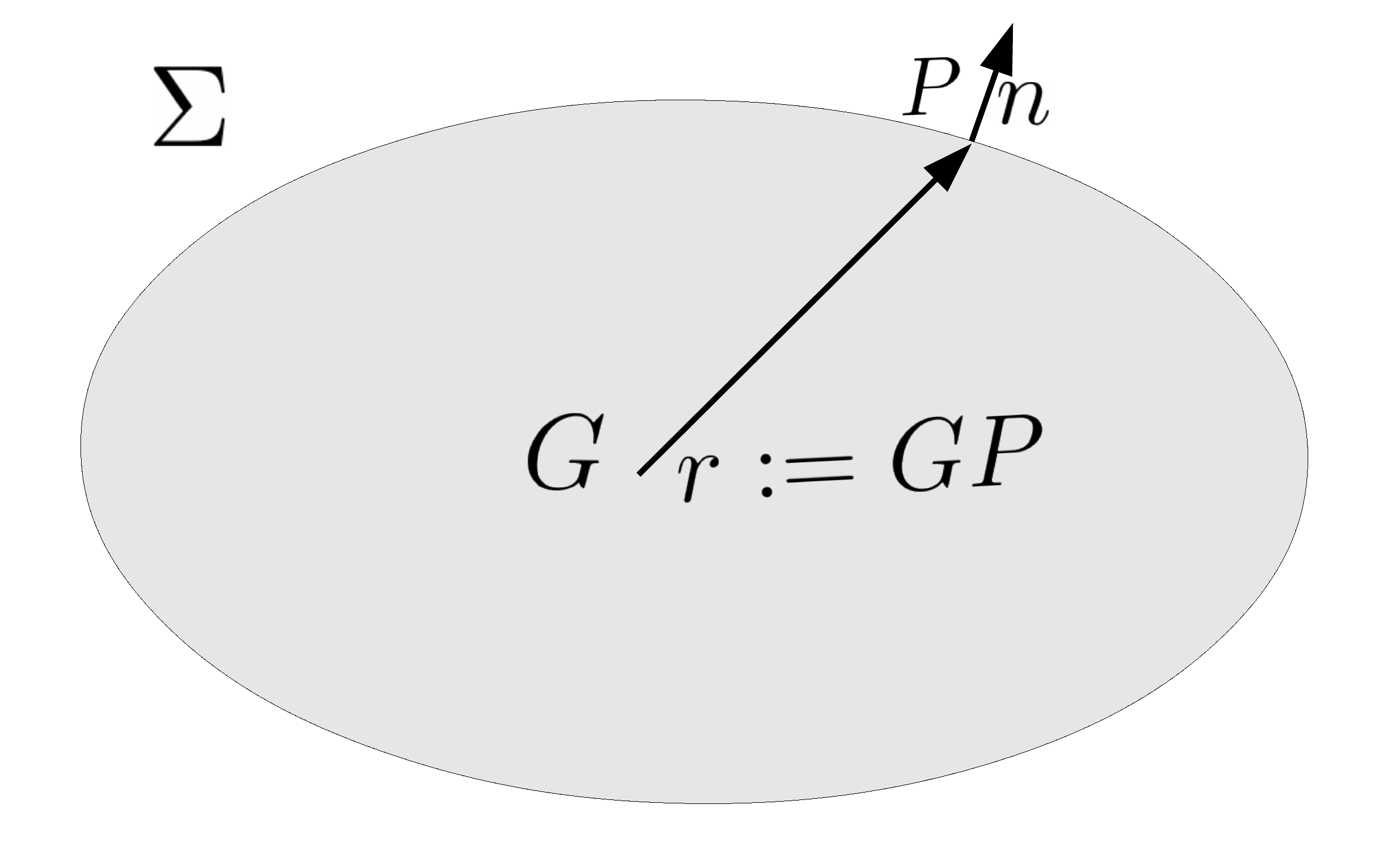}
  \qquad\qquad\qquad  \includegraphics[width=4.8cm]{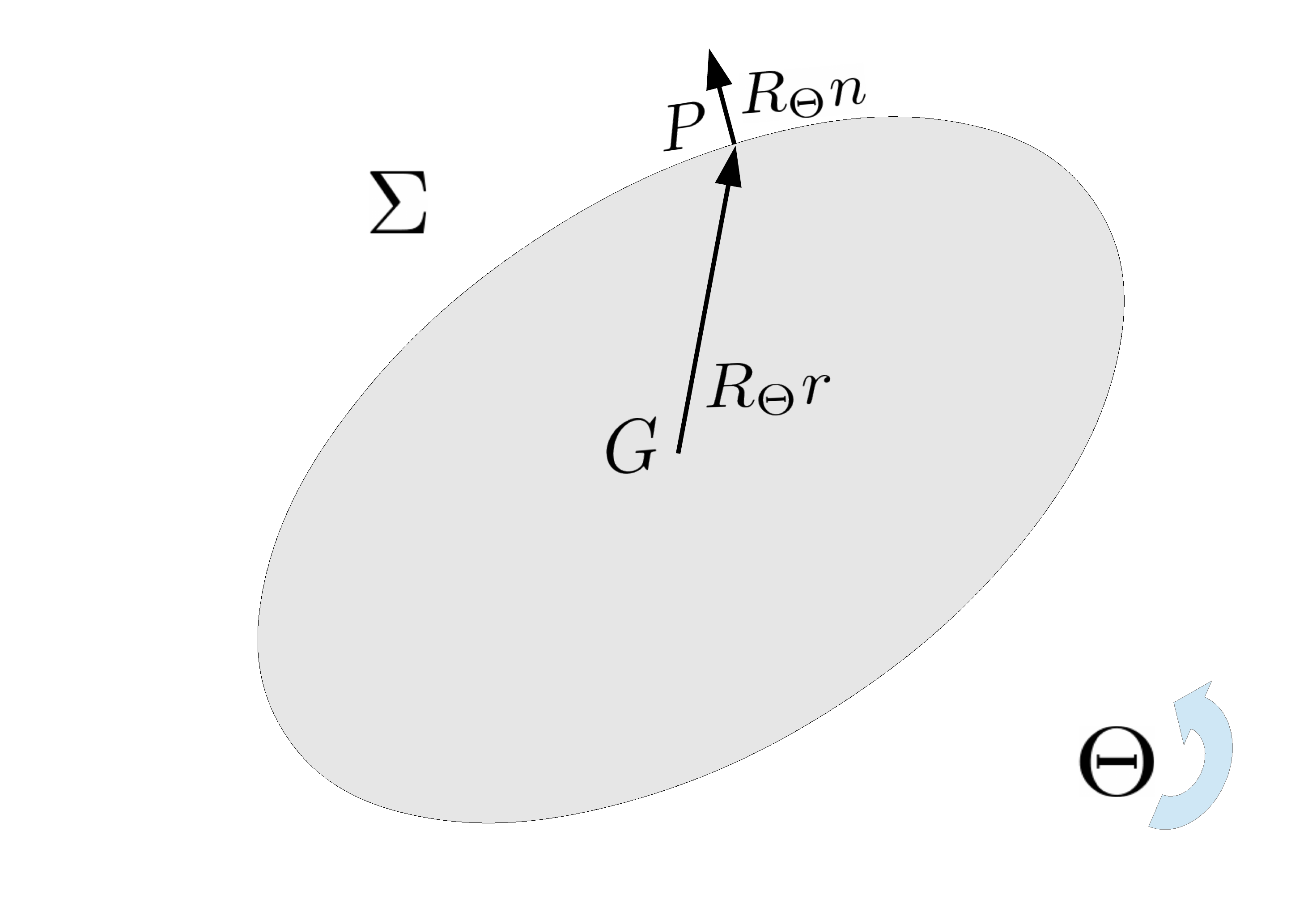} 
\caption{On the left, the reference rigid body~$  \Sigma$, on the right the rotated rigid body~$R_\Theta   \Sigma$}
\label{fig: rigid body}
\end{figure}

\subsection{Laws of motion}

Now let us describe the dynamics of the rigid body-atom system. As long as there is no collision, the centers of mass of the atoms and of the rigid body move in straight lines  and the rigid body rotates, according to the equations
\begin{equation}
\label{hard-spheres1}
\begin{aligned}
{dx_i\over dt} &= \hat  v_i \,,\quad {d\hat v_i\over dt} =0\, , \qquad  \forall i \leq N \, ,   \\
{dX\over dt} &=  V \,,\quad {dV\over dt} =0 \, , \quad {d \Theta \over dt} =\hat \Omega\, , \quad {d\hat\Omega \over dt} =0 \, ,
\end{aligned}
\end{equation}
since the moment of inertia of  a two dimensional body is constant.

Binary collisions are of two types.
If the atoms $i,j$ encounter, then their velocities $\hat v_i, \hat v_j$ are modified through the usual laws of specular reflection
\begin{equation}
\label{hard-spheres2'}
\begin{aligned}
\left. \begin{aligned}
\hat v_i'& :=\hat  v_i - \frac1{\eps^2} (\hat v_i-\hat v_j)\cdot (x_i-x_j) \, (x_i-x_j)   \\
\hat v_j'& := \hat v_j + \frac1{\eps^2} (\hat v_i-\hat v_j)\cdot (x_i-x_j) \, (x_i-x_j)
\end{aligned}\right\}
\quad  \hbox{ if } |x_i(t)-x_j(t)|=\eps\,.
\end{aligned}
\end{equation}
Atoms such that~$(\hat v_i-\hat v_j)\cdot (x_i-x_j)<0$ are said to be incoming, and after collision they are outgoing since~$(\hat v_i'- \hat v_j')\cdot (x_i-x_j)>0$.

\smallskip
If the rigid body has a collision with the atom $i$ at the point $\frac\eps\alpha r_\Theta$, meaning
$$
x_i(t)-X(t)-\frac\eps\alpha r_{\Theta(t)}=\frac\eps2  n_{\Theta(t)} \, ,
$$
the velocities $V,\hat v_i$ and the momentum~$\hat\Omega$ become (see Appendix~\ref{prooflawsmotion} for a proof) 
\begin{equation}\label{eq: collision laws 0}
\begin{aligned}
 \begin{aligned}
\hat v_i'&=\hat v_i + {2\over A+1} (V+ \frac\eps\alpha \hat \Omega \, r_\Theta^\perp - \hat v_i) \cdot n_\Theta \, n_\Theta   \\
 V'&=V  -{2m\over M(A+1)} (V+ \frac\eps\alpha \hat \Omega \, r_\Theta^\perp - \hat v_i) \cdot n_\Theta\,  n_\Theta \\
\hat\Omega'&=\hat\Omega  -{2m\over (A+1)\hat I}  \frac\eps \alpha   ( n_\Theta \cdot r^\perp_\Theta  )  \;  (V+ \frac\eps\alpha \hat \Omega \,  r_\Theta^\perp - \hat v_i) \cdot n_\Theta 
\end{aligned}
\end{aligned}
\end{equation}
with $\hat  I >0$ the moment of inertia and 
\begin{equation}
\label{eq: A 0}
A:= \frac m M + \frac m{\hat I}  \left(\frac\eps\alpha\right)^2  \big (  n \cdot r^\perp)^2  \, .
\end{equation}
The mass $M$ of the rigid body has been kept to stress the homogeneity of the coefficients, but later on we shall replace it by 1.

As in the atom-atom case, the atom and the rigid body are incoming if
$$(\hat v_i - V+ \frac\eps\alpha \hat \Omega \, r_\Theta^\perp ) \cdot n_\Theta<0$$ 
and after  scattering  one checks easily that
$$(\hat v_i' - V'+ \frac\eps\alpha \hat \Omega' \, r_\Theta^\perp ) \cdot n_\Theta>0$$ so the particles are outgoing. Recall that~$V+ \frac\eps\alpha \hat \Omega \, r_\Theta^\perp - \hat v_i$ is the relative velocity at  the impact point.

The following quantities are conserved  when an atom and the rigid body collide:
\begin{equation}
\label{eq: conservation laws}
\begin{cases}
\mathcal P:=m \hat v+MV&  \mbox{(total  momentum)} \\
\mathcal E  := \displaystyle\frac12 \big (\displaystyle   (m |\hat v|^2 +M |V|^2) +  \hat I \hat \Omega^2 \big) &  \mbox{(total energy)} \\
\hat {\mathcal I}  := \hat  I  \hat\Omega - \frac\eps\alpha M  V  \cdot r_\Theta^\perp &  \mbox{(angular momentum at   contact point)\,.} 
\end{cases}
\end{equation}

\subsection{Scalings}

The parameters $N$ and $\eps$ are related by the Boltzmann-Grad scaling~$N \eps = 1$ in dimension $d=2$. 
With this scaling, the large $N$ asymptotics describes a rarefied gas. Even if the density of the gas 
is asymptotically zero, the mean free path for the atoms is 1. 

The rigid body being larger, it will encounter 
roughly $\alpha^{-1} \hat v_{typ} $ collisions per unit time with~$\alpha \ll 1$ and $ \hat v_{typ}$ the typical relative velocity of the atoms. In the following, we   consider the joint asymptotics~$N \to \infty, \eps \to 0$ and~$\alpha \to 0$ with $\alpha$ vanishing not faster than~$1/ ({\log \log N})^\frac14$ (this restriction will be clear later on in the computations).

\medskip

The rigid body has mass $M=1$, the atoms are much lighter $m\ll1$.
As a consequence of the equipartition of energy (cf (\ref{eq: conservation laws})), 
the typical  atom velocities $\hat v_{typ}  = O(m^{-1/2}) $ are expected to be much larger than the rigid body velocity which is of order 1.
 Each collision with an atom deflects very little the rigid body. We expect to get asymptotically a diffusion with respect to the velocity variable provided that 
 $$m = \alpha^2\,.$$
From now on we therefore  rescale the atom velocities by setting
\begin{equation}
\label{scaling1}
v := m^\frac12 \hat v =\alpha \hat v \, .
\end{equation}

Similarly due to the small size $\eps/\alpha$ of the rigid body, the moment of inertia~$\hat I$ is very small, namely of the order of~$(\eps/\alpha)^2$. We therefore rescale the  moment of inertia and the angular velocity by defining
\begin{equation}
\label{eq: rescaled momentum}
I:= \left(\frac \alpha \eps\right)^2 \hat I \quad \mbox{and} \quad \Omega := \frac  \eps\alpha \hat \Omega \, ,
\end{equation}
so that both~$I$ and~$\Omega$ are quantities of order one. We accordingly set
$$
\mathcal I  :=    I   \Omega - M V\cdot r^\perp_\Theta \,  .
$$
After rescaling the collision laws~(\ref{hard-spheres2'}) and~\eqref{eq: collision laws 0}  
become
\begin{equation}
\label{hard-spheres2}
\begin{aligned}
\left. \begin{aligned}
  v_i'& :=   v_i - (  v_i-  v_j)\cdot n \, n   \\
  v_j'& :=   v_j +(  v_i-  v_j)\cdot n \, n
\end{aligned}\right\}
\quad  \hbox{ if } x_i(t)-x_j(t) =\eps n
\end{aligned}
\end{equation}
and  if~$x_i(t)-X(t)-\frac\eps\alpha r_{\Theta(t)}=\frac\eps2 n_{\Theta(t)}$,
\begin{equation}
\label{eq: collision laws}
\begin{aligned}
\begin{aligned}
v_i'&=v_i + {2 \over A+1} (\alpha V+\alpha  \Omega   r_\Theta^\perp - v_i) \cdot n_\Theta  \, n_\Theta\\
 V'&=V -{2\over A+1 } (\alpha^2 V+ \alpha^2 \Omega r_\Theta^\perp - \alpha v_i) \cdot n_\Theta\,  n_\Theta  \\
\Omega'&= \Omega  -{2\over (A+1)I}     ( n \cdot r^\perp  )   (\alpha^2 V+ \alpha^2 \Omega  r_\Theta^\perp - \alpha v_i) \cdot n_\Theta   
\end{aligned}
\end{aligned}
\end{equation}
with 
\begin{equation}
\label{eq: A}
A:= \alpha^2 \Big (1  + \frac 1I     (  n \cdot r^\perp)^2  \Big)  \, .
\end{equation}

 \subsection{Initial data and the Liouville equation}

To simplify notation, we   use throughout the paper
$$
Y:= (X,V,\Theta,\Omega)
\quad \text{and} \quad
Z_N:=(X_N,V_N)\,.
$$
 We denote by $f_{N+1}(t, Y, Z_N)$
the distribution of the $N+1$ particles at time $t \geq 0$. 
This function is symmetric   with respect to the variables~$Z_N$ since we assume that the atoms are undistinguishable.
It satisfies the Liouville equation, recalling that rescaled velocities are defined by~(\ref{scaling1}),(\ref{eq: rescaled momentum}),
$$
\partial_t   f_{N+1} + V\cdot \nabla_X   f_{N+1} +\frac1 \alpha \sum_{i=1}^N v_i \cdot \nabla_{x_i}   f_{N+1} +\frac \alpha\eps \Omega  \,  \partial_{\Theta}   f_{N+1} = 0 \,  ,
$$
in the domain
$$
{\mathcal D}_\eps^{N+1}:=  \Big\{ (Y,Z_N)     \, / \, \forall i \neq j \, ,\quad |x_i - x_j| > \eps \quad \mbox{and} \quad   d\big(  x_i , X+ \frac \eps\alpha R_\Theta    \Sigma\big)  > \frac\eps{2} \Big\} \, .
$$
Following the strategy in \cite{alexanderthesis,GSRT}, one can prove that the dynamics is well defined for almost all initial data~: the main difference with the system of hard spheres is that here, because of the geometry of the rigid body, the interaction of a single atom with the rigid body could involve many collisions. 
Using an argument similar to the one which will be developed in Section \ref{conditioning-geometry}, one can show that the collisions between the rigid body and the atoms can be controlled   for almost all initial data.

We introduce two types of Gaussian measures 
\begin{equation}
\label{eq: Gauss}
\begin{aligned}
\forall v \in \mathbb{R}^2, \qquad  
M_{\beta  } (v) &: = \frac{\beta  }{2\pi} \exp \Big( -\frac{  \beta}{ 2}|v|^2  \Big)\, , \\
\forall (V,\Omega) \in \mathbb{R}^2 \times \mathbb{R} \, , \qquad  
M_{\beta, I} (V,\Omega) &: = \frac \beta{2\pi} \left( \frac {\beta I}{2\pi} \right) ^{\frac 1 2}  \exp \Big(
-\frac \beta 2 \big(| V|^2 +  I  \Omega^2 \big) \Big)\,  .
\end{aligned}\end{equation}
We introduce the Gibbs measure on the $N+1$ particle system 
\begin{equation}
\label{eq: Gibbs measure}
M_{\beta, I, N} (Y,Z_N) := \bar M_{\beta,I} ( Y)  \left( \prod_{i =1}^N M_{\beta  } (v_i) \right)
{{\mathbf 1}_{{  \mathcal D}_\eps^{N+1}} (Y,Z_N) \over  {\mathcal Z}_N} \, ,
\end{equation}
 with
$$
\bar M_{\beta,I} ( Y) := 
\frac{1}{2 \pi} M_{\beta,I} (V, \Omega) 
$$ 
 and
where  the normalisation factor 
$$
{\mathcal Z}_N:=\int {\mathbf 1}_{{\mathcal D}_\eps^{N+1}} (Y,Z_N) \, dXdX_N 
$$ 
is computed by using the rotation and translation invariance of the system so that 
the only relevant part of the integral involves the spatial exclusion.  
The measure $M_{\beta, I, N}$
is a stationary solution for the Liouville equation, i.e. a thermal  equilibrium of the system.
Here,  we choose for initial data a small perturbation around this equilibrium, namely  
\begin{equation}
\label{defdata}
\begin{aligned}
f_{N+1,0} (Y,Z_N) :=  g_0 (Y) \; M_{\beta, I,N}  (Y,Z_N)    \, , 
\end{aligned}
\end{equation}
with 
\begin{equation}
\label{g0 borne}
\begin{aligned}
\| g_0 \|_{L^{\infty}} \leq C \, ,\quad  \|\nabla g_0 \|_{L^{\infty}} \leq C
\quad \text{and} \quad
\int  \bar M_{\beta, I} (Y) g_0(Y) \, dY = 1\,.
\end{aligned}
\end{equation}
This perturbation modifies only the distribution of the rigid body, however this initial modification will drive
 the  whole system out of equilibrium at later times.
 Note  that the uniform bounds on $g_0$ could also be slowly diverging with $\alpha$ to allow for   the limiting distribution to be a Dirac mass (see \cite{BGSR1}).

\subsection{Main result}

Our goal is   to describe the evolution of the rigid body distribution in a rarified gas starting from the measure $f_{N+1,0}$ defined in~(\ref{defdata}), i.e. close   to equilibrium.
The distribution of the rigid body is given by 
the first marginal 
$$
 f_{N+1}^{(1)} (t,Y):= \int   f_{N+1} (t,Y, Z_N) \, dZ_N \, .
$$
Our key result is a quantitative approximation of the distribution of the mechanical process by a linear Boltzmann equation. We define the operator 
{\color{black}
\begin{align}
\label{defLalpha}
{\mathcal L}_\alpha g (Y)& := 
\frac1\alpha \int _{[0,L_{ \alpha}] \times {\mathbb R}^2}  M_{\beta } (v)  
 \Big( g(Y') \big(  (\frac1{\alpha}v' -V'- \Omega'      r_{ \Theta}^\perp
) \cdot    n_{ \Theta}\big)_-    \\
  & \qquad \qquad \qquad \qquad \qquad -  g(Y) \big(  (\frac1{\alpha}v
-V- \Omega    r_{ \Theta}^\perp
\big ) \cdot    n_{ \Theta} \big)_-\Big)\, d  \sigma_\alpha dv	\, ,	 \nonumber 
\end{align}
}with $Y= (X,V,\Theta,\Omega)$ and $Y' = (X,V',\Theta,\Omega')$ as defined in~(\ref{eq: collision laws}) and where~$L_\alpha$ is the perimeter of the enlarged body
\begin{equation}
\label{enlargedbody}
  \Sigma_\alpha:= \big \{y  \; \big| \,  d(y,   \Sigma) \leq  \alpha/2  \big\} \, ,
\end{equation}
and~$ \sigma_\alpha$ is the arc-length on~$\d   \Sigma_\alpha$.

\begin{Thm}
\label{prop-approximate tensorization}
Assume that the particles are initially distributed according to $f_{N+1,0}$ defined in~{\rm(\ref{defdata})-(\ref{g0 borne})} and consider 
 the joint limit  $N \to \infty, \eps \to 0$ and $\alpha \to 0$ with 
$$
N \eps =1\, , \qquad \alpha^4  {\log \log N}  \gg 1 \, .
$$
Then for any time $T\geq 1$, the distribution $  f_{N+1}^{(1)}$ of the rigid body satisfies
 \begin{equation}
\label{mainpart-est}
\lim_{N\to \infty}\big\|   f^{(1)}_{N+1}(t)- \bar M_{\beta, I}    g_\eps(t) \big\|_{L^\infty([0,T];L^1(\T^2\times \R^2\times {\mathbb S}\times \R))} = 0
\end{equation}
where $g_\eps$ satisfies the linear Boltzmann equation
\begin{equation}
\label{eq: linear boltz}
\partial_t g_\eps + V \cdot \nabla_X g_\eps + \frac\alpha \eps \Omega  \, \partial_\Theta g_\eps \\
=  {\mathcal L}_\alpha g_\eps\,.
\end{equation}
\end{Thm}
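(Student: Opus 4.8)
The plan is to follow the Lanford strategy for a tagged (heavy, convex) particle in a background gas at equilibrium, combined with the pruning procedure of \cite{BGSR1} to reach times $T\geq 1$ that are long compared with the mean free time, and with a new geometric trajectory analysis to tame the pathologies created by the curved boundary of $\Sigma$. First I would derive from the Liouville equation the BBGKY-type hierarchy for the marginals $f^{(1+s)}_{N+1}(t,Y,Z_s)$ obtained by integrating out all but $s$ atoms. Each equation carries two collision contributions: an atom--atom term $C^{\mathrm{at}}_{s,s+1}$ adding an atom colliding with another atom, and an atom--body term $C^{\mathrm{body}}_s$ adding an atom colliding with the rigid body (governed by the rescaled laws \eqref{eq: collision laws}). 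Iterating Duhamel's formula expresses $f^{(1)}_{N+1}(t)$ as a sum over collision trees of terms alternating free transport (with the $1/\alpha$ and $\alpha/\eps$ transport speeds) and the two collision operators, applied to $f_{N+1,0}=g_0(Y)M_{\beta,I,N}$. Since the initial datum is a perturbation of the stationary Gibbs state carried only by the $Y$-variable, in the limit the atoms should stay Maxwellian, the atom--atom branchings should disappear, and collecting the surviving body branchings should reproduce exactly the Duhamel series of $\partial_t g_\eps+V\cdot\nabla_X g_\eps+\frac\alpha\eps\Omega\,\partial_\Theta g_\eps=\mathcal{L}_\alpha g_\eps$; the whole proof is the quantitative justification of this.

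Next come the a priori bounds. Using $f_{N+1,0}\le C\,M_{\beta,I,N}$ together with the invariance of the Gibbs measure under the flow, and continuity (Cauchy--Schwarz) estimates on the collision operators, I would bound the $n$-th term of the truncated series. The body collision rate is large, of order $\alpha^{-2}$ (number density $N=1/\eps$ times cross-section $\eps/\alpha$ along $\partial\Sigma_\alpha$ times relative speed $\sim 1/\alpha$), so the natural time step on which the series is summable is $\tau$ of order $\alpha^2$, and convergence of $f^{(1)}_{N+1}$ towards $\bar M_{\beta,I}g_\eps$ on $[0,\tau]$ follows once bad trajectories are excluded. To propagate this to arbitrary $T\geq 1$ one cannot iterate naively, since the number of collision trees explodes; instead I would invoke the pruning procedure of \cite{BGSR1}, splitting $[0,T]$ into $O(T\alpha^{-2})$ slabs and discarding on each slab the rare super-exponential collision histories so that the bounds close uniformly in $N$. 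The restriction $\alpha^4\log\log N\gg1$ is exactly what makes the product of the number of slabs with the per-slab pruning error negligible.

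The genuinely new difficulty, and the step I expect to be the main obstacle, is that unlike hard spheres a single atom can hit the strictly convex body many times in quick succession (sliding along the boundary), and pathological grazing or multiple-collision configurations can occur. I would therefore introduce a \emph{modified dynamics} in which such pathological atom--body interactions are forbidden (the atom is frozen or removed whenever its trajectory would graze $\partial\Sigma_\alpha$ or re-hit it within a short time), and establish two facts. First, the modified and true dynamics coincide outside a set of initial configurations of vanishingly small Gibbs measure; this uses strict convexity ($\kappa_{\min}>0$) and the size bound $r_{\max}$ to control the measure of sliding configurations, together with the scaling on $\alpha$, and relies on the estimates developed in Section~\ref{conditioning-geometry}. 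Second --- the heart of the matter --- for the modified dynamics one must show that recollisions (an atom returning to the body, or two atoms meeting after each has interacted with the body) contribute negligibly: I would parametrize the pre-collisional pseudo-trajectories by the arc-length $\sigma_\alpha\in[0,L_\alpha]$ on $\partial\Sigma_\alpha$, the normal $n_\Theta$ and the incoming velocity, and prove dispersion estimates showing that the set of such parameters leading to a recollision within a slab has measure $O(\eps^a)$ for some $a>0$. Strict convexity is precisely what keeps the post-collisional velocity map non-degenerate, with a change-of-variables Jacobian bounded below in terms of $\kappa_{\min}$, so that recollision sets are thin; this is the new type of trajectory analysis announced in the introduction, and I expect it to be the longest and most delicate part of the argument.

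Finally, on the good set each term of the pruned, modified series is a finite-dimensional integral over collision trees, and I would pass to the limit term by term: as $N\to\infty$, $\eps\to0$ and $\eps/\alpha\to0$ at fixed tree, the pre-/post-collisional parametrizations and the exclusion constraints converge, the atom--atom branchings vanish, the reservoir atoms are distributed according to the conditional Maxwellian $M_\beta(v)$, and the body branchings converge to the collision operator defining $\mathcal{L}_\alpha$ in \eqref{defLalpha}. Summing the now absolutely convergent series and using the a priori bounds of Step~2 to justify exchanging the limit with the summation gives $\lim_{N\to\infty}\|f^{(1)}_{N+1}(t)-\bar M_{\beta,I}g_\eps(t)\|_{L^\infty([0,T];L^1(\T^2\times\R^2\times{\mathbb S}\times\R))}=0$, which is the assertion \eqref{mainpart-est}; uniqueness of the solution $g_\eps$ of the linear Boltzmann equation \eqref{eq: linear boltz} closes the argument.
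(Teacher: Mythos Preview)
Your outline follows the paper's strategy closely and identifies the right ingredients (BBGKY/Duhamel, a modified dynamics to excise pathological body--atom scatterings, pruning, recollision geometry, term-by-term passage to the limit). Two points, however, need correction.

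\medskip
\textbf{Pruning bookkeeping.} The short-time Lanford radius is indeed $O(\alpha^2)$, but this is \emph{not} the slab length in the pruning. In the remainder one must control, on the $k$-th slab, a product of the form $(CT/\alpha^2)^{J_{k-1}}(Ch/\alpha^2)^{j_k}$ with $J_{k-1}\le 2^k$ and $j_k\ge 2^k$: the accumulated growth from the earlier slabs is huge (since $T/\alpha^2\gg1$) and must be beaten by the super-exponential tail on the last slab. This forces $(Ch/\alpha^2)(CT/\alpha^2)<1$, i.e.\ $h\lesssim \alpha^4/T$, so the number of slabs is $K\sim T^2/\alpha^4$, and the requirement $K\lesssim \log\log N$ is precisely $\alpha^4\log\log N\gg1$. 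With $O(T/\alpha^2)$ slabs your argument would only justify the weaker hypothesis $\alpha^2\log\log N\gg1$, inconsistent with the statement.

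\medskip
\textbf{Missing intermediate object.} You truncate only the BBGKY side, but the pathological body--atom collisions are controlled by powers of $\alpha$, not $\eps$, and cannot be absorbed into the recollision error. If only the BBGKY pseudo-trajectory is killed at a pathological collision parameter while the Boltzmann one survives, the coupling of pseudo-trajectories fails on a set of size $O(\alpha^{2\eta})$ per collision, which the pruning cannot swallow. The paper therefore introduces a \emph{modified Boltzmann hierarchy} with the same killing rules (conditions (\ref{good-collision1})--(\ref{good-collision2}) on the gain part, and a no-direct-recollision condition (\ref{good-collision3}) on the loss part), so that both pseudo-trajectories are killed simultaneously and the coupling yields an $O(\eps^a)$ error. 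One must then prove separately that the truncated linear Boltzmann solution $\tilde g_\eps$ satisfies $\|M_{\beta,I}(g_\eps-\tilde g_\eps)\|_{L^1}\le C_T\alpha^{2\eta}$ (Proposition~\ref{prop: truncatedBoltztoBoltz}); this step is absent from your sketch.

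\medskip
A smaller point: the atom--atom branchings do not ``vanish in the limit''. They converge to the atom--atom Boltzmann operator $\bar C_{s,s+1}$, which is kept in the limiting hierarchy. What makes them harmless is that $\bar C_{s,s+1}$ annihilates the tensorized ansatz $g_\eps(Y)\,\bar M_{\beta,I}(Y)\prod_i M_\beta(v_i)$ (gain equals loss on Maxwellians), so the hierarchy collapses to the single linear equation for $g_\eps$. The argument is: modified BBGKY $\to$ modified Boltzmann hierarchy term by term, then identification of the tensorized solution; not a direct comparison to the linear equation.
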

When $\eps$ and $\alpha$ tend to 0, the solution of the linear Boltzmann equation \eqref{eq: linear boltz}
converges to the solution of a hypoelliptic equation  combining the transport  with the diffusion operator
\begin{align}
\label{eq: diffusion operator}
\cL =   \frac1\beta  \big(\frac{L }2  \Delta_V +  \frac{\cK}{I^2}  \partial^2_\Omega \big)    -\frac{L }2V  \cdot \nabla_V    -  \frac{\cK}{I}  \Omega \partial_\Omega \,  ,
\end{align}
where recall that $L $ stands for the perimeter of $\Sigma$ and  where
$$
\cK  := \int _{0}^{{L }}   (r \cdot n^\perp) ^2 d  \sigma \, .
$$
Note that~$\cK = 0$ in the case when the rigid body is a disk. 

We can therefore also deduce the following behavior of the rigid body distribution for all times. 
\begin{Thm}
\label{thm: convergence density}
Assume that the particles are initially distributed according to $f_{N+1,0}$ defined in~{\rm(\ref{defdata})}-{\rm(\ref{g0 borne})}
 and consider 
 the joint limit  $N \to \infty, \eps \to 0$ and $\alpha \to 0$ with 
$$
N \eps =1\, , \qquad \alpha^4  {\log \log N}  \gg 1 \, .
$$
Then for any time $T\geq 1$, the distribution $  f_{N+1}^{(1)}$ of the rigid body converges to $\bar M_{\beta,I} \, g$,  weak-$\star$ in~$L^\infty ([0,T]\times \T^2 \times \R^2\times {\mathbb S}\times \R)$,  where $g$ is the solution of 
\begin{align}
\label{eq: distribution limite}
\d_t g +V\cdot \nabla_X g = a  \cL g
\quad \text{with} \quad
a := \left( \frac{8}{\pi \beta}\right)^{1/2},
\end{align}
starting from $g_0$. 
\end{Thm}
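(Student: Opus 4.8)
The strategy is to combine the quantitative estimate of Theorem~\ref{prop-approximate tensorization} with a diffusion-approximation analysis of the linear Boltzmann equation~\eqref{eq: linear boltz}. By~\eqref{mainpart-est} it suffices to prove that, along the joint limit $N\to\infty$, $\eps\to0$, $\alpha\to0$ with $N\eps=1$ and $\alpha^4\log\log N\gg1$ --- so that in particular $\alpha/\eps=\alpha N\to\infty$, equivalently $\eps/\alpha\to0$ --- the solution $g_\eps$ of~\eqref{eq: linear boltz} issued from $g_0$ converges, weak-$\star$ in $L^\infty([0,T]\times\T^2\times\R^2\times{\mathbb S}\times\R)$, to the solution $g$ of~\eqref{eq: distribution limite} issued from $g_0$ (note that $g$ is automatically independent of the orientation $\Theta$; if $g_0$ were not, the fast rotation would project it onto its $\Theta$-average in an initial layer invisible in the limit). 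Two mechanisms are at play: the singular transport $\frac\alpha\eps\,\Omega\,\partial_\Theta g_\eps$ is a fast rotation that averages out the dependence on $\Theta$; and, since by~\eqref{eq: collision laws}--\eqref{eq: A} each collision deflects $V$ and $\Omega$ by only $O(\alpha)$ while collisions occur at rate $O(\alpha^{-2})$, the collision operator $\mathcal{L}_\alpha$ is in a small-deflection (diffusive) regime which, once averaged over $\Theta$, converges to the Fokker--Planck operator $a\,\mathcal{L}$.

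First I would collect the uniform bounds. Splitting $g_0=g_0^+-g_0^-$ if necessary and using that, up to the Gaussian weight, $\mathcal{L}_\alpha$ is the generator of a pure-jump process --- it preserves positivity and annihilates constants --- the maximum principle for~\eqref{eq: linear boltz} gives $0\le g_\eps(t)\le\|g_0\|_{L^\infty}$ uniformly in all parameters, while the mass $\int\bar M_{\beta,I}\,g_\eps(t)\,dY=1$ is conserved. Hence $(g_\eps)$ is weak-$\star$ relatively compact in $L^\infty$; let $g$ denote a limit point. Multiplying~\eqref{eq: linear boltz} by $\eps/\alpha$ and using that $\mathcal{L}_\alpha$ sends smooth functions to $O(1)$ functions --- its formally $O(\alpha^{-1})$ contribution vanishes because $\d\Sigma_\alpha$ is a closed curve, so that $\oint_{\d\Sigma_\alpha}n\,d\sigma_\alpha=0$ --- together with $\eps/\alpha\to0$, one passes to the distributional limit and obtains $\Omega\,\partial_\Theta g=0$, hence $g$ is independent of $\Theta$.

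The heart of the proof is the passage to the limit in the weak formulation. Testing~\eqref{eq: linear boltz} against a smooth $\varphi=\varphi(t,X,V,\Omega)$ independent of $\Theta$ removes the $\frac\alpha\eps\Omega\partial_\Theta$ term and leaves, with the appropriate Gaussian weight, $-\iint g_\eps\,(\partial_t\varphi+V\cdot\nabla_X\varphi)=\iint g_\eps\,\mathcal{L}_\alpha^\ast\varphi$ (plus the initial term). Expanding $\mathcal{L}_\alpha^\ast\varphi$ in powers of $\alpha$ --- using $V'-V,\ \Omega'-\Omega=O(\alpha)$ from~\eqref{eq: collision laws} and the incoming relative flux $=\tfrac1\alpha|v\cdot n_\Theta|+O(1)$ --- the leading, formally $O(\alpha^{-1})$, drift is proportional to $\nabla_V\varphi\cdot\oint_{\d\Sigma_\alpha}n_\Theta\,d\sigma_\alpha$ and hence vanishes, again by closedness of $\d\Sigma_\alpha$. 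The surviving $O(1)$ terms, once the Gaussian moments in $v$, the arc-length integrals $\oint n_\Theta\otimes n_\Theta\,d\sigma$ and $\oint(r\cdot n^\perp)^2\,d\sigma=\mathcal{K}$ are evaluated and --- crucially --- the orientation average is taken (which isotropizes $\oint n_\Theta\otimes n_\Theta\,d\sigma$ into $\tfrac L2\,\mathrm{Id}$ and kills the $V$--$\Omega$ cross terms), reproduce exactly $a\,\mathcal{L}^\ast\varphi$ with $a=2\int_{\R^2}|v\cdot n|\,M_\beta(v)\,dv=( 8/(\pi\beta))^{1/2}$, up to an error $O(\alpha)+|L_\alpha-L|=o(1)$. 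Taking the orientation average here is justified a posteriori by the $\Theta$-independence of the limit (quantified in the final paragraph). One thus obtains that $g$ is a weak solution of~\eqref{eq: distribution limite} issued from $g_0$; since this equation is hypoelliptic of H\"ormander type --- the diffusion covers all of $(V,\Omega)$ and $[V\cdot\nabla_X,\partial_{V_j}]=-\partial_{X_j}$ restores the missing $X$-regularity --- its solution with datum $g_0$ is unique, so the whole family $(g_\eps)$ converges to $g$, and by~\eqref{mainpart-est} so does $f_{N+1}^{(1)}$ to $\bar M_{\beta,I}\,g$.

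The main obstacle is precisely this quantitative $\Theta$-averaging. The terms of formal size $O(\alpha^{-1})$ in $\mathcal{L}_\alpha^\ast\varphi$ cancel only after integration over $\d\Sigma$ and averaging over $\Theta$, so in the identity $\iint g_\eps\,\mathcal{L}_\alpha^\ast\varphi=\iint g_\eps\,\langle\mathcal{L}_\alpha^\ast\varphi\rangle_\Theta+\iint (g_\eps-\langle g_\eps\rangle_\Theta)(\mathcal{L}_\alpha^\ast\varphi-\langle\mathcal{L}_\alpha^\ast\varphi\rangle_\Theta)$ one must control the residual orientation-dependence $g_\eps-\langle g_\eps\rangle_\Theta$ sharply enough to kill the second term --- a zero-$\Theta$-mean quantity of size $O(\eps/\alpha)$ multiplying an $O(1)$-size, zero-$\Theta$-mean part of $\mathcal{L}_\alpha^\ast\varphi$. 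This two-scale difficulty, characteristic of diffusion limits of kinetic equations with a fast drift, is handled either by correcting the test function at order $\eps/\alpha$ (solving $\Omega\,\partial_\Theta\varphi_1=-(\mathcal{L}_\alpha^\ast\varphi)^\perp$ for the corrector) or by a hypocoercivity-type estimate: the fast rotation transfers the orientation modes into the range of $\mathcal{L}_\alpha$, where the collision dissipation damps them. The only delicate point is the degeneracy of the rotation near $\Omega=0$, which is controlled thanks to the Gaussian weight in $\Omega$.
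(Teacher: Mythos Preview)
Your plan is correct and follows the paper's route closely: Theorem~\ref{prop-approximate tensorization} reduces the problem to the asymptotics of~\eqref{eq: linear boltz}; the maximum principle gives weak-$\star$ compactness; multiplying by $\eps/\alpha$ gives the constraint $\Omega\,\partial_\Theta g=0$; testing against $\varphi(X,V,\Omega)$ and Taylor-expanding the collision operator yields the Fokker--Planck structure. Your identification of the $O(\alpha^{-1})$ cancellation via the closed-curve identities $\oint n_\Theta\,d\sigma_\alpha=0$ and $\oint r^\perp\!\cdot n\,d\sigma_\alpha=0$ is exactly what the paper does, and --- this is the key point --- these cancellations hold \emph{pointwise in $\Theta$}, not only after $\Theta$-averaging.

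Where you diverge from the paper is in your final paragraph, and there you make the problem harder than it is. After the $O(\alpha^{-1})$ cancellation, the remaining integrand pairs $g_\eps$ against a function of the form $M_{\beta,I}(V,\Omega)\,\Psi(X,V,\Omega,\Theta)$ with $\Psi$ bounded and converging strongly (the $\Theta$-dependence sits in $\cN_\alpha(\Theta)$, $\Gamma_\alpha(\Theta)$, which are uniformly bounded and converge to $\cN(\Theta)$, $\Gamma(\Theta)$). One may therefore pass to the weak-$\star$ limit directly: $\int g_\eps\,\Psi_\alpha\,dY\to\int g\,\Psi\,dY$. Since the limit $g$ is already known to be $\Theta$-independent, the $\Theta$-integral factors out and produces $\frac{1}{2\pi}\int\cN(\Theta)\,d\Theta=\tfrac{L}{2}\mathrm{Id}$, $\frac{1}{2\pi}\int\Gamma(\Theta)\,d\Theta=0$, etc. No decomposition $g_\eps=\langle g_\eps\rangle_\Theta+(g_\eps-\langle g_\eps\rangle_\Theta)$, no corrector, no hypocoercivity, and no special treatment near $\Omega=0$ is required; the weak-$\star$ convergence does all the work. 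Your proposed machinery would give a quantitative rate, but the theorem only asks for weak-$\star$ convergence, and the paper's soft argument suffices.
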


The fluctuations of the whole path of the rigid body can also be controlled and the limiting process will be  the  Ornstein-Uhlenbeck process  $\cW(t) =(\cV (t) , \cO (t))$  with generator $a \cL$ given in~(\ref{eq: diffusion operator}) 
\begin{equation}
\label{eq: OU process}
\begin{split}
d X(t) &= \cV(t) dt \\
d \cV(t) & =  -   aL  \cV  (t) dt + \sqrt{\frac{2  aL }{\beta}} \; dB_1(t) \\
d \cO (t) & = -  a \frac{\cK}{I} \cO (t) dt + \sqrt{\frac{2 a \cK}{\beta I}} \; dB_2(t) 
\end{split}
\end{equation}
where $B_1 \in {\mathbb R}^2, B_2 \in \mathbb R$ are two independent Brownian motions.
Initially $\cW(0) =(\cV (0) , \cO (0))$ is distributed according to $M_{\beta, I}$ defined in~\eqref{eq: Gauss}. {\color{black}There is no limiting process for the angles which are rotating too fast as the angular momentum $\hat \Omega$ has been rescaled by a factor $ {\varepsilon}/{\alpha}$.}

\medskip

In the joint Boltzmann-Grad limit and $\alpha \to 0$, the velocity and the angular momentum of the rigid body
converge to the diffusive process $\cW$. 
\begin{Thm}
\label{thm: convergence process}
Let $\brown( t) = ( V (t) , \Omega(t) )$ be  the microscopic process associated with the rigid body and 
starting from the equilibrium measure $M_{\beta,I,N}$ defined in~{\rm\eqref{eq: Gibbs measure}}.
For any time $T>0$, the process $\brown$  converges in law in $[0,T]$ to the Ornstein-Uhlenbeck process  $\cW$ defined by {\rm(\ref{eq: OU process})}  in the joint limit 
$N \to \infty, \eps \to 0$ and $\alpha \to 0$ with 
$$
N \eps =1\, , \qquad \alpha^4  {\log \log N}  \gg 1 .
$$
\end{Thm}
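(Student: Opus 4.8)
\emph{Strategy.} I would deduce the convergence in law on path space from two ingredients: (i) convergence of the finite-dimensional distributions of $\brown$ to those of $\cW$, and (ii) tightness of the family of laws of $\brown$ in the Skorokhod space $D([0,T];\R^2\times\R)$. Since the limiting Ornstein--Uhlenbeck process~\eqref{eq: OU process} has continuous paths while $\brown$ jumps only by $O(\alpha)$ at each collision (see~\eqref{eq: collision laws}), these two ingredients will in fact give convergence in $C([0,T];\R^2\times\R)$. Two structural simplifications are used throughout: first, since the initial law is the stationary Gibbs measure $M_{\beta,I,N}$ of~\eqref{eq: Gibbs measure}, the process $\brown$ is stationary, so only its multi-time correlations are at stake; second, the $(V,\Omega)$--marginal of~\eqref{eq: OU process} is itself an autonomous Markov process, whose semigroup I denote $\mathcal S_t$ (generated by~\eqref{eq: diffusion operator} without the transport term).

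\emph{Tightness.} The plan is to establish a fourth moment bound on increments,
$$\mathbb E_{M_{\beta,I,N}}\big[\,|\brown(t)-\brown(s)|^4\,\big]\ \le\ C\,|t-s|^2\,,\qquad |t-s|\ge \alpha^2\,,$$
which, together with the $O(\alpha)$ bound on the jumps, yields the Kolmogorov/Aldous tightness criterion. This estimate reduces to controlling the number $\mathcal N_{[s,t]}$ of collisions of the rigid body with atoms on $[s,t]$: each such collision changes $(V,\Omega)$ by $O(\alpha)$, the collision rate is $O(\alpha^{-2})$, so $\mathbb E[\mathcal N_{[s,t]}]\lesssim |t-s|\,\alpha^{-2}$, and a variance estimate for $\mathcal N_{[s,t]}$ closes the bound. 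The delicate point is that the strict convexity of $\Sigma$ allows a single atom to collide many times in quick succession with the body; I would discard these pathological clusters using the modified dynamics and the trajectory analysis of Section~\ref{conditioning-geometry}, which bound their contribution under $M_{\beta,I,N}$.

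\emph{Finite-dimensional distributions.} Fix times $0<t_1<\dots<t_n\le T$ and bounded Lipschitz test functions $\phi_1,\dots,\phi_n$ on $\R^2\times\R$. I would prove
$$\mathbb E_{M_{\beta,I,N}}\Big[\prod_{k=1}^{n}\phi_k\big(\brown(t_k)\big)\Big]\ \xrightarrow[\ N\to\infty\ ]{}\ \int M_{\beta,I}(dw_0)\,\Big(\mathcal S_{t_1}\big[\phi_1\,\mathcal S_{t_2-t_1}[\phi_2\,\cdots\,\mathcal S_{t_n-t_{n-1}}[\phi_n]\,]\big]\Big)(w_0)$$
by descending induction on $n$, conditioning successively on the configuration at $t_{n-1},t_{n-2},\dots$. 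The core is a \emph{restart} estimate: conditionally on the configuration at time $t_{k-1}$, the atoms lying ahead of the rigid body have not interacted with it and are --- up to an error that is negligible because the set of atoms already scattered by the body has vanishing measure (by the collision count of the tightness step, together with $\eps\ll\alpha^{2}$) --- distributed according to the conditioned Gibbs measure; running the Duhamel expansion of~\cite{lanford} forward on $[t_{k-1},t_k]$ from this essentially tensorized state, with recollisions and geometric collision clusters pruned as in~\cite{BGSR1} and Section~\ref{conditioning-geometry}, shows that
$$\mathbb E\big[\,\phi_k(\brown(t_k))\ \big|\ \mathcal F_{t_{k-1}}\,\big]\ =\ \big(\mathcal S^{\mathrm{LB}}_{t_k-t_{k-1}}\phi_k\big)\big(\brown(t_{k-1})\big)\ +\ o(1)\,,$$
uniformly, where $\mathcal S^{\mathrm{LB}}_{t}$ is the semigroup of the linear Boltzmann equation~\eqref{eq: linear boltz}. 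Iterating over the $n$ fixed slices and using the convergence of $\mathcal S^{\mathrm{LB}}_{t}$ to $\mathcal S_t$ as $\eps,\alpha\to0$ --- the semigroup form of the diffusive and fast-rotation asymptotics already performed on the densities in the proof of Theorem~\ref{thm: convergence density}, which produces the constant $a=(8/\pi\beta)^{1/2}$ upon averaging over $\Theta$ and $M_\beta$ --- I would obtain the claimed composition of Ornstein--Uhlenbeck transition kernels.

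\emph{Main obstacle.} I expect the restart estimate to be the main difficulty: one must show that, over the long diffusive time window, the effect of the past on the forward evolution --- through recollisions of the rigid body with already-scattered atoms, and through the collision clusters forced by the geometry --- is asymptotically negligible, so that the mechanical process becomes Markovian in the limit. This is exactly the mechanism behind Theorem~\ref{prop-approximate tensorization}, now applied to configurations that are only approximately, rather than exactly, a perturbation of equilibrium; it requires combining the pruning procedure of~\cite{BGSR1} with the convex-body trajectory analysis of Section~\ref{conditioning-geometry}, the same geometric control that the tightness step rests on.
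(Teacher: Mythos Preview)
Your two-step scheme (tightness plus finite-dimensional marginals) matches the paper's, and so does the idea of routing through an auxiliary Markov chain associated with the linear Boltzmann equation. The implementation, however, is different, and your ``restart estimate'' as formulated has a real gap.

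The paper never conditions on $\mathcal F_{t_{k-1}}$ or argues that the time-$t_{k-1}$ atom distribution is approximately Gibbs. Instead it writes the multi-time expectation $\mathbb E_{M_{\beta,I,N}}\big[\prod_k h_k(\brown(\tau_k))\big]$ as a \emph{single} backward Duhamel series on $[0,\tau_\ell]$, with the weights $h_k$ inserted along the rigid body's pseudo-trajectory at the intermediate times $\tau_k$ (Proposition~\ref{prop: identification proba Duhamel} and formula~\eqref{eq: representation itere + poids}). An identical weighted series represents the same functional for the linear-Boltzmann Markov chain~$\bar\brown$ through the Boltzmann hierarchy. On pseudo-trajectories without recollisions the rigid body follows \emph{exactly the same path} in both hierarchies, so the products $\prod_k h_k$ agree, and the pruning and recollision estimates already established for Theorem~\ref{prop-approximate tensorization} apply verbatim to the weighted series. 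The remaining convergence $\bar\brown\to\cW$ is then a self-contained martingale-problem argument for a genuine Markov chain (Lemma~\ref{lem: markov convergence}). Tightness is handled the same way: Aldous' criterion is verified for~$\bar\brown$ from its generator, and the modulus-of-continuity event is transferred to~$\brown$ through the same hierarchy comparison via Proposition~\ref{prop: identification proba Duhamel}.

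Your route, by contrast, tries to establish an approximate Markov property for the mechanical process itself: $\mathbb E[\phi_k(\brown(t_k))\mid\mathcal F_{t_{k-1}}]\approx(\mathcal S^{\mathrm{LB}}_{t_k-t_{k-1}}\phi_k)(\brown(t_{k-1}))$. But~$\brown$ is not Markov; the $O(\alpha^{-2}t_{k-1})$ atoms already scattered by the body encode its past and may recollide with it on $[t_{k-1},t_k]$. Showing that their later influence is negligible \emph{is} the recollision estimate --- but that estimate is posed and proved at the level of Duhamel pseudo-trajectories, not at the level of conditional phase-space measures, and nothing in the paper (nor anything readily available) says that the time-$t_{k-1}$ conditional law of the atoms is close to Gibbs in a topology strong enough to relaunch Lanford's argument from there. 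The weighted-series representation is precisely the device that makes this detour unnecessary. The same objection applies to your fourth-moment tightness bound: controlling $\mathbb E|\brown(t)-\brown(s)|^4$ directly for the mechanical dynamics would require near-independence of successive collision increments, i.e.\ again a recollision control formulated outside the hierarchy, whereas the paper gets tightness essentially for free by transferring it from the Markov chain.
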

Compared to  \cite{DGL_2}, the limiting process \eqref{eq: OU process} is somewhat simpler 
as the  velocity and the angular momentum fluctuations of the rigid body decouple.
This comes from the fact that the  size of the rigid body is scaled with $\eps/\alpha$ and when $\eps/\alpha $ tends to 0, this induces a very fast rotation  (see~\eqref{eq: rescaled momentum}) which averages out the cross correlations between the velocity and the angular momentum.
%
%

\section{Formal asymptotics and structure of the proof}

\subsection{The BBGKY hierarchy}

To prove  Theorem \ref{prop-approximate tensorization}, we need to write down the equation on~$  f_{N+1}^{(1)} $, which  involves the second marginal, so we are led as usual in this context to studying the full BBGKY hierarchy on the marginals (denoting~$z_i:=(x_i,v_i)$)
$$
\forall s\leq N+1\, , \qquad
  f_{N+1}^{(s)} (t,Y,Z_{s-1}):= \int   f_{N+1} (t,Y, Z_N) \, dz_{s} \dots dz_N \, .
$$
Recall that $  f_{N+1}$ is the distribution over the $N+1$ particles and we have~$  f_{N+1}^{(N+1)} =   f_{N+1}$.
 Applying Green's formula leads to the scaled equation
\begin{equation}
\label{eq: BBGKY}
\begin{aligned} \partial_t   f_{N+1}^{(s)}  + V \cdot \nabla_X   f_{N+1}^{(s)} + \frac 1\alpha \sum_{i=1}^{s-1} v_i \cdot \nabla_{x_i}   f_{N+1}^{(s)}+ \frac \alpha\eps \Omega \,   \partial_{\Theta}    f_{N+1}^{(s)} \\=  C_{s,s+1}   f_{N+1}^{(s+1)} +D_{s,s+1}  f_{N+1}^{(s+1)},
\end{aligned}
\end{equation}
where~$C_{s,s+1}$ is the usual collision operator related to collisions between two atoms
$$
\begin{aligned}
(C_{s,s+1}   f_{N+1}^{(s+1)}) (Y, Z_{s-1})& := (N-s+1)  \eps 	\\
& \times\sum_{i=1}^{s-1} \int_{{\mathbb S} \times {\mathbb R}^2}    f_{N+1}^{(s+1)} ( Y, Z_{s-1}, x_i + \eps \nu_s,v_{s})  \frac1\alpha (v_s-v_{i} ) \cdot \nu_s \, d\nu_s dv_{s} \, ,
\end{aligned}
$$
while~$D_{s,s+1}$ takes into account collisions between the rigid body and the atoms 
$$
\begin{aligned}
& (D_{s,s+1}   f_{N+1}^{(s+1)})  (Y, Z_{s-1}):=   (N-s+1)  \frac \eps \alpha \\
& \qquad \times\int_{[0,L_\alpha] \times {\mathbb R}^2}    f_{N+1}^{(s+1)} ( Y, Z_{s-1}, X  + \frac \eps \alpha \,   r_{\alpha, \Theta} ,v_{s})\big(  \frac1\alpha v_{s}-V
- \Omega      r_{\alpha, \Theta}^\perp
\big)
\cdot    n_{\alpha, \Theta}  \, d  \sigma_\alpha dv_{s} \, ,
\end{aligned}
$$
where we recall that the subscript $\Theta$ denotes the rotation of angle~$\Theta$ as defined in~(\ref{defRtheta}), and~$r_\alpha,n_\alpha $ are functions of the arc-length $  \sigma_\alpha$    on $\d   \Sigma_\alpha$. 

Note that the set~$ \Sigma_\alpha$ defined in~(\ref{enlargedbody})  is introduced to take into account the radius of the atoms. Indeed at the collision, the center of the atom is at distance $\eps/2$ of the body $ ({\eps}/{\alpha} )   \Sigma$, thus after rescaling by~$ {\alpha}/{\eps} $, the center of the atom is at distance~$\alpha/2$ of $   \Sigma$.
Ultimately $\alpha$ will tend to~0 and $\d   \Sigma_\alpha$  to $\d   \Sigma$ since $\Sigma$ is assumed to be smooth and convex.

The structure of the collision kernel in $D_{s,s+1}$ can be understood as follows: such a collision occurs when an atom among those labeled from~$s$ to~$N$ (say~$s$) has its center~$x_{s}$ such that~$\frac\alpha \eps R_{-\Theta} (x_s - X)$ belongs to~$\d   \Sigma_\alpha$~:
$$
R_{-\Theta} (x_s - X) -  \frac \eps\alpha r_\alpha = 0\,.
$$
 The 
    normal to the corresponding surface $\d \cD_\alpha$  is   given by~$ (\displaystyle R_\Theta   n_\alpha, -\displaystyle R_\Theta   n_\alpha , -  \frac\eps\alpha  r _\alpha^\perp \cdot   n_\alpha) $ in the~$(x_s, X, \Theta)$ space.  Applying the Stokes theorem, we   obtain that for any function $\varphi$
  {$$
\begin{aligned}
& \int _{\cD_\alpha\times \R^5} (\frac1\alpha v_s\cdot \nabla_{x_s} +V\cdot \nabla_X + \frac \alpha \eps \Omega  \, \partial_\Theta) \varphi (x_s,v_s,X,V,\Theta,\Omega) dx_sdv_sdY\\
& =  \int _{\cD_\alpha\times \R^5} \nabla_{x_s,X,\Theta}\cdot \big ( (\frac1\alpha v_s,V,  \frac \alpha \eps\Omega) \varphi \big)(x_s,v_s,X,V,\Theta,\Omega) dx_sdv_sdY \\
& =  \int _{\d \cD_\alpha\times \R^5} \big( \frac1\alpha v_{s}-V-
 \Omega  R_\Theta    r_\alpha ^\perp
\big)
\cdot  \frac{ R_\Theta  n_\alpha}{\sqrt {2+ (   \frac\eps\alpha  r _\alpha ^\perp \cdot   n_\alpha )^2 }} \varphi  (x_s,v_s,X,V,\Theta,\Omega) d\nu_\alpha  dv_sdV d\Omega
\end{aligned}
$$
where~$\nu_\alpha $ is the four-dimensional unit surface measure on the set~$\partial \cD_\alpha$. Parametrizing this set by~$   \sigma_\alpha,X,\Theta$ with~$d \sigma_\alpha$ the elementary arc-length on~$\d   \Sigma_\alpha $  we find that 
$$
 d\nu _\alpha  =  \frac\eps\alpha   \sqrt {2+ (    \frac\eps\alpha   r _\alpha ^\perp \cdot   n_\alpha )^2 } \, d  \sigma_\alpha d X d \Theta
$$
so finally we   obtain that for any function $\varphi$
$$
\begin{aligned}
& \int _{\cD_\alpha\times \R^5} (\frac1\alpha v_s\cdot \nabla_{x_s} +V\cdot \nabla_X + \frac \alpha \eps  \Omega  \, \partial_\Theta) \varphi (x_s,v_s,X,V,\Theta,\Omega) dx_sdv_sdY\\
&= \frac\eps\alpha \int \! \! \varphi ( X + \frac \eps \alpha \,r_{\alpha, \Theta} ,v_{s}, X,V,\Theta,\Omega)\big(  \frac1\alpha v_{s}-V
- \Omega  r_{\alpha, \Theta}^\perp
\big)
\cdot n_{\alpha, \Theta} \, d  \sigma _\alpha dv_sdY \, .
\end{aligned}$$
This enables us to identify the contribution of the boundary term at a rigid body-atom collision.

\begin{Rmk}
Note that the integral could be reparametrized by the arc-length on $\d \Sigma$. We indeed have that
$$ r_\alpha = r +\frac\alpha2 n $$
which leads to 
$$ 
{d r_\alpha \over d\sigma} = - n^\perp + \frac\alpha 2 \kappa n^\perp,
$$
where $-n^\perp$ stands for the tangent.
In particular, both curves have the same tangent and the same normal, and we have the identity
$$\big(  \frac1\alpha v_{s}-V
- \Omega      r_{\alpha,\Theta}^\perp
\big)
\cdot    n_{\alpha,\Theta}= \big(  \frac1\alpha v_{s}-V
- \Omega      r_\Theta^\perp
\big)
\cdot    n_\Theta\,,$$
which implies that the cross section depends only on the normal relative velocity at the contact point.
\end{Rmk}

\medskip

As usual in this context, we now separate the collision operators according to post and pre-collisional configurations, using the collision laws (\ref{hard-spheres2})\eqref{eq: collision laws}. 
This is classical as far as~$C_{s,s+1}$ is concerned: we write, thanks to the boundary condition when two atoms collide,
$$
C_{s,s+1} = C_{s,s+1}^+- C_{s,s+1}^-
$$
with
$$
   \begin{aligned}
  & \big(  C_{s,s+1}^+   f_{N+1}^{(s+1)}\big) (Y,Z_{s-1})  :=  (N-s+1)  \frac \eps \alpha\\
  & \qquad \qquad  \times  \sum_{i=1}^{s-1} \int_{{\mathbb S} \times {\mathbb R}^2}    f_{N+1}^{(s+1)}(\dots, x_i, v_i',\dots , x_i+\eps \nu_s, v'_{s})   \big((v'_{s}-v'_i) \cdot \nu_s\big)_- d\nu_s dv_{s} \\
&\big(  C_{s,s+1}^-   f_{N+1}^{(s+1)}\big) (Y,Z_{s-1})  :=(N-s+1)   \frac \eps \alpha
  \\
  &\qquad \qquad    \times \sum_{i=1}^{s-1}\int_{{\mathbb S} \times {\mathbb R}^2}   f_{N+1}^{(s+1)}(\dots, x_i, v_i,\dots , x_i+\eps \nu_s, v_{s}) \big((v_{s}-v_i) \cdot \nu_s\big)_- d\nu_s dv_{s} \, .
 \end{aligned}
$$
Note that $ \big((v_{s}-v_i) \cdot \nu_s\big)_+  =  \big((v_{s}'-v_i') \cdot \nu_s\big)_- $.

\smallskip

In the case of the operator~$D_{s,s+1}$,
we also use the collision laws which provide the decomposition
\begin{equation}
\label{eq: collision operator rigid body}
D_{s,s+1} = D_{s,s+1}^+- D_{s,s+1}^-,
\end{equation}
with
\begin{equation}
\label{eq: operator D}
\begin{aligned}
&\big(  D_{s,s+1}^+   f_{N+1}^{(s+1)}\big) (Y,Z_{s-1})    :=   (N-s+1)  \frac \eps \alpha  \\
&\,   \times \int_{[0,L_\alpha] \times {\mathbb R}^2}  \!  \!  f_{N+1}^{(s+1)} ( Y', Z_{s-1}, X +   \frac \eps \alpha \,    r_{\alpha,\Theta} ,v'_{s}) \Big( \big (  \frac 1 \alpha v'_{s}-V'-
 \Omega'     r_{ \Theta} ^\perp  \big ) \cdot     n_{ \Theta} \Big)_-  \!  d  \sigma_\alpha dv_{s} \, ,  \\
&\big(  D_{s,s+1}^-  f_{N+1}^{(s+1)}\big) (Y,Z_{s-1})  :=(N-s+1) \frac \eps \alpha   \\
  &\,   \times\int_{[0,L_\alpha] \times {\mathbb R}^2}    f_{N+1}^{(s+1)} ( Y, Z_{s-1}, X  + \frac \eps \alpha \,    r_{\alpha, \Theta} ,v_{s}) \Big( \big (  \frac 1 \alpha v_{s}-V
- \Omega     r_{ \Theta} ^\perp  \big) \cdot     n_{\Theta} \Big)_-  d  \sigma_\alpha dv_{s} \, ,
\end{aligned}
\end{equation}
where we used that $\big (  \frac 1 \alpha v'_{s} -V'- \Omega'      r_{ \Theta} ^\perp 
\big ) \cdot     n_{\Theta}   = -  \big ( \frac 1 \alpha v_{s} -V- \Omega      r_{\Theta} ^\perp   \big ) \cdot   n_{ \Theta} $ and where we have written~$Y' = (X,V',\Theta,\Omega')$ and $(v'_{s},V',\Omega' )$ is the post-collisional configuration  defined by~(\ref{eq: collision laws}).

\subsection{Iterated Duhamel formula and continuity estimates}
\label{subsec: Iterated Duhamel formula and continuity estimates}

Using the hierarchy \eqref{eq: BBGKY}, the first marginal can be represented in terms
  of  the iterated Duhamel formula
\begin{equation}\label{iterated-Duhamel 00}
\begin{aligned}
f_{N+1}^{(1)}  (t) 
= {\bf S}_1(t) f_{N+1,0}^{(1)} +
\sum_{n=1}^{N}    \int_0^t \int_0^{t_1}\dots  \int_0^{t_{n-1}}  {\bf S}_1(t-t_1) ( C_{1,2} +  D_{1,2}) 
{\bf S}_2 (t_1-t_2) \dots  \\
\dots  {\bf S}_{1+n}(t_n)  f_{N+1,0}^{(1+n)}\: dt_n \dots   {\color{black}  dt_1} \, ,
\end{aligned}
\end{equation}

where  ${\bf S}_s$ denotes the group associated with~$\displaystyle V \cdot \nabla_X + \frac1\alpha\sum_{i=1}^{s-1} v_i \cdot \nabla_{x_i} + \frac \alpha \eps \Omega \, \partial_\Theta$ in $\cD_\eps^{s}$ with specular reflection on the boundary.	
To simplify notation, we define the operators $Q_{1,1} (t) = {\bf S}_1 (t)$
and for $s,n \geq 1$
\begin{equation}
\label{eq: operator Q}
\begin{aligned}
Q_{n,n+s} (t) :=    \int_0^t \int_0^{{\color{black} t_n}}\dots  \int_0^{{\color{black} t_{n+s-2}}} 
{\bf S}_n (t-t_{n}) ( C_{n,n+1} +  D_{n,n+1}) 
{\bf S}_{n+1} (t_{n}-t_{n+1})\\ \dots  {\bf S}_{n+s}(t_{n+s-1})     {\color{black} dt_{n+s-1} \dots dt_n}  \, ,
\end{aligned}
\end{equation}
so that
\begin{equation}
\label{eq: iterated-Duhamel}
f_{N+1}^{(1)} (t) =  \sum_{s=0}^N  Q_{1,1+s} (t)    f_{N+1,0}^{(1+s)} \,.
\end{equation}

\medskip

To establish uniform bounds on the iterated Duhamel formula~(\ref{eq: iterated-Duhamel}), we   use the estimates on  the initial data~\eqref{defdata} and the maximum principle for the Liouville equation to get 
\begin{equation}
\label{maxprinciple}
f_{N+1} (t) \leq  \| g_0\|_{L^\infty} M_{\beta,I,N}\,.
\end{equation}
Thus,
\begin{equation}
\label{estimatedata 0}
\begin{aligned}
f_{N+1} ^{(s)} (t,Y, Z_{s-1})&\leq  \| g_0\|_{L^\infty} M_{\beta,I,N}^{(s)}  (X,V,\Omega, Z_{s-1} )  \\
& \leq C^s \| g_0\|_{L^\infty} \; M_{\beta,I} (V,\Omega)  M_{\beta }^{\otimes (s-1)} (V_{s-1} ) \, ,
\end{aligned}
\end{equation}
where from now on~$C$ is a constant  which may change from line to line, and the upper bound in terms of the Gaussian measures \eqref{eq: Gauss} is uniform with respect to the positions. The factor~$C^s$ is due to the exclusion~${\mathcal Z}_s^{-1}$ in~$ M_{\beta,I,N}^{(s)}$.
This estimate can be combined with   
  continuity estimates  on the collision operators (see~\cite{GSRT, BGSR1}). 
As usual we     overestimate all contributions by considering rather the operators~$|C_{s,s+1}|$  and~$|D_{s,s+1}|$ defined by
$$
| C_{s,s+1} |f_{s+1}:=  \sum_{i=1}^s  (C_{s,s+1}^{+} +C_{s,s+1}^{-})f_{s+1}\, ,\quad | D_{s,s+1} |f_{s+1}:=  \sum_{i=1}^s  (D_{s,s+1}^{+} +D_{s,s+1}^{-})f_{s+1}    \, ,
$$
and the corresponding series operators $|Q_{s,s+n}|$.
Thanks to \eqref{estimatedata 0}, it is enough to estimate 
the norm of collision operators when applied to the reference Gaussian measures introduced in~\eqref{eq: Gauss}. The following result holds.
\begin{Prop}
\label{estimatelemmacontinuity 0} 
There is a constant~$C_1= C_1(\beta,I)$  such that for all $s,n\in \N^*$ and all~$t\geq 0$, the operator~$|Q|$   satisfies the following continuity estimate:  
$$
	\begin{aligned}
	|Q_{1,1+s}| (t)  \big(
	 M_{\beta,I,N}^{(s)} \big)  \leq \Big({C_1 t\over \alpha^2} \Big) ^{s} M_{3\beta/4,I} \,  .
	\end{aligned}
$$
\end{Prop}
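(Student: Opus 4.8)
The plan is to combine the pointwise Gaussian bound on the marginals with a term-by-term estimate of the iterated collision integrals, exactly in the spirit of the Lanford-type continuity estimates of \cite{GSRT, BGSR1}, but keeping careful track of the $\alpha$-powers. First I would record that each elementary collision operator, applied to a Gaussian, produces another Gaussian with a slightly larger temperature at the cost of a factor involving $\alpha^{-1}$ from the rescaled relative velocities appearing in the cross sections of $C_{s,s+1}$ and $D_{s,s+1}$, together with a combinatorial factor $(N-s+1)\eps/\alpha \le N\eps/\alpha = 1/\alpha$ and the time integration. Concretely, one shows a one-step bound of the form
\begin{equation*}
\bigl| |C_{s,s+1}| + |D_{s,s+1}| \bigr| \, M_{\beta', I} M_{\beta'}^{\otimes(s-1)} \;\leq\; \frac{C s}{\alpha^2}\,(\beta'-\beta'')^{-1/2}\, M_{\beta'', I} M_{\beta''}^{\otimes s}
\end{equation*}
for $\beta'' < \beta'$, where the $\alpha^{-2}$ is the product of the $\alpha^{-1}$ from the prefactor $\eps/\alpha$ (with $N\eps=1$) and the $\alpha^{-1}$ from the velocity $v_s/\alpha$ in the kernel; the Gaussian integration in $v_s$ and in the surface variable $\sigma$ (resp.\ $\sigma_\alpha$, whose length $L_\alpha$ is bounded uniformly in $\alpha$ since $\Sigma$ is convex and smooth) is what absorbs the growth and shifts the temperature. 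The factor $s$ is the usual one coming from the sum over the $s-1$ atoms already present plus the rigid body.

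Next I would iterate this bound along the $n$ factors in $Q_{1,1+s}(t)$. The nested time integrals $\int_0^t\!\int_0^{t_1}\!\cdots$ contribute $t^s/s!$, the product of combinatorial and $\alpha^{-2}$ factors contributes $\prod_{k=1}^{s} (k/\alpha^2) = s!/\alpha^{2s}$, and the sequence of temperature drops is organized as a geometric-type series: one chooses intermediate inverse temperatures $\beta = \beta_0 > \beta_1 > \cdots > \beta_s = 3\beta/4$ with, say, $\beta_{k-1}-\beta_k = \beta/(4s) \cdot c_k$ for a summable choice, so that $\prod_k (\beta_{k-1}-\beta_k)^{-1/2}$ is controlled by $C^s$ after using $\sum 1/k^2 < \infty$ or a similar trick (this is the standard Cauchy–Kovalevskaya-type loss-of-analyticity bookkeeping). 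Collecting, $s!$ cancels $t^s/s!$, leaving $(C_1 t/\alpha^2)^s$ times $M_{3\beta/4, I}$, which is the claimed estimate. I would also note that the free flow groups ${\bf S}_s$ are isometries on the relevant $L^\infty$ spaces and preserve the Gaussian upper bounds (specular reflection conserves kinetic energy both for atom–atom and, by the conservation laws \eqref{eq: conservation laws}, for atom–rigid body collisions), so they do not affect the estimate.

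The main obstacle — and the place where this paper genuinely differs from \cite{GSRT, BGSR1} — is controlling the operator $D_{s,s+1}$ associated with the rigid body. Two points need care. First, the cross section in $D_{s,s+1}$ is $\bigl(\frac1\alpha v_s - V - \Omega r_\Theta^\perp\bigr)\cdot n_\Theta$, which involves $\Omega$; one must check that integrating the Gaussian $M_{\beta',I}(V,\Omega)$ against this linear-in-$(V,\Omega)$ weight still yields a Gaussian in $(V',\Omega')$ with controlled constants — here the change of variables $(v_s, V, \Omega) \mapsto (v_s', V', \Omega')$ from the collision laws \eqref{eq: collision laws} has Jacobian close to $1$ (it is $1 + O(\alpha^2)$ since the reflection is an involution up to the small mass ratio), and the energy $\frac12(|v_s|^2 + |V|^2 + I\Omega^2)$ is exactly conserved, so the Gaussian weight is genuinely invariant. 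Second, and more subtly, because $\Sigma$ is a general convex body (not a disk) a single atom may undergo several successive collisions with the rigid body; this is precisely the pathology flagged in the introduction and dealt with via the "modified dynamics" and the conditioning argument of Section~\ref{conditioning-geometry}. For the purposes of this continuity estimate, however, one overestimates all such contributions by $|D_{s,s+1}|$, which sums post- and pre-collisional parts with absolute values, so the multiple-collision issue affects only the a.e.\ well-posedness of the flow (already granted) and not the bound itself. I would therefore present the proof as: (i) the one-step Gaussian-to-Gaussian bound with the explicit $\alpha^{-2}$, treating $C_{s,s+1}$ and $D_{s,s+1}$ in parallel and pointing to the Jacobian/energy-conservation facts for $D$; (ii) the iteration with the telescoping temperatures and the $s!$ cancellation; (iii) conclude.
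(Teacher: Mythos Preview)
Your approach is essentially the same as the paper's (Appendix~\ref{subsec : continuity estimates}): transport preserves the Gibbs weight, each collision operator costs a power of $\alpha^{-1}$ times a velocity factor absorbed by sacrificing Gaussian decay, and the ordered time integral supplies the factorial. There is, however, a genuine bookkeeping slip in your temperature scheme. The claim that $\prod_{k}(\beta_{k-1}-\beta_{k})^{-1/2}$ can be made $\leq C^{s}$ by a ``$\sum 1/k^{2}$ trick'' is false: under the constraint $\sum_{k=1}^{s}(\beta_{k-1}-\beta_{k})=\beta/4$, the AM--GM inequality forces this product to be at least $(4s/\beta)^{s/2}$, with equality for the uniform choice. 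So your scheme produces an uncompensated $s^{s/2}$, and the claimed $s!$ cancellation does not close.

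The paper avoids this by \emph{not} extracting a clean $(\beta'-\beta'')^{-1/2}$ factor at each step. Instead it sacrifices a uniform $\beta/(8n)$ per collision and keeps the velocity sum intact, bounding
\[
\sum_{i\leq k}|v_{i}|\,\exp\!\Big(-\frac{\beta}{8n}|V_{k}|^{2}\Big)\ \leq\ \Big(\frac{4nk}{e\beta}\Big)^{1/2}\ \leq\ \frac{2}{\sqrt{e\beta}}\,(s+n)
\]
via Cauchy--Schwarz, so that the combinatorics and the temperature loss merge into a single factor $(s+n)$ per step; then $(s+n)^{n}/n!\leq e^{s+n}$ by Stirling finishes the job. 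Two minor corrections to your write-up: (i) the prefactor in $C_{s,s+1}$ is $(N-s+1)\eps\leq 1$, not $(N-s+1)\eps/\alpha$, so the atom--atom operator contributes only $\alpha^{-1}$ per step --- the $\alpha^{-2}$ in the final bound comes from $D_{s,s+1}$ alone, which dominates; (ii) for fixed $n_{\Theta}$ the scattering $(v,V,\Omega)\mapsto(v',V',\Omega')$ is an exact linear involution, hence has Jacobian $\pm 1$, not $1+O(\alpha^{2})$ --- though as you note, exact energy conservation is what actually matters here.
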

The proof is standard and sketched in  Appendix~\ref{subsec : continuity estimates}.
 Note that this   estimate  is the key to the local wellposedness of the hierarchy (see~\cite{GSRT}) : it implies indeed that
the series expansion~(\ref{iterated-Duhamel 00}) converges (uniformly in $N$)
  on any time such that~$t  \ll \alpha^2 $.

\subsection{Probability of trajectories and the Duhamel series}
\label{appendixprobaduhamel}

We start by recalling how the series \eqref{eq: iterated-Duhamel}
can be interpreted in terms of a branching process.
This plays a key role in the analysis of the series as explained in~\cite{lanford,CIP,GSRT}.
A more detailed presentation will be given in Section \ref{subsec: iterated Duhamel formula and collision trees}.
The operator $Q_{1,1+s}$ defined in \eqref{eq: operator Q} 
can be described by collision trees with a root indexed by  the coordinates at time $t$ of the rigid body  to which we assign the label~0. 
\begin{Def}[Collision trees]
\label{trees-def}
Let $s \geq 1$ be fixed. An (ordered) collision tree  $a \in \cA_s$ is defined by a family $(a_i ) _{1\leq i \leq s}$ with $a_i \in \{0,\dots, i-1\}$.
 \end{Def}

We first describe the adjunction of new particles in the backward dynamics starting at time $t_0=t$
{\color{black} from the rigid body which can be seen as the root of the tree}.
Fix~$s \geq 1$,  a collision tree~$a \in \cA_s$ and~$Y= (X,V,\Theta,\Omega)$, and consider a collection
 of decreasing times, impact parameters and velocities
$$
T_{1,s} = \{ t_1 > \dots > t_s \} \, , 
\quad \cN_{1,s} =  \{ \nu_1 , \dots ,\nu_s \}  \,, 
\quad V_{1,s} = \{ v_1 , \dots , v_s \} \,.
$$ 
Pseudo-trajectories are defined in terms of the  backward BBGKY dynamics as follows :
\begin{itemize}
\item in between the  collision times~$t_i$ and~$t_{i+1}$   the particles follow the~$i+1$-particle backward flow with elastic reflection;
\item  at time~$t_i^+$, if $a_i \neq 0$,   the atom labeled~$i$ is adjoined to atom $a_i$ at position~$x_{a_i} + \eps \nu_{i}$ and 
with velocity~$v_{i}$  provided this does not cause an overlap of particles. 

If $(v_{i} - v_{a_i} (t_i^+)) \cdot \nu_i >0$, velocities at time $t_i^-$ are given by the scattering laws
\begin{equation}
\label{V-def}
\begin{aligned}
v_{a_i}(t^-_i) &= v_{a_i}(t_i^+) - (v_{a_i}(t_i^+)-v_{i}) \cdot \nu_{i} \; \nu_{i} \, ,\\
v_{i} (t^-_i) &= v_{i} + (v_{a_i}(t_i^+)-v_{i}) \cdot \nu_{i} \;  \nu_{i} \, .
\end{aligned}
\end{equation}
\item at time~$t_i^+$, if $a_i = 0$ and  provided this does not cause an overlap of particles,  the atom labeled~$i$    is adjoined to the rigid body at position~$X+{\eps \over \alpha}R_{\Theta(t_i^+)}   r_{\alpha, i}  $ and 
with velocity~$v_{i}$.

If $(\alpha^{-1}  v_{i} - V (t_i^+)-\Omega(t_i^+)  R_{\Theta(t_i^+)}   r_{i}^\perp  ) \cdot R_{\Theta(t_i^+)}   \nu_{i} >0$, velocities at time $t_i^-$ are given by the scattering laws
\begin{equation}
\label{VOmega-def}
\begin{aligned}
&v_{i} (t^-_i)-v_{i} = {2\over A+1} \big(\alpha V(t_i^+)+\alpha  \Omega (t_i^+)   R_{\Theta(t_i^+)}     r_{i} ^\perp  - v_{i}  \big) \cdot R_{\Theta(t_i^+)}    \nu_{i}  \, R_{\Theta(t_i^+)}   \nu_{i}  \, ,\\
 &V(t_i^-) - V(t_i^+)  = -{2\alpha\over  A+1 }  (\alpha V(t_i^+)+\alpha  \Omega (t_i^+)  R_{\Theta(t_i^+)}     r_{i} ^\perp   -  v_{i})  \cdot R_{\Theta(t_i^+)}   \nu_{i} \,  R_{\Theta(t_i^+)}   \nu_{i} \, ,\\
 &\Omega(t_i^-) - \Omega(t_i^+) =-{2\alpha\over (A+1)I}  (\alpha V(t_i^+)+ \alpha \Omega (t_i^+)  R_{\Theta(t_i^+)}     r_{i} ^\perp -   v_{i}) \cdot  R_{\Theta(t_i^+)}   \nu_{i}  \, (  r_{i}^\perp \cdot   \nu_i )    \,. 
\end{aligned}
\end{equation}
\end{itemize}
At each time $\tau \in [0,t]$, we denote  by $Y(a, T_{1,s},\cN_{1,s}, V_{1,s},\tau)$ the position, velocity, orientation and angular velocity of the rigid body and by~$z_i(a, T_{1,s},\cN_{1,s}, V_{1,s},\tau)$ the position and velocity of the atom labeled~$i$ (provided~$\tau< t_i$). The  configuration obtained at the end of the tree, i.e. at time 0, is~$ \big(Y, Z_s\big)(a, T_{1,s}, \cN_{1,s}, V_{1,s},0)$. 
 The term  $Q_{1,1+s} (t) f_{N+1,0}^{(s+1)}$ in the series \eqref{eq: iterated-Duhamel}
 is evaluated by integrating the initial data  $f_{N+1,0}^{(s+1)}$ over the  values 
 $\big(Y, Z_s \big)(a, T_{1,s}, \cN_{1,s}, V_{1,s},0)$ of the pseudo-trajectories
  at time 0.

\medskip

Pseudo-trajectories provide a geometric representation of the iterated Duhamel series~\eqref{eq: iterated-Duhamel}, but they are not physical trajectories of the particle system.
Nevertheless,  the probability on the trajectories of the rigid body can be derived from the Duhamel series,
as we are  going to explain now.
For a given time $T >0$, the sample path of the rigid body  is denoted by~$Y_T := (X(t),V(t),\Theta(t),\Omega(t))_{t \leq T}$ and the corresponding probability by $\mathbb{P}_{f_{N+1,0}}$, where the subscript 
 stands for the initial data of the particle system.
As $Y_T$  has jumps in the velocity and angular momentum, it is   convenient to work 
 in the space $D([0,T])$ of functions that are right-continuous with left-hand limits in ${\mathbb R}^6$. 
 This space is endowed with the Skorohod topology (see \cite{billingsley} page 121).

The following proposition allows us to rephrase the probability of trajectory events in terms of the Duhamel series. \begin{Prop}
\label{prop: identification proba Duhamel}
For any measurable event $\cC$  in the space $D([0,T])$, the probability that the path~$Y_T = \{ Y(t) \}_{t \leq T}$ under the initial distribution~$ f_{N+1,0}$ belongs to $\cC$ is given by
\begin{align*}
\mathbb{P}_{f_{N+1,0}}  \Big(  \{ Y_T \in \cC \} \Big) 
= 
\int dY \sum_{s =0}^{N} 
Q_{1,1+ s} (T) \; \indc_{\{  Y_T \in \cC \}} \;  f_{N+1,0}^{(1 + s)}  \,,
\end{align*}
where the notation $Q_{1,1+s} (T) \; \indc_{\{  Y_T \in \cC \}}$ means that only the pseudo-trajectories such that 
$ Y_T$ belongs to $\cC$ are integrated over the initial data. The other pseudo-trajectories are discarded.
The integral is over  the coordinates $Y$ of the rigid body at time $T$.
\end{Prop}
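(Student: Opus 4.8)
\textbf{Proof plan for Proposition~\ref{prop: identification proba Duhamel}.}
The plan is to reduce the statement to the iterated Duhamel formula~\eqref{eq: iterated-Duhamel} by a monotone-class/$\pi$-$\lambda$ argument, showing first that both sides agree on a generating family of cylinder events and then extending to the full Borel $\sigma$-algebra of $D([0,T])$ for the Skorohod topology. Concretely, it suffices to treat events of the form $\cC = \{Y_T : Y(\tau_1)\in B_1,\dots,Y(\tau_k)\in B_k\}$ for finitely many times $0\le\tau_1<\dots<\tau_k\le T$ and Borel sets $B_j\subset{\mathbb R}^6$, since such cylinders form a $\pi$-system generating the Skorohod Borel $\sigma$-algebra, and both sides of the claimed identity are finite measures in $\cC$ (the right-hand side because $\indc_{\{Y_T\in\cC\}}\le 1$ and $Q_{1,1+s}(T)$ is a positivity-preserving operator bounded by $|Q_{1,1+s}|(T)$, whose total mass is controlled by Proposition~\ref{estimatelemmacontinuity 0}).

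First I would fix such a cylinder event and rewrite $\mathbb{P}_{f_{N+1,0}}(Y_T\in\cC)$ as $\int f_{N+1}(T,Y,Z_N)\,\indc_{\cC}(Y_T)\,dY\,dZ_N$, where $Y_T$ is the genuine mechanical trajectory of the rigid body determined by the initial configuration $(Y,Z_N)$ under the flow~\eqref{hard-spheres1}--\eqref{eq: collision laws}. The point is that the event $\{Y_T\in\cC\}$ depends only on the trajectory of the rigid body (label $0$), so conditioning on the value $(Y,Z_N)$ at time $T$ and running the dynamics \emph{backward} gives exactly the same branching structure as the backward pseudo-dynamics used to define $Q_{1,1+s}$. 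Next I would apply the iterated Duhamel formula: writing $f_{N+1}^{(1)}(T,Y) = \sum_{s=0}^N Q_{1,1+s}(T) f_{N+1,0}^{(1+s)}$, one observes that the derivation of this formula via Green's/Stokes' theorem in Section~\ref{subsec: Iterated Duhamel formula and continuity estimates} is entirely local along trajectories; inserting the indicator $\indc_{\{Y_T\in\cC\}}$, which is constant along each pseudo-trajectory (it only reads off the root coordinates at the sampling times $\tau_j$), commutes with every transport group ${\bf S}_k$ and with the collision operators $C_{k,k+1}, D_{k,k+1}$, because these operators only modify and adjoin atoms while leaving the rigid-body history between collision times unchanged up to the deterministic jumps~\eqref{V-def}--\eqref{VOmega-def}. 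Hence the same expansion holds with $\indc_{\{Y_T\in\cC\}}$ inserted, yielding the stated formula on cylinder events.

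Finally I would invoke the monotone class theorem (or Dynkin's $\pi$-$\lambda$ theorem): the collection of events $\cC$ for which the identity holds is a $\lambda$-system — it contains $D([0,T])$, is stable under proper differences and under increasing limits, the latter by monotone convergence applied separately to the left-hand side and to each of the finitely many terms $Q_{1,1+s}(T)\indc_{\{Y_T\in\cC\}}f_{N+1,0}^{(1+s)}$ (all nonnegative, all dominated using Proposition~\ref{estimatelemmacontinuity 0}) — and it contains the generating $\pi$-system of cylinders, so it is the whole Borel $\sigma$-algebra.

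\textbf{Main obstacle.} The delicate step is the claim that the Duhamel expansion remains valid verbatim once the trajectory constraint $\indc_{\{Y_T\in\cC\}}$ is inserted, i.e. that sampling events of the rigid-body path can be read off equally well from the true flow and from the backward pseudo-trajectories. This requires care because pseudo-trajectories are \emph{not} physical trajectories: a pseudo-trajectory may involve overlaps or recollision geometries that the true dynamics forbids, and conversely. What saves the argument is that the Duhamel/Green identity is an \emph{exact} algebraic identity at finite $N$ (no Boltzmann-Grad limit is taken here), so the pseudo-trajectory representation is merely a bookkeeping device for an exact integral over the true phase space; the indicator depends only on the label-$0$ component, which is transported identically in both descriptions between successive branching times, and the branching times together with the jump rules~\eqref{VOmega-def} are precisely the points where the rigid-body velocity is allowed to change. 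Making this ``the indicator factors through the root history'' statement precise — in particular checking that the reparametrization of collision integrals by arc-length on $\partial\Sigma_\alpha$ does not interfere with it — is the part that needs to be written out carefully; everything else is the standard measure-theoretic extension.
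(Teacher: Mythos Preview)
Your approach is essentially the one taken in the paper: reduce to finite-dimensional cylinder events, use that these generate the Borel $\sigma$-algebra of $D([0,T])$ for the Skorohod topology (the paper cites Billingsley, Theorem 12.5), and identify the weighted Duhamel series on cylinders. The paper makes the cylinder step precise by writing out the intermediate-time weighted expansion
\[
f_{N+1,H_\ell}^{(1)}(\tau_\ell)=\sum_{m_1,\dots,m_\ell} Q_{1,1+m_1}(\tau_\ell-\tau_{\ell-1})\,h_{\ell-1}\,Q_{1+m_1,1+m_1+m_2}(\tau_{\ell-1}-\tau_{\ell-2})\cdots h_1\,Q_{\cdots}(\tau_1)\,f_{N+1,0}^{(1+\sum m_i)}\,,
\]
which is the exact content of your ``the indicator factors through the root history'' claim, and then passes from products $\prod_j h_j(Y(\tau_j))$ to general $h(Y(\tau_1),\dots,Y(\tau_\ell))$ by density.

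There is, however, a small but genuine error in your measure-theoretic extension. The operators $Q_{1,1+s}(T)$ are \emph{not} positivity-preserving: each collision operator $C_{k,k+1}=C^+_{k,k+1}-C^-_{k,k+1}$ (and likewise $D_{k,k+1}$) is signed, so the individual terms $Q_{1,1+s}(T)\,\indc_{\{Y_T\in\cC\}}\,f_{N+1,0}^{(1+s)}$ need not be nonnegative, and monotone convergence does not apply to them. The fix is the one you already half-wrote: use \emph{dominated} convergence instead, with the domination
\[
\big|Q_{1,1+s}(T)\,\indc_{\{Y_T\in\cC\}}\,f_{N+1,0}^{(1+s)}\big|\le |Q_{1,1+s}|(T)\,\|g_0\|_{L^\infty}\,M_{\beta,I,N}^{(1+s)}
\]
controlled by Proposition~\ref{estimatelemmacontinuity 0}. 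With that correction the $\lambda$-system argument goes through, and the proof is essentially the paper's.
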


\begin{proof}
In \cite{BGSR1}, the iterated Duhamel formula was adapted to control the process at different times.
Let $\tau_1 < \dots < \tau_\ell$ be an increasing collection of times and~$H_\ell = \{ h_1,\dots,h_\ell \}$ a collection of $\ell$ smooth functions.
Define the biased distribution  at time $\tau_\ell$ as follows 
\begin{equation}\label{eq: fHk 2}
\begin{aligned}
\mathbb{E}_N  \Big(  h_1 \big(Y  (\tau_1) \big) \dots  h_\ell \big( Y (\tau_\ell) \big) \Big) 
: = 
\int dY \; h_\ell ( Y ) f_{N+1, H_\ell}^{(1)} (\tau_\ell, Y) \,,
\end{aligned}
\end{equation}
where $Y = (X, \Theta, V , \Omega)$, in the integral, stands for the state of the rigid body at time $\tau_\ell$
and the modified density is 
\begin{align}
\label{eq: representation itere + poids}
f_{N+1, H_\ell}^{(1)}  (\tau_\ell,Y) := & \sum_{m_1 + \dots+ m_{\ell} =0}^{N} 
 Q _{1,1+ m_1} (\tau_\ell - \tau_{\ell-1} ) 
\Big( h_{\ell-1} Q _{1+ m_1,1+ m_1+m_2} (\tau_{\ell-1} - \tau_{\ell-2} )   \\
& \qquad \qquad 
\dots h_1 Q _{1+ m_1+\dots + m_{\ell-1} ,1+ m_1+ \dots + m_{\ell}} (\tau_1) \Big) f^{(1 + m_1+ \dots + m_{\ell})}_{N+1,0}  \,.
\nonumber
\end{align}
In other words, the collision tree is   generated backward starting from $Y= Y(\tau_\ell)$ and the iterated Duhamel formula is weighted by the product
$h_1 \big(Y  (\tau_1) \big) \dots  h_\ell \big( Y (\tau_\ell) \big)$ evaluated on the backward pseudo-trajectory associated with the rigid body.  

More generally any function $h$ in $({\mathbb T}^2\times{\mathbb S}^1 \times  \R^3 )^{\otimes \ell}$ can be approximated in terms of products of functions in $({\mathbb T}^2\times {\mathbb S}^1 \times  \R^3 )^{\otimes \ell}$, thus \eqref{eq: fHk 2} leads to 
\begin{align}
\mathbb{E}_N  \Big(  h \big(Y  (\tau_1) , \dots,  Y (\tau_\ell) \big) \Big) 
 = 
\int dY \sum_{m =0}^{N} 
Q _{1,1+ m} (T) \;  h \big(Y  (\tau_1) , \dots,  Y (\tau_\ell) \big) \;  f^{(1 + m)}_{N+1,0}  \,,
\label{eq: fHk 3}
\end{align}
where the Duhamel series are weighted by the rigid body trajectory at times $\tau_1, \dots, \tau_\ell$.

\medskip

For any $0 \leq \tau_1 < \dots < \tau_\ell \leq T$, we denote by~$\pi_{\tau_1, \dots ,\tau_\ell}$  
the projection from $D([0,T])$ to~$({\mathbb T}^2\times {\mathbb S} \times  \R^3 )^{\otimes \ell}$
\begin{equation}
\label{eq: projection}
\pi_{\tau_1, \dots ,\tau_\ell} (Y) = ( Y(\tau_1), \dots, Y(\tau_\ell)) \,.
\end{equation}
The $\sigma$-field of Borel sets for the Skorohod topology can be generated by the sets of the form~$\pi_{\tau_1, \dots ,\tau_\ell}^{-1} H$ with $H$ a subset of $({{\mathbb T}^2\times {\mathbb S} \times  \R^3 })^\ell$ (see Theorem 12.5 in \cite{billingsley}, page 134).
Thus~\eqref{eq: fHk 3} is sufficient to characterize the probability of any measurable set $\cC$. 
This completes the proof of Proposition~\ref{prop: identification proba Duhamel}.
\end{proof}

\subsection{Structure of the paper}

In order to prove that in the Boltzmann-Grad limit, the mechanical motion  of the rigid body can be reduced 
to a stochastic process, we are going to use successive  approximations of the microscopic dynamics
 by idealized models. The first step is to compare 
the microscopic dynamics of the rigid body with a Markov chain by showing that 
for $N$ large (in the Boltzmann-Grad scaling~$N\eps = 1$), the complex interaction between the rigid body and the atoms can be replaced by the interaction with an ideal gas and the deterministic correlations can be neglected. 
This first step boils down to showing that the distribution of the rigid body follows closely a linear Boltzmann equation with an error controlled in $N,\eps, \alpha$: this corresponds to Theorem~\ref{prop-approximate tensorization} whose proof is achieved in Section~\ref{endoftheproofthm1}. The linear regime   still keeps track of some dependency in $\eps$ and $\alpha$ due to the fast rotation of the rigid body and to the large amount of collisions (with small deflections). In Section~\ref{proof theorem}, we   show that this dependency averages out when $\eps$ and~$\alpha$ tend to 0, thus proving Theorem~\ref{thm: convergence density}. 
Finally  Theorem~\ref{thm: convergence process}  is proved in Section \ref{process-section}, and  requires in particular   studying correlations at different times, as well as   checking some tightness conditions.

\bigskip

\noindent
\underbar{\sl Series expansions and pseudo-trajectories}

The initial data \eqref{defdata} is a small fluctuation around the equilibrium Gibbs measure, thus we expect that the atom distribution  will remain close to equilibrium and that in the large $N$ limit the rigid body will behave as if it 
were in contact with an ideal gas.
As a consequence, for large $N$, the distribution of the rigid body should be well approximated by
$$
  f_{N+1}^{(1)} (t,Y) \sim  \bar M_{\beta, I} (Y)  g_\eps(t,Y) \, ,
$$
where $ \bar M_{\beta, I}  $ was introduced in \eqref{eq: Gauss} and $g_\eps$ solves the linear Boltzmann equation
\eqref{eq: linear boltz}  with  initial data~$g_0$.

This approximation is made quantitative in Section  \ref{endoftheproofthm1}.
This is the key step of the proof of Theorem~\ref{prop-approximate tensorization}: one has to control the dynamics of the whole gas and to prove that the atoms act as a stochastic bath on the rigid body (up to a small error). Note that, as usual in the Boltzmann-Grad limit, we are not able to prove directly the tensorization for the joint probability of atoms, and the decoupling of the equation for the rigid body distribution.

\medskip

  We   therefore approximate the BBGKY hierarchy by  another hierarchy,
 the initial data of which is given by
\begin{equation}\label{dataBoltz}
\forall s \geq 1 \, , \quad  f_0^{(s)} (Y,Z_{s-1}) := g_0(Y) \bar M_{\beta, I} (Y)   \prod_{i =1}^{s-1} M_{\beta } (v_i)  \, .
\end{equation}
This hierarchy (referred to in the following as the Boltzmann hierarchy) is obtained by taking formally the~$N\to \infty$, $\eps \to 0$  asymptotics in the collision operators appearing in the BBGKY hierarchy under the Boltzmann-Grad scaling~$N\eps = 1$. It represents the dynamics where the rigid body and the atoms are reduced to points, however its solution still depends  on~$\alpha$ and~$ \eps$, which appear in the scaling of   the angular velocity $O(\alpha/\eps)$  of the rigid body and the velocities of the atoms $O(1/\alpha)$, as well in the collision frequency and in the fact that the collision integral is on~$\partial   \Sigma_\alpha$.
We thus define 
$$
   \begin{aligned}
  \big(  \bar C_{s,s+1} f^{(s+1)}\big) (Y,Z_{s-1}) := \frac 1 \alpha  \sum_{i=1}^s \int_{{\mathbb S} \times {\mathbb R}^2} \Big[  f^{(s+1)}(\dots, x_i, v_i',\dots , x_i, v'_{s}) \big((v'_{s}-v'_i) \cdot \nu_{s}\big)_-  \\
  -f^{(s+1)}(\dots, x_i, v_i,\dots , x_i, v_{s})    \big((v_{s}-v_i) \cdot \nu_s \big)_- \Big]    d\nu_s dv_{s}   \end{aligned}
$$
and   
$$
   \begin{aligned}
\big(  \bar D_{s,s+1} f^{(s+1)}\big) (Y,Z_{s-1})
  :=  \frac 1 \alpha  \int_{[0,L_\alpha]  \times {\mathbb R}^2}  \Big[f^{(s+1)} ( Y', Z_{s-1}, X ,v'_{s})  \Big( \big ( \frac 1 \alpha v'_{s}-V'-
 \Omega'   r_\Theta^\perp
\big ) \cdot   n_\Theta\Big)_-\\
-f^{(s+1)} ( Y, Z_{s-1}, X ,v_{s}) 
 \Big( \big ( \frac 1 \alpha v_{s}-V-
 \Omega    r_\Theta^\perp
\big ) \cdot   n_\Theta\Big)_-\Big] d  \sigma_\alpha    dv_{s}   \, , 
 \end{aligned}
$$
for any $Y=(X,V,\Theta, \Omega)$ and $Z_{s-1}$.

One can check that if the initial data is given by~(\ref{dataBoltz}), then the  solution~$(f_\eps^{(s)})_{s\geq 1}$ to the Boltzmann hierarchy  is given by the  tensor product to the solution of the linear Boltzmann equation~(\ref{eq: linear boltz}), namely
$$
\forall s \geq 1 \, , \quad  f_\eps^{(s)} (t,Y,Z_{s-1}) := g_\eps (t,Y) \bar M_{\beta, I} (Y)   \prod_{i =1}^{s-1} M_{\beta } (v_i)  \, ,
$$ 
where~$g_\eps$ solves~(\ref{eq: linear boltz}).

\medskip

The core of the proof of Theorem~\ref{prop-approximate tensorization} therefore consists in controlling  the difference between both hierarchies, starting from the geometric representation of solutions in terms of  pseudo-trajectories.

 Pseudo-trajectories involving a large number of collisions contribute very little to the sum as can be proved by the pruning argument developed in \cite{BGSR1} (see Section \ref{pruning}).
 On the other hand, we expect most pseudo-trajectories with a moderate number of collisions to involve only collisions between independent particles. A geometric argument similar to \cite{GSRT, BGSR1} (see Section \ref{recollisions})  gives indeed  a suitable estimate of the error, provided that locally the interaction between the rigid body and any fixed atom corresponds to a unique collision.

\bigskip\noindent
\underbar{\sl Control of the scattering}
 
 Compared to \cite{GSRT, BGSR1}, we have here an additional step to control the pathological atom-rigid body interactions leading to a different scattering. 
 
\medskip

 Because atoms are expected to have a typical velocity $\hat v = O(1/\alpha)$ while each point of the  rigid body has a typical velocity $ V+  \hat \Omega   r^\perp  = O(1)$, in most cases the atom will escape before a second collision is possible. 
  However, the   set of parameters leading to   pathological situations  can be controlled typically by a power of $\alpha$ (see  Section~\ref{conditioning-geometry}), which  is not small enough  to be neglected as the other recollisions (which are controlled by a power of~$\eps$). To avoid those pathological situations, a modified, truncated dynamics is introduced in Section~\ref{conditioning-def}, which stops as soon as such a pathological collision occurs. Section \ref{controlpathological} provides the proof that the original dynamics coincides with the truncated dynamics for data chosen outside a small set.

\bigskip\noindent
\underbar{\sl Diffusive scaling}

Following the strategy described in the previous paragraph and performed in Sections~\ref{conditioning} to~\ref{recollisions}, we obtain explicit controls in terms of~$N,\eps, \alpha$ on the convergence of the first marginal to the  linear Boltzmann equation \eqref{eq: linear boltz}.
However this equation still depends on $\eps$  and $\alpha$.   
On the one hand,    we prove in Section~\ref{singpertpb} that the density becomes rotationally invariant as~$\eps \to 0$, and on the other hand  in  
 the limit $\alpha \to 0$, we   show in Section \ref{subsec: Taylor expansion} that expanding 
the density in the collision operators, cancellations occur at   first order between the gain and loss terms.
Thus in the joint limit $\eps, \alpha \to 0$, we  prove that the linear Boltzmann equation~\eqref{eq: linear boltz} remains close (in a weak sense) to the weak solution of 
$$\begin{aligned}
\d_t g +V\cdot \nabla_x g = \left( \frac8{\pi \beta}\right)^{1/2}   \cL g \, ,
\end{aligned}
$$
where the diffusion operator is given by  \eqref{eq: diffusion operator}.
Finally, these estimates are used in Section~\ref{process-section} to prove the convergence 
towards an Ornstein-Uhlenbeck process.

\section{The modified BBGKY hierarchy}
\label{conditioning}

\subsection{Geometry of  the atom-rigid body interaction}
\label{conditioning-geometry}

As the rigid body rotates, even though it is convex, there are   situations where the binary  interaction between an atom and the rigid body leads to many collisions. One can imagine for instance   the extreme case when the rigid body is very long so that it almost separates the plane into two half planes  (with a rotating interface) : then, whatever the motion of the atom is, we expect    infinitely many collisions to occur.

Below, we focus on   collisions between the molecule and a single atom, forgetting the rest of the gas for a moment. Furthermore,  we consider the dynamics in the whole space and do not take into account periodic recollisions. Rescaling time and space by a factor $\alpha/\eps$,
we first use the scaling invariances of the system to reduce to the case when 
\begin{itemize}
\item the rigid body has size $O(1)$ and the diameter of the atom is $\alpha$;
\item the velocity and angular velocity of the rigid body are  $O(1)$;
\item the typical velocity of the atom is $O(1/\alpha)$.
\end{itemize}

Then we shall take advantage of the scale separation between the velocity of the atoms and the velocity of the rigid body, to show that with high probability the atom will escape a security ball around the rigid body before the rigid body has really moved. More precisely, we shall prove that if at the time of   collision, one has the following conditions (where~$\eta>0 $ is chosen small enough, typically~$\eta<1/6$ will do)
\begin{equation}
\label{good-collision1}
 \left| V' - V \right|  \geq \alpha ^{2+\eta }   \,  , \qquad 
 \max \big\{ |V|, |V'|, |\Omega|, |\Omega' | \}   \leq |\log \alpha| \, ,
\end{equation}
and
\begin{equation}
\label{good-collision2}
| v -\alpha V | \geq  \alpha ^{2/3 +\eta }
\quad \text{and} \quad
| v' -\alpha V' | \geq  \alpha ^{2/3 +\eta } \,  ,
\end{equation}
 there cannot be any direct  recollision between the rigid body and the atom.
 Notice that under~(\ref{good-collision1}), both conditions in~(\ref{good-collision2}) can be deduced one from the other thanks to~(\ref{eq: collision laws}) up to a (harmless) multiplicative constant. 
 \begin{Prop}\label{norecollision} Fix~$\eta < 1/6$ and consider a collisional configuration between an atom and the rigid body. Under assumptions~{\rm(\ref{good-collision1}), (\ref{good-collision2})}  on the collisional velocities, the atom cannot recollide with the rigid body.
\end{Prop}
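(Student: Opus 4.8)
The plan is to work in the rescaled picture (rigid body of size $O(1)$, atom of diameter $\alpha$, rigid body velocities $O(1)$, atom velocity $O(1/\alpha)$) and to show directly that the atom, after the collision under consideration, leaves a fixed security ball around the body on a time scale so short that the body has not moved or rotated enough to catch it again. First I would fix the geometry: at the collision time $t=0$, place the contact point and the outward normal $n_\Theta$; the post-collisional relative normal velocity at the contact point is $(\alpha^{-1}v' - V' - \Omega' r_\Theta^\perp)\cdot n_\Theta$, which by~\eqref{good-collision2} and the comment before Proposition~\ref{norecollision} is bounded below in absolute value by a constant times $\alpha^{-1}\cdot\alpha^{2/3+\eta} = \alpha^{-1/3+\eta}$, hence is \emph{large}. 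So the atom separates from the body with normal speed $\gtrsim \alpha^{-1/3+\eta}$.

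\textbf{Key steps.} \emph{Step 1 (escape time of the atom).} Since $v'$ has size $O(\alpha^{-1/3+\eta})$ at least in the normal direction and at most $O(1/\alpha)$ overall, the atom traverses the security ball of radius $R = O(1)$ (taken slightly larger than $r_{max}$, say $R = 2 r_{max}+1$) in a time $\tau_{esc}$ which is at most $O(\alpha)$ (it moves at speed up to $1/\alpha$) but — and this is what matters — the atom's displacement \emph{away from the body along the normal} grows linearly: after time $\tau$ the normal gap is at least $c\,\alpha^{-1/3+\eta}\,\tau$ minus corrections from the body's own motion. \emph{Step 2 (the body barely moves).} In time $\tau$ the center of mass moves by at most $|V|\tau \le |\log\alpha|\,\tau$ and the body rotates by at most $|\Omega|\tau \le |\log\alpha|\,\tau$; so the contact region of $\partial\Sigma_\alpha$ sweeps a region of size $O(|\log\alpha|\,\tau)$. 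For $\tau$ of order $\alpha$ this is $O(\alpha|\log\alpha|)$, which is $o(1)$ and in fact much smaller than the normal gap opened by the atom as soon as $\alpha^{-1/3+\eta}\tau \gg |\log\alpha|\,\tau$, i.e. always (for $\alpha$ small). \emph{Step 3 (convexity/curvature).} Use strict convexity ($\kappa \ge \kappa_{min}>0$) to convert "the atom is on the outward side of the tangent line at the contact point, at normal distance $\gtrsim$ (something)" into "the atom is outside $\Sigma_\alpha$": the body curves \emph{away} from its tangent plane, so the gap to $\partial\Sigma_\alpha$ is at least the normal gap minus a quadratic-in-tangential-displacement term controlled by $\kappa_{min}^{-1}$. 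Combining Steps 1–3: at every positive time up to the escape time, the atom's distance to $\Sigma_\alpha(t)$ stays positive, so no recollision occurs; and after the escape time the atom is outside the ball of radius $R > r_{max}$ around $X(t)$, which grows at most like $|\log\alpha|\,t$, so it cannot come back before the body has moved an $O(1)$ distance — but the atom is by then $\Omega(1)$ far and moving, and the relevant "direct recollision" window (before any other collision resets things) is closed. One makes this quantitative by comparing $R/\|v'\|_\infty \sim \alpha$ (time to clear the ball) with the time $\sim 1/|\log\alpha|$ for the body to move its own size.

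\textbf{Main obstacle.} The delicate point is Step 3 together with the bookkeeping of the tangential drift: the atom does not move purely radially, so one must show that the tangential component of its motion (which can be as large as $O(1/\alpha)$) does not carry it "around" the convex body and back into contact on the far side before it exits the security ball. This is where strict convexity and the \emph{lower} bound $|v'-\alpha V'|\ge \alpha^{2/3+\eta}$ on the full relative velocity (not just its normal part) are both needed: the lower bound guarantees the atom actually has macroscopic-in-$\alpha^{-1}$ speed to clear the ball, while convexity guarantees that clearing the ball is the same as escaping $\Sigma_\alpha$ forever (on the direct-recollision time scale). I would also need to handle the borderline case where $v'\cdot n_\Theta$ is small but $|v'-\alpha V'|$ is not — there the atom skims tangentially; the curvature bound $\kappa_{min}$ ensures that a tangentially-skimming atom still pulls away from the (strictly curving) boundary quadratically fast, so a short computation with the arc-length parametrization $\sigma\mapsto r(\sigma)$ and~\eqref{defkappa} closes it. Condition~\eqref{good-collision1}, in particular $|V'-V|\ge\alpha^{2+\eta}$ and the logarithmic bounds on velocities, is used only to make these estimates uniform and to ensure the scattering is "genuine" (nonzero deflection) so that the dichotomy incoming/outgoing is meaningful.
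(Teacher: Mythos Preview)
There is a genuine gap in your argument, and it stems from a misreading of what conditions~(\ref{good-collision1}) and~(\ref{good-collision2}) actually control.

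You claim that the post-collisional normal relative velocity $(\alpha^{-1}v' - V' - \Omega' r_\Theta^\perp)\cdot n_\Theta$ is bounded below by a constant times $\alpha^{-1/3+\eta}$, ``hence is large''. This is false. Condition~(\ref{good-collision2}) bounds only the \emph{full} relative speed $|v'/\alpha - V'| \geq \alpha^{-1/3+\eta}$, not its normal component. The normal component is obtained instead from~(\ref{good-collision1}) via the scattering law~(\ref{eq: collision laws}), which gives $|V'-V| = \frac{2\alpha^2}{A+1}\,(\alpha^{-1}v' - V' - \Omega' r_\Theta^\perp)\cdot n_\Theta$; since $|V'-V|\geq \alpha^{2+\eta}$ and $A=O(\alpha^2)$, one gets only $(\alpha^{-1}v' - V' - \Omega' r_\Theta^\perp)\cdot n_\Theta \gtrsim \alpha^{\eta}$, which is \emph{small}. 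So the generic situation is precisely the ``skimming'' case you relegate to a borderline remark: the atom moves fast tangentially but pulls away in the normal direction only at the vanishing rate $\alpha^\eta$.

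This changes the balance completely and is where the constraint $\eta<1/6$ actually enters, which your sketch never explains. The paper's proof works in the frame of the center of mass, encloses the body by the osculating parabola at the contact point, and writes the recollision condition at time $\delta\leq \delta_{max}=O(\alpha^{1/3-\eta})$ as an equation whose normal component reads, after completing the square in the tangential parameter $s$,
\[
\delta\big(\alpha^{-1}v' - V' - \Omega' r_\Theta^\perp\big)\cdot n_\Theta \;+\; \tfrac{1}{2}\kappa_{min}\Big(s-\tfrac{\delta\Omega'}{\kappa_{min}}\Big)^2 \;-\; \tfrac{(\delta\Omega')^2}{2\kappa_{min}} \;+\; w\cdot n_\Theta \;>\; 0\,.
\]
The rotation thus contributes a \emph{quadratic} correction $(\delta\Omega')^2 \leq \delta_{max}^2|\log\alpha|^2 = O(\alpha^{2/3-2\eta}|\log\alpha|^2)$ in the normal direction, which must be beaten by the normal separation $\delta\cdot\alpha^\eta$. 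This forces $\alpha^\eta \gg \alpha^{1/3-\eta}|\log\alpha|^2$, i.e.\ $\eta<1/6$. Your Steps~1--2, comparing a (wrongly large) normal speed to the linear body displacement $|\log\alpha|\,\tau$, miss both the correct order of the normal speed and the fact that the dangerous rotational effect is quadratic in $\delta\Omega'$ after the convexity is used. Condition~(\ref{good-collision1}) is therefore not merely cosmetic: it is what supplies the only available lower bound on the normal escape speed.
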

 
 \begin{proof}
From the scattering law (\ref{eq: collision laws}), we   know that
\begin{equation}
\label{eq: scattering big}
\left| V' - V \right| = \frac{2\alpha^2}{A+1} \Big|( {v\over \alpha } - V-\Omega r_\Theta^\perp  ) \cdot n_\Theta\Big| =\frac{2\alpha^2}{A+1} ( {v'\over \alpha } - V'-\Omega' r_\Theta^\perp  ) \cdot n_\Theta \, ,
\end{equation}
  where the last term is nonnegative as the velocities are outgoing.
  Under     assumption~(\ref{good-collision1}) and recalling that~$A\sim \alpha^2$, we therefore have that the  normal relative velocity at the contact point  is bounded from below by
$O( \alpha^{\eta})$:
\begin{equation}
\label{velocitycontactpoint}
( {v'\over \alpha } - V'-\Omega' r_\Theta^\perp  ) \cdot n_\Theta  \geq C \alpha^{\eta}\,.
\end{equation}
 As this velocity is relatively  small, in order to prove that 
the atom   escapes without any recollision, we will use the two key geometrical properties 
(\ref{defrmax}), (\ref{defkappa}) on the rigid body:
\begin{itemize}
\item it  has finite size, it is included in a ball of radius $r_{max}$;
\item it is strictly convex, with a curvature  uniformly bounded from below by~$\kappa_{min}$.
\end{itemize}
Note that we   only deal with kinematic conditions, so that we can stay in the reference frame of the rigid body, and split the dynamics into two components : the rotation of the rigid body and the translation of the atom.

 - If there is a recollision, it should occur before time 
\begin{equation}
\label{tmax}
\delta _{max}: = {2 r_{max} \over  \min |{v'\over \alpha} - V'|}
\end{equation}
 at which  the atom escapes from the range of action of the rigid body. From Assumption~(\ref{good-collision2}), we deduce that
 $$ \delta _{max} \leq C \alpha ^{1/3 - \eta}\,.$$
 In particular, on this time scale, the angle of rotation of the rigid body is very small thanks to~(\ref{good-collision1})
\begin{equation}\label{smallanglerotation}
 \delta_{max} |\Omega'| \leq  C \alpha ^{1/3 - \eta}|\log \alpha| \,.
 \end{equation}

 \begin{figure} [h] 
\centering
\includegraphics[width=10cm]{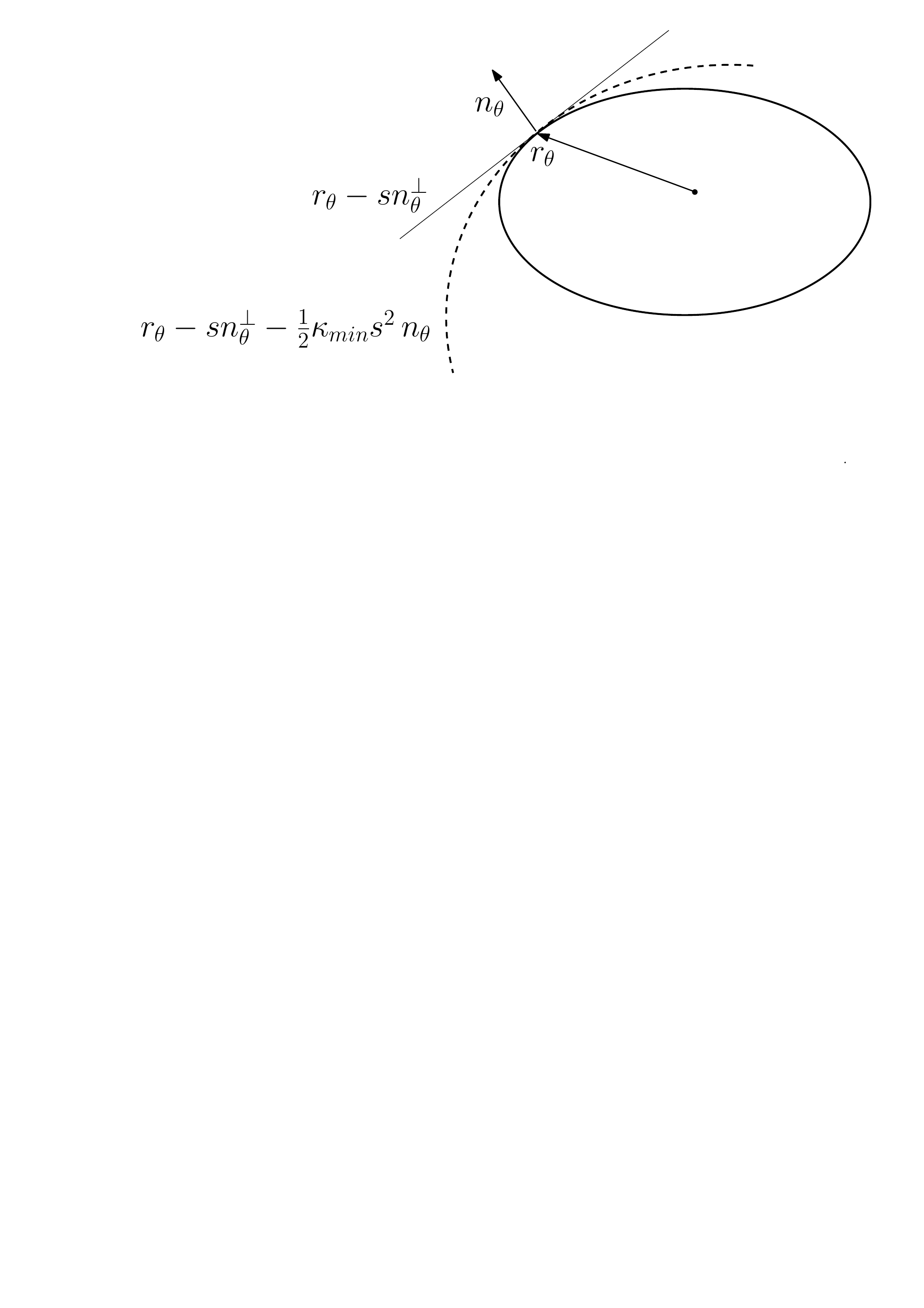}
\caption{The collision between the rigid body and a particle occurs at $r_\Theta$ and the dotted line is the tangent in the direction $-n_\Theta^\perp$ at $r_\Theta$. The parabola is represented in dashed line.}
\label{fig: rigid body 2}
\end{figure}

Let us look at the motion in the reference frame associated with  the center of mass $G$ of the rigid body.

 -  Because of the strict convexity of the rigid body, it is in the subset of the plane delimited by the parabola parametrized by $s\in \R$ (see Figure \ref{fig: rigid body 2})
 $$
   r_\Theta - s  n_\Theta^\perp -\frac12 \kappa_{min}  s^2 n_\Theta\,,
 $$
 where we have chosen the origin at the center of mass  of the rigid body, and we denote by~$r_\Theta$ the contact point at first collision and by $(-n_\Theta^\perp, n_\Theta)$ the tangent and the normal at $R_\Theta\Sigma$ at this point.
 Note that, because the rigid body is contained in a ball of radius $r_{max}$, we are only interested in the portion of the curve with $s \leq C r_{max}$.
 After a small time~$\delta$, the rigid body has rotated by a small  angle $\delta \Omega'$ around the origin, so this curve is parametrized by 
 $$ 
  (r_\Theta - s  n_\Theta^\perp  -\frac12 \kappa_{min} s^2 n_\Theta) + \delta \Omega' (r_\Theta - s   n_\Theta^\perp -\frac12 \kappa_{min} s^2 n_\Theta) ^\perp + O( (\delta \Omega')^2) \,.
  $$ 
 
 In order for the atom to recollide  with the rigid body, it has first to intersect the parabola at some time $\delta \leq \delta_{max}$, which leads to the following equation
$$ 
\delta ( {v'\over \alpha } - V')  -\delta \Omega' r_\Theta^\perp + s n_\Theta^\perp +\frac12 \kappa_{min} s^2 n_\Theta  - s \delta \Omega' n_\Theta + \frac12 \delta \Omega' \kappa_{min} s^2 n_\Theta^\perp + w = O((\delta \Omega')^2)
$$
 denoting by $w$ the relative position of any given   point of the atom with respect to the contact point at the time of first collision. Note that
  $$
  w\cdot n_\Theta \geq 0\,.
  $$
 
 Taking the scalar product by $n_\Theta$, we get
 $$ 
 \delta ( {v'\over \alpha } - V' -\Omega' r_\Theta^\perp) \cdot n_\Theta +\frac12 \kappa_{min} s^2 - s \delta \Omega' + w \cdot n_\Theta = O( (\delta \Omega')^2)\,.$$
 The canonical form of the polynomial in the left hand side is
 $$\begin{aligned}
\frac12  \kappa_{min}  ( s -  {\delta \Omega'\over \kappa_{min}})^2 &+ \delta( {v'\over \alpha } - V' -\Omega' r_\Theta^\perp) \cdot n_\Theta  - \left(  {\delta \Omega'\over \kappa_{min}} \right) ^2 \\
 &  \geq \delta\Big(( {v'\over \alpha } - V' -\Omega' r_\Theta^\perp) \cdot n_\Theta  - \delta \left(  { \Omega'\over 2\kappa_{min}} \right) ^2\Big) \\
 &  \geq C \delta  (\alpha^{\eta} + O (\delta (\Omega')^2)),
   \end{aligned}
   $$
where on the last line, the first lower bound comes from~(\ref{velocitycontactpoint}).   

Recall that by~(\ref{good-collision2}) and~(\ref{smallanglerotation}), 
$$\delta (\Omega')^2 \leq C \alpha^{1/3-\eta } |\log \alpha|^2 \,.$$
We thus conclude, since~$\eta<1/6$, that
$$\delta ( {v'\over \alpha } - V' -\Omega' r_\Theta^\perp) \cdot n_\Theta +\frac12 \kappa_{min} s^2 - s \delta \Omega' + w \cdot n_\Theta + O( (\delta \Omega')^2)>0\,,$$
which implies that no recollision can occur.
\end{proof}

\subsection{Modified dynamics}
\label{conditioning-def}

Initial data such that the microscopic dynamics do not satisfy  conditions 
\eqref{good-collision1}, \eqref{good-collision2} at some time in a given time interval $[0,T]$ will lead to pathological trajectories which cannot be easily controlled in 
terms of the Duhamel series \eqref{eq: iterated-Duhamel}. 
In this section, we   prove that  such initial data  contribute very little to the average, and therefore that  the dynamics can be modified on these bad configurations without changing the law of large numbers.

\medskip

Fix $T$ a given time. The set $\cA^{\eps,\alpha} $ of initial data such that the rigid body encounters a pathological collision during  $[0,T]$, i.e. a collision for which \eqref{good-collision1} or \eqref{good-collision2} is not satisfied,
is  included in   the union of the  following two sets, the first one being defined only in terms of the trajectory of the rigid body, and the second one involving additionally  one atom of small relative velocity~:
$$
\begin{aligned}
\cA_1^{\eps,\alpha} &:= \Big\{ (Y,Z_N) \; \Big| \;  \exists s  \leq T, 
\quad  0< \left| V(s^-) - V(s^+) \right|  <  \alpha ^{2+\eta }\\
& \qquad \qquad \qquad \qquad \qquad \text{or} \quad
 |\Omega (s)|    \geq |\log \alpha| \quad \text{or} \quad \  |V (s)|    \geq |\log \alpha| 
\Big\}\,,\\
\cA_2 ^{\eps,\alpha}&:= \Big\{ (Y,Z_N) \; \Big| \;  \exists s \leq T, i \leq N, \quad 
d\big(  x_i (s), X (s)+ \frac \eps\alpha R_\Theta    \Sigma\big)  = \frac\eps{2} 
\quad \text{and}  \\
& \qquad \qquad \qquad \text{either} \quad 
 |v_i(s^+)  - \alpha V(s^+) | \leq \alpha^{2/3 + \eta}
 \quad \text{or} \quad 
 |v_i(s^-)  - \alpha V(s^-) | \leq \alpha^{2/3 + \eta}
\Big\} \,.
\end{aligned}
$$
 
We shall prove that both sets have vanishing probability under the invariant measure when~$\alpha$ tends to 0, uniformly in $\eps\ll \alpha$.   
In the following,  $\mathbb{E}_{M_{\beta,I, N}}, \mathbb{P}_{M_{\beta,I, N}}$ stand for  the expectation and the probability of the microscopic dynamics on the time interval~$[0,T]$ starting from $M_{\beta,I, N}$. In the following we shall use indifferently the notation~$\mathbb{E}_{M_{\beta,I, N}}, \mathbb{P}_{M_{\beta,I, N}}$ when applied to the trajectory or to the corresponding initial data.
\begin{Prop}
\label{lem: small set}
Assume that  the   scaling relations $N \eps =1$ and $\alpha |\log \eps|\geq 1$ hold.
Then, for any given $T>0$, we have
$$
\lim_{ \alpha \to 0 }  \mathbb{P}_{M_{\beta,I, N}} (\cA^{\eps,\alpha}) = 0\, .
$$
\end{Prop}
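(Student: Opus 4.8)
The plan is to bound $\mathbb{P}_{M_{\beta,I,N}}(\cA_1^{\eps,\alpha})$ and $\mathbb{P}_{M_{\beta,I,N}}(\cA_2^{\eps,\alpha})$ separately, using the stationarity of the Gibbs measure $M_{\beta,I,N}$ together with the fact that both events are defined in terms of \emph{collisions} occurring on $[0,T]$, so that each can be estimated by a flux-type integral over the collision boundary (a Cauchy--Crofton / boundary-measure argument), weighted by the invariant measure. Controlling the number of collisions up to time $T$ via the continuity estimates of Proposition~\ref{estimatelemmacontinuity 0} (or rather the stationarity plus a trace estimate) reduces everything to computing the $M_{\beta,I,N}$-measure of a single pathological collisional configuration.

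For $\cA_1^{\eps,\alpha}$, I would first note that the event ``$|\Omega(s)|\ge|\log\alpha|$ or $|V(s)|\ge|\log\alpha|$ for some $s\le T$'' has small probability: since energy is conserved by the dynamics and $M_{\beta,I,N}$ is the corresponding Gibbs measure, the supremum in time of $|V|$ and $|\Omega|$ is controlled, and a Gaussian tail bound gives a probability $\lesssim e^{-c|\log\alpha|^2}$, which is much smaller than any power of $\alpha$. For the remaining event $0<|V(s^-)-V(s^+)|<\alpha^{2+\eta}$, I would use the scattering relation~\eqref{eq: scattering big}: a jump of size less than $\alpha^{2+\eta}$ forces the normal relative velocity $((v/\alpha)-V-\Omega r_\Theta^\perp)\cdot n_\Theta$ to lie in an interval of length $O(\alpha^\eta)$ around $0$. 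Parametrizing collisions by the arc-length $\sigma_\alpha$, the velocity $v$ and time, the expected number of such collisions in $[0,T]$ is bounded by $T$ times $\frac\eps\alpha$ times an integral of $M_\beta(v)$ over the slab $\{|((v/\alpha)-V-\Omega r_\Theta^\perp)\cdot n_\Theta|\le C\alpha^\eta\}$; after the change of variables $v=\alpha\hat v$ the Maxwellian mass of this slab is $O(\alpha^\eta)$, and collecting the $\frac\eps\alpha$ flux prefactor against the $N=1/\eps$ choices of atom this yields a bound $O(\alpha^{\eta}|\log\eps|)$ or similar, which vanishes under $\alpha|\log\eps|\ge1$ only if one is a bit careful — here one uses the cutoff $|V|,|\Omega|\le|\log\alpha|$ already established to make the velocity integral finite and to trade the logarithms correctly.

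For $\cA_2^{\eps,\alpha}$, the key point is again a boundary/flux computation: a collision between atom $i$ and the rigid body with $|v_i-\alpha V|\le\alpha^{2/3+\eta}$ (on either side; by the remark after~\eqref{good-collision2} the two are equivalent up to constants under~\eqref{good-collision1}, and one handles the complementary case via the $\cA_1$ estimate) occurs only if, at the collision instant, the atom's velocity lies in a ball of radius $O(\alpha^{2/3+\eta})$ around $\alpha V$. The Maxwellian mass $\int_{|v-\alpha V|\le\alpha^{2/3+\eta}} M_\beta(v)\,dv$ of such a ball is $O(\alpha^{4/3+2\eta})$; multiplying by the collision flux $N\frac\eps\alpha\,d\sigma_\alpha$ (with $N\eps=1$, so this contributes an $\alpha^{-1}$), by $|v_i/\alpha-V-\dots|\cdot n_\Theta$ which is itself $O(\alpha^{-1/3+\eta})$ on this set, by the measure $L_\alpha=O(1)$ of the body's boundary, and by $T$, gives an overall bound $O(\alpha^{4/3+2\eta-1-1/3+\eta})=O(\alpha^{3\eta})$, which tends to $0$. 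To make the reduction from ``exists $s\le T$'' to a single flux integral rigorous I would use the standard trace inequality for the transport operator against the stationary measure, i.e. bound the $M_{\beta,I,N}$-expected number of rigid-body/atom collisions in $[0,T]$ lying in the bad velocity set by the corresponding phase-space boundary integral; this is exactly the computation that underlies the continuity estimate cited as Proposition~\ref{estimatelemmacontinuity 0}.

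The main obstacle I anticipate is \emph{not} any single Gaussian estimate but rather the bookkeeping of the flux factors and the interplay of the various small/large parameters: one must verify that the powers of $\alpha$ gained from the Maxwellian slab/ball genuinely beat the $\alpha^{-1}$ (and possible $|\log\eps|$ or $|\log\alpha|$) losses coming from the collision frequency and from summing over all $N=1/\eps$ atoms, uniformly in the regime $\eps\ll\alpha$ and $\alpha|\log\eps|\ge1$. A secondary subtlety is that the events in $\cA_1,\cA_2$ involve quantities just \emph{before} and just \emph{after} a collision, so one should express the bad set in terms of pre-collisional data (using the involutive change of variables~\eqref{eq: collision laws}, whose Jacobian is harmless up to constants) before integrating against $M_{\beta,I,N}$; the stationarity of $M_{\beta,I,N}$ and the fact that it factorizes (up to the exclusion factor $\mathcal Z_N^{-1}=O(1)^{-1}$) make this step routine once set up correctly. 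I would therefore organize the proof as: (i) reduce to counting bad collisions via stationarity and a trace estimate; (ii) a Gaussian tail bound for the $|V|,|\Omega|\le|\log\alpha|$ truncation; (iii) the slab estimate for the small-jump part of $\cA_1$; (iv) the ball estimate for $\cA_2$; (v) collect powers and conclude.
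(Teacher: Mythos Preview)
Your plan is workable in outline, but it differs from the paper's proof in two places worth flagging, and the arithmetic for $\cA_1$ needs a small correction.

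For $\cA_1^{\eps,\alpha}$ you and the paper share the same core: the Gaussian tail handles $\{|V|\vee|\Omega|\ge|\log\alpha|\}$, and the small-jump condition $|V'-V|<\alpha^{2+\eta}$ forces $|b_\alpha|=|(v-\alpha V-\alpha\Omega r_\Theta^\perp)\cdot n_\Theta|\lesssim\alpha^{1+\eta}$. Since the cross-section itself carries the factor $|b_\alpha|$, the relevant integral is $\int_0^{C\alpha^{1+\eta}}u\,du\sim\alpha^{2+2\eta}$, and after multiplying by the $N\eps/\alpha^2=1/\alpha^2$ prefactor one gets a rate $O(\alpha^{2\eta})$, not $O(\alpha^\eta|\log\eps|)$ --- no logarithm of $\eps$ appears. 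The implementation differs: instead of a global trace/flux identity on $[0,T]$, the paper slices time into windows of length $\alpha^5$, uses stationarity to reduce to $[0,\alpha^5]$, then invokes Proposition~\ref{prop: identification proba Duhamel} to write the probability as a Duhamel series for the \emph{first} marginal. On that short window the continuity bounds kill all terms with $m\geq2$ branchings (each costs $(C\alpha^5/\alpha^2)^m$), leaving only the $m=1$ term, which is exactly your slab integral.

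For $\cA_2^{\eps,\alpha}$ the paper takes a genuinely different route, and this is where your proposal is most exposed. First, $\cA_2$ is \emph{not} a functional of the rigid-body path $Y_T$ alone (it involves the atom velocity $v_i$), so Proposition~\ref{prop: identification proba Duhamel} does not apply to it and the Duhamel argument cannot be recycled. Second, your flux estimate bounds the \emph{expected number} of bad collisions in $[0,T]$; for Markov's inequality to turn this into a probability bound you need that expectation to be finite and equal to $T$ times the boundary flux. This is a Santal\'o-type identity for billiard flows whose validity presupposes that the dynamics has no accumulation of collisions --- precisely the pathology (an atom chattering against the rotating body) that the proposition is meant to control. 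The paper sidesteps this circularity entirely: working on $\cA_2\setminus\cA_1$, it notes (via Proposition~\ref{norecollision}) that the \emph{first} collision in any chattering sequence is already pathological, and then tracks how the offending atom approached the $(3\eps/\alpha)$-neighbourhood of the body before that first bad collision. Three cases arise: (i) the atom was already close at $t=0$ (probability $O(\eps/\alpha^2)$); (ii) the atom crossed the annulus alone with small relative speed, recast as a time-integral event $\cB_1=\{\exists i:\int_0^T\alpha^{-1}|v_i-\alpha V|\,\indc_{\text{close, slow}}\,ds\geq\eps/\alpha\}$; (iii) a second atom was simultaneously close, giving an analogous event $\cB_2$. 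Both $\cB_1,\cB_2$ are bounded by Markov's inequality plus Fubini and the stationarity of $M_{\beta,I,N}$, yielding $\mathbb P(\cB_1)\lesssim T\alpha^{3\eta}$ and $\mathbb P(\cB_2)\lesssim T\eps/\alpha^4$. No collision-counting is used anywhere.

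In short: your approach is more direct and recovers the same $O(\alpha^{3\eta})$ order, but it leans on a flux identity that you would have to justify independently of the proposition (e.g.\ from the a.e.\ well-posedness asserted in Section~\ref{microscopicmodel}); the paper's trajectory argument for $\cA_2$ is longer but entirely self-contained, trading the collision count for bounded time-integrals of indicator functions.
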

The proof of this  result is postponed to Section~\ref{controlpathological}.
We stress the fact that the pathological trajectories are not   estimated in terms of the Duhamel series but 
directly at the level of the microscopic dynamics.

%

\bigskip
\noindent
\underbar{\sl  A non conservative dynamics}

To avoid multiple collisions of an atom with the rigid body, we  kill trajectories when one of the conditions (\ref{good-collision1}), (\ref{good-collision2}) is violated.
More precisely, we define  $\tilde f_{N+1}$ as the  solution  to the Liouville equation
$$
\partial_t  \tilde f_{N+1} + V\cdot \nabla_X \tilde  f_{N+1} +\frac1 \alpha \sum_{i=1}^N v_i \cdot \nabla_{x_i}  \tilde f_{N+1} +\frac \alpha\eps \Omega  \,  \partial_{\Theta}  \tilde f_{N+1} = 0   \, ,
$$
with the following modification of the boundary conditions
\begin{equation}
\label{eq: killed bc} 
\tilde f_{N+1} (t,Y', Z_N')  = \tilde f_{N+1} (t,Y, Z_N) 
 \indc_{ \big\{ (v_i, V,\Omega) \text{  satisfy  \eqref{good-collision1} and \eqref{good-collision2}} \big\}}  
\end{equation}
where  $(Y', Z_N')$ is the post-collisional configuration  defined by (\ref{eq: collision laws}).
Note  that $\tilde f_{N+1}$ coincides with $f_{N+1}$ on all characteristics which do not involve a pathological collision between the rigid body and an atom.
 The marginals of~$\tilde f_{N+1} $ are
$$
\tilde   f_{N+1}^{(s)} (t,Y,Z_{s-1}):= \int  \tilde  f_{N+1} (t,Y, Z_N) \, dz_{s} \dots dz_N\, ,$$
and for all $i \in \{1,\dots, s-1\}$,  there holds    at the boundary
\begin{equation}
\label{eq: killed bcs}
\tilde f_{N+1} ^{(s)} (t,Y', Z_s')  = \tilde f_{N+1} ^{(s)} (t,Y, Z_s) 
 \indc_{ \big\{ (v_i, V,\Omega) \text{  satisfy  \eqref{good-collision1} and \eqref{good-collision2}} \big\}}  \,,
\end{equation}
and  in particular
$$
\tilde f_{N+1} ^{(s)} (t,Y', Z_s')  
\indc_{ \big\{ (v_i, V,\Omega) \text{ do not satisfy  \eqref{good-collision1} or  \eqref{good-collision2}} \big\} }  \equiv 0\,.$$
From the maximum principle, we furthemore obtain that
\begin{equation} \label{estimatemaxprinciple}
 0 \leq \tilde f_{N+1} \leq f_{N+1} \leq   \| g_0\|_\infty M_{\beta, I,N} \,.
 \end{equation}

\bigskip
\noindent
\underbar{\sl  Modified  hierarchy}

The modified BBGKY hierarchy can be written for~$t \leq T$
\begin{equation}
\label{eq: modified hierarchy}
\begin{aligned}
\tilde f_{N+1}^{(s)}  (t) 
=
\sum_{n=0}^{N+1-s}    \int_0^t \int_0^{t_{s}}\dots  \int_0^{t_{s+n-2}}  {\bf S}^\dagger_s(t-t_{s}) ( C_{s,s+1} +  D^\dagger_{s,s+1}) {\bf S}^\dagger_{s+1}(t_{s}-t_{s+1}) \dots  \\
\dots  {\bf S}^\dagger_{s+n}(t_{s+n-1})  f_{N+1}^{(s+n)}(0) \: dt_{s+n-1} \dots dt_{s} \, ,
\end{aligned}
\end{equation}
 where 
 \begin{itemize}
 \item ${\bf S}^\dagger_s$ denotes the  semigroup associated with~$\displaystyle V \cdot \nabla_X + \frac1\alpha\sum_{i=1}^{s-1} v_i \cdot \nabla_{x_i} + \frac \alpha \eps \Omega \, \partial_\Theta$ in $\cD_\eps^{s}$ with partial specular reflection (\ref{eq: killed bcs});	
\item   and the truncated collision operator $D_{s,s+1}^\dagger
= D_{s,s+1}^{+, \dagger} - D_{s,s+1}^-$ is obtained by modifying the gain operator  \eqref{eq: operator D}  so that
\begin{equation}
\label{eq: operator D truncated}
\begin{aligned}
\big(  D_{s,s+1}^{+, \dagger}   \tilde f_{N+1}^{(s+1)}\big) (Y,Z_{s-1})    :=   (N-s+1)  \frac \eps \alpha  
\int_{[0,L_\alpha] \times {\mathbb R}^2} 
 \indc_{ \big\{ (v'_s, V',\Omega') \text{ \ satisfy  \eqref{good-collision1}, \eqref{good-collision2}} \big\} }\\
{}\times \tilde f_{N+1}^{(s+1)} ( Y', Z_{s-1}, X   + \frac \eps \alpha \,     r_{\alpha,\Theta},v'_{s}) \Big( \big (  \frac 1 \alpha v'_{s}-V'-
 \Omega'  r_\Theta^\perp  \big ) \cdot      n_\Theta\Big)_-  \!  d  \sigma_\alpha dv_{s}\, .
\end{aligned}
\end{equation}
\end{itemize} Notice that
\begin{equation}
\label{estimateonSsdagger}
\forall p \in [1,\infty] \, , \quad \forall 1\leq s \leq N+1\, , \quad
\|{\bf S}^\dagger_s f_s \|_{L^p} \leq \|f_s \|_{L^p}\, .
\end{equation} 

\bigskip

\noindent
\underbar{\sl  Error estimates}
 
 Both dynamics ${\bf S}^\dagger_{N+1}$ and ${\bf S}_{N+1}$ coincide on $[0,T]$ for initial data which are supported on~$(\cA^{\eps, \alpha}) ^c$ as no pathological collision occurs.  Define
 $$ r_{N+1,0} := f_{N+1, 0}  \indc _{\cA^{\eps, \alpha} }\,.$$
Then, for any $t\in [0,T]$, thanks to~(\ref{estimateonSsdagger}),
$$ \begin{aligned}
\| f_{N+1}(t)   - \tilde f_{N+1} (t) \|_{L^1(\cD_\eps^{N +1})} &= \| {\bf S}^\dagger_{N+1} (t)   f_{N+1, 0}  -{\bf S}_{N+1} (t)   f_{N+1, 0} \|_{L^1(\cD_\eps^{N +1 })}\\
& \leq\big \| \big({\bf S}^\dagger_{N+1} (t)    -{\bf S}_{N+1} (t) \big)   f_{N+1, 0}(1- \indc _{\cA^{\eps, \alpha}}) \big \|_{L^1(\cD_\eps^{N +1 })}\\
& \quad + \| {\bf S}^\dagger_{N+1} (t)     r_{N+1, 0} \|_{L^1(\cD_\eps^{N +1})}+ \| {\bf S}_{N+1} (t)   r_{N+1, 0} \|_{L^1(\cD_\eps^{N +1})}\\
& \leq 2 \|  r_{N+1,0}\|_{L^1(\cD_\eps^{N +1})}\,.
\end{aligned}
$$
From  Proposition \ref{lem: small set}, we therefore deduce the following corollary.
\begin{cor}
\label{corconditioneddata}
Assume that the   scaling relations $N \eps =1$ and $\alpha |\log \eps|\geq 1$ hold.
Then, for any given $T>0$ we have
$$
\sup_{t \in [0,T]}  \, \lim_{ \alpha \to 0 } \| f_{N+1}(t)   - \tilde f_{N+1} (t) \|_{L^1(\cD_\eps^{N +1})}= 0\, .
$$
\end{cor}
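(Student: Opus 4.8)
The plan is to treat this corollary as a soft functional-analytic reduction on top of Proposition~\ref{lem: small set}: the only thing to do is combine the $L^1$ stability of the two flows with the fact that the ``bad'' set of data carries asymptotically vanishing Gibbs mass. First I would record once more the structure of the estimate already visible in the computation preceding the statement. Both the full flow ${\bf S}_{N+1}$ and the truncated flow ${\bf S}^\dagger_{N+1}$ are $L^1$-contractions (the latter by~\eqref{estimateonSsdagger}, it can only lose mass on pathological collisions), and the two produce identical characteristics as long as no pathological configuration~\eqref{good-collision1}--\eqref{good-collision2} is met on $[0,T]$, that is, on data supported in $(\cA^{\eps,\alpha})^c$. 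Splitting $f_{N+1,0}=f_{N+1,0}(1-\indc_{\cA^{\eps,\alpha}})+r_{N+1,0}$, using that $({\bf S}^\dagger_{N+1}(t)-{\bf S}_{N+1}(t))$ annihilates the first piece and applying the triangle inequality with the two contraction bounds to the second, one obtains
$$\sup_{t\in[0,T]}\|f_{N+1}(t)-\tilde f_{N+1}(t)\|_{L^1(\cD_\eps^{N+1})}\leq 2\,\|r_{N+1,0}\|_{L^1(\cD_\eps^{N+1})},$$
a bound which is uniform in $t$.

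Next I would bound the right-hand side. From~\eqref{defdata} and the uniform bound $\|g_0\|_{L^\infty}\leq C$ of~\eqref{g0 borne}, one has $r_{N+1,0}=g_0\,M_{\beta,I,N}\,\indc_{\cA^{\eps,\alpha}}\leq C\,M_{\beta,I,N}\,\indc_{\cA^{\eps,\alpha}}$, hence, since $M_{\beta,I,N}$ is a probability density and $\cA^{\eps,\alpha}$ is a measurable set of initial data (the microscopic dynamics being well defined for almost every datum, the event ``the rigid body encounters a pathological collision on $[0,T]$'' is measurable), $\|r_{N+1,0}\|_{L^1(\cD_\eps^{N+1})}\leq C\,\mathbb{P}_{M_{\beta,I,N}}(\cA^{\eps,\alpha})$. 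Finally, under the scaling relations $N\eps=1$ and $\alpha|\log\eps|\geq 1$, Proposition~\ref{lem: small set} gives $\lim_{\alpha\to 0}\mathbb{P}_{M_{\beta,I,N}}(\cA^{\eps,\alpha})=0$; inserting this into the previous two displays yields the claimed limit, and since the bound is $t$-uniform one even gets $\lim_{\alpha\to 0}\sup_{t\in[0,T]}$.

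The only genuinely substantial ingredient is Proposition~\ref{lem: small set} itself, i.e. the proof that both $\cA_1^{\eps,\alpha}$ (defined solely through the rigid-body trajectory) and $\cA_2^{\eps,\alpha}$ (involving additionally one atom of small relative velocity) have $M_{\beta,I,N}$-probability tending to $0$ as $\alpha\to 0$, uniformly in $\eps\ll\alpha$; but that is established separately in Section~\ref{controlpathological} and is used here as a black box, so at the present level I do not expect any real obstacle. The one point worth keeping in mind while assembling the argument is that $\cA^{\eps,\alpha}$ is described through the whole trajectory on $[0,T]$ and not merely through the initial datum, which is precisely why one needs Proposition~\ref{lem: small set} in its trajectorial formulation (equivalently, one exploits the stationarity of $M_{\beta,I,N}$ under the microscopic flow); everything else is the elementary splitting and contraction estimate above.
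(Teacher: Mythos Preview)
Your proposal is correct and follows essentially the same approach as the paper: split the initial data into its restriction to $(\cA^{\eps,\alpha})^c$ and $r_{N+1,0}$, use that the two flows agree on the first piece, apply the $L^1$-contraction~\eqref{estimateonSsdagger} to the second, and conclude via Proposition~\ref{lem: small set}. Your additional remarks on measurability and on the bound $\|r_{N+1,0}\|_{L^1}\leq C\,\mathbb{P}_{M_{\beta,I,N}}(\cA^{\eps,\alpha})$ via~\eqref{g0 borne} are fine, and your observation that the estimate is uniform in $t$ (so that one actually obtains $\lim_{\alpha\to 0}\sup_{t\in[0,T]}$) is a harmless strengthening.
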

  We shall therefore from now on work on    the modified series expansion
\begin{equation}
\label{iterated-Duhamel}
   \tilde   f_{N+1}^{(s)} (t) =  \sum_{n=0}^{N+1-s}  Q^\dagger_{s,s+n} (t)  f_{N+1}^{(s+n)}(0) \,,
\end{equation}
where we denote
\begin{equation}
\label{Qdagger}
\begin{aligned}
Q^\dagger_{s,s+n} (t) :=  \int_0^t \int_0^{t_{s}}\dots  \int_0^{t_{s+n-2}}  {\bf S}^\dagger_s(t-t_{s}) ( C_{s,s+1} +  D^\dagger_{s,s+1}) {\bf S}^\dagger_{s+1}(t_{s}-t_{s+1}) \dots  \\
\dots  {\bf S}^\dagger_{s+n}(t_{s+n-1}) \: dt_{s+n-1} \dots dt_{s} \,.
\end{aligned}
\end{equation}
Note that  the operators~$Q^\dagger_{s,s+n}$  satisfy the   estimates  stated in Proposition~\ref{estimatelemmacontinuity}. In particular the series expansion~(\ref{iterated-Duhamel})
converges on any time~$t \ll \alpha^2$. 

\subsection{Control on the pathological configurations: proof of Proposition \ref{lem: small set}}\label{controlpathological}
\label{Control on the pathological configurations}

 The probability of $\cA_1^{\eps,\alpha}$ and $\cA_2^{\eps,\alpha}$  are estimated by different arguments.

\medskip

\noindent
{\bf Step 1.} {\it Estimating the probability of $\cA_1^{\eps,\alpha}$.}

We   first reduce  the analysis to small time intervals. 
Let $  \cA_{\alpha^5}$ be the event $\cA_1^{\eps,\alpha}$ restricted to the time interval  $[0, \alpha^5]$.
Suppose that  
\begin{equation}
\lim_{ N \to \infty \atop \alpha \to 0} \frac{1}{\alpha^5}  \mathbb{P}_{M_{\beta,I, N}} (   \cA_{\alpha^5}) = 0 \, .
\label{eq: small jumps alpha}
\end{equation}
Then the limit
$$ 
\lim_{ N \to \infty} \mathbb{P}_{M_{\beta,I, N}}  (\cA_1^{\eps,\alpha}) = 0
$$
 can be deduced by decomposing the event $\cA_1^{\eps,\alpha}$ over the time intervals
$([(k-1) \alpha^5, k\alpha^5[)_{k \leq \frac{T}{\alpha^5}}$ and using the fact that $M_{\beta, I, N}$ is invariant
$$
 \mathbb{P}_{M_{\beta,I, N}}  (\cA_1^{\eps,\alpha}) \leq \frac{T}{\alpha^5} {\mathbb P}_{M_{\beta, I, N}} (   \cA_{\alpha^5}) \, .
$$

\medskip

We turn now to the proof of \eqref{eq: small jumps alpha}.
The set $  \cA_{\alpha^5}$ is a set of initial conditions for the whole dynamics, but it can be seen also as a set  $   \cC_{\alpha^5}$ on  the single trajectory $Y_{\alpha^5} = \{ Y(t) \}_{t \leq \alpha^5}$ of the rigid body. In the latter formulation,  $  \cC_{\alpha^5}$ 
is measurable in the Skorohod space $D([0,\alpha^5])$. 
It is indeed
 the union of 
$$
\cC_1 := \bigcup_{k \geq 1} \bigcap_{n \geq k}  \bigcup_{r \in {\mathbb Q} \cap [0,\alpha^5 - \frac{1}{n}]} 
\Big \{Y_{\alpha^5} \, ; \qquad   0 < \big|V(r+\frac{1}{n}) - V(r) \big| < \alpha^{2 +\eta}   \Big\}
$$
and 
$$
 \cC_2 :=   \bigcup_{r \in {\mathbb Q} \cap [0,\alpha^5]} 
\Big \{Y_{\alpha^5}\, ; \qquad    
 |V(r) | \geq |\log \alpha| \quad \text{or} \quad   \big|\Omega(r) \big|
\geq | \log \alpha|\Big\} \, .
$$
Using Proposition  \ref{prop: identification proba Duhamel}, the probability of the event 
$   \cC_{\alpha^5}$ can be rephrased in terms of  the Duhamel formula
\begin{align*}
 \mathbb{P}_{M_{\beta,I, N}}  \Big(   \cA_{\alpha^5} \Big) 
= \sum_{m =0}^{N-1} 
\int dY \;
Q_{1,1+ m} (\alpha^5) \; \indc_{\{  Y_{\alpha^5} \in     \cC_{\alpha^5} \}} \;  M_{\beta, I, N}^{(1 + m)}  \, .
\end{align*}
The contribution of $\cC_2$ is exponentially small: there are two constants~$C$ and~$C'$, depending on~$\beta$ and~$I$ and which may change from one line to the other, such that
$$
\begin{aligned}
\sum_{m =0}^{N-1} 
\int dY \;
Q_{1,1+ m} (\alpha^5) \; \indc_{\{  Y_{\alpha^5} \in   \cC_2 \}} \;  M_{\beta, I, N}^{(1 + m)}  &\leq C \exp (-C' |\log \alpha|^2 )\sum_{m =0}^{N-1} 
\int dY \;
Q_{1,1+ m} (\alpha^5) \;  M_{\beta/2, I, N}^{(1 + m)}
\\
& \leq C \exp (-C' |\log \alpha|^2 ) \, ,
\end{aligned}
$$
where the last inequality is obtained using the conservation of energy to replace~$\Omega$ by~$\Omega'$ in a fraction of the Maxwellian, and  applying Proposition \ref{estimatelemmacontinuity} with $t = \alpha^5$. 

To evaluate the contribution of $\cC_1$, we treat the terms of the series differently according  to the number of collisions.
If there is no collision $m =0$ then the rigid body is not deflected so   the term is equal to 0.
 Going back to the proof of Proposition \ref{estimatelemmacontinuity} with $t = \alpha^5$, we get
$$
\forall m \geq 2, \qquad 
\Big \| Q_{1,1+ m} (\alpha^5) \; \;  M_{\beta, I, N}^{(1 + m)}  
\Big \|_{L^\infty} 
\leq C^m \alpha^{3 m} \, ,
$$
 and we deduce that the total contribution of the terms $m \geq 2$  will be bounded by $\alpha^6$, so   it will vanish in \eqref{eq: small jumps alpha}.

Thus it remains to control the term with a single collision $m =1$: let us show that 
\begin{equation}
\label{eq: single collision cost}
\int dY Q_{1,2} (\alpha^5) \; \indc_{\{  Y_{\alpha^5} \in \cC_1\}} \;  M_{\beta, I, N}^{(2)} 
\leq C \alpha^{5 +2 \eta} \, ,
\end{equation}
which will complete \eqref{eq: small jumps alpha}.
The time integration in $Q_{1,2} (\alpha^5)$ provides a factor $\alpha^5$, so   it is enough to gain a factor 
$\alpha^{2\eta}$ from the collision operator.
The event $\cC_1$ is supported by the pseudo-trajectories with a deflection, thus only the part 
$D_{1,2}^+$ of the collision operator~\eqref{eq: collision operator rigid body} will be contributing
\begin{align*}
&\big(  D_{1,2}^+   M_{\beta, I, N}^{(2)}\big) (Y,Z_{1})  =     \\
&  \qquad \frac{1}{\alpha} \int_{[0,L_\alpha] \times {\mathbb R}^2}  \!  \!  
M_{\beta, I, N}^{(2)} ( Y',   X  + \frac \eps \alpha \,     r_{\alpha, \Theta},v'_1) \Big( \big (  \frac 1 \alpha v'_1-V'-
 \Omega'     r_\Theta^\perp  \big ) \cdot   n_\Theta\Big)_-  \!  d  \sigma_\alpha dv_1 \, .  
\end{align*}

Suppose that the collision leads to a small deflection, then 
according to \eqref{eq: scattering big} we  get 
$$ 
\left| V' - V \right| 
= {2\alpha\over 1+A} |( v_1   - \alpha V- \alpha \Omega r_\Theta^\perp  ) \cdot n_\Theta| 
= {2\alpha\over 1+A} |( v'_1  - \alpha V'- \alpha \Omega' r_\Theta^\perp  ) \cdot n_\Theta|
\leq  2\alpha^{2 + \eta}  \, .
$$
Given the coordinates of the rigid body $(V, \Omega)$ and the impact parameter $n_\Theta$, this implies that the velocity $v_1$ has to belong to a tube of diameter $\alpha^{1 + \eta}$ which has a measure less than~$\alpha^{2(1 + \eta)}$ under   $|( v_1   - \alpha V- \alpha \Omega r_\Theta^\perp  ) \cdot n_\Theta| M_\beta$.
Plugging this estimate in the collision operator $D_{1,2}^+$, we get an upper bound of the type 
$O( \alpha^{2 \eta})$. This completes \eqref{eq: single collision cost}.

\bigskip

\noindent
{\bf Step 2.} {\it Estimating the probability of $\cA_2^{\eps,\alpha} \setminus \cA_1^{\eps,\alpha} $.}

We first recall that  since~$|\Omega(s) | \leq |\log \alpha|$ and due to conditions~(\ref{eq: collision laws}),
outside~$ \cA_1^{\eps,\alpha} $ the condition 
$$
|v_i(s^-)  - \alpha V(s^-) | \leq C\alpha^{2/3 + \eta}
$$
is equivalent (up to a change of the constant~$C$), to
$$
|v_i(s^+)  - \alpha V(s^+) | \leq C\alpha^{2/3 + \eta}\,.
$$
{\color{black}
If a series of collisions occurs between a particle and the rigid body, then  thanks to Proposition~\ref{norecollision}
necessarily the first of these collisions is pathological. We are going to estimate the probability of this first pathological collision to estimate the probability of 
the event~$\cA_2^{\eps,\alpha}  \setminus \cA_1^{\eps,\alpha}$.}
It  can occur due to two possible scenarios: either there is only one particle in the vicinity of the solid body before the pathological shock, or there are several. In the following we denote by~$i$ the particle having a pathological collision with the solid body, by $\tau_c$ the time of this collision and we define
$$\tau_1:= \min \{\tau \geq 0 \,/\, \forall \tau' \in [\tau ,\tau_c]\, ,\quad  | x_i (\tau' ) -  X(\tau') |\leq 2\eps/\alpha \}\,,$$
$$\tau_2:= \min \{\tau \geq 0 \,/\, \forall \tau' \in [\tau ,\tau_c]\, ,\quad  | x_i (\tau' ) -  X(\tau') |\leq 3\eps/\alpha \}\,.$$

\begin{itemize}
\item[(i)]   If $\tau_2 = 0$, then the corresponding probability 
under $M_{\beta, I, N}$ can be estimated by 
$$
CN \frac{\eps^2}{\alpha^2}   =  C \frac{\eps }{\alpha^2} \, ,
$$
where the factor $N$ takes into account all the possible choices of the label $i$.

\item[(ii)]  If $\tau_2 >0$ and there is no other particle at a distance less than $2\eps/\alpha $ from the rigid body on $[\tau_1,\tau_c]$,
 then the particle $i$  has traveled a distance  at least ${\eps}/{\alpha}$ through the neighborhood of the rigid body
before the pathological collision.  In this case, the following event will be satisfied
$$
\cB_1 := \Big\{  \exists i \leq N\, , \quad \int_0^T ds\frac{|v_i(s) - \alpha V(s)| }{\alpha}  \indc_{\{ |x_i(s) - X(s) | \leq \frac{2 \eps}{\alpha}, 
\  |v_i(s) - \alpha V(s)| \leq \alpha^{2/3 + \eta}\}}\geq  \frac{\eps}{\alpha}
\Big\}  \,.
$$

\item[(iii)] If $\tau_2 >0$ and there is at least one other particle $j$ at a distance less than $2\eps/\alpha $ from the rigid body for some  $\tau \in [\tau_1,\tau_c]$,
 then one of the particles $i,j$ has to travel a distance at least ${\eps}/{\alpha}$ while the two particles remain 
at distance less than ${3 \eps}/{\alpha}$ of the rigid body.  Thus it is enough to estimate the event
\begin{equation}
\label{eq: slow shock case 2}
\begin{aligned}
\cB_2 := \Big\{  \exists i,j \leq N\, , \quad &
\int_0^T ds   \left( \frac{|v_i (s) - \alpha V(s) |}{\alpha} + \frac{|v_j (s) - \alpha V(s) |}{\alpha} \right)  \\ 
&\qquad \qquad 
\times \indc_{\{ |x_i(s) - X(s) | \leq \frac{3 \eps}{\alpha},  | x_j(s) - X(s)  | \leq \frac{3 \eps}{\alpha} \}}  
\geq  \frac{\eps}{\alpha}
\Big\} \, . 
\end{aligned}
\end{equation}
\end{itemize}
We turn now to estimating the probabilities of the events $ \cB_1$, $\cB_2$ to conclude the proof.

\bigskip

We first bound from above the probability of $\cB_1$ by using the invariant measure. On the one hand
\begin{align*}
{\mathbb P}_{M_{\beta,I, N}} (\cB_1) & 
\leq N \frac{1}{\eps}  
{\mathbb E}_N \left( 
\int_0^T ds  |v_1 (s) - \alpha V(s)  | \, \indc_{\{ |x_1(s) - X(s) | \leq \frac{2 \eps}{\alpha}, \  |v_1(s) - \alpha V(s)| \leq \alpha^{2/3+\eta}\}}
\right)\\
& \leq N \frac{ T}{\eps}  
{\mathbb E}_{M_{\beta,I,N}}  \left( 
  |v_1 -  \alpha V | \, \indc_{\{ |x_1 - X  | \leq \frac{2 \eps}{\alpha}, \  |v_1 - \alpha V| \leq \alpha^{2 /3 + \eta}\}}
\right) \, .
\end{align*}
As the position and velocity are independent under the invariant measure, we get 
\begin{align*}
{\mathbb P}_{M_{\beta,I, N}} (\cB_1) &
\leq N \frac{ T}{ \eps}  
{\mathbb E}_{M_{\beta,I,N}}   \left(  |x_1 - X | \leq \frac{2 \eps}{\alpha} \right) \; 
{\mathbb E}_{M_{\beta,I,N}}  \left(  |v_1 - \alpha V| \;  \indc_{ \{ |v_1 - \alpha V| \leq \alpha^{2 /3 + \eta} \}} \right)\\
& 
\leq N \frac{ T}{ \eps}  \frac{4 \eps^2}{\alpha^2} \alpha^{2 + 3 \eta}
= 4 T \alpha^{3 \eta} \, ,
\end{align*}
where we used that in dimension 2
$$
{\mathbb E}_{M_{\beta,I,N}}  \left(  |v_1 - \alpha V| \;  \indc_{ \{ |v_1 - \alpha V| \leq \alpha^{2 /3 + \eta} \}} \right)
\simeq \int_{\R^2}  M_\beta (v) |v| \indc_{ \{ |v| \leq \alpha^{2 /3 + \eta} \}} dv\simeq \alpha^{2 + 3 \eta} \,.
$$

\medskip

We turn now to the second event. We have
$$
\begin{aligned}
{\mathbb P}_{M_{\beta,I, N}} (\cB_2) & 
\leq N^2 \frac{1}{\eps}  
{\mathbb E}_{M_{\beta,I, N}}  \Big( 
\int_0^T ds  \left( |v_1(s) - \alpha V(s) | + |v_2 (s) - \alpha V(s) |  \right) \\
&\qquad\qquad\qquad\qquad \times \indc_{\{ |x_1(s) - X(s) | \leq \frac{3 \eps}{\alpha}, \   |x_2(s) - X(s) | \leq \frac{3 \eps}{\alpha} \}}
\Big) 
\\
& 
\leq C N^2 \frac{ T}{\eps}  
{\mathbb E}_{M_{\beta,I,N}}   \Big(  |x_1 - X | \leq \frac{3 \eps}{\alpha} \hbox{ and }   |x_2 - X | \leq \frac{3 \eps}{\alpha}\Big) \; 
{\mathbb E}_{M_{\beta,I,N}}   \Big(  2   |v_1 - \alpha V|  \Big) \nonumber
\\
& 
\leq C N^2 \frac{T}{ \eps}  \frac{  \eps^4}{\alpha^4}  
= \frac{C T \eps}{\alpha^4}  \,,  \end{aligned}
$$
bounding~${\mathbb E}_{M_{\beta,I,N}}    (  2   |v_1 - \alpha V|   )$ by  a constant.
Proposition~\ref{lem: small set} is proved.
\qed

\section{The modified Boltzmann hierarchy}\label{modified-Boltz-section}

\subsection{Removing the pathological collisions}
\label{subsec: Boltzmann hierarchy}

We shall prove in the following sections  that the modified BBGKY hierarchy~(\ref{iterated-Duhamel}) behaves asymptotically as the following modified Boltzmann hierarchy:  for all~$s \geq 1$,
\begin{equation}
\label{eq: modified hierarchy}
\begin{aligned}
\tilde   f_\eps^{(s)}  (t) 
:=
\sum_{n=0}^{\infty}    \int_0^t \int_0^{t_{s}}\dots  \int_0^{t_{s+n-2}} \bar  {\bf S}_s(t-t_{s}) (\bar  C_{s,s+1} +  \bar D^\dagger_{s,s+1})\bar  {\bf S}_{s+1}(t_{s}-t_{s+1}) \dots  \\
\dots  \bar {\bf S}_{s+n}(t_{s+n-1})  f_0^{(s+n)} \: dt_{s+n-1} \dots dt_{s} \, ,
\end{aligned}
\end{equation}
 where 
 \begin{itemize}
 \item $\bar {\bf S}_s$ denotes the group associated with the free transport~$\displaystyle V \cdot \nabla_X + \frac1\alpha\sum_{i=1}^{s-1} v_i \cdot \nabla_{x_i} + \frac \alpha \eps \Omega \, \partial_\Theta$ 
 \item the collision operators~$\bar C_{s,s+1} $ are defined as usual by 
 $$
   \begin{aligned}
  \big(  \bar C_{s,s+1} f^{(s+1)}\big) (Y,Z_{s-1}) &:= \frac 1 \alpha  \sum_{i=1}^s \int_{{\mathbb S} \times {\mathbb R}^2} \Big[  f^{(s+1)}(\dots, x_i, v_i',\dots , x_i, v'_{s})  \\
 &\quad   -f^{(s+1)}(\dots, x_i, v_i,\dots , x_i, v_{s})\Big]        \big((v_{s}-v_i) \cdot \nu\big)_- d\nu dv_{s}   \end{aligned}
 $$
 
 \item   and the truncated collision operator $\bar D_{s,s+1}^\dagger
= \bar D_{s,s+1}^{+, \dagger} - \bar D_{s,s+1}^{-,\dagger}$ is obtained by modifying the gain operator  
\begin{equation}
\label{eq: operator barD truncated}
\begin{aligned}
\big(  \bar D_{s,s+1}^{+, \dagger}   f^{(s+1)}\big) (Y,Z_{s-1})  &  :=   \frac1 \alpha  
\int_{[0,L_\alpha] \times {\mathbb R}^2} 
     \indc_{ \big\{ (v'_s, V',\Omega', \Theta, \sigma) \text{ \ satisfy  \eqref{good-collision1}, \eqref{good-collision2}} 
     \big\} }\\
&\quad \times f^{(s+1)} ( Y', Z_{s-1},X,v'_{s}) \Big( \big (  \frac 1 \alpha v'_{s}-V'-
 \Omega'  r_\Theta^\perp  \big ) \cdot      n_\Theta\Big)_-  \!  d  \sigma_\alpha dv_{s}
 \end{aligned}
\end{equation}
and    the loss operator 
\begin{equation}
\label{eq: operator barD- truncated}
\begin{aligned}
\big(  \bar D_{s,s+1}^{-, \dagger}   f^{(s+1)}\big) (Y,Z_{s-1})  &  :=   \frac1 \alpha  
\int_{[0,L_\alpha] \times {\mathbb R}^2} 
 \indc_{ \big\{ (v_s, V,\Omega, \Theta, \sigma) \text{ \ satisfy  \eqref{good-collision3}} \big\} }\\
&\quad \times f^{(s+1)} ( Y, Z_{s-1}, X   ,v_{s}) \Big( \big (  \frac 1 \alpha v_{s}-V-
 \Omega  r_\Theta^\perp  \big ) \cdot      n_\Theta\Big)_-  \!  d  \sigma_\alpha dv_{s}\, ,
 \end{aligned}
\end{equation}
where the additional constraint is defined as
\begin{equation}
\label{good-collision3}
\begin{aligned}
(v_s, V,\Omega, \Theta, \sigma) \hbox{ are such that starting from a configuration $ ( Y, Z_{s-1}, X   + \frac \eps \alpha \,     r_{\alpha,\Theta},v_{s}) $ } \\
 \hbox{ there is no direct recollision in the (two-body) backward evolution.}
\end{aligned}
\end{equation}
Note that this constraint depends only on $\alpha$ and not on $\eps$, as the relative size between the rigid body and the atoms scales with $\alpha$.
\end{itemize}

Note that, in order to avoid direct recollisions between the rigid body and a new atom, we modify  the   collision operators.   
  The advantage in this approach, instead of modifying the transport in (\ref{eq: modified hierarchy}) is that there is no memory effect, which allows for chaotic solutions to the modified Boltzmann hierarchy.
  One can check in particular that  the initial data~(\ref{dataBoltz})
 gives rise to the unique solution to the Boltzmann hierarchy 
 \begin{equation}\label{tensorizedsolutionhierarchy}
   \tilde f_\eps^{(s)}(t,X,V,\Theta,\Omega,Z_{s-1}) =    \tilde  g_\eps(t,Y)  \bar M_{\beta, I} (Y) \prod_{i =1}^s M_{\beta } (v_i)  
 \end{equation}
 where~$  \tilde  g_\eps$ solves the linear equation
\begin{equation}
\label{eq: modified linear boltz}
\begin{aligned}
\partial_t  \tilde g_\eps &+ V \cdot \nabla_X  \tilde  g_\eps + \frac\alpha \eps \Omega  \, \partial_\Theta  \tilde  g_\eps \\
&= \frac1{\alpha}
\int _{[0,L_{ \alpha}] \times {\mathbb R}^2}  M_{\beta } (v)  
 \Big( \indc_{(\ref{good-collision1}),(\ref{good-collision2})  }  \tilde  g_\eps(Y') \big(  (\frac1{\alpha}v' -V'- \Omega'      r_{ \Theta}^\perp
) \cdot    n_{ \Theta}\big)_-     \\
&\qquad\qquad \qquad\qquad\qquad \qquad - \indc_{(\ref{good-collision3})} \tilde  g_\eps(Y) \big(  (\frac1{\alpha}v
-V- \Omega    r_{ \Theta}^\perp
\big ) \cdot    n_{ \Theta} \big)_-\Big)\, d  \sigma_\alpha dv  \, .  
\end{aligned}
\end{equation}

We further introduce the notation
\begin{equation}
\label{iterated-Duhamel2}
     \tilde  f^{(s)} _\eps(t) =  \sum_{n=0}^{\infty}  \bar Q_{s,s+n}^\dagger (t)  f^{(s+n)}_0  \, ,
\end{equation}
with
$$
	\begin{aligned}
\bar Q_{s,s+n} ^\dagger (t) :=    \int_0^t \int_0^{t_{s}}\dots  \int_0^{t_{s+n-2}}\bar  {\bf S}_s(t-t_{s}) (\bar  C_{s,s+1} +\bar D^\dagger_{s,s+1}) \bar {\bf S} _{s+1}(t_{s}-t_{s+1}) \dots \\
\dots  \bar {\bf S}_{s+n}(t_{s+n-1})     dt_{s+n-1} \dots dt_{s}\,.
	\end{aligned}
$$
Note that  the operators~$ \bar Q_{s,s+n}^\dagger$ satisfy the   estimates  stated in Proposition~\ref{estimatelemmacontinuity}. In particular the series expansion~(\ref{iterated-Duhamel2})
converges on any time~$t \ll \alpha^2$.

\subsection{Asymptotics of the truncated Boltzmann equation}

The very rough  estimates of  Proposition~\ref{estimatelemmacontinuity} can be  improved using the fact that the solution to the Boltzmann hierarchy  is a tensor product, provided that we can establish an a priori bound for the solution to the Boltzmann equation.
There is however a small difficulty here as the collision operator in (\ref{eq: modified linear boltz}) is truncated, which breaks the symmetry.
We therefore need to relate (\ref{eq: modified linear boltz}) to  \eqref{eq: linear boltz}.

Note that 
\begin{itemize}
\item at the level of the hierarchy, it is crucial to introduce the truncations (\ref{good-collision1}), (\ref{good-collision2}) and (\ref{good-collision3}) in order to minimize the errors between the modified BBGKY hierarchy and the Boltzmann hierarchy, because these errors will sum up.
\item at the level of the Boltzmann equation, the truncation will not be  a big deal. It is indeed proved in   Appendix~\ref{proofpropproximity}  that the solutions to (\ref{eq: modified linear boltz}) and \eqref{eq: linear boltz} stay very close to each other, as stated in the following proposition.
\end{itemize}

\begin{Prop}
\label{prop: truncatedBoltztoBoltz}
Consider an initial data $g_0$    satisfying the assumptions~{\rm(\ref{g0 borne})}.
 Let~$  g_\eps $ and~$\tilde  g_\eps$  be the solutions to~{\rm(\ref{eq: linear boltz})} and~{\rm(\ref{eq: modified linear boltz})} with initial data $g_0$. {\color{black} Then there exists~$C_0$ such that 
 $$
\| \tilde g_\eps \| _{L^\infty(\R^+\times \R^4\times {\mathbb S} \times \R)} 
{\color{black} \leq C_0}
$$
and     for any $T \geq 0$
there exists $C_T$ such that 
}
  $$
  \|M_{\beta,I} (g_\eps - \tilde g_\eps )\|_{L^\infty([0,T] ;  L^1( \R^4\times  {\mathbb S} \times \R))} \leq C_T \alpha ^{2\eta}
$$
where $\eta$ is the exponent defining the truncations  {\rm(\ref{good-collision1}), (\ref{good-collision2})}.
\end{Prop}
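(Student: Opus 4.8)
The plan is to treat $\tilde g_\eps$ and $g_\eps$ as solutions of linear Boltzmann equations that differ only through the indicator functions $\indc_{(\ref{good-collision1}),(\ref{good-collision2})}$ (in the gain term) and $\indc_{(\ref{good-collision3})}$ (in the loss term), and to estimate the difference by a Duhamel/Gronwall argument in $L^1(\bar M_{\beta,I}\,dY)$. First I would establish the a~priori bound $\|\tilde g_\eps\|_{L^\infty}\le C_0$: this follows from a maximum-principle argument for (\ref{eq: modified linear boltz}), since the truncated gain operator only decreases the incoming mass and the loss operator is nonnegative, so the $L^\infty$ norm is nonincreasing along characteristics up to the transport; a cleaner route is to note that $\tilde f_\eps^{(1)}\le \tilde f_{N+1}^{(1)}$-type comparison is not available directly, so instead one integrates the equation against $\sgn(\tilde g_\eps - \|g_0\|_\infty)_+$ and uses that the gain term, being a truncation of the full gain, is dominated by the full (conservative) gain whose $L^\infty\to L^\infty$ effect is controlled by the collision frequency times the time — but since the collision frequency is $O(1/\alpha)\times O(1/\alpha)=O(1/\alpha^2)$ this naive bound is not uniform, so one really must use the conservativeness of the \emph{untruncated} operator ${\mathcal L}_\alpha$ (for which $\bar M_{\beta,I}\,M_\beta$ is stationary and the maximum principle gives $\|g_\eps\|_\infty\le\|g_0\|_\infty$) together with the fact that dropping part of the gain and adding a restriction to the loss can only push $\tilde g_\eps$ below $g_\eps$ on the set where $\tilde g_\eps$ is large. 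The honest statement is: $\indc_{(\ref{good-collision3})}\le 1$ makes the loss smaller, and the truncated gain is $\le$ full gain, hence by a comparison principle $\tilde g_\eps \le g_\eps$ pointwise is \emph{not} quite right because the loss being smaller pushes it up; so I would instead run the comparison on $w:=\tilde g_\eps - g_\eps$ directly, as below, and get the $L^\infty$ bound as a byproduct of the $L^1$ estimate combined with a propagation-of-$L^\infty$ lemma using that the truncated loss frequency is bounded (uniformly in $\eps,\alpha$, after the rescaling this frequency is $O(1)$ because the relative normal velocity $M_\beta$-integrates against $d\sigma_\alpha$ to an $O(1)$ quantity).

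Second, for the quantitative bound, write $w := M_{\beta,I}(g_\eps-\tilde g_\eps)$ and subtract the two equations. The transport part is skew-adjoint in $L^1(dY)$ and drops out upon integrating $|w|$ along characteristics. The collision-operator difference splits as $(\text{gain})_{\text{full}}-(\text{gain})_{\text{trunc}}$ plus $-(\text{loss})_{\text{full}}+(\text{loss})_{\text{trunc}}$, evaluated at $g_\eps$, plus the truncated operator applied to $w$ itself; the latter is handled by Gronwall (its $L^1(dY)$ norm is $\le C_T \|w(t)\|_{L^1}$ using the $O(1)$ bound on the truncated collision frequency and the Gaussian weights, as in the proof of Proposition~\ref{estimatelemmacontinuity}). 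The source term is
$$
\mathcal R_\eps(t,Y) := \frac1\alpha\!\int_{[0,L_\alpha]\times\R^2}\!\! M_\beta(v)\Big[(1-\indc_{(\ref{good-collision1}),(\ref{good-collision2})})\,g_\eps(Y')\,\big|(\tfrac1\alpha v'-V'-\Omega' r_\Theta^\perp)\cdot n_\Theta\big| \;+\;(1-\indc_{(\ref{good-collision3})})\,g_\eps(Y)\,\big|(\tfrac1\alpha v-V-\Omega r_\Theta^\perp)\cdot n_\Theta\big|\Big]\,d\sigma_\alpha\,dv\,,
$$
weighted by $\bar M_{\beta,I}$; so it remains to show $\|\bar M_{\beta,I}\,\mathcal R_\eps\|_{L^1_Y}\le C\,\alpha^{2\eta}$ uniformly in time.

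Third — and this is the heart of the matter — estimate the measure of the truncated-out set of collision parameters. Using $\|g_\eps\|_\infty\le\|g_0\|_\infty$, it suffices to integrate the collision kernel times $\bar M_{\beta,I}M_\beta$ over the bad set. The set where (\ref{good-collision1}) fails because $|V|$ or $|\Omega|$ exceeds $|\log\alpha|$ contributes an exponentially small amount by the Gaussian tails (as in the $\cC_2$ estimate of Section~\ref{controlpathological}). The set where $0<|V'-V|<\alpha^{2+\eta}$: by the scattering identity (\ref{eq: scattering big}), $|V'-V|=\tfrac{2\alpha}{1+A}|(v-\alpha V-\alpha\Omega r_\Theta^\perp)\cdot n_\Theta|$ up to the $\tfrac1\alpha$ in the kernel, so this confines $(v-\alpha V)\cdot n_\Theta$ to a slab of width $O(\alpha^{1+\eta})$; integrating $M_\beta(v)\,|(\tfrac1\alpha v-\cdots)\cdot n_\Theta|$ over that slab gains a factor $\alpha^{1+\eta}\cdot\alpha^{-1}\cdot\alpha = \alpha^{1+\eta}$ relative to the $O(1/\alpha)$ frequency — a more careful bookkeeping (the kernel already carries one normal-velocity factor, and the slab has $v$-measure $\alpha^{1+\eta}$, with $d\sigma_\alpha$ giving $O(1)$) yields $O(\alpha^{2\eta})$ exactly as in the single-collision estimate (\ref{eq: single collision cost}). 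The set where (\ref{good-collision2}) fails, $|v-\alpha V|\le\alpha^{2/3+\eta}$ (or the primed version, equivalent by (\ref{eq: collision laws}) up to constants), has $v$-measure $O(\alpha^{4/3+2\eta})$ and the kernel there is $O(\alpha^{-1+2/3+\eta})=O(\alpha^{-1/3+\eta})$, so after the overall $1/\alpha$ and $d\sigma_\alpha$ one gets $O(\alpha^{-1/3+\eta}\cdot\alpha^{4/3+2\eta}\cdot\alpha^{-1}\cdot\alpha)=O(\alpha^{1+3\eta})$, which is dominated by $\alpha^{2\eta}$. Finally the loss-truncation set (\ref{good-collision3}), ``direct recollision in the two-body backward flow'', is by Proposition~\ref{norecollision} contained in the union of the failure sets of (\ref{good-collision1})–(\ref{good-collision2}) (read backward), so it is covered by the previous estimates. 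Summing, $\|\bar M_{\beta,I}\mathcal R_\eps\|_{L^1_Y}\le C\alpha^{2\eta}$, and Gronwall over $[0,T]$ gives $\|w\|_{L^\infty([0,T];L^1_Y)}\le C_T\alpha^{2\eta}$.

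I expect the main obstacle to be the bookkeeping of $\alpha$-powers in the bad-set integrals against the kernel $\tfrac1\alpha M_\beta(v)\,|(\tfrac1\alpha v-V-\Omega r_\Theta^\perp)\cdot n_\Theta|$, where the relative velocity at contact is $O(1/\alpha)$ on most of phase space but the truncation (\ref{good-collision2}) precisely localizes to the region where $v\approx\alpha V$ and the normal velocity is atypically small — so one must be careful not to lose the gain by overestimating the kernel on that region. The secondary subtlety is the a~priori $L^\infty$ bound on $\tilde g_\eps$: because the truncated loss operator has a \emph{bounded} collision frequency (after rescaling, $\int M_\beta(v)|(\tfrac1\alpha v-V-\Omega r_\Theta^\perp)\cdot n_\Theta|\,d\sigma_\alpha\,dv/\alpha$ is $O(1)$ in $\alpha$ once one uses that the large-$|v|$ contribution is Gaussian-controlled), a Gronwall estimate along characteristics against $\|g_0\|_\infty$ closes with a constant $C_0$ independent of $\eps,\alpha$, and this is detailed in Appendix~\ref{proofpropproximity}.
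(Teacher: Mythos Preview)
Your source--term estimate is right and matches the paper's computation (Lemma~\ref{lem: markov tronque}): the bad set for~(\ref{good-collision1}) has normal relative velocity $\lesssim\alpha^{1+\eta}$, so the kernel $\frac1{\alpha^2}|(v-\alpha V-\alpha\Omega r_\Theta^\perp)\cdot n_\Theta|$ times the $M_\beta$--measure of the slab gives $O(\alpha^{2\eta})$, and the other truncations contribute less.

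The gap is in how you close the argument. You assert twice that the (truncated) collision frequency is $O(1)$ in $\alpha$, and build on this both the Gronwall control of $\cT_\alpha^{\dagger,\star}w$ in $L^1$ and the $L^\infty$ bound on $\tilde g_\eps$ ``along characteristics''. But the collision frequency is $O(1/\alpha^2)$: the kernel is $\frac1\alpha\,|(\frac1\alpha v-V-\Omega r_\Theta^\perp)\cdot n_\Theta|$, and $\int M_\beta(v)\,|\frac1\alpha v\cdot n_\Theta|\,dv\sim 1/\alpha$ (this is exactly why Proposition~\ref{estimatelemmacontinuity} carries factors $(Ct/\alpha^2)^s$, and why in the paper's coupling argument the jump rate is bounded by $c/\alpha^2$). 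A naive Gronwall therefore produces $e^{CT/\alpha^2}$, which destroys the estimate.

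What saves the day --- and this is the content of the paper's Appendix~\ref{proofpropproximity} --- is not smallness of the frequency but the \emph{sub--Markov structure}. The paper shows (Lemma~\ref{lem: markov tronque}) that the adjoint $\cT_\alpha^\dagger$ of $\cT_\alpha^{\dagger,\star}$ with respect to $\bar M_{\beta,I}$ is a genuine sub--Markov generator (using that (\ref{good-collision1})--(\ref{good-collision2}) imply (\ref{good-collision3}), so the gain indicator is dominated by the loss indicator after the change of variables~(\ref{eq: generateur changement var})). Hence $e^{t\cT_\alpha^\dagger}$ is an $L^\infty$--contraction, which by duality makes $e^{t\cT_\alpha^{\dagger,\star}}$ an $L^1(\bar M_{\beta,I}\,dY)$--contraction. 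With this in hand your Duhamel formula gives directly
\[
\|M_{\beta,I}(g_\eps-\tilde g_\eps)(t)\|_{L^1}\le \int_0^t \|\bar M_{\beta,I}\,\mathcal R_\eps(s)\|_{L^1}\,ds \le C_T\,\alpha^{2\eta},
\]
with no Gronwall loss. The paper phrases this as a coupling: the processes $\bar Y$ and $\bar Y^\dagger$ agree until the first bad jump, and the expected number of jumps in $[0,T]$ times the per--jump probability $O(\alpha^{2+2\eta})$ of hitting the bad set gives $O(\alpha^{2\eta})$; but analytically this is the same contraction--plus--source computation. Likewise the $L^\infty$ bound on $\tilde g_\eps$ is obtained not by Gronwall but by the maximum principle inherited from the sub--Markov structure (the paper invokes duality with~(\ref{eq: generateur non symmetrique})). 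So your plan is salvageable, but the crucial step you are missing is precisely the one the paper isolates: recognising and using the $L^1$/$L^\infty$ contraction of the truncated semigroup in place of frequency bounds.
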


\section{Control of collisions}
\label{pruning}
 
\subsection{Pruning procedure}
\label{subsec: Pruning procedure}

We now recall  the strategy devised in \cite{BGSR1} in order  to control the convergence of the series expansions, or equivalently the number of collisions for  times much longer than the mean free time, in a linear setting. Here by collision we mean the collision of a particle with a new one through the collision operator.

The idea is to introduce a sampling in time with a (small) parameter $h>0$.
Let~$\{n_k\}_{k \geq 1}$ be a  sequence of integers, typically~$n_k = 2^k$.
We    study the dynamics up to time  $t := K h$ for some large integer $K$, by splitting the time interval~$[0,t]$  into~$K$ intervals of size~$h$,  and controlling the number of collisions  on each interval: we discard   trajectories having more than~$n_k$ collisions on the  interval~$[t-kh ,t-(k-1) h]$.
Note that by construction, the trajectories are actually  followed ``backwards", from time~$t$ (large)   to time~$0$. So
we decompose the iterated Duhamel formula~\eqref{iterated-Duhamel} by writing
\begin{equation}
\label{series-expansion}
\begin{aligned}
\widetilde  f_{N+1}^{(1)} (t)    =   \sum_{j_1=0}^{n_1-1}\! \!   \dots \!  \! \sum_{j_K=0}^{n_K-1} Q^\dagger_{1,J_1} (h )Q^\dagger_{J_1,J_2} (h )
 \dots  Q^\dagger_{J_{K-1},J_K} (h )      f^{(J_K)}_{N+1,0}   + R_{N,K} (t)
 \end{aligned}
\end{equation}
with a remainder  
\begin{equation}
\label{remainder expansion}
\begin{aligned}
R_{N,K} (t) :=   \sum_{k=1}^K \; \sum_{j_1=0}^{n_1-1} \! \! \dots \! \! \sum_{j_{k-1}=0}^{n_{k-1}-1}  \!\sum_{j_k \geq n_k} \; 
 Q^\dagger_{1,J_1} (h ) \dots  Q^\dagger_{J_{k-2},J_{k-1}} (h ) \,  Q^\dagger_{J_{k-1},J_{k}} (h)   \tilde  f_{N+1}^{(J_{k})}(t-kh)   \end{aligned}
\end{equation}
with~$J_0:=1$,~$J_k :=1+ j_1 + \dots +j_k$. The first term on the right-hand side of \eqref{series-expansion} 
corresponds to  a controlled number of collisions,  and the second term is the remainder:  it represents trajectories   having at least~$n_k$ collisions during the last time lapse, of size~$h$. One proceeds in a similar way for the Boltzmann hierarchy~(\ref{eq: modified hierarchy})
and decompose it as
\begin{equation}
\label{eq: boltz hierarchy decomposition}
   \tilde   f^{(1)} _\eps(t) =  \sum_{j_1=0}^{n_1-1}\! \!   \dots \!  \! \sum_{j_K=0}^{n_K-1} \bar Q^\dagger_{1,J_1} (h ) \bar Q^\dagger_{J_1,J_2} (h )
 \dots  \bar Q^\dagger_{J_{K-1},J_K} (h )       f^{(J_K)}_{0}   + \bar R_{\eps,K} (t)
\end{equation}
with a remainder  $$
\begin{aligned}
\bar R_{\eps,K} (t) :=   \sum_{k=1}^K \; \sum_{j_1=0}^{n_1-1} \! \! \dots \! \! \sum_{j_{k-1}=0}^{n_{k-1}-1}\sum_{j_k \geq n_k} \; 
\bar Q^\dagger_{1,J_1} (h ) \dots  \bar Q^\dagger_{J_{k-2},J_{k-1}} (h ) \,  \bar Q^\dagger_{J_{k-1},J_{k}} (h)   \tilde  f_{\eps}^{(J_{k})}(t-kh) \, .
 \end{aligned}
$$

\begin{Prop}
\label{prop: prunning}
There is a constant~$C $ such that the following holds.
For any  (small)~$\gamma>0$  and~$T>1 $, if
\begin{equation}
\label{eq: h petit}
h \leq   \gamma {\color{black} \frac{\alpha^4}{CT} } 
\end{equation}
then uniformly in~$t \leq T$
\begin{equation*}
\big \| R_{N,K} (t) \big\|_{L^1} + \big \| \bar R_{\eps,K} (t) \big\|_{L^1} \leq  \|g_0\|_{L^\infty} \; \gamma \, .
\end{equation*} 
\end{Prop}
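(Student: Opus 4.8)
The plan is to estimate the $L^1$ norm of the remainders $R_{N,K}(t)$ and $\bar R_{\eps,K}(t)$ term by term, using the continuity estimate of Proposition~\ref{estimatelemmacontinuity} (the analogue of Proposition~\ref{estimatelemmacontinuity 0} for the truncated operators~$Q^\dagger$ and~$\bar Q^\dagger$) together with the a priori bounds~\eqref{estimatemaxprinciple} on~$\tilde f_{N+1}$ and the a priori bound $\|\tilde g_\eps\|_{L^\infty}\leq C_0$ from Proposition~\ref{prop: truncatedBoltztoBoltz} on~$\tilde f_\eps$. Since both remainders have exactly the same combinatorial structure and obey the same continuity estimates, it suffices to treat one of them, say $R_{N,K}(t)$, and the bound for $\bar R_{\eps,K}(t)$ follows verbatim.

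\textbf{Key steps.} First, fix $k\in\{1,\dots,K\}$ and a choice of $j_1,\dots,j_{k-1}$ with $j_\ell\leq n_\ell-1$, and apply Proposition~\ref{estimatelemmacontinuity} successively to bound
$$
\big\| Q^\dagger_{1,J_1}(h)\dots Q^\dagger_{J_{k-2},J_{k-1}}(h)\, Q^\dagger_{J_{k-1},J_k}(h)\,\tilde f_{N+1}^{(J_k)}(t-kh)\big\|_{L^1}
\leq \Big(\frac{C_1 h}{\alpha^2}\Big)^{J_k-1}\big\|M_{3\beta/4,I}\big\|_{L^1}\,\|g_0\|_{L^\infty},
$$
using~\eqref{estimatemaxprinciple} to dominate $\tilde f_{N+1}^{(J_k)}(t-kh)$ by $\|g_0\|_{L^\infty}M_{\beta,I,N}^{(J_k)}$ and the semigroup contraction~\eqref{estimateonSsdagger} for the free transport pieces that are not inside a $Q^\dagger$. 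Second, sum over $j_1,\dots,j_{k-1}$ (each a geometric-type sum in which the number of admissible choices for $j_\ell$ is $n_\ell$, contributing a factor controlled by $\prod_\ell n_\ell (C_1 h/\alpha^2)^{j_\ell}$) and over $j_k\geq n_k$ (a convergent tail of a geometric series, provided $C_1 h/\alpha^2<1/2$, say), which produces a factor of the form $2^{-n_k}$ times the ``head'' contribution. Third, choosing $n_k=2^k$ as indicated, the factor $2^{-n_k}=2^{-2^k}$ decays super-exponentially in $k$, so summing over $k=1,\dots,K$ gives a finite geometric-type series whose total is bounded by $\|g_0\|_{L^\infty}$ times a constant times $(C_1 h/\alpha^2)$ to some positive power, or more crudely by $\|g_0\|_{L^\infty}$ times a quantity that tends to $0$ as $h/\alpha^2\to 0$. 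The point is that the time integration in each $Q^\dagger$ over an interval of length $h$ supplies the small factor $h/\alpha^2$ per collision, while the super-exponential weights $2^{-n_k}$ beat the combinatorial factors $\prod_{\ell<k} n_\ell$; tracking $K=T/h$ shows that as long as $h\leq \gamma\alpha^4/(CT)$ the resulting bound is at most $\gamma\|g_0\|_{L^\infty}$.

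\textbf{Main obstacle.} The delicate point is the bookkeeping in the iterated continuity estimate: one must check that the constant $C_1$ from Proposition~\ref{estimatelemmacontinuity} is genuinely uniform across the whole chain $Q^\dagger_{1,J_1}(h)\cdots Q^\dagger_{J_{k-1},J_k}(h)$ — in particular that composing the estimates does not cause the Gaussian weight to degrade below a fixed fraction of $\beta$ after arbitrarily many steps — and that the loss $M_{\beta,I}\to M_{3\beta/4,I}$ can be absorbed once and for all rather than at each factor (this is exactly the role of the $3\beta/4$ versus $\beta$ margin in Proposition~\ref{estimatelemmacontinuity 0}, which leaves room for iteration since each single application only costs a fixed geometric fraction of the available Gaussian decay). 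Secondarily, one must verify that the requirement $C_1 h/\alpha^2 \ll 1$ needed for the geometric sums to converge is implied by~\eqref{eq: h petit} with room to spare, since $h\leq\gamma\alpha^4/(CT)\leq \gamma\alpha^2/C$ for $\alpha\leq 1$; and one must make sure the precise power of $\alpha$ in~\eqref{eq: h petit} (namely $\alpha^4$ rather than $\alpha^2$) is what is needed — this extra factor $\alpha^2$ comes from requiring not merely convergence but smallness of the full sum over $K=T/h$ intervals. Beyond this combinatorial care, the argument is the standard Lanford-type short-time bound applied on each slice of size~$h$ and iterated $K$ times via the pruning decomposition of~\cite{BGSR1}.
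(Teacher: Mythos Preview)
Your overall plan — maximum principle to dominate $\tilde f_{N+1}^{(J_k)}$ by $\|g_0\|_{L^\infty}M_{\beta,I,N}^{(J_k)}$, then continuity estimates on the $Q^\dagger$ operators, then the super-exponential choice $n_k=2^k$ — is exactly the paper's. But the execution differs from the paper in one important respect, and your own ``main obstacle'' is in fact the gap.

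You propose to \emph{iterate} Proposition~\ref{estimatelemmacontinuity} across the $k$ factors to obtain $(C_1 h/\alpha^2)^{J_k-1}$. As you note, each application degrades the Gaussian weight by a fixed fraction; after $k$ applications (and $k$ can be as large as $K=T/h$, which diverges) the weight is gone, so the constant $C_1$ is \emph{not} uniform along the chain. The paper avoids this entirely: it does not iterate. Instead it observes that the composition $|Q^\dagger_{1,J_1}|(h)\cdots|Q^\dagger_{J_{k-2},J_{k-1}}|(h)$, applied to a Gibbs-dominated function, is bounded above by $|Q^\dagger_{1,J_{k-1}}|\big((k-1)h\big)$ (enlarging the time domain only increases the upper bound), and then applies the \emph{two-factor} estimate of Proposition~\ref{estimatelemmacontinuity} once:
\[
|Q^\dagger_{1,J_{k-1}}|\big((k-1)h\big)\,|Q^\dagger_{J_{k-1},J_k}|(h)\,M_{\beta,I,N}^{(J_k)}
\ \leq\ \Big(\frac{C_1 t}{\alpha^2}\Big)^{J_{k-1}-1}\Big(\frac{C_1 h}{\alpha^2}\Big)^{j_k} M_{3\beta/4,I}\,,
\]
with a single degradation $\beta\to 3\beta/4$.

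This also corrects your account of where the $\alpha^4/T$ in~\eqref{eq: h petit} comes from. The key combinatorial fact — absent from your sketch — is that with $n_\ell=2^\ell$ one has
\[
J_{k-1}-1=j_1+\cdots+j_{k-1}\;<\;n_1+\cdots+n_{k-1}=2^k-2\;<\;2^k=n_k\;\leq\;j_k\,,
\]
so the potentially large factor $(C_1 t/\alpha^2)^{J_{k-1}-1}$ can be absorbed into the small one:
\[
\Big(\frac{C_1 t}{\alpha^2}\Big)^{J_{k-1}-1}\Big(\frac{C_1 h}{\alpha^2}\Big)^{j_k}
\;\leq\;\Big(\frac{C_1^2\,t\,h}{\alpha^4}\Big)^{j_k}\;\leq\;\gamma^{j_k}\;\leq\;\gamma^{2^k}
\]
under the hypothesis $h\leq \gamma\alpha^4/(C_1^2 T)$ (so $C=C_1^2$). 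Summing $\sum_k \big(\prod_{\ell<k}n_\ell\big)\gamma^{2^k}=\sum_k 2^{k(k-1)/2}\gamma^{2^k}\leq \gamma$ concludes. Thus the $\alpha^4/T$ arises from pairing one $C_1 t/\alpha^2$ with one $C_1 h/\alpha^2$, not from counting $K=T/h$ intervals as you suggest; if your bound $(C_1 h/\alpha^2)^{J_k-1}$ were valid, $h\lesssim\gamma\alpha^2$ would already suffice, which is inconsistent with the stated hypothesis.
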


\begin{proof}
We   follow the main argument of~\cite{BGSR1}.  The maximum principle~(\ref{estimatemaxprinciple})  ensures that the~$L^\infty$ norm of the marginals are bounded at all times 
$$\begin{aligned}
\big |   \widetilde  f_{N+1}^{(s)} (t,Y,Z_{s-1}) \big | \leq        \| g_0\|_{L^\infty}  M_{\beta,I,N}^{(s)} (Y,Z_{s-1}) \, .
\end{aligned}
$$
Combining this uniform bound with the $L^\infty$ estimate on the collision operator given in Proposition~\ref{estimatelemmacontinuity}, we can bound each term of the remainder \eqref{remainder expansion}
as follows
$$
\begin{aligned}
\big| Q^\dagger_{1,J_1} \big| (h ) \dots  \big| Q^\dagger_{J_{k-2},J_{k-1}} \big| (h ) \,  \big|Q^\dagger_{J_{k-1},J_{k}} \big| (h)  &     \widetilde  f_{N+1}^{(J_{k})}(t-kh) \\
 &\leq  \|g_0\|_{L^\infty} \big| Q^\dagger_{1,J_{k-1}} \big| ((k-1)h) \,  \big|Q^\dagger_{J_{k-1},J_{k}} \big| (h)   M_{\beta,I,N}^{(J_k)} \\
& \leq \|g_0\|_{L^\infty} \Big({C_1 t\over \alpha^2} \Big) ^{J_{k-1}-1} \Big({C_1 h\over \alpha^2} \Big)^{{ j_k}} M_{3\beta/4,I}   \, .
\end{aligned}
$$
 Summing over the different intervals and recalling that~$n_k= 2^k$, we deduce  that
 \begin{align*}
\big \| R_{N,K}  (t) \big\|_{L^1} & \leq   \|g_0\|_{L^\infty}\sum_{k=1}^K \; \sum_{j_1=0}^{n_1-1} \! \! \dots \! \! \sum_{j_{k-1}=0}^{n_{k-1}-1}\sum_{j_k \geq n_k} \; 
   \Big({C_1 t\over \alpha^2} \Big) ^{J_{k-1}-1} \Big({C_1 h\over \alpha^2} \Big)^{{ j_k}} \, .\end{align*} 
Given a small parameter $\gamma >0$, we take $h$ such that \eqref{eq: h petit} holds, with~$C := C_1^2$.
The previous formula can be estimated from above 
$$
\begin{aligned}
\big \| R_{N,K}  (t) \big\|_{L^1} &\leq  \| g_0\|_{L^\infty}  \sum_{k=1}^K \; \sum_{j_1=0}^{n_1-1} \! \! \dots \! \! \sum_{j_{k-1}=0}^{n_{k-1}-1} \exp \big(  2^k \log \gamma \big) \\
 & \leq   \|g_0\|_{L^\infty} \sum_{k=1}^K \; 
\left( \prod_{i = 1}^k n_i \right) \exp \big(  2^k \log \gamma \big)\\
& \leq \|g_0\|_{L^\infty}\sum_{k=1}^K \;  \exp \left( \frac{k (k-1)}{2} + 2^k \log \gamma \right)
\leq  \|g_0\|_{L^\infty} \gamma \, .
\end{aligned} 
$$

The same argument applies to the Boltzmann hierarchy due to the specific form of the solution to the Boltzmann hierarchy 
\begin{equation}
\label{specificformboltz}
    \tilde f_\eps^{(s)}(t,Y,Z_{s-1}) :=   \tilde g_\eps(t,Y)   \bar M_{\beta, I} (Y) \prod_{i =1}^s M_{\beta } (v_i)  
\end{equation}
  together with the $L^\infty$ bound (see Proposition~\ref{prop: truncatedBoltztoBoltz})
  for the modified Boltzmann equation (\ref{eq: modified linear boltz}).

\medskip

Proposition~\ref{prop: prunning} is proved.
\end{proof}

\subsection{Truncation of large energies and separation of collision times}
\label{subsec: Control of  large velocities}

We now prove  that   pseudodynamics with   large velocities  or  close collision times contribute very little to the
iterated Duhamel series.
More precisely we first define 
\begin{equation}
\label{eq: truncated velocities}
{\mathcal V}_{s}:= \Big  \{ (Y , X_{s-1},V_{s-1})   \; \big| \; \;  
 {\mathcal E}_{s} (V  , \Omega , V_{s-1}) \leq C_0^2 |\log \eps| ^2\Big\} \, ,
\end{equation}
for some~$C_0>0$, where
$$
 {\mathcal E}_{s} (V  , \Omega , V_{s-1}):=\frac12 \big(  \sum_{j=1}^{s-1} |v_j|^2 +|V|^2 +       I   \Omega^2 \big) \, .
$$
Then let
\begin{equation}\label{reste cut}
  R_{N,K}^{vel} (t) 
    :=
   \sum_{j_1=0}^{n_1-1}\! \!   \dots \!  \! \sum_{j_K=0}^{n_K-1} Q^\dagger_{1,J_1} (h )Q^\dagger_{J_1,J_2} (h )
 \dots  Q^\dagger_{J_{K-1},J_K} (h ) \, \left(
 f^{(J_K)}_{N+1,0}    
 \indc_{{\mathcal V}_{J_K}^c }\right) 
\end{equation} and
 $$\begin{aligned}
\bar R_{\eps,K}^{ vel} (t)   :=
   \sum_{j_1=0}^{n_1-1}\! \!   \dots \!  \! \sum_{j_K=0}^{n_K-1} 
\bar Q^\dagger_{1,J_1} (h )  \, \bar Q^\dagger_{J_1,J_2} (h )
 \dots  \bar  Q^\dagger_{J_{K-1},J_K} (h ) \, \left(   f^{(J_K)}_{0}    
 \indc_{{\mathcal V}_{J_K}^c }\right)   \, .
\end{aligned}
$$
The contribution of   large energies can be estimated  by the following result.
\begin{Prop} 
\label{lem: truncation0}
There exists a constant $C \geq 0 $ such that for all $t\in [0,T]$
$$
\begin{aligned}
\|   R_{N,K}^{vel} (t) 
 \|_{L^1}+ \|\bar R_{\eps,K}^{ vel} (t)   \|_{L^1} \leq   \| g_0\|_{L^\infty}\Big(\frac{   CT}{\alpha^2}\Big)^{    2^K} \eps  \, .
 \end{aligned}
$$\end{Prop}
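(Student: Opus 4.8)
The plan is to extract from the energy cut-off $\indc_{{\mathcal V}_{J_K}^c}$ a factor which is super-polynomially small in $\eps$, and then to propagate the surviving Gaussian weight through the truncated collision operators by means of the continuity estimate of Proposition~\ref{estimatelemmacontinuity}, exactly along the lines of the proof of Proposition~\ref{prop: prunning}. I shall detail the bound on $R_{N,K}^{vel}$; the Boltzmann term $\bar R_{\eps,K}^{vel}$ is then treated verbatim, since the operators $\bar Q^\dagger$ obey the same continuity estimates and the data \eqref{dataBoltz} is pointwise dominated by $\|g_0\|_{L^\infty}\,\bar M_{\beta,I}(Y)\prod_i M_\beta(v_i)$.

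First I would invoke the maximum principle \eqref{estimatemaxprinciple}--\eqref{estimatedata 0}, which dominates $f_{N+1,0}^{(J_K)}$ by $\|g_0\|_{L^\infty}\,C^{J_K}$ times a product of Gaussians of weight $\propto\exp(-\beta\,{\mathcal E}_{J_K})$ in the velocities, the factor $C^{J_K}$ accounting for the exclusion ${\mathcal Z}_{J_K}^{-1}$ and for the Gaussian prefactors. On ${\mathcal V}_{J_K}^c$ one has ${\mathcal E}_{J_K}>C_0^2|\log\eps|^2$ by \eqref{eq: truncated velocities}, so writing $\exp(-\beta\,{\mathcal E}_{J_K})=\exp(-\tfrac\beta2{\mathcal E}_{J_K})\,\exp(-\tfrac\beta2{\mathcal E}_{J_K})$ and estimating the second factor on ${\mathcal V}_{J_K}^c$ yields
$$
f_{N+1,0}^{(J_K)}\,\indc_{{\mathcal V}_{J_K}^c}\ \le\ \|g_0\|_{L^\infty}\,C^{J_K}\,\exp\!\big(-\tfrac\beta2\,C_0^2|\log\eps|^2\big)\; M_{\beta/2,I}(V,\Omega)\;M_{\beta/2}^{\otimes(J_K-1)}(V_{J_K-1}).
$$

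The remaining Gaussian is now propagated as in Proposition~\ref{prop: prunning}. By positivity of the operators $|Q^\dagger|$, for each fixed $J_K$ the time-sampled composition is dominated by a single iterated Duhamel operator,
$$
\sum_{\substack{j_1+\dots+j_K=J_K-1\\ 0\le j_k<n_k}} |Q^\dagger_{1,J_1}|(h)\cdots|Q^\dagger_{J_{K-1},J_K}|(h)\ \le\ |Q^\dagger_{1,J_K}|(Kh),
$$
because distributing the $J_K-1$ collision times over the $K$ consecutive slices of $[0,Kh]$ of length $h$ recovers all the terms on the left; this step, already used in Proposition~\ref{prop: prunning}, is precisely what prevents the loss of Gaussian temperature from accumulating across the $K$ slices. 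Applying Proposition~\ref{estimatelemmacontinuity} with $t=Kh\le T$ and $\beta$ replaced by $\beta/2$, and integrating in $L^1$, one gets a bound $O\!\big(C^{J_K}(C_1T/\alpha^2)^{J_K-1}\big)$ for each $J_K$; using that $J_K-1=\sum_k j_k\le\sum_k n_k\le 2^{K+1}$, summing the resulting geometric series over $J_K$ (the factors $C^{J_K}$ being absorbed into the base of the exponent), and noting that $\exp(-\tfrac\beta2 C_0^2|\log\eps|^2)\le\eps$ as soon as $\tfrac\beta2 C_0^2|\log\eps|\ge1$, one obtains
$$
\big\|R_{N,K}^{vel}(t)\big\|_{L^1}+\big\|\bar R_{\eps,K}^{vel}(t)\big\|_{L^1}\ \le\ \|g_0\|_{L^\infty}\Big(\frac{CT}{\alpha^2}\Big)^{2^{K}}\eps
$$
for $\eps$ small (or $C_0$ large) -- the precise power of $2$ in the exponent plays no role, since in the sequel this term is shown to vanish using only $\alpha^4\log\log N\gg1$.

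The computation is routine once Proposition~\ref{estimatelemmacontinuity} is available; the only delicate point -- already settled in the proof of Proposition~\ref{prop: prunning} -- is to dominate the time-sampled product of truncated operators by a \emph{single} Duhamel operator before invoking the continuity estimate, so that the loss of Gaussian temperature does not compound over the (many) time slices. It is worth stressing that the huge prefactor $(CT/\alpha^2)^{2^K}$ is harmless: since $K\lesssim T^2/\alpha^4$, it is a double exponential in $\alpha^{-4}$, hence beaten by the single power of $\eps$ under the standing assumption $\alpha^4\log\log N\gg1$.
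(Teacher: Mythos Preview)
The proposal is correct and follows essentially the same approach as the paper: extract a factor~$\eps$ from the Gaussian on~${\mathcal V}_{J_K}^c$ (the paper peels off $\tfrac\beta6$ of the exponent rather than $\tfrac\beta2$, which is cosmetic), then dominate the time-sampled product of $|Q^\dagger|$-operators by a single $|Q^\dagger_{1,J_K}|(t)$ and apply Proposition~\ref{estimatelemmacontinuity}. Your emphasis on collapsing the sliced product into one Duhamel operator before invoking the continuity bound is exactly the point the paper uses implicitly when it writes $\sum_{j_1}\cdots\sum_{j_K}\leq \sum_{J=1}^{2^K}|Q^\dagger_{1,J}|(t)$.
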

\begin{proof}
We have for $C_0$ large enough
$$ 
\begin{aligned}
\big | 
 f^{(J_K)}_{N+1,0}      \indc_{{\mathcal V}_{J_K}^c }  \big| 
& \leq   M^{(J_k)}_{\beta,N}  \indc_{{\mathcal V}_{J_K}^c }
\;  \|g_0\|_{L^\infty}
\\
& \leq\|g_0\|_{L^\infty}C^{J_k}  M_{5 \beta / 6 }^{\otimes (J_k-1)}  M_{5 \beta  / 6,I }   \exp \left( - \frac{\beta}{6}  {\mathcal E}_{J_k} (V  , \Omega , V_{J_k-1})\right)  \indc_{{\mathcal V}_{J_K}^c  }\\
& \leq  \eps\|g_0\|_{L^\infty}C^{J_k} M_{5 \beta/6}^{\otimes (J_k-1)}  M_{5 \beta  / 6,I }   \, .
\end{aligned}
$$
   Then  Proposition \ref{estimatelemmacontinuity} implies as previously, for some constant~$C \geq C_1$,
  $$
  \begin{aligned}
   \sum_{j_1=0}^{n_1-1}\! \!   \dots \! & \! \sum_{j_K=0}^{n_K-1} \left|  Q^\dagger_{1,J_1} (h )Q^\dagger_{J_1,J_2} (h )
 \dots  Q^\dagger_{J_{K-1},J_K} (h ) \, \big(
 f^{(J_K)}_{N+1,0}      \indc_{{\mathcal V}_{J_K}^c }  \big) \right|\\
& \leq  \sum_{J=1}^{2^K} \left|  Q^\dagger_{1,J}(t) \big(
 f^{(J)}_{N+1,0}      \indc_{{\mathcal V}_{J}^c }  \big) \right| \leq \| g_0\|_{L^\infty}\,  \left(\frac{CT}{\alpha^2}\right)^{  2^{K }  }   \eps  M_{5 \beta/8,I}  \,.
 \end{aligned}
 $$
The remainder in the Boltzmann series expansion can be controlled in the same way and this concludes the proof of Proposition~\ref{lem: truncation0}.
\end{proof}

In a similar way, we   remove pseudodynamics with close collision times. Let~$\delta>0$ be a given small parameter, and  define
\begin{equation}
\label{duhamelpseudotraj1}
\begin{aligned}
 \widetilde
 f_{N+1}^{ (1,K) } (t)    : =   \sum_{j_1=0}^{n_1-1}\! \!   \dots \!  \! \sum_{j_K=0}^{n_K-1} Q^\delta_{1,J_1} (h )Q^\delta_{J_1,J_2} (h )
 \dots  Q^\delta_{J_{K-1},J_K} (h )       \left(
f^{(J_K)}_{N+1,0}   \indc_{{\mathcal V}_{J_K}  } \right) \, ,
 \end{aligned}
\end{equation}
and
\begin{equation}
\label{duhamelpseudotrajboltz}
 \begin{aligned}
\tilde  f^{ (1,K) } _\eps(t) :=  \sum_{j_1=0}^{n_1-1}\! \!   \dots \!  \! \sum_{j_K=0}^{n_K-1} \bar Q^\delta_{1,J_1} (h ) \bar Q^\delta_{J_1,J_2} (h )
 \dots  \bar Q^\delta_{J_{K-1},J_K} (h )  \left(    f^{(J_K)}_{0}   \indc_{{\mathcal V}_{J_K}  } \right)  \, ,
 \end{aligned}
\end{equation}
with  
$$
\begin{aligned}
Q^\delta_{s,s+n} (t) :=    \int  {\bf S} _s(t-t_{s}) ( C_{s,s+1} +D^\dagger_{s,s+1}) {\bf S} _{s+1}(t_{s}-t_{s+1})\dots  \\
\dots  {\bf S} _{s+n}(t_{s+n-1})\Big( \prod  {\mathbf 1}_{t_{i-1}-t_i \geq \delta}\Big)\,  dt_{s+n-1} \dots dt_{s} \, , \\
 \bar Q^\delta_{s,s+n} (t) :=    \int  \bar {\bf S} _s(t-t_{s}) (  \bar C_{s,s+1} + \bar D^\dagger_{s,s+1})  \bar {\bf S} _{s+1}(t_{s}-t_{s+1})\dots  \\
\dots   \bar {\bf S} _{s+n}(t_{s+n-1}) \Big(    \prod  {\mathbf 1}_{t_{i-1}-t_i \geq \delta}\Big)\,  dt_{s+n-1} \dots dt_{s}\,.
	\end{aligned}
$$
Applying the  continuity bounds for the transport and collision operators obtained in Paragraph~\ref{subsec : continuity estimates}, one   proves easily that
$$
 R_{N,K}^{ \delta,vel}   :=\widetilde  f_{N+1}^{(1 ) }  - \widetilde f_{N+1}^{  (1,K)  } -  R_{N,K}^{   vel}  -  R_{N,K} \quad \mbox{and} \quad  \bar R_{\eps,K}^{\delta,vel }   :=  \tilde  f^{(1 ) } _\eps- \tilde   f^{ (1,K) } _\eps -  \bar R_{\eps,K}^{  vel} -  \bar R_{\eps,K}
 $$
satisfy the  estimates given in the following proposition.
\begin{Prop} 
\label{lem: truncation}
There exists a constant $C \geq 0 $ such that for all $t\in [0,T]$
$$
\begin{aligned}
\|  R_{N,K}^{\delta,vel}   \|_{L^1}+ \| \bar  R_{\eps,K}^{\delta,vel}   \|_{L^1} \leq    \| g_0\|_{L^\infty}\left(  {\delta\over \alpha^2} \right) \Big(\frac{   CT}{\alpha^2}\Big)^{    2^K}    \,.
 \end{aligned}
$$ 
\end{Prop}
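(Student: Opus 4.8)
Proposition~\ref{lem: truncation} asserts that removing, from the pruned Duhamel series, the pseudotrajectories having two consecutive collision times within~$\delta$ of each other costs at most~$\|g_0\|_{L^\infty}(\delta/\alpha^2)(CT/\alpha^2)^{2^K}$. The strategy is exactly the same as for the large-velocity truncation in Proposition~\ref{lem: truncation0}: one writes the difference $R_{N,K}^{\delta,vel}$ as a telescoping sum over the $K$ subintervals and over which of the $\le 2^K$ collision operators carries the constraint $\indc_{t_{i-1}-t_i<\delta}$, then applies the continuity estimate of Proposition~\ref{estimatelemmacontinuity} on every factor except the one carrying the short-time constraint, where one gains the extra factor~$\delta/\alpha^2$.

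\textbf{Step 1: reduce to one short time-gap.} The operator $Q^\dagger_{s,s+n}(t)-Q^\delta_{s,s+n}(t)$ is a sum over $j\in\{1,\dots,n\}$ of the same iterated integral but with $\indc_{t_{j-1}-t_j<\delta}$ inserted on the $j$-th slot and all other slots unconstrained (or constrained as in $Q^\delta$; by positivity one may as well drop those constraints for the upper bound, working with $|Q|$). Expanding the product $Q^\dagger_{1,J_1}\cdots Q^\dagger_{J_{K-1},J_K}$ minus $Q^\delta_{1,J_1}\cdots Q^\delta_{J_{K-1},J_K}$ and telescoping, $R_{N,K}^{\delta,vel}$ is bounded by the sum over the position of the first short gap of terms of the form
$$
\big|Q^\dagger_{1,J}\big|(\tau)\,\big(\text{one collision operator with the }\indc_{<\delta}\text{ constraint}\big)\,\big|Q^\dagger_{J',J_K}\big|(\tau')\big(f^{(J_K)}_{N+1,0}\indc_{{\mathcal V}_{J_K}}\big).
$$

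\textbf{Step 2: gain $\delta/\alpha^2$ from the constrained slot.} On the slot carrying $\indc_{t_{i-1}-t_i<\delta}$, the time integral $\int_{t_i}^{t_{i-1}}$ is restricted to an interval of length $\delta$ rather than $O(h)$; since each collision operator (either $C_{s,s+1}$ or $D^\dagger_{s,s+1}$) applied to a Gaussian is controlled, as in Appendix~\ref{subsec : continuity estimates}, by a bound proportional to $1/\alpha$ times the collision frequency (and the time integration absorbs the remaining $1/\alpha$), this single slot contributes $\delta/\alpha^2$ instead of $C_1 h/\alpha^2$. All the other~$\le 2^K-1$ collision operators are estimated by Proposition~\ref{estimatelemmacontinuity} exactly as in the proof of Proposition~\ref{lem: truncation0}, producing a factor $(C_1 T/\alpha^2)^{2^K}$ after summing the geometric series over $j_1,\dots,j_K$ (using $\sum_{J\le 2^K}$ and $n_k=2^k$), while the initial datum contributes $\|g_0\|_{L^\infty}$ via the maximum principle~(\ref{estimatemaxprinciple}) and the Gaussian upper bound~(\ref{estimatedata 0}). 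The restriction to ${\mathcal V}_{J_K}$ only helps (it can be dropped for an upper bound). Summing over the $\le K\cdot 2^K$ choices of which slot is constrained is absorbed into the constant $C$ (or into a harmless polynomial-in-$2^K$ prefactor dominated by $(CT/\alpha^2)^{2^K}$), which gives the claimed bound.

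\textbf{Step 3: the Boltzmann side.} The term $\bar R_{\eps,K}^{\delta,vel}$ is handled identically, using $\bar C_{s,s+1},\bar D^\dagger_{s,s+1}$, the continuity estimate of Proposition~\ref{estimatelemmacontinuity} for the $\bar Q^\dagger$ operators, the tensorized form~(\ref{specificformboltz}) of the Boltzmann solution and the $L^\infty$ bound of Proposition~\ref{prop: truncatedBoltztoBoltz}; no new idea is needed.

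\textbf{Main obstacle.} Nothing here is deep — the one point requiring care is the bookkeeping of exactly where the $1/\alpha$ factors sit: each collision kernel carries a $1/\alpha$ from the rescaled relative velocity, and the Gaussian in $v_s$ after the rescaled change of variables supplies a factor $\alpha$ only for the velocity integration, so that on the constrained slot the net gain is genuinely $\delta/\alpha^2$ and not $\delta/\alpha$ or $\delta$. Getting this accounting right — i.e. checking that the constrained slot loses precisely a factor $h/\delta$ relative to an unconstrained slot in the bound of Proposition~\ref{estimatelemmacontinuity} — is the only thing one must verify, and it follows directly from the form of the continuity estimates established in Appendix~\ref{subsec : continuity estimates}.
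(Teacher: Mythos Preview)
Your proposal is correct and follows the same route the paper indicates: the paper does not spell out a proof but simply says the bound follows from the continuity estimates of Appendix~\ref{subsec : continuity estimates}, and your telescoping over the location of the short time-gap, gaining $\delta/\alpha^2$ on that single slot while applying Proposition~\ref{estimatelemmacontinuity} on the others, is exactly the intended argument. The absorption of the $O(2^K)$ combinatorial factor from the choice of slot into the exponential $(CT/\alpha^2)^{2^K}$ is legitimate, and your handling of the Boltzmann side via~(\ref{specificformboltz}) and Proposition~\ref{prop: truncatedBoltztoBoltz} matches the paper's treatment of the analogous remainder in Proposition~\ref{prop: prunning}.
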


In the sequel, we will choose typically $\delta=\eps^{1/3}$, so that  in particular
\begin{equation}\label{choiceparameters}
{\delta \over \alpha} |\log \eps| \ll 1\quad \mbox{and} \quad \delta \gg \eps^{2/3}\,.
\end{equation}

 \section{Coupling pseudo-trajectories}
\label{recollisions}
 
\subsection{Collision trees}
\label{subsec: iterated Duhamel formula and collision trees}

In Section \ref{appendixprobaduhamel}, the iterated Duhamel series were interpreted in terms of 
pseudo-trajectories. A similar graphical representation holds for the series expansions~(\ref{duhamelpseudotraj1}) and~(\ref{duhamelpseudotrajboltz}) and we explain below how    pseudo-trajectories have to be modified to take into account the killing procedure.

Given a collision tree $a \in \cA_s$ (recall Definition \ref{trees-def}),   pseudo-dynamics 
start at time~$t$ from the coordinates $Y= (X,V,\Theta,\Omega)$ of the molecule (which has label 0)
and then go backward in time.
The pruning procedure and the time separation induce some constraints on the branching times.
\begin{Def}[Admissible sequences of times]
 \label{times-def}
 Let $s\geq 1$, $t\geq 0$ be fixed. An admissible sequence of times  $T_{1,s-1}=(t_i)_{1\leq i\leq s-1}$ is a decreasing sequence of $[0,t] $ 
 \begin{itemize}
 \item
 having at most $2^k$ elements in $[t- kh, t-(k-1)h]$ for~$k \in [0,K]$;
 \item and such that
 $ t_i - t_{i+1} \geq \delta  \hbox{ with } t_0 = t, t_s = 0\,.$
 \end{itemize}
 We will denote $\cT _{1,s-1}$ the set of such sequences.
 
 \end{Def}

\begin{Def}[Pseudo-trajectory]
\label{pseudotrajectory}
Fix~$s \geq 1$,  a collision tree~$a \in \cA_s$ and~$Y= (X,V,\Theta,\Omega)$, and consider a collection~$(T_{1,s-1}, \cN_{1,s-1}, V_{1,s-1}) \in \cT_{1,s-1} \times ({\mathbb S} \times B_{C_0|\log \eps|})^{s-1} $  of times, impact parameters and velocities,
where we identify  $\partial   \Sigma_\alpha$  to ${\mathbb S}$. 

 We   define recursively   pseudo-trajectories in terms of the  backward BBGKY dynamics as follows
\begin{itemize}
\item in between the  collision times~$t_i$ and~$t_{i+1}$   the particles follow the~$i+1$-particle backward flow with the partially absorbing reflection~{\rm(\ref{eq: killed bc})}; 
\item  at time~$t_i^+$, if $a_i \neq 0$,   the atom labeled~$i$ is adjoined to atom $a_i$ at position~$x_{a_i} + \eps \nu_{i}$ and 
with velocity~$v_{i}$. 
If $ (v_{i} - v_{a_i} (t_i^+)) \cdot \nu_i >0$, velocities at time $t_i^-$ are given by the scattering laws \eqref{V-def}.

{The pseudo-trajectory is  killed if some particles overlap.}

\item at time~$t_i^+$, if $a_i = 0$,  the atom labeled~$i$    is adjoined to the rigid body at position~$X+{\eps \over \alpha}R_{\Theta(t_i^+)}   r_{\alpha, i}  $ and 
with velocity~$v_{i}$. 

If $ (\alpha^{-1}  v_{i} - V (t_i^+)-\Omega(t_i^+)  R_{\Theta(t_i^+)}   r_{i}^\perp  ) \cdot R_{\Theta(t_i^+)}   n_{i} >0$, velocities at time $t_i^-$ are given by the scattering laws \eqref{VOmega-def}.
{The pseudo-trajectory is  killed if some particles overlap or   if~{\rm(\ref{good-collision1}),(\ref{good-collision2})} do not hold in the post-collisional case.}
\end{itemize}
We denote $\big(     Y,  Z_{s-1}\big) (a, T_{1,s-1}, \cN_{1,s-1}, V_{1,s-1},0)$ the initial configuration.  

\medskip

Similarly, we define    pseudo-trajectories associated with the modified Boltzmann hierarchy. 
These pseudo-trajectories evolve according to the backward Boltzmann dynamics as follows
\begin{itemize}
\item in between the  collision times~$t_i$ and~$t_{i+1}$   the particles follow the~$i+1$-particle backward free flow;
\item at time~$t_i^+$,  if $a_i \neq 0$, atom   $i$ is adjoined to atom $a_i$  at exactly the same position~$x_{a_i}$. Velocities are given by the laws {\rm(\ref{V-def})} if there is scattering.
\item at time~$t_i^+$,  if $a_i =0$, atom   $i$ is adjoined to the rigid body  at exactly the same position~$X$. Velocities are given by the laws {\rm(\ref{VOmega-def})} if there is scattering.

{  The pseudo-trajectory is  killed   if~{\rm (\ref{good-collision1}),(\ref{good-collision2})}
do not hold in the post-collisional case, and if~{\rm(\ref{good-collision3})} does not hold in the pre-collisional case.}

\end{itemize}
We denote $\big(   \bar Y,\bar Z_{s-1}\big) (a, T_{1,s-1}, \cN_{1,s-1}, V_{1,s-1},0)$ the initial configuration. \end{Def}

\begin{Def}[Admissible parameters]
\label{def: overlap}
Given~$Y = (X,V,\Theta,\Omega)   $ and a collision tree~$a \in \cA_s$,  the  set of admissible parameters are  defined by
\begin{align*}
G_s(a) := \Big\{(T_{1,s-1}, \cN_{1,s-1}, V_{1,s-1}) \in \cT_{1,s-1} \times ({\mathbb S} \times B_{C_0|\log \eps|})^{s-1}   \, \Big/ \, \\
\begin{aligned}
 &\hbox{ the pseudotrajectory }  (Y, Z_{s-1})(a, T_{1,s-1}, \Omega_{1,s-1}, V_{1,s-1},\tau)\\
 & \, \hbox{ is defined backwards up to time 0 with }  {\mathcal E}_{s} (V  , \Omega , V_{s-1}) (0) \leq C_0^2 |\log \eps| ^2 \Big \} \, ,
  \end{aligned} 
\end{align*}
and
\begin{align*}
\bar G_s(a) := \Big\{(T_{1,s-1}, \cN_{1,s-1}, V_{1,s-1}) \in \cT_{1,s-1} \times ({\mathbb S} \times B_{C_0|\log \eps|})^{s-1}   \, \Big/ \, \\
\begin{aligned}
 &\hbox{ the pseudotrajectory }  (\bar Y, \bar Z_{s-1})(a, T_{1,s-1}, \Omega_{1,s-1}, V_{1,s-1} ,\tau)\\
 & \, \hbox{ is defined backwards up to time 0 with }  {\mathcal E}_{s} (V  , \Omega , V_{s-1}) (0) \leq C_0^2 |\log \eps| ^2  \Big \} \, .
  \end{aligned}
\end{align*}
\end{Def}

We recall following important semantic distinction.
\begin{Def}[Collisions/Recollisions]
\label{def: recollisions}
The term collision will be used only for the creation of a new atom, i.e. for a branching in the collision trees. 
A shock between two particles in the  backward dynamics will be called a recollision.
\end{Def}
Note that no recollision can occur in the Boltzmann hierarchy as the particles have zero diameter. 

\medskip

With these notations the iterated Duhamel formula (\ref{duhamelpseudotraj1}) and (\ref{duhamelpseudotrajboltz}) for the first marginals  can be rewritten 
\begin{equation}
\label{duhamelpseudotraj}
\begin{aligned}
\tilde  f_{N+1}^{(1,K)} (t) =\sum_{s=0}^{N}   N& \dots\big(N-(s-2)\big) \eps^{s-1}\sum_{a \in \cA_s}   \int_{G_s(a)} dT_{1,s-1}   d\cN_{1,s-1}  dV_{1,s-1} \\
 &\times 
\Big( \prod_{i=1}^{s-1}   b_i\Big)
f_{N+1,0}^{(s)}\big ((Y,Z_{s-1})(a, T_{1,s-1}, \cN_{1,s-1}, V_{1,s-1},0)\big)  \, ,
 \end{aligned}
\end{equation}
and
\begin{equation}
\label{duhamelpseudotraj2}
 \begin{aligned}
  \tilde f^{(1,K)} _\eps(t) =\sum_{s=1}^{\infty}   &  \sum_{a \in \cA_s}  \int_{\bar G_s(a)}  dT_{1,s-1}   d\cN_{1,s-1}  dV_{1,s-1}  \\
 &\times  \Big( \prod_{i=1}^{s-1}   b_i\Big)f_{0}^{(s)}\big ((\bar Y,\bar Z_{s-1})(a, T_{1,s-1}, \cN_{1,s-1}, V_{1,s-1},0)\big)  \, ,
 \end{aligned}
\end{equation}
denoting  
$$\begin{aligned}
& b_{i} : =\alpha^{-1}
 (v_i -v_{a_i} (t_i)) \cdot \nu_i  \quad \hbox{ if }\quad a_i \neq 0 \,  ,\\
&b_{i} :=\alpha^{-1}
(\alpha^{-1} v_i -V (t_i)- \Omega(t_i) \,   r_{ \Theta }^\perp) \cdot    n_{ \Theta }  \quad\hbox{ if } \quad a_i =0 \, .
\end{aligned}
$$
We stress the fact that the contributions of the loss and gain terms in \eqref{duhamelpseudotraj} and~\eqref{duhamelpseudotraj2} are coded
in the sign of $b_i$.

\bigskip

In order to show that  $\tilde  f_{N+1}^{(1,K)} $ and $ \tilde  f^{(1,K)} _\eps$
   are close to each other when $N$ diverges, we shall prove in Section~\ref{endoftheproofthm1} that the pseudo-trajectories $(Y,Z_{s-1})$ and~$(\bar Y,\bar Z_{s-1})$ can be coupled   up to a small error  due to the micro-translations of the added particle at each collision time $t_k$, provided that   the set of parameters leading to the following events is discarded:
\begin{itemize}
\item  recollisions on the interval $]t_k, t_{k-1}[$ along the flow ${\bf S}^\dagger_k$   (which do not occur for the free flow $\bar {\bf S}_k$);
\item  killing the Boltzmann or the BBGKY  pseudo-trajectory without killing  the other.
\end{itemize}

\subsection{Geometry of the recollision sets}
\label{subsec: Control of recollisions}

 The next step is to construct a small set of deflection angles and velocities  such that the pseudo-trajectories $(Y,Z)$ induced by the complementary of this set set have no recollision and therefore remain very close to the pseudo-trajectories $(\bar Y, \bar Z)$ associated with the free flow. These good pseudo-trajectories will be identified  by a recursive process selecting for each $k$, good configurations with $k$ atoms.

By definition, a good configuration with $k$ atoms is such that the atoms and the rigid body remain at a distance $\eps_0 \gg \eps/\alpha $ one from another for a time $T$ 
$$\begin{aligned}
\cG_k(\eps_0) := \Big\{ (Y,Z_{k-1})\in {\mathcal V}_k \; 
\Big| \; \forall u \in [0,T]  ,\quad \forall i\neq j ,\quad  |(x_i -u \,{ v_i\over \alpha} ) - ( x_j- u \,{ v_j\over \alpha}) |\geq \eps_0 \\
 \hbox{ and } |(X-u \, V)- (  x_j- u \, {v_j\over \alpha}) |\geq \eps_0  \Big\}\,.
\end{aligned}
$$
On  $\cG_k(\eps_0)$, ${\bf S}_{k}^\dagger $ coincides with the free flow.    

If   the configurations $(Y, Z_{k-1})$, $(\bar Y,  \bar Z_{k-1})$ are  such that  $Y=\bar Y$ and 
$$
\forall i \in [1,k-1] \, , \qquad |x_i - \bar x_i| \leq  \eps_0/2    \, ,
\qquad 
v_i = \bar v_i
$$
and if $(\bar Y,  \bar Z_{k-1})$ belongs to $\cG_k(\eps_0)$, then  $ (Y,Z_{k-1})\in \cG_k(\eps_0/2)$   and there is no recollision as long as no new particle is adjoined. 

\medskip

\noindent We are now going to show that   good configurations are stable by adjunction of a 
$k^{\text{th}}$-atom.
More precisely, let $(\bar Y,  \bar Z_{k-1})$  and  
$(Y, Z_{k-1})$ be in~$\cG_k(\eps_0)$ with  
\begin{equation}
\label{eq: example}
Y = \bar Y, \qquad 
 \sup _{j\leq k-1} |x_j - \bar x_j | \leq { 2 r_{max} \eps \over \alpha} + (k-1) \eps \, , \qquad V_{k-1} = \bar V_{k-1}\,.
\end{equation}
Then,
by  choosing the velocity $v_{k}$ and the deflection angle 
$\nu_{k}$ of the new particle $k$ outside a bad set $\cB_{k} (\bar Y,  \bar Z_{k-1})$, both configurations
$(Y, Z_{k})$ and  $(\bar Y,  \bar Z_{k})$ will remain close to each other.
Immediately after the adjunction, the colliding particles $a_k$ and~$k $ will not be at distance~$\eps_0$, but~$v_{k }, \nu_{k }$ will be chosen such that the particles drift rapidly far apart and
after a short time $\delta>0$ the configurations $(Y, Z_{k})$ and  $(\bar  Y,   \bar Z_{k})$
will be again in the good sets $\cG_{k+1} (\eps_0/2)$ and~$\cG_{k+1} (\eps_0)$.
By construction, $Y = \bar Y$ at all times as long as there is no recollision.
The following proposition defines  and quantifies the bad sets outside of which particles drift rapidly far apart.

For the sake of simplicity, we assume without loss of generality that the time of addition is $t_k = 0$. Note that no overlap can occur  here since  the $k$ existing particles  are far from each other.

\begin{Prop}
\label{geometric-prop}
We   fix   a parameter~$ \eps_0\gg \eps/\alpha$ and assume that~{\rm(\ref{choiceparameters})} holds. Given~$(\bar Y,  \bar Z_{k-1}) $ in~$ \cG_k(\eps_0)$, there is a 
subset~$\cB_{k} (\bar Y,  \bar Z_{k-1})$ 
of~${\mathbb S} \times B_ {C_0|\log \eps|} $  of small measure
\begin{equation}
\label{pathological-size}
\int  \indc_{ {\mathcal V} _k } \indc_{ \cB_{k} (\bar Y,  \bar Z_{k-1})} b_k dv_k d\nu_k   \leq  \frac{CT^2 k^2}{\kappa_{min}\alpha^5 }  |\log \eps|  ^7 \left(  {\eps \over  \alpha \eps_0}+  {  \eps_0 \alpha \over \delta}  +{\eps^{1/3} \over \alpha}  \right) 
\end{equation}
such that good configurations close to $(\bar Y,  \bar Z_{k-1}) $  are stable by adjunction of a collisional particle close to any particle~${a_k}$, and remain close to~$(\bar Y,  \bar Z_{k}) $ in the following sense. 

\smallskip
\noindent 	 
Let~$(Y,  Z_{k-1})$ be a configuration with $k-1$ atoms, satisfying \eqref{eq: example}.

$ \bullet $ If  $1\leq a_k\leq k-1$, a new atom with velocity $v_{k}$ is added to $(Y,  Z_{k-1})$
at $x_{a_k} + \eps \nu_{k}$, and to $(\bar Y,  \bar Z_{k-1})$ at $\bar x_{a_k}$. Post-collisional velocities $(v_{a_k}, v_k)$ are   updated by scattering to pre-collisional velocities.

Then, if $(\nu_{k},v_{k}) \in {\mathbb S} \times B_ {C_0|\log \eps|} \setminus \cB_{ k} (\bar Y,  \bar Z_{k-1})$,
the configuration $(Y, Z_k)$ has no  recollision under  the backward flow, and $(\bar Y,  \bar Z_{k})$ becomes a good configuration after a time lapse $\delta$:
$$ (\bar Y, \bar Z_k) (\delta) \in \cG_{k+1} (\eps_0) \,.$$
Moreover,  for all $t\in [0,T]$
\begin{equation}
\label{goodconfig1}
Y = \bar Y\, ,\quad   \sup _{j\leq k} |x_j - \bar x_j | \leq {2  r_{max}\eps \over \alpha} + k \eps  \, ,\quad \hbox{ and } \quad V_{k} = \bar V_k \, .
\end{equation}

$ \bullet$ If $a_k = 0$, a new atom with velocity $v_{k}$ is added to $(Y,  Z_{k-1})$
at $X + (\eps/\alpha)   r_{\alpha, \Theta }$, and to~$(\bar Y,  \bar Z_{k-1})$ at $\bar X$. Pre-collisional configurations are killed for the Boltzmann dynamics if~{\rm(\ref{good-collision3})} does not hold.
Post-collisional configurations are killed for both dynamics if ~{\rm(\ref{good-collision1})(\ref{good-collision2})} do not hold, and updated by scattering to pre-collisional velocities if not.

Then, if $(\nu_{k},v_{k}) \in [0,L_\alpha]  \times B_ {C_0|\log \eps|} \setminus  \cB_{0}( \bar Y,  \bar Z_{k-1})$,\\
- either both pseudo-trajectories are killed before time $\delta$;\\
- or the configuration $(Y, Z_k)$ has no collision under  the backward flow, and $(\bar Y,  \bar Z_{k-1}) $becomes a good configuration after a time lapse $\delta$:
$$ (\bar Y, \bar Z_k) (\delta) \in \cG_{k+1}(\eps_0) \, . $$
Moreover
\begin{equation}\label{goodconfig2}
Y = \bar Y\, ,\quad    \sup _{j\leq k} |x_j - \bar x_j | \leq {2  r_{max}\eps \over \alpha} + k \eps  \, ,\quad \hbox{ and } \quad V_{k} = \bar V_k \, .
\end{equation}
\end{Prop}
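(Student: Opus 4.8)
The plan is to treat the two cases ($1\le a_k\le k-1$ and $a_k=0$) separately, but in both the mechanism is the same: after adding the $k$-th atom we must show that, for $(\nu_k,v_k)$ outside a controlled bad set, the new atom drifts out of the $\eps_0$-neighbourhood of every existing particle and of the rigid body within time $\delta$, so that $({\bf S}^\dagger_{k+1}$ or the free flow agree and no recollision occurs thereafter (using that $(\bar Y,\bar Z_{k-1})\in\cG_k(\eps_0)$ already prevents recollisions among the first $k$ particles). First I would fix the colliding partner $a_k$ and write the relative trajectory of atom $k$ with respect to $x_{a_k}$ (resp.\ with respect to $X$ when $a_k=0$): in the rescaled variables atom $k$ moves with velocity $v_k/\alpha$ which is of size $|\log\eps|/\alpha$, large, while the partner moves at $O(|\log\eps|)$, so the separation grows linearly at rate $\sim|v_k-v_{a_k}|/\alpha$. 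The bad directions are those for which this relative velocity is nearly tangent to one of the $k-1$ spheres of radius $\eps_0$ centred at the other existing atoms, or tangent to the security ball of the rigid body: each such ``forbidden cone'' of impact parameters $\nu_k$ and velocities $v_k$ has angular width $\sim \eps/(\alpha\eps_0)$ for the atom-atom occlusions (the micro-translation by $\eps\nu_k$ versus $0$ in the Boltzmann picture) plus a contribution $\sim \eps_0\alpha/\delta$ coming from requiring that after the short time $\delta$ the configuration has genuinely re-entered $\cG_{k+1}(\eps_0)$, and a term $\eps^{1/3}/\alpha=\delta/\alpha$ from the time separation and the slow-velocity exclusion near the body. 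Summing over the $k-1$ partners and integrating against the cross-section $b_k$, which contributes the extra $|\log\eps|/\alpha$ and the various $T$, $k$ powers through the number of time layers, yields exactly the bound~\eqref{pathological-size}; the factor $\kappa_{min}^{-1}$ enters only in the $a_k=0$ case, from the curvature lower bound needed to linearise the body's boundary as in the parabola argument of Proposition~\ref{norecollision}.

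For the case $a_k=0$ I would moreover invoke Proposition~\ref{norecollision} directly: on the complement of $\cB_0(\bar Y,\bar Z_{k-1})$ we arrange that the post-collisional velocities satisfy~\eqref{good-collision1} and~\eqref{good-collision2} (otherwise both pseudo-trajectories are killed, which is the first alternative), and~\eqref{good-collision3} holds in the pre-collisional Boltzmann configuration; then Proposition~\ref{norecollision} guarantees no direct recollision with the body, and the scale separation handles the drift away from the $k-1$ atoms just as in the first case. The identity $Y=\bar Y$ for all times as long as there is no recollision is immediate from the collision laws~\eqref{eq: collision laws}, since the rigid body's state is updated identically in both dynamics (the position $x_k$ versus $X$ of the new atom does not enter the update of $(V,\Omega,\Theta)$ to leading order, and the truncation indicators depend only on $(v_k,V,\Omega,\Theta,\sigma)$, which coincide). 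The propagation of the position gap, $\sup_{j\le k}|x_j-\bar x_j|\le 2r_{max}\eps/\alpha + k\eps$, follows by induction: the new atom is created at $X+(\eps/\alpha)r_{\alpha,\Theta}$ versus $\bar X$, a discrepancy of at most $r_{max}\eps/\alpha$ doubled to account for the body's extent, and each subsequent atom-atom branching adds at most $\eps$ from the micro-translation by $\eps\nu$, while free transport (identical velocities) preserves the gap.

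The main obstacle I expect is the careful bookkeeping of the bad set in the $a_k=0$ case: there the ``partner'' is an extended, rotating, strictly convex body rather than a point, so the forbidden set of $(\nu_k,v_k)$ is not simply a cone but the preimage under the (nonlinear, $\Theta$-dependent) collision map of the set of configurations that would lead either to a recollision or to a failure of~\eqref{good-collision3}; showing this set is contained in a union of tubes whose total measure is $O\big((\eps_0\alpha/\delta + \eps^{1/3}/\alpha + \eps/(\alpha\eps_0))|\log\eps|^7 T^2 k^2/(\kappa_{min}\alpha^5)\big)$ requires combining the curvature estimate of Proposition~\ref{norecollision}, the lower bound~\eqref{velocitycontactpoint} on the normal relative velocity coming from the non-pathology condition~\eqref{good-collision1}, and a change of variables from $(\sigma_\alpha,v_k)$ to scattering data whose Jacobian must be controlled away from degeneracies (which is where $\kappa_{min}$ and the energy truncation $|\log\eps|$ are used). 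Once that measure estimate is in hand, the stability statements $(\bar Y,\bar Z_k)(\delta)\in\cG_{k+1}(\eps_0)$ and~\eqref{goodconfig1}--\eqref{goodconfig2} are routine consequences of the linear drift and the triangle inequality.
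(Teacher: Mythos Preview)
Your overall structure is correct and matches the paper closely: identify the bad set as a union of thin rectangles/cylinders coming from (i) requiring $(\bar Y,\bar Z_k)(\delta)\in\cG_{k+1}(\eps_0)$ and (ii) forbidding recollisions of the new atom with the other $k-1$ particles, then translate these conditions on pre-collisional velocities back to $(\nu_k,v_k)$ via Carleman-type parametrizations. Two minor corrections: the factor $\kappa_{min}^{-1}$ enters through the scattering Jacobian $d\nu^*=\kappa^{-1}\,dn_\Theta$ in the atom--body Carleman change of variables (Lemma~\ref{am-scattering}), not through the parabola argument; and the factor $(T/\alpha)^2$ counts periodic images $q\in\Z^2\cap B_{C_0|\log\eps|T/\alpha}$, not ``time layers''.

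There is, however, a genuine gap in your pre-collisional $a_k=0$ branch. You propose to arrange that~\eqref{good-collision3} holds on the complement of $\cB_k$, i.e.\ to put the entire set $\{\text{\eqref{good-collision3} fails}\}$ into the bad set. But this set is not controlled by any power of~$\eps$: by the contrapositive of Proposition~\ref{norecollision} it is only contained in $\{\text{\eqref{good-collision1} or \eqref{good-collision2} fails}\}$, whose $b_k$-weighted measure is of order $\alpha^{2\eta}$. That is far too large for Proposition~\ref{prop: 1 recoll}, where the bad-set bound is multiplied by $(CT/\alpha^2)^{2^{K+1}}$ and must still beat a power of~$\eps$. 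The paper's resolution is different and is the main new idea here: one puts only $\{|v_k-\alpha V|\le\eps^{1/3}\}$ into $\cB_k$ (this is the origin of the $\eps^{1/3}/\alpha$ term). On the remainder of $\{\text{\eqref{good-collision3} fails}\}$, the Boltzmann pseudo-trajectory is killed by the truncation in $\bar D^{-,\dagger}$, and one shows that the BBGKY pseudo-trajectory is \emph{also} killed: the backward two-body recollision with the body must occur within time $t_{max}\le 2r_{max}\eps/\eps^{1/3}=C\eps^{2/3}\ll\delta$, during which no other particle can interfere (they are at distance $\ge\eps_0-\eps_1$); at that recollision the absorbing boundary condition~\eqref{eq: killed bc} of the flow $\mathbf S^\dagger$ necessarily triggers, since if~\eqref{good-collision1}--\eqref{good-collision2} held there, Proposition~\ref{norecollision} applied forward from the recollision point would forbid the atom's return to the body at $t_k$, a contradiction. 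Thus both pseudo-trajectories land in the ``killed'' alternative of the Proposition without enlarging $\cB_k$.
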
 

%
%

 \begin{proof}[Proof of Proposition~{\rm\ref{geometric-prop}}]
 The proof follows closely the arguments in \cite{GSRT,BGSR1}. The main difference lies in the possible killing of trajectories to avoid pathological recollisions with the molecule.

 The conditions for $(\bar Y, \bar Z_k)$ to be a good configuration after a  time lapse $\delta$ can be written simply
$$
\forall u \geq \delta, \, \forall j \notin \{ k, a_k\},\, \forall q \in \Z^2 \cap B_{C_0|\log \eps| T/\alpha} \,,\quad \begin{cases}
  | q - \frac u\alpha (v_k^- - v_{a_k}^- )| \geq \eps_0\, , \\
   | q + \bar x_{a_k}  - \bar x_j- \frac u\alpha (v_k^- - v_j )| \geq \eps_0 \,,\\
   | q + \bar x_{a_k}  - \bar x_j- \frac u\alpha (v_{a_k} ^- - v_j )|\geq \eps_0\,,   
\end{cases}
$$
   denoting     with a slight abuse $\bar x_0 = \bar X$ and $v_0 = \alpha V$.
   This means that $v_k^-$ and $v_{a_k} ^-$ have to be outside   a union of $(C_0|\log \eps| T/\alpha)^2$ rectangles of width at most $\eps_0\alpha / \delta$.
   Note   that the last condition on $v_{a_k}^- $ is not necessary in the absence of scattering, since we already know that the initial configuration is a good configuration.

\medskip

   The conditions for $( Y,  Z_k)$ to have no recollision  are a little bit more involved. Note that, provided there is no recollision, since the velocities are equal 
   $$ V=\bar V\, , \quad \Omega = \bar \Omega \quad\hbox{ and } \quad V_k = \bar V_k$$
  then $( Y,  Z_k)$ will stay close to $(\bar Y, \bar Z_k)$ and therefore it will be a good configuration after a  time lapse $\delta$.
 We therefore only have to check that~$( Y,  Z_k)$  has no recollisions on~$[0,\delta]$.
 \noindent
 \underline{Case of a collision between two atoms}.

By definition of pre-collisional velocities $(v_{a_k}^-, v_k^-)$, we know that for short times $u\leq \delta$, atoms $k$ and $a_k$ will not recollide directly one with the other.
 Indeed,  they  move away from each other, and no periodic recollision may occur since all velocities are bounded by $C_0|\log \eps|$   and there holds $\delta |\log \eps|/\alpha  \ll 1$.

We then need to    ensure that for short times $u\leq \delta$, atoms $a_k$ and $k$ cannot recollide with another atom $j\neq a_k,k$ nor with the rigid body $j=0$. A necessary condition for such a recollision to hold  is that there exist $u\geq 0$ and $q \in \Z^2 \cap B_{C_0|\log \eps| T/\alpha} $ such that 
  $$\begin{aligned}
   | q + \bar x_{a_k}  - \bar x_j- \frac u\alpha (v_k^- - v_j )| \leq {2 r_{max}\eps \over \alpha} +2 (k-1) \eps   \,,\\
   | q + \bar x_{a_k}  - \bar x_j- \frac u\alpha (v_{a_k} ^- - v_j )| \leq  {2 r_{max}\eps \over \alpha} +2 (k-1) \eps  \,.\\
   \end{aligned}$$
    Since $|\bar x_{a_k}  - \bar x_j+q |\geq \eps_0$ for all $q \in \Z^2$, this means that $v_k^-$ or $v_{a_k} ^-$ has to belong to a union of $(C_0|\log \eps| T/\alpha )^2$ cylinders of opening 
 $3\eps_1/\eps_0$, defining
 $$
 \eps_1:= 2  r_{max} \eps/\alpha+2 (k-1) \eps\,.
 $$

  \medskip
 \noindent
 \underline{Case of a collision between an atom and the rigid body}.

 For post-collisional configurations, either (\ref{good-collision1}) and (\ref{good-collision2}) hold and thanks to Proposition~\ref{norecollision} there will be no direct recollision between $k$ and the rigid body in the backward dynamics, or (\ref{good-collision1}) or (\ref{good-collision2}) fail to hold, in which case both the Boltzmann and the BBGKY pseudo-trajectories are killed.

 The case of pre-collisional configurations is a little bit more involved. If (\ref{good-collision3}) is satisfied, there will be no direct recollision between $k$ and the rigid body in the backward dynamics.  If (\ref{good-collision3}) is  not satisfied, we first exclude  small relative velocities 
 \begin{equation}\label{relativevelocities13}
  |v_k - \alpha V | \leq \eps^{1/3}\,.
  \end{equation}
Then the geometric argument (\ref{tmax})
in  Section \ref{conditioning-geometry} shows that the first recollision   between the rigid body  and the particle $k$ (which exists by definition) has to happen before time (recalling that  Section \ref{conditioning-geometry} was under a scaling by~$\alpha/\eps$) 
$$ t_{max} := {2 r_{max} \eps  \over \min |v_k - \alpha V|} \leq C \eps^{2/3} \ll \delta \,.$$
As all other particles are at a distance at least~$\eps_0 - \eps_1 - C_0|\log \eps| t_{max}/\alpha $ from the rigid body, there cannot be any interaction, changing the two-body dynamics during this time lapse.
This means that the BBGKY pseudo-trajectory is killed as well  before time $\delta$.

   Then we check that all other situations (recollisions  with another atom~$j\neq k$) do not use the geometry of particles (replacing the rigid body by   a security sphere of radius~$r_{max}\eps /\alpha$ around it). So according to the previous paragraph  we find that  $v_k^-$ and $V^-$ have to be outside a union of $(C_0|\log \eps| T/\alpha)^2$ rectangles of sizes $ C|\log \eps|^2 \eps_1/\eps_0$. We also need, as in~(\ref{relativevelocities13}), for~$V^-$ to be  outside a union of $ k-1$ balls of radius~$\eps^{1/3} /\alpha$.

 \bigskip

From these conditions on the pre-collisional velocities, we now deduce the definition and estimate on the bad sets $ \cB_{k} (\bar Y,  \bar Z_{k-1}).$

 If $a_k \neq 0$,  using Lemma \ref{aa-scattering} in the Appendix to translate these conditions in terms of~$(\nu_k, v_k)$, we   obtain  that there is a 
subset~$\cB_{k, a_k} (\bar Y,  \bar Z_{k-1})$ 
of~${\mathbb S} \times B_ {C_0|\log \eps|} $ with
 $$\int_{{\mathcal V} _k} \indc_{ \cB_{ k, a_k} (\bar Y,  \bar Z_{k-1})} b_k dv_k d\nu_k   \leq  {Ck \over \alpha} \left({C_0|\log \eps| T\over \alpha} \right)^2 (C_0 |\log \eps|  )^2 |\log \eps| \left((C_0| \log \eps|)^2 {\eps_1\over \eps_0} +C_0 |\log \eps| {\eps_0 \alpha  \over \delta} \right)$$
 such that the addition of an atom close to $a_k$ with $(\nu_k, v_k) \notin  \cB_{ k, a_k} (\bar Y,  \bar Z_{k-1})$ provides a  good pseudo-trajectory.

 If $a_k= 0$, we need to compute the pre-image of the bad sets by the scattering. By Lemma~\ref{am-scattering} in the Appendix, we   obtain that there is a set $\cB_{ 0} (\bar Y,  \bar Z_{k-1})$ of measure
 $$
\begin{aligned}
 \int_{{\mathcal V} _k} \indc_{ \cB_{ k,0} (\bar Y,  \bar Z_{k-1})} b_k dv_k d\nu_k   &\leq  {Ck\over \kappa_{min} \alpha^4 } \left({C_0|\log \eps| T\over \alpha} \right)^2(C_0  |\log \eps|) ^2 |\log \eps|\\
&\quad  \times\left((C_0|\log \eps|^2 ){\eps_1\over \eps_0} +C_0 |\log \eps| {\eps_0 \alpha \over \delta} +{\eps^{\frac13}\over \alpha} \right)
\end{aligned} $$
 such that the addition of an atom close to the rigid body with $(\nu_k, v_k) \notin \cB_{ k,0}$ provides a good pseudo-trajectory.
 Proposition~\ref{geometric-prop} is proved.
 \end{proof}

\subsection{Truncation of the collision parameters}\label{v-nu-truncation}


Thanks to Proposition \ref{geometric-prop} we know that given a good configuration~$(\bar Y,\overline Z_{s-1})$, 
  if the adjoined particle does not belong to~$ {\mathcal B}_{ s }( \bar Y ,\overline Z_{s-1}) $ then the resulting configuration~$(\bar Y,\overline Z_{s})$ is again a good configuration after the time~$\delta$. As a consequence we can define recursively the  set of good parameters as follows~:
\begin{Def}[Good parameters]
\label{def: overlap}
Given~$Y = (X,V,\Theta,\Omega)   $ and a collision tree~$a \in \cA_s$,  we say that $(T_{1,s-1}, \cN_{1,s-1}, V_{1,s-1})$ is a sequence of good parameters if
\begin{itemize}
\item $T_{1,s-1}$ is a sequence of admissible times;
\item for all $k \in \{1,\dots, s-1\}$,  the following recursive condition holds
$$(\nu_k, v_k) \in ({\mathbb S} \times B_{C_0|\log \eps|} )\setminus \cB_k ( (\bar Y,  \bar Z_{k-1}) (t_k)) ;$$
\item the total energy at time 0 satisfies
 $${\mathcal E}_{s} (V  , \Omega , V_{s-1}) (0) \leq C_0^2 |\log \eps| ^2\,.$$
\end{itemize}
We denote by $G_s^0(a)$ the set of good parameters.
\end{Def}

We the define the approximate BBGKY and Boltzmann solutions by~:
\begin{equation}
\label{duhamelpseudotrajbis}
\begin{aligned}
\tilde  f_{N+1}^{(1,K),0} (t) =\sum_{s=0}^{N}   N& \dots\big(N-(s-2)\big) \eps^{s-1}\sum_{a \in \cA_s}   \int_{G^0_s(a)} dT_{1,s-1}   d\cN_{1,s-1}  dV_{1,s-1} \\
 &\times 
\Big( \prod_{i=1}^{s-1}   b_i\Big)
f_{N+1,0}^{(s)}\big ((Y,Z_{s-1})(a, T_{1,s-1}, \cN_{1,s-1}, V_{1,s-1},0)\big)  \, ,
 \end{aligned}
\end{equation}
and
\begin{equation}
\label{duhamelpseudotrajbis2}
 \begin{aligned}
 \tilde f^{(1,K),0} _\eps(t) =\sum_{s=1}^{\infty}   &  \sum_{a \in \cA_s}  \int_{ G^0_s(a)}  dT_{1,s-1}   d\cN_{1,s-1}  dV_{1,s-1}  \\
 &\times  \Big( \prod_{i=1}^{s-1}   b_i\Big)f_{0}^{(s)}\big ((\bar Y,\bar Z_{s-1})(a, T_{1,s-1}, \cN_{1,s-1}, V_{1,s-1},0)\big)  \, .
 \end{aligned}
\end{equation}

\medskip
The results proved in  this   paragraph  imply directly the following proposition, choosing~$\eps_0= \eps^{2/3},\delta = \eps^{1/3}$, and recalling that~$\alpha \gg |\log \eps|$.
\begin{Prop} 
\label{prop: 1 recoll}
The  contribution of pseudo-trajectories involving  recollisions  is bounded by
 $$
 \begin{aligned}
 \forall t\in [0,T] \, , \quad  \|  \tilde  f_{N+1}^{ (1,K) } (t) - \widetilde f_{N+1}^{ (1,K) , 0} (t) \|_{L^1} &+ \| \widetilde  f_{\eps}^{ (1,K) } (t) - \tilde f_{\eps}^{ (1,K) , 0} (t) \|_{L^1} \\
&\quad   \leq  \|g_0\|_{L^\infty}  
  \Big( \frac{CT }{\alpha^2} \Big)^{  2^{K+1}  }    \eps^{1/4}
    \, .
\end{aligned}
$$  
\end{Prop}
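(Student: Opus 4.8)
The strategy is to compare the two representations~\eqref{duhamelpseudotraj}--\eqref{duhamelpseudotraj2} of $\tilde f^{(1,K)}_{N+1}$ and $\tilde f^{(1,K)}_\eps$ with the ``good'' versions~\eqref{duhamelpseudotrajbis}--\eqref{duhamelpseudotrajbis2}, in which the collision parameters are restricted to the recursively defined set $G^0_s(a)$. The difference between $G_s(a)$ (resp.\ $\bar G_s(a)$) and $G^0_s(a)$ consists exactly of those parameter configurations for which, at some branching step $k$, the adjoined atom falls into the bad set $\cB_k\big((\bar Y,\bar Z_{k-1})(t_k)\big)$ constructed in Proposition~\ref{geometric-prop}. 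So the plan is: (i) telescope the difference over the branching index $k$ at which the first bad parameter occurs; (ii) on each such term, bound the contribution of the bad set at step $k$ using the measure estimate~\eqref{pathological-size}, while controlling all the other (good) branchings and the remaining free/transport evolution by the continuity estimates of Proposition~\ref{estimatelemmacontinuity}; (iii) sum over $k$ and over collision trees $a\in\cA_s$, using the combinatorial bound $|\cA_s|\le s!$ absorbed into the geometric series, and finally optimize in the parameters $\eps_0=\eps^{2/3}$, $\delta=\eps^{1/3}$.

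More concretely, first I would show by an induction on $k$ (as in \cite{GSRT,BGSR1}) that if at every branching time $t_1>\dots>t_{k-1}$ the parameters are good, then the BBGKY and Boltzmann pseudo-trajectories satisfy the closeness relations~\eqref{goodconfig1}--\eqref{goodconfig2}, in particular $Y=\bar Y$, $V_{k-1}=\bar V_{k-1}$, and $\sup_j|x_j-\bar x_j|\le 2r_{max}\eps/\alpha+(k-1)\eps$, and that both $(Y,Z_{k-1})\in\cG_k(\eps_0/2)$ and $(\bar Y,\bar Z_{k-1})\in\cG_k(\eps_0)$. This is the content of Proposition~\ref{geometric-prop}; I would just invoke it. Then I would write
$$
\tilde f^{(1,K)}_{N+1}(t)-\tilde f^{(1,K),0}_{N+1}(t)
=\sum_{k=1}^{s-1}\ (\text{term where step }k\text{ is the first bad one}),
$$
and similarly for the Boltzmann side. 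In the $k$-th term, the branchings $1,\dots,k-1$ are integrated over good parameters, the $k$-th branching is integrated over $\indc_{\cB_k}$, and the subsequent branchings $k+1,\dots,s-1$ together with the remaining time evolution are simply bounded in $L^\infty$ (against the Gaussian reference measures, using~\eqref{estimatedata 0}) by the continuity estimate of Proposition~\ref{estimatelemmacontinuity}, which produces a factor $(CT/\alpha^2)^{\#\text{branchings}}$. The crucial gain comes only from the $k$-th branching: integrating the collision cross-section $b_k$ against $\indc_{\cV_k}\indc_{\cB_k}$ over $(\nu_k,v_k)$ gives, by~\eqref{pathological-size}, a factor
$$
\frac{CT^2k^2}{\kappa_{min}\alpha^5}|\log\eps|^7\Big(\frac{\eps}{\alpha\eps_0}+\frac{\eps_0\alpha}{\delta}+\frac{\eps^{1/3}}{\alpha}\Big)
$$
rather than the $O(1/\alpha^2)$ coming from an unrestricted branching. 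Because the number of terms in the tree decomposition at level $\le 2^K$ is bounded (the admissible-times constraint of Definition~\ref{times-def} forces $s\le 2^{K+1}$), summing over $k$, over $a\in\cA_s$, and over $s$ yields a prefactor $(CT/\alpha^2)^{2^{K+1}}$ times the small bracket above. With $\eps_0=\eps^{2/3}$ and $\delta=\eps^{1/3}$ the bracket is $O(\eps^{1/3}/\alpha+\eps^{1/3}/\alpha)=O(\eps^{1/3}/\alpha)$; since $\alpha\gg|\log\eps|$ and we may crudely absorb the polynomial-in-$|\log\eps|$ and $\alpha^{-5}$ factors into the $(CT/\alpha^2)^{2^{K+1}}$ prefactor, we end up with the claimed bound $\|g_0\|_{L^\infty}(CT/\alpha^2)^{2^{K+1}}\eps^{1/4}$ (any exponent $<1/3$ works; $1/4$ is a safe choice after absorbing the logarithmic losses).

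The main obstacle is the bookkeeping in step (ii): one must verify that restricting the $k$-th branching to $\cB_k$ really does only cost the measure~\eqref{pathological-size}, i.e.\ that the remaining branchings $k+1,\dots,s-1$ can be estimated \emph{uniformly} over the bad configuration at step $k$ without re-using any smallness, and that the energy truncation $\indc_{\cV_{J_K}}$ propagates correctly through the scattering so that all velocities stay bounded by $C_0|\log\eps|$ (needed to make the geometric estimates of Proposition~\ref{geometric-prop} applicable). This is exactly the same mechanism as in \cite{GSRT,BGSR1}; the only genuinely new point is that, for a branching onto the rigid body ($a_k=0$), the bad set $\cB_0$ also has to account for the pre-collisional recollisions excluded by Proposition~\ref{norecollision} and by the condition~\eqref{good-collision3}, but this has already been folded into~\eqref{pathological-size}, so no extra work is needed here beyond invoking Proposition~\ref{geometric-prop}.
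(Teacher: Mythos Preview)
Your proposal is correct and follows essentially the same approach as the paper, which simply states that the proposition follows directly from the results of the section with the choices $\eps_0=\eps^{2/3}$, $\delta=\eps^{1/3}$. Your write-up spells out in detail the telescoping over the first bad branching index and the use of the measure bound~\eqref{pathological-size} combined with the continuity estimates, which is exactly the standard mechanism from \cite{GSRT,BGSR1} that the paper is implicitly invoking. One minor slip: you write ``since $\alpha\gg|\log\eps|$'', but the correct scaling relation is $\alpha|\log\eps|\gg1$ (i.e.\ $\alpha\gg1/|\log\eps|$); this does not affect your argument, since the point is only that the stray factors $|\log\eps|^7/\alpha^5$ and the $k^2$ can be absorbed into the prefactor $(CT/\alpha^2)^{2^{K+1}}$, and your conclusion that any exponent below $1/3$ (hence $1/4$) works after absorbing the logarithmic losses is correct.
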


\section{Convergence to the Boltzmann hierarchy}\label{endoftheproofthm1}

The last step to conclude the proof of Theorem \ref{prop-approximate tensorization} is to evaluate the difference $\widetilde f^{(1,K), 0}_{N+1}(t) -  f_\eps^{(1,K), 0} (t)$.
Once   recollisions   have been excluded, the only discrepancies between the BBGKY and the Boltzmann  pseudo-trajectories come from the micro-translations due to the diameter of the colliding 
particles (see Definition \ref{pseudotrajectory}): note that the rigid body follows the same trajectory in both settings since atoms alone are ``added" to the pseudo-dynamics. 
At the initial time, the error between the two configurations  after $s$ collisions   is given by Proposition~\ref{geometric-prop}.
Recall that the discrepancies are only for   positions, as   velocities remain equal in both hierarchies.
These configurations are then evaluated   on the marginals of the 
 initial data~$f_{N+1,0}^{(s)} $ or   $ f^{(s)} _0$ which are close to each other thanks to Proposition~\ref{exclusion-prop2}.
We have
\begin{align*}
& \Big|  f_{0 }^{(s)} \Big( (\bar Y ,\bar Z_{s-1}) (a, T_{1,s-1}, \cN_{1,s-1}, V_{1,s-1},0) \Big)
- 
 f_{N+1,0 }^{(s)} \Big( (Y ,Z _{s-1}) (a, T_{1,s-1}, \cN_{1,s-1}, V_{1,s-1},0) \Big) \Big|\\
&   \leq 
\Big| f_{0 }^{(s)} \Big(  (\bar Y ,\bar Z_{s-1}) (a, T_{1,s-1}, \cN_{1,s-1}, V_{1,s-1},0) \Big)
- 
 f_{0 }^{(s)} \Big( (Y ,Z _{s-1}) (a, T_{1,s-1}, \cN_{1,s-1}, V_{1,s-1},0) \Big) \Big|\\
&      +
\Big|  f_{0 }^{(s)} \Big( (Y ,Z _{s-1}) (a, T_{1,s-1}, \cN_{1,s-1}, V_{1,s-1},0) \Big)
- 
 f_{N+1,0 }^{(s)} \Big( (Y ,Z _{s-1}) (a, T_{1,s-1}, \cN_{1,s-1}, V_{1,s-1},0) \Big) \Big| \, .
\end{align*}
Since~$g_{0}$ has   Lipschitz regularity, by the estimate on the shift on the initial configurations given by~(\ref{goodconfig1}) and~(\ref{goodconfig2}) in Proposition \ref{geometric-prop}, we get (using the conservation of energy at each collision)
$$
\begin{aligned}
\Big| f_{0 }^{(s)} \Big(  (\bar Y ,\bar Z_{s-1}) (a, T_{1,s-1}, \cN_{1,s-1}, V_{1,s-1},0) \Big)
- 
 f_{0 }^{(s)} \Big( (Y ,Z _{s-1}) (a, T_{1,s-1}, \cN_{1,s-1}, V_{1,s-1},0) \Big) \Big|
\\ \leq C^s  \left(  \frac \eps\alpha  +2^K \eps\right)     M_{\beta, I} (V,\Omega) M_{\beta } ^{\otimes (s-1)}(V_{s-1})  \, .
\end{aligned}
$$
On the other hand, by construction, the good pseudo-trajectories reach only good configurations at time 0, which means that 
we can use the convergence of the initial data stated in Proposition \ref{exclusion-prop2}~:
$$
\begin{aligned}
\Big|  f_{0 }^{(s)} \Big( (  Y ,  Z_{s-1}) (a, T_{1,s-1}, \cN_{1,s-1}, V_{1,s-1},0) \Big)
- 
 f_{N+1,0 }^{(s)} \Big( (Y ,Z _{s-1}) (a, T_{1,s-1}, \cN_{1,s-1}, V_{1,s-1},0) \Big) \Big|\\
  \leq C^s  \frac{\eps}{\alpha^2}        M_{\beta, I} (V,\Omega) M_{\beta } ^{\otimes (s-1)}(V_{s-1})  \, .
\end{aligned}
$$

The last source of discrepancy between the formulas defining $\widetilde f^{(1,K), 0}_{N}$ and $ \widetilde f_\eps^{(1,K), 0} $
comes from the prefactor~$N\dots (N-s+2) \eps^{s-1}$ which has been replaced by $1$.
For  $s \ll N$, the corresponding error is 
$$ \Big( 1- {N \dots (N- s+2) \over N^{s-1}}\Big)  \leq C {s^2 \over N}\leq Cs^2 {\eps } $$
which, combined with the bound on the collision operators,  leads to an error of the form 
\begin{equation}
\label{error3}
\left(\frac{C    T}{\alpha^2} \right)^{s-1} s^2{\eps }   \,.
\end{equation}
Summing the previous bounds gives finally
\begin{equation}
\label{eq: erreur 0N cut}
 \Big \|\widetilde f^{(1,K), 0}_{N+1}(t) - \tilde  f_\eps^{(1,K), 0} (t) \Big\|_{L^1} \leq  C  2^{2K}   \frac{\eps}{\alpha^2}    
  \left(\frac{CT }{\alpha^2} \right)^{    2^{K+1} }
     \, .
\end{equation} 
Combining Proposition \ref{prop: 1 recoll} and (\ref{eq: erreur 0N cut}) to control  the difference in the parts with controlled branching process,
we find
 \begin{equation}
\label{mainpart-est K}
\Big\|  \widetilde f^{(1,K)} _{N+1}(t)- \tilde f_\eps^{(1,K)} (t) \Big\|_{L^1} 
\leq   
C     \left(\frac{CT }{\alpha^2} \right)^{    2^{K+1} }
   \eps^{1/4}     \, \cdotp
\end{equation}
Finally to conclude the proof of Theorem~\ref{prop-approximate tensorization}
we put together Propositions~\ref{prop: prunning},  \ref{lem: truncation0} and \ref{lem: truncation}, and use the comparison (\ref{mainpart-est K}). We find that if
$$
h < \gamma   \frac{\alpha^4}{CT}  
$$
then 
 for all~$t \leq T$
 \begin{equation*}
\Big\|  \widetilde f^{(1)} _{N+1}(t)-\tilde f_\eps^{(1)}(t) \Big\|_{L^1}  \leq   C \gamma 
+ C \left(\frac{CT }{\alpha^2} \right)^{    2^{K+1} }
    {  \eps^{1/4}  } \, ,\end{equation*} 
    so thanks to~(\ref{tensorizedsolutionhierarchy})
\begin{equation*}
\Big\|  \widetilde f^{(1)} _{N+1}(t)-  \bar M_{\beta, I}   \tilde g_\eps(t) \Big\|_{L^1}  \leq   C \gamma 
+ C \left(\frac{CT }{\alpha^2} \right)^{    2^{K+1} }
    {  \eps^{1/4}  } \, \cdotp
\end{equation*} 
Finally, we choose 
$$
K = T/h  = {CT^2 \over \gamma \alpha^4} \leq c \log\log N,
$$
with $c$ a constant small enough.
Then, we get that
$$
\Big\| \widetilde  f^{(1)} _{N+1}(t)-M_{\beta, I}    \tilde g_\eps(t) \Big\|_{L^1} 
\leq  \frac{C  T^2}{\alpha^4\log \log N }  \, \cdotp 
$$
Finally it remains to use Proposition~\ref{prop: truncatedBoltztoBoltz} 
and Corollary \ref{corconditioneddata}, giving the closeness of the truncated BBGKY (resp. Boltzmann) hierarchy and the original one, to conclude the proof of Theorem~\ref{prop-approximate tensorization}.
 \qed

\section{Convergence to the Fokker-Planck equation}
\label{proof theorem}

\subsection{The singular perturbation problem}\label{singpertpb}

Theorem~\ref{prop-approximate tensorization}  states that in the limit $\eps \to 0$, provided that $\alpha ({\log \log N})^\frac14 \gg1$, the solution $f_{N+1}^{(1)}$  is asymptotically close to the solution~$\bar M_{\beta,I} \,  g_\eps $ of the singular linear Boltzmann equation
\eqref{eq: linear boltz}: 
\begin{equation}\label{singular-Boltz}
\begin{aligned}
\partial_t g_\eps  &+ V \cdot \nabla_X g_\eps  + \frac\alpha \eps \Omega  \, \partial_\Theta g_\eps  \\
&= \frac1{\alpha}
\int _{[0,L_\alpha] \times {\mathbb R}^2}  M_{\beta } (v)  
 \Big( g_\eps (Y') \big(  ( \frac1{\alpha}v' -V'- \Omega'      r_\Theta^\perp
) \cdot     n_\Theta\big)_-      \\
&\qquad\qquad \qquad\qquad\qquad \qquad -  g_\eps (Y) \big(  ( \frac1{\alpha}v
-V-\Omega     r_\Theta^\perp
\big ) \cdot     n_\Theta \big)_-\Big)\, d  \sigma_\alpha dv \, , 
\end{aligned}
\end{equation}
with $Y= (X,V,\Theta,\Omega)$ and $Y' = (X,V',\Theta,\Omega')$ and initial data~$g_0$.

\medskip\noindent
\underbar{Constraint equation}  

From the uniform $L^\infty$ bound on $g_\eps$ (coming from the maximum principle), we   deduce that   there is a function~$g$ such that up to extraction of a subsequence, for all times~$[0,T]$,
$$g_\eps \rightharpoonup g \hbox{ weakly * in } L^\infty ([0,T];\T^2 \times \R^2\times {\mathbb S}  \times\R ) \, ,$$
as $\eps, \alpha \to 0$.
Multiplying (\ref{singular-Boltz}) by $ \eps/\alpha$ and taking limits in the sense of distributions, we get
$$ \Omega \, \partial_\Theta g = 0\,,$$
since $\eps \ll \alpha ^3$.
 This implies that~$g$ must satisfy
$$ g := g  (t,X,V,\Omega)\,.$$
%

\medskip

\noindent
\underbar{Averaged evolution  equation}

We then integrate (\ref{singular-Boltz}) with respect to $\Theta$, which provides
\begin{equation}
\label{average-Boltz}
\begin{aligned}\partial_t \int g_\eps d\Theta &+ V \cdot \nabla_X\int  g_\eps  d\Theta  = \frac1{\alpha}
\int d\Theta  \int _{[0,L_\alpha]\times {\mathbb R}^2}
   M_{\beta } (v)   \\ 
& \qquad\qquad\qquad
 \Big( g_\eps(t,X,V',\Theta,\Omega') \big(  (\frac 1 \alpha v' -V'-\Omega'    r_\Theta^\perp
) \cdot    n_\Theta\big)_- \\
&\qquad\qquad \qquad  -  g_\eps (t,X,V,\Theta,\Omega) \big(  (\frac 1 \alpha v
-V- \Omega r_\Theta^\perp
\big ) \cdot    n_\Theta \big)_-\Big)\, d  \sigma_\alpha dv  \, .
\end{aligned} \end{equation}
This is our starting point to derive the limiting equation in the limit $\eps,\alpha\to 0$.

\subsection{The~$\eps,\alpha\to 0$ limit}
\label{subsec: Taylor expansion}
      
 To investigate the joint limit $\eps,\alpha\to 0$, we   use a weak formulation of the collision operator.
Let~$\varphi=\varphi(X,V,\Omega)$ be a test function with compact support and let us compute 
\begin{align*}
F_\eps := 
  \frac1 {\alpha }  & \int   d   \sigma_\alpha dv dY
  M_{\beta } (v)  M_{\beta, I} (V,\Omega)
 \Big( g_\eps (t,X,V',\Theta,\Omega') \big(  (\frac 1 \alpha v' -V'-\Omega'  r_\Theta^\perp
) \cdot    n_\Theta\big)_- \\
&\qquad\qquad \qquad  -  g_\eps  (t,X,V,\Theta,\Omega) \big(  (\frac 1 \alpha v
-V- \Omega r_\Theta^\perp
\big ) \cdot    n_\Theta \big)_-\Big)  \varphi(X,V,\Omega) 
  \, .
\end{align*}
By a change of variables using the conservation of energy we find
\begin{align*}
F_\eps  = 
  \frac1 {\alpha }  & \int   d   \sigma_\alpha dv dY
  M_{\beta } (v)  M_{\beta, I} (V,\Omega)
g_\eps (t,X,V, \Theta,\Omega) \nonumber \\
& \qquad \qquad  \times\big ( \varphi  (X,V', \Omega') - \varphi(X,V,\Omega) \big) 
\, \big( (\frac 1 \alpha v
-V- \Omega  r_\Theta^\perp \big ) \cdot     n_\Theta\big)_- \, .
\end{align*}
Defining
\begin{equation}
\label{eq: b alpha}
b_\alpha(v,V,\Omega) := (v-\alpha V-\alpha \Omega   r_\Theta^\perp ) \cdot   n_\Theta\, 
\quad \text{and} \quad
b_{\alpha -} (v,V,\Omega) := - \inf \big \{ 0, b_\alpha(v,V,\Omega) \big\},
\end{equation}
we have
\begin{align}
F_\eps := 
  \frac1 {\alpha^2 }  & \int   d   \sigma_\alpha  dv dY
  M_{\beta } (v)  M_{\beta, I} (V,\Omega)
g_\eps (t,X,V, \Theta,\Omega)  \nonumber \\
& \qquad\qquad\qquad \qquad  \times \big ( \varphi  (X,V', \Omega') - \varphi(X,V,\Omega) \big) 
b_{\alpha-}(v,V,\Omega) \, .
\label{eq: weak form}
\end{align}
The idea is now to use   cancellations on the right-hand side.  Recalling, as stated in~(\ref{eq: collision laws}),  that the tangential part of~$V$ is constant through the scattering and
$$ 
V' \cdot {n_\Theta}=V\cdot{  n_\Theta}+\frac{2\alpha}{A+1} b_\alpha \quad \mbox{and} \quad \Omega'= \Omega  +\frac{2\alpha}{(A+1)I}     (  r_\Theta \cdot   n_\Theta^\perp) b_\alpha  \,$$
we find that
$$
\begin{aligned}
& \varphi  (X,V',\Omega') - \varphi(X,V,\Omega) \\
&\qquad = \frac{2\alpha}{A+1}b_\alpha \,  ( n_\Theta \cdot \nabla_V) \varphi (X,V,\Omega) +\frac{2\alpha}{(A+1)I} b_\alpha   (r_\Theta \cdot   n_\Theta^\perp) \partial_\Omega  \varphi (X,V,\Omega)  \\
& \qquad \quad + \frac{2\alpha^2} {(A+1)^2}  b_\alpha^2 ( n_\Theta\cdot \nabla_V)^2  \varphi (X,V,\Omega) +  \frac{2\alpha^2} {(A+1)^2I^2} b_\alpha^2  (r_\Theta \cdot   n_\Theta^\perp) ^2    \partial_\Omega ^2  \varphi (X,V,\Omega) \\
&\qquad \quad +  \frac{4\alpha^2} {(A+1)^2I}  b_\alpha^2(  r_\Theta \cdot   n_\Theta^\perp )( n_\Theta \cdot \nabla_V)     \partial_\Omega  \varphi (X,V,\Omega) +  {\color{black}
O \big( \alpha^3 b_\alpha^3 \| \varphi\|_{W^{3,\infty}} \big) } \,  .
\end{aligned}
$$
Notice that
\begin{align}
\label{eq: b alpha relation}
\frac1\alpha b_\alpha \times b_{\alpha-} &={\color{black} -\frac1\alpha  (v\cdot n_\Theta)_-^2   +2   (v\cdot n_\Theta)_-(V+\Omega r_\Theta^\perp) \cdot n_\Theta 
 + O\big( \alpha (|V|^2 + |\Omega|^2 ) \big) },
 \end{align}
and that 
\begin{align*} 
b_\alpha^2 \times b_{\alpha-}  &=  (v\cdot n_\Theta)_-^3 
 {  + O\big( \alpha (|V| + |\Omega| + |v| )^3  \big) } \,.
\end{align*}
We also note that $A = O(\alpha^2)$, so   we can neglect its contribution. We therefore can write 
$$
\begin{aligned}
&\frac1{\alpha^2} \big( \varphi  (X,V',\Omega') - \varphi(X,V,\Omega)  \big)  b_{\alpha-} (v,V,\Omega) \\
&  =\Big( -\frac2\alpha (v\cdot  n_\Theta)_-^2  + 4 (v\cdot  n_\Theta)_-(V+\Omega   r_\Theta^\perp) \cdot  n_\Theta\Big) \big(    n_\Theta \cdot \nabla_V  +  I^{-1} r_\Theta \cdot   n_\Theta^\perp     \partial_\Omega \big) \varphi (X,V,\Omega)  \\
& \quad + 2  (v\cdot n_\Theta)_-^3 \Big( ( n_\Theta \cdot \nabla_V)^2 + I^{-2} (r  \cdot n^\perp)^2\partial_\Omega^2+2  I^{-1} r_\Theta \cdot   n_\Theta^\perp ( n_\Theta \cdot \nabla_V)     \partial_\Omega\Big)   \varphi (X,V,\Omega)\\
&  \quad + O \big(\alpha\| \varphi\|_{W^{3,\infty}} ( |V| + |\Omega| +|w|)^4 \big).
\end{aligned}
$$
We  therefore find
\begin{equation}
\label{eq: 1ere etape}
\begin{aligned}
& F_\eps =   2 \int   d   \sigma_\alpha dv dY \, 
    M_\beta(v) M_{\beta, I} (V,\Omega)   \, g_\eps  (t,X,V,\Theta,\Omega)   \\
&\times\Big (-\frac1\alpha (v\cdot n_\Theta)_-^2   \big(   n_\Theta \cdot \nabla_V +   I^{-1} r_\Theta \cdot   n_\Theta^\perp     \partial_\Omega \big) \varphi (X,V,\Omega)  \\
& \qquad +2 (v\cdot n_\Theta)_-(V+\Omega  r_\Theta^\perp) \cdot n_\Theta \; \big(   n_\Theta \cdot \nabla_V+   I^{-1} r_\Theta \cdot   n_\Theta^\perp     \partial_\Omega \big) \varphi (X,V,\Omega)  \\
& \quad +    (v\cdot n_\Theta)_-^3 \Big( ( n_\Theta \cdot \nabla_V)^2 + I^{-2} (r_\Theta \cdot   n_\Theta^\perp) ^2 \partial^2_\Omega  +2I^{-1} r_\Theta \cdot   n_\Theta^\perp  ( n_\Theta \cdot \nabla_V)     \partial_\Omega\Big)   \varphi (X,V,\Omega)\Big) \, ,
\end{aligned}
\end{equation}
up to terms of order $  O(\alpha)$.
{\color{black} 
The following identities hold for any unit vector $e \in {\mathbb S}$
\begin{equation}
\label{expectations}
\begin{aligned}
\int M_\beta(v)  (v\cdot e)_\pm dv & = \left( \frac1{2\pi \beta}\right)^{1/2},
\qquad \int M_\beta (v) (v\cdot e)_\pm^2 dv & = \frac1{2\beta} \, ,\\
 \int M_\beta(v)  (v\cdot e)^3_\pm dv & = \left( \frac 2{\pi \beta^3}\right)^{1/2}  = {2\over \beta} \left( \frac1{2\pi \beta}\right)^{1/2} .
 \end{aligned}
\end{equation}
They imply that the velocity $v$ in \eqref{eq: 1ere etape} can be integrated out.
Furthermore,  the  terms in the second line of  \eqref{eq: 1ere etape}  cancel thanks to the relation 
\begin{equation}
\label{usefulcancellations}
\begin{aligned}
&\int_{0}^{L_\alpha}  n\cdot e \,  d   \sigma_\alpha  =  \int _{\Sigma_\alpha} \nabla\cdot e \, dr = 0 \, ,\\
&\int_{0}^{L_\alpha}    r^\perp \cdot n  \, d   \sigma_\alpha  
= \int _{\Sigma_\alpha} \nabla\cdot ( r^\perp)\,  dr =0 \, .
\end{aligned}
\end{equation}
Thus only the terms of order $O(1)$ remain:}
$$
\begin{aligned}
F_\eps  =2 &  \int   d   \sigma_\alpha \, dv dY \, 
  M_\beta(v) M_{\beta, I} (V,\Omega)\, g_\eps  (t,X,V,\Theta,\Omega) \\
&\times\Big ( 2 (v\cdot n_\Theta)_-(V+\Omega  r_\Theta^\perp) \cdot n_\Theta \;
\big(   n_\Theta \cdot \nabla_V  +  I^{-1} r_\Theta \cdot   n_\Theta^\perp \partial_\Omega \big) \varphi (X,V,\Omega)  \\
& \quad + (v\cdot n_\Theta)_-^3 \Big( ( n_\Theta \cdot \nabla_V)^2 + I^{-2} (r_\Theta \cdot   n_\Theta^\perp)^2 \partial^2_\Omega +2 I^{-1} r_\Theta \cdot n_\Theta^\perp   ( n_\Theta \cdot \nabla_V)\partial_\Omega \Big)   \varphi (X,V,\Omega) \\
& \quad +  {\color{black}
O \big( \alpha  \| \varphi\|_{W^{3,\infty}} \big) } Â \, .
\end{aligned}
$$
 Let us introduce the notation (as in \cite{DGL_2})
  \begin{equation}
 \label{NKT-cal}
 \begin{aligned}
 \cN_\alpha (\Theta)& :=\int_{0}^{L_\alpha}   n_\Theta \otimes n_\Theta d   \sigma_\alpha= R_\Theta \cN_\alpha R_\Theta \, ,\\
 \quad 
 \Gamma_\alpha(\Theta) & := \int_{0}^{L_\alpha}   r_\Theta \cdot   n_\Theta^\perp n_\Theta d   \sigma_\alpha = R_\Theta \Gamma_\alpha \, ,\\
 \quad 
 \cK_\alpha & := \int_{0}^{L_\alpha}(  r \cdot n^\perp  )^2  d   \sigma_\alpha \,,
 \end{aligned}
 \end{equation}
 and notice that~$ \cN_\alpha (\Theta)$ and~$\Gamma_\alpha(\Theta) $ both converge strongly,
{\color{black} when $\alpha$ tends to 0}, to
 $$
  \begin{aligned}
 \cN  (\Theta)& :=\int_{0}^{L }   n_\Theta \otimes n_\Theta d   \sigma = R_\Theta \cN  R_\Theta \, ,\\
 \quad 
 \Gamma (\Theta) & := \int_{0}^{L }   r_\Theta \cdot   n_\Theta^\perp n_\Theta d   \sigma  = R_\Theta \Gamma  \,,
 \end{aligned}
 $$  
 and~$ \cK_\alpha $ converges to
 $ 
\displaystyle \cK := \int_{0}^{L }(  r \cdot n^\perp  )^2  d   \sigma,  $  where~$L$ is the perimeter of $\Sigma$.

Using \eqref{expectations},  $F_\eps$ can be rewritten 
$$
 \begin{aligned}
 & F_\eps = \left( \frac8{\pi \beta}\right)^{1/2} \int dY\, M_{\beta, I} (V,\Omega) g_\eps (t,X,V,\Theta,\Omega) \\
 &\qquad\qquad  \times \Big( (V\cdot \cN_\alpha  (\Theta) + \Omega \Gamma_\alpha (\Theta)) \cdot \nabla_V \varphi (X,V,\Omega)
+ I^{-1} (V\cdot \Gamma_\alpha (\Theta) +\Omega \cK_\alpha) \partial_\Omega \varphi (X,V,\Omega)\\
 &\qquad \qquad \quad +\frac1\beta  \Big(\nabla_V\cdot  \cN_\alpha (\Theta)\cdot \nabla_V + I^{-2} \cK_\alpha \partial^2_\Omega + 2 I^{-1}  \partial _\Omega\nabla_V\cdot \Gamma_\alpha (\Theta)
 \Big)  \varphi (X,V,\Omega)\Big)\\
 &\qquad \qquad\qquad + O( \alpha \| \varphi\|_{W^{3,\infty}} \| g_\eps \|_{L^\infty}) \, .
 \end{aligned}
$$
Note that the  remainder  converges to 0 as $\eps,\alpha$ tend to $0$ since we have a uniform $L^\infty$ bound on $g_\eps$.

  \medskip\noindent
\underbar{Convergence to the Fokker-Planck equation}

We turn now to the joint limit $\eps,\alpha \to 0$ with $\eps \ll \alpha$.
From the weak-$\star$ convergence 
$$g_\eps  \rightharpoonup g \quad \hbox{ with } \qquad g= g(t,X,V,\Omega) $$
and the strong convergence of~$ \cN_\alpha (\Theta)$ and~$\Gamma_\alpha(\Theta) $ we immediately deduce that 
$$ 
\partial_t \int g_{\eps}d\Theta + V \cdot \nabla_X\int  g_\eps  d\Theta  \to \d_t g + V \cdot \nabla_X g  
$$
in the sense of distributions.

 For any test function $\varphi = \varphi(X,V,\Omega)$ $$
 \begin{aligned}
 F_\eps \to &  \left( \frac8{\pi \beta}\right)^{1/2} 
 \int  dX d \Omega dV \;   \, M_{\beta, I} (V,\Omega) g(t,X,V,\Omega) \\
 &\qquad\qquad  \times \Big(   \frac L2 V  \cdot \nabla_V \varphi (X,V,\Omega) +  \cK I^{-1}  \Omega \partial_\Omega \varphi (X,V,\Omega)\\
 &\qquad \qquad \quad +\frac1\beta  ( \frac L2 \Delta_V + I^{-2} \cK  \partial^2_\Omega  )  \varphi (X,V,\Omega)\Big)\,.
 \end{aligned}
 $$
Integrating with respect to $\Theta$ and then integrating by parts in $V,\Omega$, we finally get that
$$\begin{aligned}
\d_t g +V\cdot \nabla_x g = \left( \frac8{\pi \beta}\right)^{1/2}   \cL g,
\end{aligned}
$$
where the diffusion operator \eqref{eq: diffusion operator}   is given by 
\begin{align*}
\cL =   \frac1\beta  \big( \frac L2  \Delta_V +  \frac{\cK}{I^2}  \partial^2_\Omega \big)    - \frac L2 V  \cdot \nabla_V    -  \frac{\cK}{I}  \Omega \partial_\Omega  .
\end{align*}
Note indeed that $\cL$ is symmetric in the space $L^2 (M_{\beta, I} (V,\Omega)dVd\Omega)$.
This concludes the proof of Theorem \ref{thm: convergence density}.  \qed

\section{Convergence to the Ornstein-Uhlenbeck process}
\label{process-section}

We are going to study the path fluctuations and  derive Theorem  \ref{thm: convergence process}.
Throughout this section, the limit $N \to \infty$ refers to the joint limit $N \to \infty, \eps \to 0$ and $\alpha \to 0$ in the  Boltzmann-Grad scaling $N \eps =1$ with $\alpha \gg \left( \frac{1}{\log \log N}\right)^{1/4} $.

%
\medskip

To prove the convergence of the  process $\brown$ 
to  $\cW$, we will proceed as in \cite{BGSR1}
and check 
\begin{itemize}
\item the convergence of the time marginals  for any $0\leq \tau_1 < \dots < \tau_\ell 	\leq T$
\begin{equation}
\label{eq: finite dim x1}
\lim_{N \to \infty} \mathbb{E}_{M_{\beta,I, N}}  \Big(  h_1 \big( \brown ( \tau_1) \big) \dots  
h_\ell \big(  \brown (  \tau_\ell) \big)
 \Big) 
=
\mathbb{E}  \Big(  h_1 \big(  \cW( \tau_1) \big) \dots  h_\ell \big(  \cW ( \tau_\ell) \big)  \Big) \, ,
\end{equation}
where $\{ h_1, \dots, h_\ell \}$ is a collection of continuous functions in ${\mathbb R}^2 \times \mathbb R$.
\item the tightness of the sequence, that is  
\begin{equation}
\label{eq: tightness criterion}
\forall \petit >0, \qquad 
\lim_{\eta \to 0} \lim_{N \to \infty}
\mathbb P_{M_{\beta,I, N}} \left( \sup_{ |\sigma - \tau| \leq  \eta \atop \tau \in [0,T]}  \big| \brown(\sigma) - \brown(\tau)  \big| \geq \petit 
\right) = 0 \, .
\end{equation}
\end{itemize}	
The linear Boltzmann equation \eqref{eq: linear boltz} is associated with 
a stochastic process  $\bar \brown( t)$ which converges to $\cW$.
Thus it is enough to prove that the mechanical process $\brown$ is close to the stochastic process $\bar \brown$
for an appropriate coupling.

\medskip

{\color{black} 

By construction, the position is given by 
$$
X(t) = \int_0^t V(s) ds\,,
$$
thus we deduce from the convergence in law of the velocity that
in the limit, the process is a Langevin process \eqref{eq: OU process}.
}

\subsection{Auxiliary Markov process}
\label{subsec: Auxiliary Markov process}

We first define the stochastic process associated with the linear Boltzmann equation \eqref{eq: linear boltz}.
The Markov process  $\bar Y( t) = ( \bar X(t), \bar \Theta(t), \bar V (t) , \bar \Omega(t) )$ is characterized by the generator 
\begin{align}
\label{eq: generateur L}
 \cT_\alpha \varphi(    Y) &= - 
V \cdot \nabla_X \varphi (    Y)
- \frac{\alpha}{\eps} \Omega \partial_\Theta \varphi (    Y) 
 \\
&  
+ \frac1{\alpha}
\int _{ [0, L_\alpha] \times {\mathbb R}^2}  M_{\beta} (v)  
 \Big( \varphi(    Y' ) - \varphi(    Y) \Big)
  \big(  ({v \over \alpha} - V - \Omega R_\Theta   r^\perp
\big ) \cdot R_\Theta   n \big)_- \, d \sigma_\alpha dv  
\nonumber \\
&  
=- 
V \cdot \nabla_X \varphi (    Y)
- \frac{\alpha}{\eps} \Omega \partial_\Theta \varphi (    Y)  + \cL_\alpha \varphi(    Y),
\nonumber
\end{align}
with $Y = ( X, \Theta, V, \Omega)$ and $Y' = ( X, \Theta, V', \Omega')$.
The dependency of the process $\bar Y$ on $\alpha$ and~$\eps$ is omitted in the notation.
Note that the dependency in $\bar \Theta$ will average out in the limit, 
so that asymptotically it is enough to consider the process  $\bar \brown( t) = (\bar V (t) , \bar \Omega(t) )$.
The~$(\bar X, \bar \Theta)$ dependency has been kept for later purposes, but it does not influence the evolution of~$\bar \brown$
as the stochastic dynamics model a rigid body in a uniformly distributed ideal gas.
It is proved in Lemma \ref{lem: markov simple}
 that the invariant measure associated with the process~$\bar Y$ is given by 
$$
\bar M_{\beta,I} ( X, \Theta, V, \Omega) = 
\frac{1}{2 \pi} M_{\beta,I} (V, \Omega).
$$ 
Note that the position $X$ and the angle $\Theta$ are uniformly distributed in 
${\mathbb T}^2 \times {\mathbb S}$ under $\bar M_{\beta,I}$.

\begin{lem}
\label{lem: markov convergence}
Fix $T>0$ and consider the Markov chain $\bar Y$ on $[0,T]$ starting from $\bar M_{\beta,I}$.
Then the stochastic process $\bar \brown( t) = ( \bar V (t) , \bar \Omega(t) )$ converges in law to $\cW$ in $[0,T]$ in the joint limit $\eps \to 0$ and $\alpha \to 0$ with $ \alpha^4 \, \log |\log\eps| \gg 1 $.
\end{lem}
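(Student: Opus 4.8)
The plan is to obtain weak convergence of $\bar\brown$ to $\cW$ from the two classical ingredients: convergence of the finite-dimensional time-marginals, and tightness of the family of laws in the Skorohod space $D([0,T])$ (see \cite{billingsley}). Since $\cW=(\cV,\cO)$ solves the linear additive-noise SDE \eqref{eq: OU process}, its law is the unique solution of the martingale problem for the non-degenerate Ornstein--Uhlenbeck generator $a\cL$, with $\cL$ as in \eqref{eq: diffusion operator}; hence these two ingredients together identify the limit and yield convergence in law.

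For the finite-dimensional marginals I would use the martingale characterization attached to the generator \eqref{eq: generateur L}. Fix $\varphi\in C^\infty_c(\R^2\times\R)$; since $\varphi$ does not depend on $(X,\Theta)$, the transport terms in $\cT_\alpha$ annihilate it and
$$
M^\varphi_t := \varphi\big(\bar\brown(t)\big) - \varphi\big(\bar\brown(0)\big) - \int_0^t \cL_\alpha\varphi\big(\bar Y(s)\big)\,ds
$$
is a martingale, $\cL_\alpha$ being the collision operator \eqref{defLalpha}. Starting from $\bar M_{\beta,I}$, the pair $(\bar V(s),\bar\Omega(s))$ is distributed according to $M_{\beta,I}$ at every time $s$, which controls its tails. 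The heart of the matter is to show that $\int_0^t \cL_\alpha\varphi(\bar Y(s))\,ds \to a\int_0^t \cL\varphi(\bar\brown(s))\,ds$ in probability, in two steps. First, an averaging argument in the fast angle: since $d\bar\Theta/dt=(\alpha/\eps)\bar\Omega$ with $\alpha/\eps\to\infty$, the angle equidistributes on ${\mathbb S}$, so $\cL_\alpha\varphi(\bar Y(s))$ may be replaced by its $\Theta$-average $\langle\cL_\alpha\varphi\rangle(\bar V(s),\bar\Omega(s)):=\frac1{2\pi}\int_{\mathbb S}\cL_\alpha\varphi\,d\Theta$; the correction is controlled through a corrector $\chi$ solving $\bar\Omega\,\partial_\Theta\chi=\cL_\alpha\varphi-\langle\cL_\alpha\varphi\rangle$, which, inserted in the full generator applied to $\varphi+(\eps/\alpha)\chi$, produces a term of size $O\big((\eps/\alpha)\,|\bar\Omega|^{-2}\big)$, negligible after excising the set $\{|\bar\Omega|\le\rho\}$ (of $M_{\beta,I}$-measure $O(\rho)$) for a suitable $\rho\to0$. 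Second, the $\alpha\to0$ Taylor expansion of the collision integral: this is precisely the computation carried out in Section~\ref{subsec: Taylor expansion} (the expansion of $F_\eps$ and the cancellations \eqref{usefulcancellations}), which gives $\langle\cL_\alpha\varphi\rangle\to a\cL\varphi$ uniformly on compact sets of $(V,\Omega)$, with a remainder bounded by $O(\alpha\|\varphi\|_{W^{3,\infty}})$ times a polynomial in $(|V|,|\Omega|)$; stationarity and Markov's inequality dispose of the large-energy contribution. One concludes by checking, in the same way, that the predictable quadratic variation of $M^\varphi$ converges to $\int_0^t a\big(\cL(\varphi^2)-2\varphi\,\cL\varphi\big)(\bar\brown(s))\,ds$, so that every limit point of the laws of $\bar\brown$ solves the $a\cL$-martingale problem.

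For tightness I would invoke the Aldous--Rebolledo criterion: it suffices to bound, uniformly for stopping times $\tau\le\tau+\delta\le T$, the increments of the finite-variation part $\int\cL_\alpha\varphi$ and of the quadratic variation of $M^\varphi$ by $C\delta$ up to an event of small probability. These bounds follow from the uniform-in-$\alpha$ continuity estimates (the collision frequency is $O(\alpha^{-2})$ while each jump of $(\bar V,\bar\Omega)$ is $O(\alpha)$ on bounded-energy configurations, so the accumulated variance grows at rate $O(1)$), combined with stationarity to discard large energies. Since the maximal jump of $\bar\brown$ is $O(\alpha)\to0$, the criterion yields tightness with a continuous limit.

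The main obstacle is the fast-rotation averaging: the corrector equation $\bar\Omega\,\partial_\Theta\chi=\cL_\alpha\varphi-\langle\cL_\alpha\varphi\rangle$ degenerates on $\{\bar\Omega=0\}$, where $\chi$ blows up, so one must simultaneously keep $|\bar\Omega|$ bounded away from $0$ (using that $\{|\bar\Omega|\le\rho\}$ has $M_{\beta,I}$-measure $O(\rho)$) and bounded above (Gaussian tails), and verify that the residual error vanishes; this is where the hypothesis $\alpha^4\log|\log\eps|\gg1$ is used, since it ensures in particular that $\eps$ is far smaller than any fixed power of $\alpha$.
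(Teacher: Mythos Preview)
Your proposal is correct and follows the same overall architecture as the paper: tightness plus identification of every subsequential limit through the martingale problem for $a\cL$, with the Taylor expansion of Section~\ref{subsec: Taylor expansion} providing the $\alpha\to0$ asymptotics of the collision operator. The tightness argument (Aldous--Rebolledo, using that each jump of $(\bar V,\bar\Omega)$ is $O(\alpha)$ while the jump rate is $O(\alpha^{-2})$) is essentially what the paper does in its Step~1.

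The genuine difference lies in the treatment of the fast angle $\bar\Theta$. You average \emph{at the path level}, constructing an explicit corrector $\chi$ with $\Omega\,\partial_\Theta\chi=\cL_\alpha\varphi-\langle\cL_\alpha\varphi\rangle$ and working with the perturbed test function $\varphi+(\eps/\alpha)\chi$; this makes the averaging quantitative but forces you to excise a neighbourhood of $\{\Omega=0\}$ where $\chi$ blows up. The paper instead averages \emph{at the density level}: it rewrites the martingale expectation as an integral against a weighted density $\hat g_\eps$ satisfying the linear Boltzmann equation~\eqref{eq: linear boltz}, and then invokes the analysis of Sections~\ref{singpertpb}--\ref{subsec: Taylor expansion} wholesale, where the $\Theta$-independence of the limit is obtained softly as the constraint equation $\Omega\,\partial_\Theta g=0$ for the weak-$\star$ limit. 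Your route is self-contained and more probabilistic; the paper's route is shorter precisely because it recycles the PDE computation already carried out for Theorem~\ref{thm: convergence density}, and it sidesteps the $\Omega=0$ singularity entirely. One minor point in your estimate: since the $O(1)$ part of $\cL_\alpha$ acting on $\chi$ involves \emph{second} derivatives in $\Omega$, the remainder $(\eps/\alpha)\cL_\alpha\chi$ is of size $(\eps/\alpha)\,|\Omega|^{-3}$ rather than $|\Omega|^{-2}$; this only changes the choice of cutoff to $\rho\gg(\eps/\alpha)^{1/3}$, which is harmless.
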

This is the analogue to the convergence Theorem \ref{thm: convergence process} for the process  $\bar Y$. 
The proof relies on the martingale approach which is standard to establish the convergence of stochastic processes (see \cite{Durrett}). 
A similar convergence was derived in \cite{DGL_2} but in our case the fast rotation leads to some degeneracy, thus we sketch the proof below for convenience.

\begin{proof}
The limiting diffusion $\cW$ can be identified as the unique solution of the martingale problem, i.e.
for any  test function $\varphi$ in $C^2 ( {\mathbb R}^2 \times \mathbb R)$
$$
\varphi \big( \cW(t) \big)  -  a \int_0^t \cL \varphi \big( \cW(s) \big) ds 
$$
is a martingale and the generator $\cL$ was introduced in \eqref{eq: diffusion operator} 
\begin{align*}
\cL =   \frac1\beta  \big({ L \over 2}  \Delta_V +  \frac{\cK}{I^2}  \partial^2_\Omega \big)    
-  { L \over 2} V  \cdot \nabla_V    -  \frac{\cK}{I}  \Omega \partial_\Omega 
\quad \text{with} \quad 
a = \left( \frac{8}{\pi \beta}\right)^{1/2}.
\end{align*}
To prove the convergence, we will first show that the distributions of the trajectories of $\bar \brown$ are tight in the 
Skorokhod space $D([0,T])$, then we identify the limiting distribution as the unique solution of the martingale problem.

\medskip

\noindent
{\bf Step 1. Tightness.}

From Aldous' criterion (see  \cite{billingsley}, Theorem 16.10), 
the tightness of the sequence \eqref{eq: tightness criterion} boils down to proving the following assertion
\begin{equation}
\label{eq: aldous tightness criterion Y}
\forall \petit >0, \qquad 
\lim_{\eta \to 0} \limsup_{\alpha,\eps \to 0} \sup_{ \mathcal T \atop 
0 < u < \eta}  
\mathbb P_{\bar M_{\beta,I}} \left( \big| \bar \brown(u + \mathcal T) - 
\bar \brown(\mathcal T)  \big| \geq \petit 
\right) = 0 \, ,
\end{equation}
where the supremum is taken over any stopping time $\mathcal T$  in $[0,T]$ and by abuse of notation~$u + \mathcal T$ stands for  $\inf \{ u + \mathcal T, T \}$. The stopping times are measurable with respect 
to the filtration associated with the random kicks.

\medskip

Since $\bar Y$ is a Markov process with generator $\cT_\alpha$ defined in \eqref{eq: generateur L}, we know that 
\begin{equation}
\label{eq: martingale bar Y}
\cM(t) = \varphi \big( \bar Y(t) \big) - \varphi \big( \bar Y(0) \big)  - \int_0^t \cT_\alpha \varphi \big( \bar Y (s) \big) ds
\end{equation}
is a martingale for any test function  $\varphi$.  We start by estimating the fluctuations of $\bar V$ and 
applying identity \eqref{eq: martingale bar Y} to $\varphi(Y) = V$, we deduce from \eqref{eq: collision laws} that 
\begin{align*}
\cM(t) & = \bar V(t) - \bar V(0)  
-  \frac1{\alpha^2} \int_0^t ds \int _{[0,L_\alpha] \times {\mathbb R}^2}  M_{\beta} (v)  
 \big( \bar V'(s) -  \bar V(s) \big)  b_{\alpha -}
\, d \sigma_\alpha  dv\\
& = \bar V(t) - \bar V(0)  
-  \int_0^t ds \int _{[0,L_\alpha] \times {\mathbb R}^2}  M_{\beta} (v)  
 n_{\bar \Theta(s)} \, \frac1{\alpha} {2\over A+1}  b_\alpha \times  b_{\alpha -}
 \, d \sigma_\alpha  dv   \, .
\end{align*}
 From relation \eqref{eq: b alpha relation}
\begin{align*}
\frac1\alpha b_\alpha \times b_{\alpha -} 
= \frac1\alpha  (v\cdot n_{\bar \Theta})_-^2   - 2   (v\cdot n_{\bar \Theta})_-(\bar V+\bar \Omega r_{\bar \Theta}^\perp) \cdot n_{\bar \Theta} 
 + O\big( \alpha (|\bar V|^2 + |\bar \Omega|^2 ) \big)     \, ,
\end{align*}
we get  
\begin{align}
 \bar V(t) - \bar V(0)  = & \cM(t)  \nonumber \\
&  -
 4    \int_0^t ds \int _{[0,L_\alpha] \times {\mathbb R}^2}  M_{\beta} (v)  
 n_{\bar \Theta(s)} \,  (v\cdot n_{\bar \Theta(s)})_-( \bar V (s) + \bar \Omega (s) r_{\bar \Theta(s)}^\perp) \cdot n_{\bar \Theta(s)}  \, d \sigma_\alpha  dv
 \label{eq: last terms} \\
 & 
 + \alpha \int_0^t ds \int _{[0,L_\alpha] \times {\mathbb R}^2}  M_{\beta} (v) 
 O\big(   | \bar V (s)|^2 + | \bar \Omega (s)|^2  \big) \, d \sigma_\alpha  dv \, , \nonumber
\end{align}
where we integrated in $v$  the first  term $ (v\cdot n_\Theta)_-^2$
and the corrections from the factor $A \simeq \alpha^2$ have been added to the error term.
We finally obtain for any $\petit>0$
\begin{align}
& \mathbb P_{\bar M_{\beta,I}} \left(  \big| \bar  V(\mathcal T + u) - \bar V(\mathcal T )  \big| \geq \petit 
\right) \nonumber \\
\label{eq: fast rotation term}
& \qquad \leq  
\mathbb P_{\bar M_{\beta,I}} \left(  \big| \cM (\mathcal T + u) - \cM (\mathcal T )  \big| \geq \petit /2 \right)
\\
& \qquad + 
\mathbb P_{\bar M_{\beta,I}} \left(   \int_{\mathcal T}^{\mathcal T +u} ds  O\big( |\bar V (s) | + | \bar V (s)|^2 + |\bar \Omega (s)| + |\bar \Omega (s)|^2 )  \geq \petit /2 \right), \nonumber
\end{align}
where the last probability is an upper bound on the fluctuations of the last two terms in~\eqref{eq: last terms}.
This probability can be easily bounded by using a Chebyshev estimate and the time invariance of the measure $\bar M_{\beta,I}$. We treat only one term for simplicity.  Let $C$ be a  large enough constant and choose $\eta \leq  \petit / {(2 C)}$. As $u \leq \eta$, we get 
\begin{align}
\label{eq: borne integrale temps}
\mathbb P_{\bar M_{\beta,I}} \left(   \int_{\mathcal T}^{\mathcal T +u} 
ds  |\bar V (s) | \geq \petit  \right)
& \leq 
\mathbb P_{\bar M_{\beta,I}} \left(   \int_0^T ds  |\bar V (s) | 1_{ \{ |\bar V (s) | \geq C\}} \geq \frac{\petit}{2}  \right)\\
& \leq \frac{2 T}{\petit} \mathbb E_{\bar M_{\beta,I}} \left(   | \bar V|   1_{ \{ |\bar V  | \geq C\}} \right).
\nonumber
\end{align}
The last term vanishes when $C$ tends to infinity so that the probability also vanishes when $\eta$ tends to 0 (for any given $\petit >0$).

\medskip

Finally, we will prove that 
\begin{equation*}
\lim_{\eta \to 0} \limsup_{\alpha,\eps \to 0} 
\sup_{ \mathcal T \atop 0 < u < \eta}   
\mathbb P_{\bar M_{\beta,I}} \left(  \big| \cM (\mathcal T + u) - \cM (\mathcal T )  \big| \geq \petit \right) = 0 \, .
\end{equation*}
By the Chebyshev estimate and the martingale property, we get 
\begin{align}
\mathbb P_{\bar M_{\beta,I}} \left(  \big| \cM (\mathcal T + u) - \cM (\mathcal T )  \big| \geq \petit \right) 
& \leq \frac{1}{\petit^2}
\mathbb E_{\bar M_{\beta,I}} \Big( \big( \cM (\mathcal T + u) - \cM (\mathcal T ) \big)^2  \Big) \nonumber\\
& \leq \frac{1}{\petit^2}
\mathbb E_{\bar M_{\beta,I}} \Big(  \cM (\mathcal T + u)^2 - \cM (\mathcal T )^2  \Big) \, .
\label{eq: carre martingale}
\end{align}
For the martingale $\cM$ defined by \eqref{eq: martingale bar Y}, we know that  
\begin{equation*}
t \mapsto
\cM(t)^2 -  \int_0^t \Big[ \cT_\alpha \varphi^2  - 2 \varphi \cT_\alpha \varphi  \Big]\big( \bar Y (s) \big) ds
\end{equation*}
is also a martingale and furthermore 
\begin{equation*}
[ \cT_\alpha \varphi^2  - 2 \varphi \cT_\alpha \varphi ] (Y)
= \frac1{\alpha}
\int _{[0,L_\alpha] \times {\mathbb R}^2}  M_{\beta} (v)  
 \Big( \varphi( \bar  Y' ) 
 -  \varphi( \bar  Y) \Big)^2 \big(  ( \frac{v}{\alpha} - V - \Omega R_\Theta\bar  r^\perp
\big ) \cdot R_\Theta\bar  n \big)_- \, d  \sigma_\alpha  dv  \,  .
\end{equation*}
Thus inequality \eqref{eq: carre martingale} can be rewritten (with $\varphi(Y) = V$) 
\begin{align*}
& \mathbb P_{\bar M_{\beta,I}} \left(  \big| \cM (\mathcal T + u) - \cM (\mathcal T )  \big| \geq \petit \right) \\
& \qquad \leq \frac{1}{\petit^2 \alpha^2}
\mathbb E_{\bar M_{\beta,I}} \Big( \int_{\mathcal T}^{\mathcal T + u} ds
\int _{[0,L_\alpha] \times {\mathbb R}^2}  M_{\beta} (v)  
 \Big(  V'(s)   -  V(s) \Big)^2 b_{\alpha -}(v, \bar V (s), \bar \Omega(s) ) \, d  \sigma_\alpha  dv \Big)\\
& \qquad \leq \frac{1}{\petit^2 }
\mathbb E_{\bar M_{\beta,I}} \Big( \int_{\mathcal T}^{\mathcal T + u} ds
\int _{[0,L_\alpha] \times {\mathbb R}^2}  M_{\beta} (v)  
  b_{\alpha}^2 \times b_{\alpha -}(v, \bar V (s), \bar \Omega(s) ) \, d  \sigma_\alpha  dv \Big) 
  \,  .
\end{align*}
This last term can be estimated as in \eqref{eq: borne integrale temps}.

\medskip

The derivation of \eqref{eq: aldous tightness criterion Y} can be completed by following the 
same proof  to control the fluctuations of $\bar \Omega$.

\bigskip

\noindent
{\bf Step 2. Martingale problem.}

Consider  a collection of times $0 \leq \tau_1 < \dots < \tau_\ell \leq s < t$ in $[0,T]$   and a collection   of continuous functions~$\{ h_1, \dots, h_\ell \}$ in ${\mathbb R}^2 \times \mathbb R$.
For any $\alpha, \eps$, the martingale relation \eqref{eq: martingale bar Y} implies that 
for any $\varphi$ in $C^2 ( {\mathbb R}^2 \times \mathbb R)$
\begin{equation}
\mathbb{E}_{\bar M_{\beta,I}}
\Big(  h_1 \big( \bar \brown ( \tau_1) \big) \dots  h_\ell \big(  \bar \brown (  \tau_\ell) \big)
\big( 
\varphi \big( \bar \brown (t) \big) - \varphi \big( \bar \brown (s) \big)  - \int_s^t \cL_\alpha \varphi \big( \bar \brown (u) \big) du
\big)
 \Big) 
= 0 \, .
\end{equation}
 A Taylor expansion at   second order  as in Section \ref{subsec: Taylor expansion}   implies that 
any limiting distribution~$\mathbb{E}_{\bar M_{\beta,I}}^*$ will be a solution of the martingale problem 
as
\begin{equation}
\mathbb{E}_{\bar M_{\beta,I}}^*
\Big(  h_1 \big( \bar \brown ( \tau_1) \big) \dots  h_\ell \big(  \bar \brown (  \tau_\ell) \big)
\big( 
\varphi \big( \bar \brown (t) \big) - \varphi \big( \bar \brown (s) \big)  
- \int_s^t \cL \varphi \big( \bar \brown (u) \big) du
\big)
 \Big) 
= 0 \, .
\end{equation}
To derive the limit above, one can consider the process $\bar Y$ starting at time $s$ from the 
weighted measure $\hat g_\eps (s)$ defined for any test function $\Psi$ by 
$$
\int \hat g_\eps (s,Y) \Psi (Y) dY 
=
\mathbb{E}_{\bar M_{\beta,I}}^*
\Big(  h_1 \big( \bar \brown ( \tau_1) \big) \dots  h_\ell \big(  \bar \brown (  \tau_\ell) \big) \Psi ( \bar Y(s)) \Big) \,  ,
$$
and then apply the arguments of Section \ref{subsec: Taylor expansion}.
This completes the proof of Lemma \ref{lem: markov convergence} for the convergence in law of the process  to $\cW$.
\end{proof}

\subsection{Comparison with the truncated process}

The coupling between the mechanical process $\brown$ and the stochastic process $\bar \brown$ will be achieved indirectly by considering a coupling between the truncated processes which are defined next.

We first introduce $\bar Y^\dagger(t)$ the analogue of the process $\bar Y$ with an additional killing term : 
the truncated stochastic process $\bar Y^\dagger(t) = ( \bar X^\dagger (t), \bar \Theta^\dagger (t), \bar V^\dagger (t) , \bar \Omega^\dagger (t) )$ is defined by the generator
\begin{align}
\label{eq: generateur non symmetrique}
\cT_\alpha^\dagger g
 :=
 V \cdot \nabla_X g_\alpha +  \frac{\alpha}{\eps} \Omega \partial_\Theta g&+
\frac1\alpha \int _{[0,L_{ \alpha}] \times {\mathbb R}^2}  M_{\beta } (v)   \indc_{(\ref{good-collision3})}
\big  ( \indc_{(\ref{good-collision1})(\ref{good-collision2}) } g(Y')   -  g(Y)\big)     \\
& \qquad \times
\big(  (\frac1{\alpha}v -V- \Omega    r_{ \Theta}^\perp
\big ) \cdot    n_{ \Theta} \big)_- \, d  \sigma_\alpha dv \,. \nonumber
\end{align}
We set as well $\bar \brown^\dagger(t) = (\bar V^\dagger (t), \bar \Omega^\dagger (t))$.
The rigid body in the mechanical process with a killing term, as introduced in \eqref{eq: killed bc}, 
will be denoted by $Y^\dagger(t) = ( X^\dagger (t), \Theta^\dagger (t), V^\dagger (t), \Omega^\dagger (t) )$.
We define also $\brown^\dagger =( V^\dagger, \Omega^\dagger)$.

Fix $T>0$.
Using Proposition \ref{lem: small set}, the processes $Y$ and $Y^\dagger$ can be coupled with high probability
\begin{equation}
\label{eq: comparison processes Y}
\lim_{\eps,\alpha \to 0}  {\mathbb P}_{M_{\beta, I,N}} \Big( \exists t \leq T, \quad  Y( t) \not =  Y^\dagger ( t) \Big)
= 0 \,.
\end{equation}
In the same way, as shown in Lemma \ref{lem: markov tronque}, the processes $\bar Y$ and $\bar Y^\dagger$ 
coincide asymptotically, on the time interval $[0,T]$,  when $\alpha$ tends to 0. 
Thus Lemma \ref{lem: markov convergence} implies  that the time marginals of $\bar \brown^\dagger$ converge to those of a brownian motion 
\begin{equation}
\label{eq: convergence dagger}
\lim_{\eps, \alpha \to 0} \mathbb{E}_{\bar M_{\beta,I}}  \Big(  h_1 \big( \bar \brown^\dagger ( \tau_1) \big) \dots  
h_\ell \big( \bar \brown^\dagger (  \tau_\ell) \big)
 \Big) 
=
\mathbb{E}  \Big(  h_1 \big(  \cW( \tau_1) \big) \dots  h_\ell \big(  \cW ( \tau_\ell) \big)  \Big)\, ,
\end{equation}
The tightness criterion \eqref{eq: tightness criterion} holds also for $\bar \brown^\dagger$.

\medskip

As a consequence of the previous results, it will be enough to compare the laws of $\brown^\dagger$ and~$\bar \brown^\dagger$ in order to complete the derivation of \eqref{eq: finite dim x1} and \eqref{eq: tightness criterion}.



\subsection{Convergence of the time marginals}

In this section, we are going to show that 
\begin{equation}
\label{eq: finite dim esperance}
\lim_{N \to \infty} 
\mathbb{E}_{\bar M_{\beta,I}}  \Big(   h_1 \big( \bar \brown^\dagger ( \tau_1) \big) \dots  
h_\ell \big( \bar \brown^\dagger (  \tau_\ell) \big) 
 \Big) 
 - 
 \mathbb{E}_{M_{\beta,I,N}}  \Big(   h_1 \big(  \brown^\dagger ( \tau_1) \big) \dots  
h_\ell \big(  \brown^\dagger (  \tau_\ell) \big) \Big) 
= 0 \,.
\end{equation}
To do this,  we will follow the same argument as in \cite{BGSR1} and reduce the 
limit \eqref{eq: finite dim esperance}  to a comparison of the BBGKY and Boltzmann hierarchies.

\medskip

\noindent
{\bf Step 1: Time marginals and iterated Duhamel formula.}

The density  of the Markov process $\bar Y^\dagger$ is given by 
$\tilde f^{(1)}_{\eps} = \bar M_{\beta,I} \, \tilde g_\eps$ 
where  $\tilde g_\eps$ follows the linear Boltzmann equation \eqref{trunc-boltz}.
More generally,  the evolution of the killed Markov process is related to the Boltzmann hierarchy
\eqref{eq: modified hierarchy} 
starting from the initial data \eqref{dataBoltz}
\begin{equation*}
\forall s \geq 1 \,  \quad \tilde  f_0^{(s)} (Y,Z_{s-1}) 
:= g_0(Y) \bar M_{\beta, I} (Y) \prod_{i =1}^{s-1} M_{\beta } (v_i)\,.
\end{equation*}
In particular, a representation similar to \eqref{eq: fHk 2} holds for the Markov process $\bar Y^\dagger (t)$
\begin{align}
\mathbb{E}_{\bar M_{\beta,I}}  
\Big(  h_1 \big(\bar \brown^\dagger  (\tau_1) \big) \dots  h_\ell \big( \bar \brown^\dagger (\tau_\ell) \big) \Big) 
=
\int d  \bar Y \;  h_\ell \big(  V,  \Omega \big) \tilde f_{\eps, H_\ell}^{(1)} (\tau_\ell , \bar  Y)\,,
\label{eq: Boltz Hk}
\end{align}
where $\bar  Y = (\bar   X, \bar  \Theta, \bar   V , \bar  \Omega)$ stands for the position of the Markov process at time $t_\ell$
and the modified distribution can be rewritten in terms of Duhamel series
as in \eqref{eq: representation itere + poids}
\begin{align}
\label{eq: Boltz Hk 0}
 \tilde f_{\eps, H_\ell}^{(1)} (\tau_\ell ,Y) & =  \sum_{m_1 + \dots+ m_{\ell-1} =0}^{\infty} 
\bar Q_{1,1+ m_1}^\dagger (\tau_\ell - \tau_{\ell-1} ) 
  h_{\ell-1} \bar Q_{1+ m_1,1+m_2}^\dagger (\tau_\ell - \tau_{\ell-1} )   h_{\ell -2} \dots   \nonumber  \\
& \qquad \qquad 
\bar Q_{1+ m_1+\dots + m_{\ell-2} ,1+ m_1+ \dots + m_{\ell-1}}^\dagger (\tau_1)  
\tilde f^{(1 + m_1+ \dots + m_{\ell-1})}_0\, .
\end{align} 
Many cancelations occur in the  series above and  the only relevant collision trees are made of a single  backbone 
formed by the pseudo-trajectory associated with the Markov process with a new branch at each deflection.

\medskip

\noindent
{\bf Step 2. Comparison of the finite dimensional marginals.} 

We complete now the derivation of \eqref{eq: finite dim esperance} by comparing term by term the series \eqref{eq: fHk 2} and \eqref{eq: Boltz Hk 0}.
Suppose now that the collection of functions $h_i$ are bounded.
Thanks to this uniform bound on the weights of the collision trees,
the pruning procedure applies also in this case and enables us to restrict to trees with at most $2^{K+1}$ collisions during the time interval~$[0,T]$.
Furthermore, for collision trees of size less than $2^{K+1}$, the arguments of Section \ref{subsec: Control of recollisions} apply and   recollisions can be neglected.
When no recollision occurs,  the pseudo-trajectories associated with  $\widetilde f^{(1)}_{N+1,H_\ell}$ and $\widetilde  f_{\eps, H_\ell}^{(1)}$ are close to each other and in particular the pseudo trajectory of the rigid body coincides with the one associated with the Markov process.
Thus the weights $\prod_{i = 1}^\ell h_i \big( \brown^\dagger (\tau_i) \big)$ 
and $\prod_{i = 1}^\ell h_i \big( \bar \brown^\dagger (\tau_i) \big)$
are identical and the series~\eqref{eq: fHk 2} and~\eqref{eq: Boltz Hk 0} can be compared in the same way as in 
\eqref{mainpart-est}
$$
\lim_{N \to \infty} \big\| \widetilde  f_{N+1, H_\ell}^{(1)}  (\tau_\ell) -   
\widetilde f_{\eps, H_\ell}^{(1)} (\tau_\ell ) 
\big\|_{L^\infty([0,T];L^1(\T^2\times \R^2\times {\mathbb S}\times \R))} = 0\,.
$$
This completes the proof of  \eqref{eq: finite dim esperance}.

\subsection{Tightness of the process}

We have already derived the counterpart of \eqref{eq: tightness criterion} for the limiting process.
Indeed, Aldous' criterion \eqref{eq: aldous tightness criterion Y} (see  \cite{billingsley}, Theorem 16.10) implies that  for any~$\petit >0$
\begin{equation*}
\label{eq: aldous tightness criterion bar Y}
\lim_{\eta \to 0} \limsup_{\alpha,\eps \to 0} \mathbb P_{\bar M_{\beta,I}} \left( 
\sup_{ |\tau - \sigma |\leq \eta \atop \tau \in [0,T]}   
\big| \bar \brown^\dagger (\tau) -  \bar \brown^\dagger (\sigma)  \big| \geq \petit \right) 
= 0\,.
\end{equation*}
Using Proposition \ref{prop: identification proba Duhamel}, the probability of the above event can be rewritten in terms of Duhamel series.
Comparing both hierarchies, we deduce \eqref{eq: tightness criterion} for the deterministic dynamics from 
\eqref{eq: aldous tightness criterion bar Y}.

\medskip

Theorem  \ref{thm: convergence process} is proved.\qed

\section{Conclusion and open problem}

As already mentioned, the main mathematical novelty in this paper is the control of pathological dynamics by computing directly the probability of these events under the invariant measure. 
We hope that this strategy will be useful in other situations, for instance to control multiple recollisions in the linearized setting (see~\cite{BGSR2}). 
This technique also gives a correspondence between real trajectories of tagged particles and pseudo-trajectories coming from the iterated Duhamel formula, which could be used to track the correlations, and maybe to obtain stronger convergence results (such as entropic convergence).

\bigskip
From the physical point of view, the main flaws of our study is that the system is two dimensional, and that the rigid body, although bigger than the atom, cannot be macroscopic.

In 3 dimensions, the dynamics is much more complicated since the rotation of the rigid body has two degrees of freedom and the matrix of inertia is not constant (it oscillates with the rotation). On the other hand,   the moment of inertia scales as $(\eps/\alpha)^2$, so there is as in the 2D case a very  fast rotation. Because of this fast rotation, we have an averaging effect and the rigid body behaves actually as its spherical envelope. In the limit, we completely lose track of the rotation and of the geometry.
This singular regime does not  exists if the  size of the rigid body does not vanish with $\eps$ 
(see \cite{DGL_2}). Ideally we would like to deal with a macroscopic convex body. Then we expect that it will undergo typically $O(N)$ collisions per unit of time, which corresponds to a mean field regime. In particular, we expect a macroscopic part of the atoms to see the rigid body, so that recollisions - as defined in the present paper - will occur with a non negligible probability. We have therefore to take into account the fact that the dynamics of the rigid body depends weakly on the atoms (deflections are infinitesimal), and to have a different treatment for the atom-rigid body interactions and for the atom-atom collisions.

\appendix

\section{A priori  and stability estimates for the Boltzmann equation}
\label{proofpropproximity}

\subsection{Symmetries of the collision operator  and maximum principle}

The linear Boltzmann equation
\begin{equation}
\label{eq: boltz lineaire}
\begin{aligned}
 \partial_t g_\eps  & =  \cT_\alpha^\star g_\eps  \\
\cT_\alpha^\star g & :=
- V \cdot \nabla_X g  -  \frac{\alpha}{\eps} \Omega \partial_\Theta g  \\
& \quad +
\frac1\alpha \int _{[0,L_{ \alpha}] \times {\mathbb R}^2}  M_{\beta } (v)  
 \Big( g(Y') \big(  (\frac1{\alpha}v' -V'- \Omega'      r_{ \Theta}^\perp
) \cdot    n_{ \Theta}\big)_-    \\
  & \qquad \qquad \qquad \qquad \qquad -  g(Y) \big(  (\frac1{\alpha}v
-V- \Omega    r_{ \Theta}^\perp
\big ) \cdot    n_{ \Theta} \big)_-\Big)\, d  \sigma_\alpha dv
\end{aligned}
\end{equation}
can be interpreted as the evolution of the density of the Markov process
defined in \eqref{eq: generateur L}.
Note that, unlike in the usual Boltzmann equation for hard spheres, the collision operator describing the interaction with a non symmetric rigid body is not self-adjoint.

\begin{lem}
\label{lem: markov simple}
The adjoint of the operator $\cT_\alpha^\star$ with respect to the measure $\bar M_{\beta, I} = \frac{1}{2\pi} M_{\beta, I}$
is given by 
\begin{align}
\label{eq: generateur symmetrique}
\cT_\alpha g
& :=
 V \cdot \nabla_X g  +  \frac{\alpha}{\eps} \Omega \partial_\Theta g \\
& \quad +
\frac1\alpha \int _{[0,L_{ \alpha}] \times {\mathbb R}^2}  M_{\beta } (v)   
\big  ( g(Y')   -  g(Y)\big)    \big(  (\frac1{\alpha}v
-V- \Omega    r_{ \Theta}^\perp
\big ) \cdot    n_{ \Theta} \big)_- \, d  \sigma_\alpha dv \,. \nonumber
\end{align}
It is therefore associated with the Markov process  $\bar Y( t) = ( \bar X(t), \bar \Theta(t), \bar V (t) , \bar \Omega(t) )$  introduced in \eqref{eq: generateur L}.
The measure $\bar M_{\beta, I}$ is invariant for this process.
\end{lem}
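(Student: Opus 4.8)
The plan is to verify directly that the operator $\cT_\alpha$ displayed in~\eqref{eq: generateur symmetrique} is the formal adjoint of $\cT_\alpha^\star$ from~\eqref{eq: boltz lineaire} with respect to the weighted inner product $\langle f, g\rangle_{\bar M_{\beta,I}} := \int f(Y)\,g(Y)\,\bar M_{\beta,I}(Y)\,dY$, and then to read off the invariance of $\bar M_{\beta,I}$ as the special case $f \equiv 1$. I would split the operator into its transport part and its collision part and treat them separately.

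For the transport part, I would note that $\bar M_{\beta,I}(Y) = \frac{1}{2\pi} M_{\beta,I}(V,\Omega)$ depends only on $(V,\Omega)$, hence is constant along the transport field $V\cdot\nabla_X + \frac\alpha\eps\Omega\,\partial_\Theta$; integrating by parts on $\T^2\times{\mathbb S}$ (no boundary terms, by periodicity) immediately gives that $-V\cdot\nabla_X - \frac\alpha\eps\Omega\,\partial_\Theta$ is skew-adjoint for $\langle\cdot,\cdot\rangle_{\bar M_{\beta,I}}$, which accounts for the sign flip of the transport terms between the two operators. For the collision part, the key computation is a change of variables. Writing the loss term of $\cT_\alpha^\star$ tested against $g$ and paired with $f$, one gets an integral over $d\sigma_\alpha\,dv\,dY$ of $M_\beta(v)\,f(Y)\,g(Y)\,((\tfrac1\alpha v - V - \Omega r_\Theta^\perp)\cdot n_\Theta)_-$; for the gain term one gets the same with $g(Y)$ replaced by $g(Y')$ and the weight written as $((\tfrac1\alpha v' - V' - \Omega' r_\Theta^\perp)\cdot n_\Theta)_-$. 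One then performs, at fixed $\sigma_\alpha$ (equivalently fixed contact point/normal $n_\Theta$), the involutive change of variables $(V,\Omega,v)\mapsto(V',\Omega',v')$ given by the scattering law~\eqref{eq: collision laws}. The crucial facts to check are: (i) this map is an involution whose Jacobian, including the Gaussian weights, satisfies $M_\beta(v)M_{\beta,I}(V,\Omega)\,dv\,dV\,d\Omega = M_\beta(v')M_{\beta,I}(V',\Omega')\,dv'\,dV'\,d\Omega'$, which follows from conservation of energy $\mathcal E$ in~\eqref{eq: conservation laws} together with the fact that the collision map is measure-preserving on the energy surface (the latter is exactly the content of the computation in Appendix~\ref{prooflawsmotion}, or can be checked by direct differentiation using~\eqref{eq: collision laws}); and (ii) under this map $((\tfrac1\alpha v - V - \Omega r_\Theta^\perp)\cdot n_\Theta)_- \leftrightarrow ((\tfrac1\alpha v' - V' - \Omega' r_\Theta^\perp)\cdot n_\Theta)_-$, which is the pre/post-collisional symmetry already recorded in the excerpt (just after~\eqref{eq: operator D}).

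Carrying out the substitution in the gain term of $\langle f, \cT_\alpha^\star g\rangle_{\bar M_{\beta,I}}$ turns it into $\int M_\beta(v)M_{\beta,I}(V,\Omega)\,f(Y')\,g(Y)\,((\tfrac1\alpha v - V - \Omega r_\Theta^\perp)\cdot n_\Theta)_-\,d\sigma_\alpha\,dv\,dY$, while the loss term is unchanged. Adding the transport contribution and regrouping gives exactly $\langle \cT_\alpha f, g\rangle_{\bar M_{\beta,I}}$ with $\cT_\alpha$ as in~\eqref{eq: generateur symmetrique}; the difference $f(Y')-f(Y)$ appears because the gain term now carries $f(Y')$ and the loss term carries $f(Y)$. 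This proves the first assertion. For the invariance of $\bar M_{\beta,I}$: the density $\rho$ of the process $\bar Y$ solves $\partial_t\rho = \cT_\alpha^\star(\rho/\bar M_{\beta,I})\cdot\bar M_{\beta,I}$ in the appropriate weak sense, and $\partial_t\big(\int\rho\,\varphi\big) = \int \rho\,\cT_\alpha\varphi\,dY$ for test functions $\varphi$; taking $\rho = \bar M_{\beta,I}$ and $\varphi$ arbitrary, one checks $\int \bar M_{\beta,I}\,\cT_\alpha\varphi = 0$, which reduces to $\int \cT_\alpha^\star(1)\,\varphi\,\bar M_{\beta,I} = 0$, and $\cT_\alpha^\star(1) = 0$ since all three terms of $\cT_\alpha^\star$ annihilate constants (the transport terms trivially, and the collision gain and loss terms cancel because $1 = 1$ at $Y$ and $Y'$ and the weights match up after the same change of variables). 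Hence $\bar M_{\beta,I}$ is stationary.

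The main obstacle I expect is the careful justification of step (i) above: verifying that the scattering map $(V,\Omega,v)\mapsto(V',\Omega',v')$ at fixed normal $n_\Theta$ is measure-preserving for $M_\beta\,M_{\beta,I}$. One cannot simply invoke the usual hard-sphere Jacobian because here the "collision" is between objects of unequal mass and with an angular degree of freedom, so the reflection is in a weighted inner product (with weights $M=1$, $m=\alpha^2$, $I$); the cleanest route is probably to observe that the collision is the orthogonal reflection, across the hyperplane $\{(\tfrac1\alpha v - V - \Omega r_\Theta^\perp)\cdot n_\Theta = 0\}$, with respect to the quadratic form $\mathcal E$ appearing in~\eqref{eq: conservation laws}, so it is an isometry of that form and thus preserves the associated Gaussian $\exp(-\beta\mathcal E)$ together with Lebesgue measure on its level sets — but spelling this out rigorously, keeping track of the $\frac1\alpha$ factors between $v$ and $\hat v$, is the delicate point. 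Once this is in hand, everything else is bookkeeping. Since the statement is labelled a lemma and the excerpt says the relevant computations appear in Appendix~\ref{prooflawsmotion}, I would keep the proof short and refer to that appendix for the measure-preservation fact.
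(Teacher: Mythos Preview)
Your approach to the adjoint computation is correct and essentially identical to the paper's: both rely on the involutive change of variables $(V,\Omega,v)\mapsto(V',\Omega',v')$ at fixed $(\Theta,\sigma_\alpha)$, together with energy conservation to carry the Gaussian weights through. The paper simply writes down the resulting identity (your gain-term substitution) and names the two ingredients---involution and energy conservation---without further comment; your reflection-in-$\mathcal E$ picture is a clean way to see simultaneously that the map has unit Jacobian and preserves $M_\beta\,M_{\beta,I}$, and is a legitimate route to the fact you flag as the ``main obstacle''.

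There is, however, a genuine gap in your invariance argument. You correctly reduce to showing $\cT_\alpha^\star(1)=0$, but your justification (``the collision gain and loss terms cancel because $1=1$ at $Y$ and $Y'$ and the weights match up after the same change of variables'') does not work: at fixed $Y$ the collision integral is only over $(v,\sigma_\alpha)$, so the full scattering change of variables $(V,\Omega,v)\mapsto(V',\Omega',v')$ is unavailable, and the partial map $v\mapsto v'$ at fixed $(V,\Omega)$ has Jacobian $\tfrac{|A-1|}{A+1}\neq1$ and does not preserve $M_\beta$. The correct computation uses instead the sign-flip identity $((\tfrac1\alpha v'-V'-\Omega' r_\Theta^\perp)\cdot n_\Theta)_-=((\tfrac1\alpha v-V-\Omega r_\Theta^\perp)\cdot n_\Theta)_+$, which turns gain minus loss into
\[
\frac1\alpha\int M_\beta(v)\,\Big(\frac1\alpha v-V-\Omega r_\Theta^\perp\Big)\cdot n_\Theta\,d\sigma_\alpha\,dv\,;
\]
the $\tfrac1\alpha v\cdot n_\Theta$ part vanishes by oddness of $M_\beta$, and the remaining $(V+\Omega r_\Theta^\perp)\cdot n_\Theta$ vanishes after the $\sigma_\alpha$-integral by the divergence identities~\eqref{usefulcancellations}. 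The paper does not spell this out either---it just writes ``As a consequence, the measure $\bar M_{\beta,I}$ is invariant''---so neither argument is complete as stated, but yours asserts a mechanism that is actually incorrect rather than merely elliptical.
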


\begin{proof}
For a given $n_{ \Theta}$, the map $\Gamma (V, \Omega, v) := (V', \Omega', v')$ is an involution, thus 
using the change of variable $\Gamma^{-1}$, we deduce that for any function $h$
\begin{equation}
\begin{aligned}
\label{eq: generateur changement var}
& \int _{[0,L_{ \alpha}] \times {\mathbb R}^5}  M_{\beta } (v) M_{\beta,I } (V, \Omega)   
 g(Y')   h (Y)    \big(  (\frac1{\alpha}v' - V' - \Omega'    r_{ \Theta}^\perp
\big ) \cdot    n_{ \Theta} \big)_- \, d  \sigma_\alpha dv d V d \Omega   \\
&   \,
= \int _{[0,L_{ \alpha}] \times {\mathbb R}^5}  M_{\beta } (v')   M_{\beta,I } (V', \Omega') 
 g(Y)   h (Y')    \big(  (\frac1{\alpha}v - V - \Omega    r_{ \Theta}^\perp
\big ) \cdot    n_{ \Theta} \big)_- \, d  \sigma_\alpha dv' d V' d \Omega'\\
&   \,
= \int _{[0,L_{ \alpha}] \times {\mathbb R}^5}  M_{\beta } (v)   M_{\beta,I } (V, \Omega) 
 g(Y)   h (Y')    \big(  (\frac1{\alpha}v
-V- \Omega    r_{ \Theta}^\perp
\big ) \cdot    n_{ \Theta} \big)_- \, d  \sigma_\alpha dv d V d \Omega\,,  
\end{aligned}
\end{equation}
where we used that the kinetic energy  is conserved by the elastic collisions and therefore the Maxwellian is preserved.
This identity implies that  the   adjoint collision operator with respect to $M_{\beta,I}$
has the form  \eqref{eq: generateur symmetrique}.
As a consequence, the measure  $\bar M_{\beta,I}$ is invariant.
This proves Lemma~\ref{lem: markov simple}.
\end{proof}

\subsection{Convergence of the truncated Boltzmann equation: proof of Proposition~\ref{prop: truncatedBoltztoBoltz}}

We now consider a solution of the truncated Boltzmann equation
\begin{equation}
\label{trunc-boltz}
\begin{aligned}
\partial_t \tilde g_\eps  &  =  \cT_\alpha ^{\dagger, \star} \tilde g_\eps \\
\cT_\alpha^{\dagger, \star}  g 
&:=
- V \cdot \nabla_X g -  \frac{\alpha}{\eps} \Omega \partial_\Theta g\\
& + \frac1\alpha \int _{[0,L_{ \alpha}] \times {\mathbb R}^2}  M_{\beta } (v)  
 \Big(  \indc_{(\ref{good-collision1})(\ref{good-collision2})}g(Y') \big(  (\frac1{\alpha}v' -V'- \Omega'      r_{ \Theta}^\perp
) \cdot    n_{ \Theta}\big)_-    \\
  &\qquad \qquad \qquad -   \indc_{(\ref{good-collision3})}g(Y) \big(  (\frac1{\alpha}v
-V- \Omega    r_{ \Theta}^\perp
\big ) \cdot    n_{ \Theta} \big)_-\Big)\, d  \sigma_\alpha dv\,.
\end{aligned}
\end{equation}
This equation describes the density evolution of the Markov process introduced in \eqref{eq: generateur non symmetrique}.

\begin{lem}
\label{lem: markov tronque}
The operator $\cT_\alpha^\dagger$ defined in \eqref{eq: generateur non symmetrique} is the adjoint 
of $\cT_\alpha ^{\dagger, \star}$.
The killed process~$\bar Y^\dagger$ defined by the generator $\cT_\alpha^\dagger$ 
hardly differs from the process $\bar Y( t) = ( \bar X(t), \bar \Theta(t), \bar V (t) , \bar \Omega(t) )$.
In particular, on any time interval $[0,T]$, both processes can be coupled with large probability so that 
\begin{equation}
\label{eq: comparison processes}
\hat {\mathbb P} \Big( \exists t \leq T, \quad \bar Y( t) \not = \bar Y^\dagger ( t) \Big)
\leq C \alpha^{2 \eta}\,,
\end{equation}
where $\hat {\mathbb P}$ stands for the joint measure of the coupled processes. 
In this coupling, both processes start from the same initial data sampled from the measure $\bar M_{\beta, I}$. 
\end{lem}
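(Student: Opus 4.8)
The plan is to prove Lemma \ref{lem: markov tronque} in two parts. First, the adjointness of $\cT_\alpha^\dagger$ and $\cT_\alpha^{\dagger,\star}$ follows exactly as in the proof of Lemma \ref{lem: markov simple}: using that $\Gamma(V,\Omega,v) = (V',\Omega',v')$ is an involution preserving the Maxwellian $M_\beta(v)M_{\beta,I}(V,\Omega)$ by conservation of kinetic energy, one performs the change of variable $\Gamma^{-1}$ in the gain term. The only new point is bookkeeping of the indicator functions: the gain term carries $\indc_{(\ref{good-collision1})(\ref{good-collision2})}$ evaluated at $(v',V',\Omega')$, which under $\Gamma^{-1}$ becomes precisely the constraint ``the post-collisional configuration satisfies \eqref{good-collision1}, \eqref{good-collision2}'', i.e. a condition on the pre-collisional data; meanwhile the loss term in $\cT_\alpha^{\dagger,\star}$ carries $\indc_{(\ref{good-collision3})}$, the no-direct-recollision constraint in the backward evolution. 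Matching these after the substitution gives that the adjoint of $\cT_\alpha^{\dagger,\star}$ with respect to $\bar M_{\beta,I}$ is exactly $\cT_\alpha^\dagger$ as written in \eqref{eq: generateur non symmetrique}, where in the symmetrized form the killing indicators become $\indc_{(\ref{good-collision3})}\big(\indc_{(\ref{good-collision1})(\ref{good-collision2})} g(Y') - g(Y)\big)$.

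Second, for the coupling estimate \eqref{eq: comparison processes}, the idea is to realise both $\bar Y$ and $\bar Y^\dagger$ on the same probability space driven by the same Poisson point process of kicks (with intensity $\frac1\alpha M_\beta(v)\big(( \alpha^{-1}v - V - \Omega r_\Theta^\perp)\cdot n_\Theta\big)_- \, d\sigma_\alpha\, dv\, dt$). Both processes start from the same $\bar M_{\beta,I}$-distributed initial condition and follow identical trajectories until the first kick whose parameters fall in the ``bad'' set: either a kick that would be killed by the $(\ref{good-collision3})$ truncation (a loss-type event removed in $\cT_\alpha^\dagger$ but kept in $\cT_\alpha$), or a kick whose post-collisional configuration violates $(\ref{good-collision1})$ or $(\ref{good-collision2})$ and hence is killed in $\bar Y^\dagger$ but accepted (as a genuine deflection) in $\bar Y$. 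On that event the two processes may decouple; otherwise they coincide on $[0,T]$. Thus $\hat{\mathbb P}(\exists t\le T,\ \bar Y(t)\ne\bar Y^\dagger(t))$ is bounded by the $\bar M_{\beta,I}$-expected number of such bad kicks up to time $T$.

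The core estimate is then to bound this expected number. Since $\bar M_{\beta,I}$ is invariant for $\bar Y$, the expected number of bad kicks in $[0,T]$ equals $T$ times the stationary rate, i.e. $\frac{T}{\alpha}\int M_\beta(v) \big(( \alpha^{-1}v - V - \Omega r_\Theta^\perp)\cdot n_\Theta\big)_- \indc_{\text{bad}}\, d\sigma_\alpha\, dv$ integrated against $M_{\beta,I}(V,\Omega)\,dV\,d\Omega$. This is essentially the same computation already carried out in the proof of Proposition \ref{lem: small set}, Step 1, for the ``single collision cost'' \eqref{eq: single collision cost}: a kick with $|V'-V| \le \alpha^{2+\eta}$ forces $v$ into a tube of diameter $\alpha^{1+\eta}$ (measure $O(\alpha^{2(1+\eta)})$ against $b_{\alpha-}M_\beta$), and after dividing by the $\alpha^{-1}$ prefactor and the collision frequency $O(\alpha^{-1})$ this produces the bound $O(\alpha^{2\eta})$; the contributions of large $|V|,|\Omega| > |\log\alpha|$ are exponentially small by the Gaussian tails (as in the $\cC_2$ estimate), and the $(\ref{good-collision2})$/$(\ref{good-collision3})$ slow-velocity events $|v - \alpha V| \lesssim \alpha^{2/3+\eta}$ contribute $O(\alpha^{3\eta})$ exactly as in the $\cB_1$ computation. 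Collecting these gives \eqref{eq: comparison processes} with exponent $2\eta$. The $L^\infty$ bound on $\tilde g_\eps$ claimed in Proposition \ref{prop: truncatedBoltztoBoltz} then follows from the maximum principle for \eqref{trunc-boltz} (the killing only decreases the solution), and the $L^1$ stability $\|M_{\beta,I}(g_\eps - \tilde g_\eps)\|_{L^1} \le C_T\alpha^{2\eta}$ is a direct consequence of the coupling via the representation $\int M_{\beta,I}|g_\eps - \tilde g_\eps|\,\varphi \le 2\|\varphi\|_\infty \hat{\mathbb P}(\bar Y \ne \bar Y^\dagger)$ for test functions, using that the laws of $\bar Y$ and $\bar Y^\dagger$ have densities $M_{\beta,I}g_\eps$ and $M_{\beta,I}\tilde g_\eps$ respectively.

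The main obstacle I expect is the careful tracking of which indicator is a pre-collisional versus a post-collisional constraint under the involution $\Gamma^{-1}$, and correspondingly the identification, in the coupling, of exactly which kicks cause a discrepancy — in particular ensuring that the $(\ref{good-collision3})$ constraint (a geometric two-body no-recollision condition depending only on $\alpha$, not $\eps$) translates correctly into a bad-set of controllable measure; once this is set up, the probabilistic estimates reduce to the computations already performed for Proposition \ref{lem: small set}.
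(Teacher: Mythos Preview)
Your proposal is correct and follows essentially the same approach as the paper's own proof. The one point to make explicit in the adjoint computation is that conditions \eqref{good-collision1}--\eqref{good-collision2} are symmetric under the involution $\Gamma$ and, by Proposition~\ref{norecollision}, imply \eqref{good-collision3}; this is precisely what permits the factorization $\indc_{(\ref{good-collision3})}\big(\indc_{(\ref{good-collision1})(\ref{good-collision2})}\,g(Y') - g(Y)\big)$ and, on the coupling side, reduces the control of the \eqref{good-collision3}-bad set to the estimates you already invoke for \eqref{good-collision1}--\eqref{good-collision2}.
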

This lemma is the counterpart of Proposition \ref{lem: small set} which allowed to compare the gas dynamics to the killed  dynamics. The strategy is identical, indeed both trajectories coincide if none 
of the events (\ref{good-collision1}), (\ref{good-collision2}) or (\ref{good-collision3}) is encountered, 
 thus it is enough to evaluate the probability of the occurence of any of these events.

\begin{proof}
{\color{black}

To compute the adjoint \eqref{eq: generateur non symmetrique}, we first use the same change of variables
as in \eqref{eq: generateur changement var}.
We also use the fact that conditions (\ref{good-collision1}), (\ref{good-collision2}) 
are symmetric with respect to the variables~$(v,V)$ and $(v',V')$ so that 
the adjoint  reads
\begin{equation*}
\begin{aligned}
\cT_\alpha^{\star}  g 
&= V \cdot \nabla_X g +  \frac{\alpha}{\eps} \Omega \partial_\Theta g\\
& + \frac1\alpha \int _{[0,L_{ \alpha}] \times {\mathbb R}^2}  M_{\beta } (v)  
 \Big(  \indc_{(\ref{good-collision1})(\ref{good-collision2})}(Y) g(Y') \big(  (\frac1{\alpha}v -V- \Omega      r_{ \Theta}^\perp) \cdot    n_{ \Theta}\big)_-    \\
  &\qquad \qquad \qquad -   \indc_{(\ref{good-collision3})}g(Y) \big(  (\frac1{\alpha}v
-V- \Omega    r_{ \Theta}^\perp
\big ) \cdot    n_{ \Theta} \big)_-\Big)\, d  \sigma_\alpha dv\, .
\end{aligned}
\end{equation*}
As (\ref{good-collision1}), (\ref{good-collision2}) imply (\ref{good-collision3}),
the indicator function can be factorized leading to the expression~\eqref{eq: generateur non symmetrique} for the adjoint.
}

\medskip

To prove  \eqref{eq: comparison processes}, we are going to build a coupling of the processes $\bar Y, \bar Y^\dagger$. Both processes start with the same initial data and have the same updates up to the collision time such that~(\ref{good-collision1}) or (\ref{good-collision2})  no longer hold.

First of all,  recall that the analysis of the atom-rigid body interaction in Section  \ref{conditioning-geometry} shows that if $(v,V,\Omega, n, \Theta)$ does not satisfy (\ref{good-collision3}), then one of the following conditions is  violated
\begin{align}
\label{eq: condition 1}
& |\Omega | < |\log \alpha|\,, \qquad |V | < |\log \alpha|  \,,  \\
\label{eq: condition 2}
&|V'-V| = {2\alpha^2\over A+1} \big| \big( \frac1{\alpha}v
-V- \Omega    r_{ \Theta}^\perp
\big ) \cdot    n_{ \Theta}\big| \geq \alpha^{2+\eta}\,,  \\
\label{eq: condition 3}
& |v-\alpha V| \geq \alpha^{2/3+\eta}\,.
\end{align}

\medskip

Since the measure  $\bar M_{\beta, I}$ is invariant for the process $\bar Y$,
we can proceed as in the first step of the proof  of Proposition \ref{lem: small set} and show that 
with probability much larger than $1-\alpha^{3 \eta}$ the process $\bar Y$ will remain in the set 
\begin{equation}
\label{eq: good set process}
\big \{  |\Omega | < \frac{1}{2} |\log \alpha|\,, \quad |V | < \frac{1}{2}  |\log \alpha| \big\}
\end{equation}
during the time interval $[0,T]$. The initial data of both processes will also belong to this set.

The process $\bar Y$ is obtained by drawing random times with 
update rates  bounded by 
$$
\sup_{ |V|, |\Omega| \leq |\log \alpha|/2 }
\;
\frac1\alpha \int _{[0,L_{ \alpha}] \times {\mathbb R}^2}  M_{\beta } (v)    
\big(  (\frac1{\alpha}v -V- \Omega    r_{ \Theta}^\perp
\big ) \cdot    n_{ \Theta} \big)_- \, d  \sigma_\alpha dv 
\leq \frac{c}{\alpha^2} \,,
$$
for some constant $c>0$, i.e. by a Poisson process with rate $c /\alpha^2$.
For the process $\bar Y$, the probability density to change from a configuration
$(V,\Omega)$ into~$(V',\Omega')$ is given by 
\begin{align*}
\frac{
M_{\beta } (v)  \big(  ( v
- \alpha V- \alpha \Omega    r_{ \Theta}^\perp
\big ) \cdot    n_{ \Theta} \big)_-  }
{ \displaystyle \int _{[0,L_{ \alpha}] \times {\mathbb R}^2}  M_{\beta } (v)    
\big(  ( v -\alpha V-  \alpha \Omega    r_{ \Theta}^\perp
\big ) \cdot    n_{ \Theta} \big)_- \, d  \sigma_\alpha dv }\,\cdotp
\end{align*}
The probability of violating the event \eqref{eq: condition 2} under this measure is bounded by $\alpha^{2+2\eta}$ 
thanks to the following estimate
$$
\int_0^{\alpha^{1+\eta}}  u du = \frac{1}{2} \alpha^{2+2\eta}\,.
$$
The probability of violating the event \eqref{eq: condition 3} 
is bounded by $\alpha^{2+3\eta}$. 
Combining the previous estimates, we deduce that the coupling  fails with a probability less than 
$$
\hat {\mathbb P} \Big( \exists t \leq T, \quad \bar Y( t) \not = \bar Y^\dagger ( t) \Big)
\leq  C \alpha^{2 \eta}\,.
$$
This concludes the proof of Lemma~\ref{lem: markov tronque}. 
\end{proof}

{\color{black}
Proposition~\ref{prop: truncatedBoltztoBoltz} can be deduced from the previous analysis.  
Indeed since $\tilde g_\eps$ is the density of a Markov chain, the 
 maximum principle holds. This leads to  the uniform bound 
$$
\| \tilde g_\eps \| _{L^\infty(\R^+\times \R^4\times {\mathbb S} \times \R)} \leq C_0 \, .
$$
The previous bound can also be understood by using duality and then 
applying the maximum principle for the operator \eqref{eq: generateur non symmetrique}.
Finally,  the difference
 $$
  \|M_{\beta,I} (g_\eps - \tilde g_\eps )\|_{L^\infty([0,T] ;  L^1( \R^4\times  {\mathbb S} \times \R))} \leq C_T \alpha ^{2\eta}
$$
can be deduced from \eqref{eq: comparison processes} by applying the inequality at any given time $t \leq T$.} 

\medskip
Proposition~\ref{prop: truncatedBoltztoBoltz} is proved.\qed

\section{Technical estimates}
\label{technicalestimates}

In this section, we collect some  estimates that were used in the core of the proof of the main theorem:   Paragraph~\ref{subsec : continuity estimates} is devoted to the proof of  some rather well-known continuity bounds on the collision integrals, and finally an estimate showing the  convergence of the initial data is provided in Paragraph~\ref{subsec : initial data}.

\subsection{Continuity estimates}
\label{subsec : continuity estimates}

The following proposition is a precised version of Proposition~\ref{estimatelemmacontinuity 0}.  
\begin{Prop}
\label{estimatelemmacontinuity} 
There is a constant~$C_1= C_1(\beta,I)$  such that for all $s,n\in \N^*$ and all~$h,t\geq 0$, the operators~$|Q|$   satisfy the following continuity estimates:  
$$
	\begin{aligned}
	& |Q_{1,s}| (t)  \big(
	 M_{\beta,I,N}^{(s)} \big)  \leq \Big({C_1 t\over \alpha^2} \Big) ^{s-1} M_{3\beta/4,I} \\
	& |Q_{1,s }| (t) \,  |Q_{s,s+n} | (h) \big(
	 M_{\beta,I,N}^{(s+n)}  \big) \! \leq \!   \Big({C_1 \over \alpha^2} \Big) ^{n+ s-1} t^{s-1}h^n
	M_{3\beta/4,I}  \, .
	\end{aligned}
$$
Similar estimates hold for~$|\bar Q|$,~$|Q^\dagger|$ and~$|\bar Q^\dagger|$.
\end{Prop}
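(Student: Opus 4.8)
The plan is to follow the now-standard Lanford-type bookkeeping (as in \cite{lanford,GSRT,BGSR1}), exploiting three ingredients. First, the free-transport groups $\mathbf{S}_s$ — together with their specular-reflection and partially-absorbing variants $\mathbf{S}_s^\dagger$ and the free-flow Boltzmann version $\bar{\mathbf{S}}_s$ — preserve the kinetic energy of every particle, hence map any velocity-Gaussian to itself (up to the position-domain indicator) and act as contractions on each $L^p$, see \eqref{estimateonSsdagger}. Second, the Gibbs marginals are pointwise dominated by a tensorized Gaussian, $M_{\beta,I,N}^{(s)}(Y,Z_{s-1})\le C^s M_{\beta,I}(V,\Omega)\prod_{i=1}^{s-1}M_\beta(v_i)$ uniformly in the positions, the factor $C^s$ coming from the exclusion normalisation $\mathcal Z_s^{-1}$, exactly as in \eqref{estimatedata 0}. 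Third, each elementary collision operator carries an explicit negative power of $\alpha$ times a single power of the relative velocity at the contact point.

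The core step is the one-collision continuity bound. Using $N\eps=1$, the prefactor $(N-s+1)\eps$ in $C_{s,s+1}$ is $O(1)$ while that of $D_{s,s+1}$, namely $(N-s+1)\eps/\alpha$, is $O(1/\alpha)$; combined with the $1/\alpha$ coming from the rescaled atomic velocities in $C_{s,s+1}$ and from the rescaled relative velocity $\tfrac1\alpha v_s-V-\Omega r_\Theta^\perp$ in $D_{s,s+1}$, both $|C_{s,s+1}|$ and $|D_{s,s+1}|$ are bounded by $C/\alpha^2$ times an integral of the form $\int(|v_s|+|v_a|+|V|+|\Omega|)M_\beta(v_s)\,dv_s\,d\sigma_\alpha$ (resp. $d\nu_s$), where $a$ is the parent label and $L_\alpha$, $|{\mathbb S}|$ are bounded. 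Since the scattering maps \eqref{hard-spheres2}, \eqref{eq: collision laws} have unit Jacobian and conserve the total energy, carrying out the $v_s$-integration returns $\le C\alpha^{-2}(1+|v_a|)$ times the same Gaussian in the remaining variables, and the surplus power of velocity is absorbed — at the price of a constant $C=C(\beta,I)$ — into a slightly fatter Gaussian via $\sup_w(1+|w|)e^{-\delta|w|^2/2}\le C\delta^{-1/2}$.

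Then I would iterate. For $Q_{1,s}(t)$ one sums over the collision trees $a\in\cA_s$ and integrates over the time simplex $\{t>t_1>\dots>t_{s-1}>0\}$: the simplex contributes $t^{s-1}/(s-1)!$, the number of trees is at most $(s-1)!$, and these two factorials cancel, while the $s-1$ collision operators give $(C_1/\alpha^2)^{s-1}$ and the cumulative velocity losses are absorbed by letting the temperature drop in $s-1$ controlled decrements from $\beta$ to $3\beta/4$, yielding precisely $|Q_{1,s}|(t)\big(M_{\beta,I,N}^{(s)}\big)\le (C_1 t/\alpha^2)^{s-1}M_{3\beta/4,I}$. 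The second inequality is the same computation with the time domain split into a length-$t$ simplex for the first $s-1$ collisions and a length-$h$ simplex for the last $n$, producing the extra factor $(C_1/\alpha^2)^n h^n$. The bounds for $|\bar Q|$, $|Q^\dagger|$, $|\bar Q^\dagger|$ are then immediate: the truncation indicators in \eqref{eq: operator D truncated}, \eqref{eq: operator barD truncated}, \eqref{eq: operator barD- truncated} only restrict the domains of integration, so the truncated operators are pointwise dominated by $|C_{s,s+1}|$, $|D_{s,s+1}|$, while $\mathbf{S}_s^\dagger$ and $\bar{\mathbf{S}}_s$ remain energy- and norm-preserving.

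The point I expect to be the most delicate, and would spell out with care, is the uniform-in-$s$ control of the velocity moments accumulated along a collision tree: one must organise the $s-1$ temperature decrements (and the matching polynomial losses) so that the constant stays of the form $C_1(\beta,I)^{s-1}$, i.e. so that the loss of moments produces no spurious super-exponential factor in $s$. This is exactly where one uses that each collision operator is only \emph{linear} in the relative velocity at contact, together with the cancellation between the number of trees and the volume of the time simplex. Everything else is the routine Cauchy--Kovalevskaya-type argument, which is also why the bound is only useful on the short time scale $t\ll\alpha^2$ on which the series \eqref{iterated-Duhamel} converges.
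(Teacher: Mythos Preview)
Your strategy and ingredients match the paper's proof exactly: invariance of Maxwellians under transport, the pointwise bound $M_{\beta,I,N}^{(s)}\le C^s M_{\beta,I}M_\beta^{\otimes(s-1)}$, and the one-step collision estimate producing a factor $C\alpha^{-2}$ times a linear velocity moment. The overall architecture is correct, and the extension to $|\bar Q|$, $|Q^\dagger|$, $|\bar Q^\dagger|$ by domination is fine.

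There is, however, a real gap at precisely the point you flag as ``most delicate''. Your bookkeeping spends the time-simplex factorial $1/(s-1)!$ on cancelling the tree count $(s-1)!$, leaving you with $t^{s-1}$ and a per-tree velocity factor $\prod_{i=1}^{s-1}(1+|v_{a_i}|)$ to absorb into the Gaussian via $s-1$ temperature decrements. That absorption does \emph{not} give $C^{s-1}$ uniformly: for the degenerate tree $a_i\equiv 0$ (every atom added on the rigid body) you face $(1+|V|+|\Omega|)^{s-1}e^{-\beta(|V|^2+I\Omega^2)/8}$, whose supremum is of order $(Cs)^{(s-1)/2}$, i.e.\ super-exponential in $s$. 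The ``linearity of the cross-section'' you invoke is already used in obtaining $(1+|v_{a_i}|)$, and the tree/simplex cancellation yields $1$, not a compensating $1/(s-1)!$; so neither ingredient you name closes the gap.

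The paper's resolution allocates the factorial differently. One does \emph{not} split by tree: the per-step bound on $|C_{k,k+1}|+|D_{k,k+1}|$ already sums over parents, producing at step $k$ a factor $C\alpha^{-2}\big(k+\sum_{i}|v_i|+|V|+|\Omega|\big)$. Distributing $1/n$ of the Gaussian weight to each of the $n$ steps and using Cauchy--Schwarz,
\[
\sum_{i\le k}|v_i|\,\exp\Big(-\frac{\beta}{8n}|V_k|^2\Big)\ \le\ \Big(\frac{4nk}{e\beta}\Big)^{1/2}\ \le\ C\,(s+n),
\]
so each step costs $C\alpha^{-2}(s+n)$, and the time simplex contributes $h^n/n!$. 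The super-exponential piece is then killed by Stirling, $(s+n)^n/n!\le e^{s+n}$, which is where the factorial is actually needed. In short: the $1/(s-1)!$ should cancel the accumulated polynomial growth $s^{s-1}$ from the velocity moments, not the tree combinatorics. Reorganising your argument in this way (or equivalently, not expanding by tree before bounding) fixes the proof.
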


\begin{proof}[Sketch of proof]
The estimate is simply obtained from the fact that the transport operators preserve the Gibbs measures, along with the continuity of the elementary collision operators~:
\begin{itemize}
\item  the transport operators satisfy the identities
$$
{\mathbf  S}_k (t)   M_{\beta,I,k-1}
=   M_{\beta,I,k-1}  \,.
$$
 \item  the collision operators satisfy the following bounds in the Boltzmann-Grad 
scaling~$N \e  = 1 $  (see \cite{GSRT})
$$
\begin{aligned}
|{C}_{k,k+1} |   M_{\beta,I,k}
  \leq C  \alpha^{-1}  \Big( k \beta   ^{-\frac12} + \sum_{1 \leq i \leq k} |v_i|\Big) 
M_{\beta,I,k-1} ,\\
|{D} _{k,k+1} | M_{\beta,I,k} 
  \leq C \alpha^{-1}  \Big( (\alpha^2\beta)  ^{-\frac12} + |V|+|\Omega| \Big) 
 M_{\beta,I,k-1},
\end{aligned}
$$
almost everywhere. Note that we choose not to track the dependence on~$I$ in the estimates as contrary to the factor~$\beta$ there is no loss in this parameter.
\end{itemize}

Estimating the operator $|Q_{s,s+n}|(h)$ follows from piling together those inequalities (distributing the  exponential weight evenly on each occurence of a collision term).
For the collision operator involving the rigid body, we write
$$
\big( |V|+|\Omega| \big) 
\exp \Big(
-\frac{\beta}{8n} \big(|V|^2 +  I  \Omega^2 \big) \Big) 
\leq 
\sqrt{  \frac{C n }{\beta}} \, \cdotp
$$
For the atoms,  we  notice  that by the Cauchy-Schwarz inequality
\begin{equation}
\label{CS}
\begin{aligned}
\frac1 \alpha \sum_{1 \leq i \leq k} |v_i| \exp\Big(- \frac { \beta } {8n} |V_k|^2\Big) &\leq\frac1 \alpha  \left( k\frac {4n} \beta\right) ^{\frac12} \left( \sum_{1 \leq i \leq k} \frac\beta {4n} |v_i|^2 \exp\Big(- \frac {\beta }{4n} |V_k|^2\Big)\right)^{1/2} \\
&\leq\frac1 \alpha  \Big( \frac{4nk}{e\beta }\Big)^{1/2} \leq\frac1 \alpha  \frac{2}{\sqrt{ e\beta }} (s+n)  \, ,
\end{aligned}
\end{equation}
where the last inequality comes from the fact that  $k \leq s+n$. 
Each collision operator gives therefore at most a loss of $ C \beta  ^{-1/2} \alpha ^{-2}  (s+n) $ together with a loss on the exponential weight, while 
  integration with respect to time provides  a factor $h^n/n!$. By Stirling's formula, we have
$$
{ (s+n)^n\over n!} \leq  \exp \left(  n \log {n+s \over n }  + n\right) \leq \exp ( s+n) \,.
$$
As a consequence, for $\beta' <\beta$
$$
|Q_{s,s+n}| (h) M_{\beta,I,s+n-1}
\leq \left( {C_{\beta, \beta'}h \over  \alpha^2}\right)^{n}   M_{\beta,I,s-1} .
$$
Proposition \ref{estimatelemmacontinuity} follows from this upper bound
and the fact that $M_{\beta,I,N}^{(s)} \leq C^s  M_{\beta,I,s-1}$ for some $C$.
\end{proof}

\subsection{Convergence of the initial data} 
\label{subsec : initial data}

Let us  prove the following result.
\begin{Prop}
\label{exclusion-prop2}
There is a constant~$C>0$ such that  the following holds. For the initial data~$f_{N+1,0 } $ and~$( f_0^{(s)})_{s \geq 1}$ given  in~{\rm(\ref{defdata})} and~{\rm(\ref{dataBoltz})},  there holds as $N\to \infty $, and~$\eps,\alpha\to 0$ in the scaling $N \e =  1   $ with~$\alpha \gg \eps^{ \frac12}$,
	$$
			\Big| \left(f_{N+1,0 } ^{(s)}  - f_0^{(s)}\right) \indc_{{\mathcal D}_{\eps}^{s}} (Y,Z_{s-1})
	\Big|  \leq C ^{s-1}   \frac{\eps}{\alpha^2}     \|g_0\|_{L^\infty}     M_{\beta, I} (V,\Omega) M_{\beta } ^{\otimes (s-1)}(V_{s-1})   \,.
			$$
\end{Prop}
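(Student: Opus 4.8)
The plan is to compare the two marginals by computing them explicitly and tracking the only genuine difference, which is the normalisation constant $\mathcal Z_s$ appearing through the exclusion indicator. First I would write out both sides: by definition $f_0^{(s)}(Y,Z_{s-1}) = g_0(Y)\,\bar M_{\beta,I}(Y)\prod_{i=1}^{s-1}M_\beta(v_i)$, while $f_{N+1,0}^{(s)}$ is obtained from $f_{N+1,0} = g_0\,M_{\beta,I,N}$ by integrating out the variables $z_s,\dots,z_N$. Since the velocity part of the Gibbs measure $M_{\beta,I,N}$ already factorises as $\bar M_{\beta,I}(Y)\prod_{i=1}^N M_\beta(v_i)$, integrating out the velocities $v_s,\dots,v_N$ is trivial, and one is left with
$$
f_{N+1,0}^{(s)}(Y,Z_{s-1}) = g_0(Y)\,\bar M_{\beta,I}(Y)\prod_{i=1}^{s-1}M_\beta(v_i)\,
\frac{1}{\mathcal Z_N}\int \mathbf 1_{\mathcal D_\eps^{N+1}}(Y,Z_N)\,dx_s\cdots dx_N\,,
$$
so that on the diagonal set $\mathcal D_\eps^s$ the difference of the two marginals equals $f_0^{(s)}$ multiplied by $\big(\mathcal Z_{s,N}/\mathcal Z_N - 1\big)$, where $\mathcal Z_{s,N}:=\int \mathbf 1_{\mathcal D_\eps^{N+1}}(Y,Z_N)\,dx_s\cdots dx_N$ is the partial spatial integral with the $s-1$ atom positions and the rigid body frozen in an admissible configuration.

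The heart of the argument is then the estimate $\big|\mathcal Z_{s,N}/\mathcal Z_N - 1\big|\le C^{s-1}\eps/\alpha^2$. This is the standard cluster-expansion / inclusion-exclusion bound for the partition function of a hard-sphere gas, with the only twist being the presence of the rigid body of size $\eps/\alpha$. Using translation and rotation invariance as indicated in the text, $\mathcal Z_N$ reduces to a spatial-exclusion volume; expanding $\mathbf 1_{\mathcal D_\eps^{N+1}} = \prod_{i<j}(1-\mathbf 1_{|x_i-x_j|<\eps})\prod_i(1-\mathbf 1_{d(x_i,X+\frac\eps\alpha R_\Theta\Sigma)<\eps/2})$ and integrating, each excluded pair of atoms contributes a volume $O(\eps^2)$, and each atom-body exclusion contributes a volume $O((\eps/\alpha)\cdot(\eps/\alpha)) = O(\eps^2/\alpha^2)$ — the dominant term, since the body has diameter $O(\eps/\alpha)$ and the atom must come within $\eps/2$. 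Freezing $s-1$ atoms removes, relative to $\mathcal Z_N$, the corresponding excluded volumes; counting the number of frozen pairs ($O(s^2)$ atom-atom at cost $\eps^2$ each, and $O(s)$ atom-body at cost $\eps^2/\alpha^2$ each) and using $N\eps=1$ so that $N\eps^2 = \eps$, one gets a bound of the form $C s^2\eps + Cs\,\eps/\alpha^2 \le C^{s-1}\eps/\alpha^2$ once $\alpha\ll 1$, which absorbs the combinatorial $s$-dependence into a geometric factor $C^{s-1}$. The scaling hypothesis $\alpha\gg\eps^{1/2}$ guarantees this is a genuinely small quantity.

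The main obstacle — or rather the point requiring the most care — is the precise bookkeeping of the cluster expansion uniformly in $N$ and in the admissible configuration $(Y,Z_{s-1})\in\mathcal D_\eps^s$: one must show that the "bad'' excluded volumes involving the frozen particles are dominated uniformly, which is exactly where the convexity and bounded size of $\Sigma$ (through $r_{max}$) enter to control the atom-body exclusion volume by $O(\eps^2/\alpha^2)$ rather than something larger. I would invoke the standard hard-sphere partition-function estimates from \cite{GSRT} (and their adaptation in \cite{BGSR1}) for the atom-atom part, and carry out only the atom-body contribution in detail, since that is the new feature; the factor $\alpha^{-2}$ in the statement is precisely the signature of this single new geometric scale. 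Finally, multiplying by the pointwise bound $f_0^{(s)}\le C^{s-1}\|g_0\|_{L^\infty} M_{\beta,I}(V,\Omega)M_\beta^{\otimes(s-1)}(V_{s-1})$ coming from $\|g_0\|_{L^\infty}$ and the factorised Maxwellian yields the claimed inequality.
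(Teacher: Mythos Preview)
Your proposal is correct and follows essentially the same route as the paper: reduce the difference to the partition-function ratio $\tilde{\mathcal Z}_{(s,N)}/\mathcal Z_N - 1$ (the paper writes $\tilde{\mathcal Z}_{(s,N)}$ for your $\mathcal Z_{s,N}$) and control it by excluded-volume bounds, with the rigid body contributing the dominant $\eps/\alpha^2$; the paper makes this explicit by sandwiching $\tilde{\mathcal Z}_{(s,N)}$ between the atom-only partition function $\bar{\mathcal Z}_{N-s+1}$ and a correction $\bar{\mathcal Z}_{N-s+1}-\bar{\mathcal Z}^\flat_{(s,N)}$, together with the ratio estimate $\mathcal Z_N^{-1}\mathcal Z_{N-s}\le(1-C\eps)^{-s}$. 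One minor imprecision in your bookkeeping: the leading atom--body term comes from the $N-s+1$ \emph{integrated} atoms each avoiding the body, giving $(N-s+1)\,\eps^2/\alpha^2\approx\eps/\alpha^2$ (and likewise the atom--atom cross terms are $(s-1)(N-s+1)\eps^2\approx s\eps$), not from ``$O(s)$ atom--body'' interactions --- but your overcount $Cs^2\eps+Cs\,\eps/\alpha^2$ still sits safely inside $C^{s-1}\eps/\alpha^2$, so the conclusion is unaffected.
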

 \begin{proof}
The proof is very similar to the proof of Proposition 3.3 in~\cite{GSRT}, and it is an obvious consequence of the following estimate
\begin{equation}
\label{estimateonmaxwellians}
 	\Big| \left(M_{\beta,I , N }^{(s)}   -  \bar M_{\beta, I}  M_{\beta }^{\otimes (s-1)}  \right) \indc_{{\mathcal D}_{\eps}^{s}} (Y,Z_{s-1}) 
	\Big|  \leq C ^{s-1}   \frac{\eps}{\alpha^2}         M_{\beta, I} (V,\Omega) M_{\beta } ^{\otimes (s-1)}(V_{s-1})  \end{equation}
which we shall now prove.

Let us start by proving, as in~\cite{GSRT}, that  in the scaling $N \e \equiv 1,$ with~$\alpha \gg \varepsilon^{\frac12}$,
\begin{equation}
\label{Z-est}
  1 \leq   \cZ_N^{-1}  \cZ_{N-s} \leq\big( 1 - C\e     \big)^{-s} \, .
\end{equation}
 The first inequality is due to the immediate upper bound 
 $$
   \cZ_N \leq    \cZ_{N-s} \, .
 $$
Let us prove the second inequality. We have  by definition
$$
 \begin{aligned} 
    {\mathcal Z}_{s+1} : =  \int  \Big(\prod_{1 \leq i \neq j \leq s+1} \indc_{|x_i-x_{j} |>\eps}\Big)
       \Big(\prod_{1 \leq\ell \leq s+1} \indc_{d (x_\ell, X+\frac\eps\alpha R_\Theta  \Sigma_\alpha)>0}\Big)  \, dX_{s+1} dX \, .
\end{aligned}
$$ 
By Fubini's equality, we deduce
 $$ \begin{aligned}   {\mathcal Z}_{s+1}  = \int  \left( \int_{ \T^{2}  } \Big(\prod_{1 \leq i \leq s} \indc_{|x_i-x_{s+1} |>\eps}\Big)  \,  \indc_{d (x_{s+1}, X+\frac\eps\alpha R_\Theta  \Sigma_\alpha)>0}\, dx_{s+1}\right) \Big(\prod_{1 \leq i\neq j \leq s}   \indc_{|x_i-x_{j} |>\eps}\Big)   \\
 \times  \Big(\prod_{1 \leq\ell \leq s} \indc_{d (x_\ell, X+\frac\eps\alpha R_\Theta  \Sigma_\alpha)>0}\Big)   \, dX_{s}dX  \, .\end{aligned}$$
 One has
 $$ \int_{ \T^{2}  } \Big(\prod_{1 \leq i \leq s} \indc_{|x_i-x_{s+1} |>\eps}\Big)\indc_{d (x_{s+1}, X+\frac\eps\alpha R_\Theta  \Sigma_\alpha)>0}\, dx_{s+1}\geq 
1- \left( \k s \eps^2 +   C_\alpha\left( \frac\eps\alpha\right)^2\right)   \, ,$$
   where~$\k$ is the volume of the unit ball and~$  C_\alpha$ the volume of~$  \Sigma_\alpha$. Since as~$\alpha\to 0$,~$  C_\alpha $ converges to  the volume of~$  \Sigma$, we deduce  from the fact that~$\alpha \gg \eps^{\frac12}$,    $s \leq N$ and the scaling $N \e \equiv 1$ the lower bound as~$N \to \infty$ and~$\eps, \alpha \to 0$:
$$   {\mathcal Z}_{s+1}   \geq
  {\mathcal Z}_{s}   
(1-C \eps    ) \, .
$$ This implies by induction
$$
  {\mathcal Z}_N  \geq   {\mathcal Z}_{N-s}   \big( 1 - C\e  \big)^s  
$$
 and proves~(\ref{Z-est}). Now 
 writing
 $$
dZ_{(s ,N)}  :=dz_{s } \dots dz_N \, ,
$$
we compute for $s \leq N$ 
$$
M_{\beta,I,N}^{(s)} (Y,Z_{s-1})  =   {\mathcal Z}_N^{-1}   \tilde {\mathcal Z} _{(s,N)} (Y,Z_{s-1}) \indc_{ {\mathcal D}_\eps^{s}}(Y,Z_{s-1})  \bar M_{\beta, I} (Y)  M_{\beta } ^{\otimes (s-1)}(V_{s-1})  \, ,
$$
where
$$  
 \begin{aligned}
  \tilde {\mathcal Z} _{(s,N)} (Y,Z_{s-1}):=  \int  \Big(  \prod_{s  \leq i \neq j \leq N} \indc_{|x_i - x_j| > \eps} \Big) \Big(  \prod_{i '\leq s-1 <  j'}  \indc_{|x_{i'} - x_{j'}| > \eps} \Big)  \\
  {}\times     \Big(\prod_{\ell = s }^N \indc_{d (x_\ell, X+\frac\eps\alpha R_\Theta  \Sigma_\alpha)>0}\Big) \, 
dX_{(s ,N)} \, .
  \end{aligned}$$
  We deduce that
 $$  \begin{aligned} 
\Big( M_{\beta,I,N}^{(s)} (Y,Z_{s-1})  -  \bar M_{\beta, I}  (Y) M_\beta^{\otimes (s-1)}(V_{s-1})  \Big) 
 \indc_{ {\mathcal D}_\eps^s}(Y,Z_{s-1})
 =  \bar M_{\beta, I}  (Y)  M_\beta^{\otimes (s-1)}(V_{s-1})   \\
{}\times  \indc_{ {\mathcal D}_\eps^s}(Y,Z_{s-1})\Big(
{\mathcal Z}_N^{-1}  \tilde {\mathcal Z} _{(s,N)} (Y,Z_{s-1}) -1  \Big)\, . \end{aligned}
$$
Next defining
 $$
\begin{aligned} 
 \bar{\mathcal Z}_{N-s+1}& :=   \int  \prod_{s  \leq i\neq j \leq N}\indc_{|x_i- x_j| > \eps}    \, 
dX_{(s ,N)} \quad \mbox{and} \\
 \bar{\mathcal Z}^\flat_{(s ,N)} (Y,Z_{s-1})&:=\sum_{\ell = s }^N \int  \Big( \indc_{ x_\ell \in X+\frac\eps\alpha R_\Theta  \Sigma_\alpha } \Big)
    \prod_{i' \leq s-1 < j'} \indc_{|x_{i'} - x_{j'}| > \eps }  \prod_{s  \leq i\neq j \leq N} \indc_{|x_i- x_j| > \eps} \,dX_{(s,N)} \\
&+ \sum_{i' \leq s-1 < j'}  \int  \indc_{|x_{i'} - x_{j'}| \leq \eps }   \prod_{s  \leq i\neq j \leq N} \indc_{|x_i- x_j| > \eps} \prod_{\ell = s }^N \indc_{d (x_\ell, X+\frac\eps\alpha R_\Theta  \Sigma_\alpha)>0} \,dX_{(s,N)} \, ,
 \end{aligned}
$$
we have
$$ 
 \bar{\mathcal Z}_{N-s+1} \geq 
\tilde{\mathcal Z}_{(s ,N)}(Y,Z_{s-1}) \geq  \bar{\mathcal Z}_{N-s+1} - \bar{\mathcal Z}^\flat_{(s ,N)} (Y,Z_{s-1})\,.
 $$
It remains to prove that~${\mathcal Z}_N^{-1}  \bar{\mathcal Z}_{N-s+1} = 1+O(\eps/\alpha^2)$ and~${\mathcal Z}_N^{-1} \bar{\mathcal Z}^\flat_{(s ,N)} (Y,Z_{s-1}) = O(\eps/\alpha^2)$.
We recall that as proved in~\cite{GSRT},   in the scaling $N \e \equiv 1,$  there holds
$$
1\geq \bar {\mathcal Z}_{s+1}   \geq
\bar  {\mathcal Z}_{s}   
(1-C \eps    )  \, ,
$$
So on the one hand~$ {\mathcal Z}_{N-s+1} \leq \bar {\mathcal Z}_{N-s+1} $, and
$$
 \begin{aligned} 
 {\mathcal Z}_{N-s+1}&  \geq  \int    \prod_{s  \leq i\neq j \leq N} \indc_{|x_i- x_j| > \eps} \,dX_{(s,N)} dX  \\
&\qquad\qquad\qquad\qquad {} - \sum_{\ell = s }^N \int \Big( \indc_{ x_\ell \in X+\frac\eps\alpha R_\Theta  \Sigma_\alpha } \Big)  \prod_{s  \leq i\neq j \leq N} \indc_{|x_i- x_j| > \eps} \,dX_{(s,N)} \\
&\geq  \bar{\mathcal Z}_{N-s+1} +  O\Big ((N-s+1)  \frac{\eps^2}{\alpha^2} \bar{\mathcal Z}_{N-s} \Big ) \\
 & \geq      \bar{\mathcal Z}_{N-s+1} \Big (  1 + O  (  \frac{\eps }{\alpha^2}   ) \Big ) \,.
 \end{aligned}
$$
Thus
$$
  {\mathcal Z}_{N-s+1} 
=    \bar{\mathcal Z}_{N-s+1} \Big (  1 + O  (  \frac{\eps }{\alpha^2}   ) \Big) \,.
$$
Then
$$
{\mathcal Z}_N^{-1}   \bar{\mathcal Z}_{N-s+1} ={\mathcal Z}_N^{-1} {\mathcal Z}_{N-s+1} \Big( 1+  O  (  \frac{\eps }{\alpha^2}   )\Big)
$$
so thanks to~(\ref{Z-est}) we find
$$
{\mathcal Z}_N^{-1}   \bar{\mathcal Z}_{N-s+1} =  1 +  O  (  \frac{\eps }{\alpha^2}  + C^s \eps )\,.
$$
Finally to conclude the proof we notice that 
 $$
 \begin{aligned} 
 0\leq  \bar{\mathcal Z}^\flat_{(s ,N)}(Y,Z_{s-1})&\leq 
 C\Big(  (N-s+1) \frac{\eps^2}{\alpha^2} +(s-1)(N-s)   \eps^2\Big)  \bar{\mathcal Z}_{N-s} 
  \end{aligned}
 $$
 so again
 $$
 {\mathcal Z}_N^{-1}    \tilde{\mathcal Z}^\flat_{(s ,N)}(Y,Z_{s-1})  =  
 O  (  \frac{\eps }{\alpha^2}  + C^s \eps )\, .
 $$
The result follows.
\end{proof}

\section{Technical geometrical results}

In this section, we provide, in Paragraph~\ref{geometricalresults}, a few  geometrical computations useful for the study of recollisions. 
We also justify   in Paragraph~\ref{prooflawsmotion} the collision laws stated in \eqref{eq: collision laws 0}.

\subsection{Pre-images of rectangles by   scattering}
\label{geometricalresults}
 
 In the proof of Proposition~\ref{geometric-prop}, there is a need to translate the condition that outgoing velocities   belong to some given set (typically a rectangle) into a condition on the incoming velocity and deflection angle (which are the integration parameters).

We first consider the case of a collision involving two atoms and start by recalling 
Carleman's parametrization which
relies on the following representation of the scattering:
\begin{equation}
\label{defcarleman}
(v^*,\nu^* )\in \R^2 \times {\mathbb S} \mapsto 
\begin{cases}
 v' _* := v ^*- (v ^* - \bar  v) \cdot \nu ^* \nu  ^*\\
 v' := \bar v + (v ^*-  \bar v) \cdot \nu ^* \nu ^* \, ,
\end{cases}
\end{equation}
where $(v',v'_*)$ belong to the set ${\mathcal C}$ defined by 
$${\mathcal C}:=\Big \{(v',v'_*)\in \R^2\times \R^2 \,/\, (v'-\bar v)\cdot (v'_*-\bar v) =0\Big\} \, .$$
This map sends the measure $ |(v ^* - \bar  v) \cdot \nu ^* |  \,  dv^*d\nu^*$ on the 
measure $dv'dS(v'_*)$, where $dS$ is the Lebesgue measure  on the line
orthogonal to $(v'-\bar v)$ passing through $\bar v$. 

Using Carleman's parametrization the following control on the scattering has been derived in \cite{BGSR2}.

\begin{Lem}
\label{aa-scattering}
Let $\cR$ be a rectangle with sides of length $\delta, \delta' $, then 
\begin{align}
\int  _{B_R \times \mathbb{S}}  \indc_{\{ v'  \in \cR \hbox{ or } v'_* \in \cR \} }  \; \big| (v^* -\bar v  ) 
\cdot \nu^* \big|  \,  dv^* d\nu^*
\leq C R^2 \min (\delta, \delta')     \big( |\log \delta| + |\log \delta' | + 1\big) \,.
\label{rectangle0}
\end{align} 
\end{Lem}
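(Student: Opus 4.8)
The plan is to use Carleman's parametrization \eqref{defcarleman} to transfer the integral from the variables $(v^*,\nu^*)$ — which are the natural scattering parameters — to the variables $(v',v'_*)$, where $v'$ ranges over $B_R\subset\R^2$ and $v'_*$ ranges over the line through $\bar v$ orthogonal to $v'-\bar v$. Under this change of variables the weight $|(v^*-\bar v)\cdot\nu^*|\,dv^*d\nu^*$ becomes exactly $dv'\,dS(v'_*)$, so the left-hand side of \eqref{rectangle0} is bounded by
$$
\int_{v'\in B_R} dv' \int_{v'_*\in L(v')} dS(v'_*)\; \indc_{\{v'\in\cR\}\cup\{v'_*\in\cR\}},
$$
where $L(v')$ denotes the line through $\bar v$ perpendicular to $v'-\bar v$. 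The integral then splits into two contributions according to which of the two events holds, and each is estimated separately.

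For the first contribution, $\indc_{\{v'\in\cR\}}$, one integrates first in $v'_*$: for fixed $v'$, the inner integral $\int_{L(v')}dS(v'_*)$ is taken over a segment of length at most $CR$ (since all relevant velocities lie in $B_R$), and then $\int_{B_R}\indc_{\{v'\in\cR\}}\,dv'\le |\cR|=\delta\delta'$. This gives a bound $CR\,\delta\delta'\le CR^2\min(\delta,\delta')$ (using $\delta\delta'\le R\min(\delta,\delta')$ when one factor is $\le R$; in fact one simply keeps $CR\delta\delta'$ which is better than the claimed bound), so this term is harmless.

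For the second contribution, $\indc_{\{v'_*\in\cR\}}$, the geometry is more delicate and this is the step I expect to be the main obstacle. Here one integrates first in $v'$: for a fixed $v'_*$ in the (two-dimensional) tube around $\cR$, the point $v'$ is constrained to lie on the line through $\bar v$ orthogonal to $v'_*-\bar v$, and the measure $dv'$ on $B_R$ restricted near this line — but one must remember that $v'$ itself is a free $2$-variable and $v'_*$ lies on a line \emph{depending on} $v'$, so the roles are coupled. The cleanest route is to fix the direction $\omega=(v'-\bar v)/|v'-\bar v|\in{\mathbb S}$ and the length $\rho=|v'-\bar v|\in[0,R]$, write $dv'=\rho\,d\rho\,d\omega$, and observe that for fixed $\omega$ the line $L$ carrying $v'_*$ is fixed; the condition $v'_*\in\cR$ then forces $v'_*$ into a segment of $L$ of length $\le C\min(\delta,\delta')/|\sin\theta|$ where $\theta$ is the angle between $L$ and the long side of $\cR$, i.e. essentially the angle between $\omega$ and a fixed direction. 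Integrating $\rho\,d\rho$ over $[0,R]$ gives $CR^2$; integrating the $d\omega\,dS(v'_*)$ part produces $\int_{{\mathbb S}}\min\!\big(R,\,C\min(\delta,\delta')/|\sin\theta|\big)\,d\theta$, and the logarithmic singularity of $1/|\sin\theta|$ near $\theta=0$ integrated against the cutoff at scale $\min(\delta,\delta')/R$ yields precisely the factor $\min(\delta,\delta')\,(|\log\delta|+|\log\delta'|+1)$. Since this lemma is quoted verbatim from \cite{BGSR2}, the proof here can simply cite that reference after setting up the Carleman change of variables; if a self-contained argument is wanted, the sketch above — Carleman parametrization, splitting into the two indicator events, and the elementary tube-versus-line intersection estimate with its logarithmic loss — is the complete skeleton.
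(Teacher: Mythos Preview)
Your proposal is essentially correct, and you rightly note that the paper itself simply cites \cite{BGSR2} for this lemma. There is, however, a genuine methodological difference between your self-contained sketch and the argument that \cite{BGSR2} actually uses (which the paper reproduces in its proof of the companion Lemma~\ref{am-scattering}). You treat the hard case $\{v'_*\in\cR\}$ by passing to polar coordinates $v'=\bar v+\rho\omega$, observing that the line $L(v')$ depends only on $\omega$, and then integrating the line--rectangle intersection length $\min\big(\delta',\,\min(\delta,\delta')/|\sin\theta|\big)$ over $\theta$ to produce the logarithm. The paper's approach instead covers $\cR$ by $\lfloor\delta'/\delta\rfloor$ balls of radius $2\delta$ centered at $w_k$ along the axis of $\cR$, proves for each ball the pointwise estimate $\int\indc_{|v'_*-w_k|\le\delta}\,|b|\,dv^*d\nu^*\le CR^2\delta\min\big(1,\delta/|w_k-\bar v|\big)$ via the same angular-sector reasoning, and then sums $\sum_k \delta/(|w_k-\bar v|+\delta)$ to recover the $\log(\delta'/\delta)$ factor. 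Your route is more direct and arguably cleaner for a rectangle; the ball-covering route is more modular and is what the paper reuses verbatim for the atom--molecule case.

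One minor imprecision: in your sketch you write the cutoff scale for $\theta$ as $\min(\delta,\delta')/R$, which would yield a factor $|\log\delta|+\log R$ rather than $|\log\delta|+|\log\delta'|$. The correct cap on the segment length is $\delta'$ (the diagonal of $\cR$), not $R$, so the cutoff is $\min(\delta,\delta')/\delta'$; this is what produces the $|\log\delta'|$ term as stated. With that correction your argument goes through.
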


We refer to \cite{BGSR2} for a proof. 
We are now going to extend this result to the case of a collision between an atom and the rigid body (see Lemma \ref{aa-scattering}).
We use the following notation: the collision takes place at a point
of arc-length~$\nu^*$  and we denote by~$n_\Theta = R_\Theta n$ the corresponding unit outward normal at that point (on the rotated rigid body) and by~$r_\Theta = R_\Theta r$ the vector joining the center of mass~$G$ to the impact point~$P$ after rotation and rescaling. Finally the velocities at collision are given by
\begin{equation}\label{scatteringformulasappendix}
\begin{aligned}
 v'-\alpha V &= v^* - \alpha V   + {2 \over A+1} (\alpha V_P- v^* ) \cdot  n_{ \Theta } \,  n_{ \Theta } \\
  V' -   V  &=   {2\alpha \over A+1}  (\alpha V_P - v^* ) \cdot n_{ \Theta } \,  n_{ \Theta } \,,
\end{aligned}
\end{equation}
with~$V_P:= V+  \Omega \,  r_{ \Theta }^\perp$.
\begin{Lem}
\label{am-scattering}
Let $\cR$ be a rectangle with sides of length $\delta, \delta'$, then 
$$
\begin{aligned}
&\int  _{B_R \times [0,L_\alpha]}  \indc_{\{ V'  \in \cR \hbox{ or } v' \in \cR \} }  \; \big| (v^* -\alpha V_P ) \cdot  n_{\Theta } \big|  \,  dv^* d\nu^* \\
& \qquad \qquad \qquad \qquad \qquad   \leq   \frac {CR^2 }{\kappa_{min} }  \left(\frac{A+1}{2\alpha   } \right)^2  \min (\delta, \delta')     \big( |\log \delta| + |\log \delta' | + 1\big) \,.
\end{aligned}
$$
\end{Lem}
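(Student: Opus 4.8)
The strategy is to reduce Lemma~\ref{am-scattering} to Lemma~\ref{aa-scattering} via an appropriate Carleman-type parametrization adapted to the atom--rigid body scattering laws \eqref{scatteringformulasappendix}. The key observation is that, just as in the atom--atom case, the map $(v^*,\nu^*)\mapsto (v',V')$ (at fixed rigid body data $V,\Omega,\Theta$, hence fixed $V_P$ and, through $\nu^*$, fixed $n_\Theta$ and $r_\Theta$) decomposes the collision into a component normal to $n_\Theta$ and a tangential component, and the normal relative velocity $(v^*-\alpha V_P)\cdot n_\Theta$ is exactly the weight appearing in the measure. First I would fix the arc-length parameter $\nu^*\in[0,L_\alpha]$ (equivalently the normal direction $n_\Theta$), and for that fixed direction perform the change of variables on $v^*\in B_R$: writing $v^*=v^*_\parallel+(v^*\cdot n_\Theta)n_\Theta$ with $v^*_\parallel$ the tangential part, the post-collisional outgoing velocity $v'$ has tangential part unchanged, $v'_\parallel=v^*_\parallel+\alpha(V_P-V)_\parallel$ up to the harmless $A=O(\alpha^2)$ correction, while its normal component is an affine (indeed essentially reflecting) function of $(v^*\cdot n_\Theta)$. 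Thus, at fixed $\nu^*$, the condition $v'\in\cR$ pulls back to $v^*$ lying in an affine image of $\cR$, a parallelogram of comparable side lengths $\delta,\delta'$ (the Jacobian of the normal reflection being $O(1)$ since $A+1\sim 1$), and the same weighted-measure estimate \eqref{rectangle0} applies after integrating over the one-dimensional tangential slice with the weight $|(v^*-\alpha V_P)\cdot n_\Theta|$.

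The extra ingredients compared to Lemma~\ref{aa-scattering} are the two scale factors in the right-hand side: the $\big((A+1)/(2\alpha)\big)^2$ and the $1/\kappa_{min}$. The factor $\big((A+1)/(2\alpha)\big)^2$ arises when one pulls back the condition $V'\in\cR$ rather than $v'\in\cR$: from the second line of \eqref{scatteringformulasappendix}, $V'-V = \frac{2\alpha}{A+1}(\alpha V_P-v^*)\cdot n_\Theta\, n_\Theta$ is aligned with $n_\Theta$ and depends on $v^*$ only through its normal component, with an amplification by $\frac{2\alpha}{A+1}$; hence $V'\in\cR$ forces $(v^*-\alpha V_P)\cdot n_\Theta$ to lie in an interval of length $\lesssim \frac{A+1}{2\alpha}\min(\delta,\delta')$, and the quadratic power comes from combining this with the free tangential direction (which contributes an $O(R)$ factor) and an additional $O(R)$ from the Cauchy--Schwarz/logarithmic bound on the weight as in the proof of \eqref{rectangle0}. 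The factor $1/\kappa_{min}$ enters when translating a condition on $v^*$ (or on the normal relative velocity) into a condition on the arc-length parameter $\nu^*$: moving along $\partial\Sigma_\alpha$ changes the normal direction $n_\Theta(\nu^*)$ at rate $\kappa\geq \kappa_{min}>0$ (by \eqref{defkappa} and the Remark relating $\partial\Sigma_\alpha$ and $\partial\Sigma$, the curvature of $\partial\Sigma_\alpha$ is also bounded below by a constant multiple of $\kappa_{min}$ for $\alpha$ small), so the Jacobian of the map $\nu^*\mapsto n_\Theta$ is bounded below by $\kappa_{min}$, and integrating over the bad set of $\nu^*$ costs a factor $\kappa_{min}^{-1}$.

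Concretely, I would organize the argument as: (i) fix $V,\Omega,\Theta$; (ii) for each $\nu^*$, apply the one-direction Carleman slicing and invoke Lemma~\ref{aa-scattering} (or rather the one-dimensional weighted estimate behind it) to bound $\int_{B_R}\indc_{\{v'\in\cR\}}|(v^*-\alpha V_P)\cdot n_\Theta|\,dv^*$ by $CR^2\min(\delta,\delta')(|\log\delta|+|\log\delta'|+1)$, and separately bound the $\{V'\in\cR\}$ part by $CR^2 \frac{A+1}{2\alpha}\min(\delta,\delta')(|\log\delta|+|\log\delta'|+1)$ — note here the relevant direction for $v^*$ is the normal $n_\Theta$ so no logarithm is strictly needed for that factor, but keeping it is harmless; (iii) integrate over $\nu^*\in[0,L_\alpha]$, using the lower bound on the curvature to absorb the change of variables into the $1/\kappa_{min}$ prefactor; (iv) collect the powers of $\frac{A+1}{2\alpha}$ (one from the amplification, one from the measure/weight rescaling, giving the square) and of $R$. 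The main obstacle I anticipate is bookkeeping the dependence on the rigid-body velocities hidden in $V_P=V+\Omega r_\Theta^\perp$ and making sure that the shift $\alpha V_P$ and the $O(\alpha^2)$ terms coming from $A$ do not interfere with the uniformity in $\delta,\delta'$ — these are genuinely harmless on the velocity balls of radius $R$ under consideration (recall $R=C_0|\log\eps|$ and the truncations make $|V|,|\Omega|$ at most of that order), but require care to state cleanly; everything else is a routine adaptation of the computation in \cite{BGSR2}.
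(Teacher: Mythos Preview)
Your overall strategy --- a Carleman--type parametrization for the atom--rigid body scattering, reducing to an estimate like Lemma~\ref{aa-scattering} --- is indeed the paper's strategy. But the paper organizes the computation differently, and your execution contains a genuine error.

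The paper does not fix $\nu^*$ and then integrate. Instead it derives a \emph{joint} change of variables $(v^*,\nu^*)\mapsto (V',v')$ with $v'$ constrained to the line through $\alpha V$ orthogonal to $n_\Theta$, obtaining the exact Jacobian identity
\[
\kappa(\nu^*)\,\big|(v^*-\alpha V_P)\cdot n_\Theta\big|\,dv^*\,d\nu^* \;=\;\Big(\tfrac{A+1}{2\alpha}\Big)^{2}\,dV'\,d\mu_{\alpha V}(v')\,.
\]
This makes both the factor $((A+1)/(2\alpha))^2$ and the $1/\kappa_{min}$ transparent: the event $\{V'\in\cR\}$ is then trivially bounded by $|\cR|\cdot O(R)$, while $\{v'\in\cR\}$ is handled by covering $\cR$ with balls of radius $\delta$ and summing, exactly as in Lemma~\ref{aa-scattering}.

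Your step (ii) for $\{V'\in\cR\}$ is where your argument breaks. At fixed $\nu^*$, the relation $V'=V+\frac{2\alpha}{A+1}(\alpha V_P-v^*)\cdot n_\Theta\,n_\Theta$ forces $V'$ to lie on the line $V+\R n_\Theta$; the constraint $V'\in\cR$ then confines the normal relative velocity to an interval of length $\frac{A+1}{2\alpha}\,\ell(\nu^*)$, where $\ell(\nu^*)$ is the length of the chord $\{V+\R n_\Theta\}\cap\cR$. This chord can be as long as $\sqrt{\delta^2+\delta'^2}\sim\max(\delta,\delta')$, \emph{not} $\min(\delta,\delta')$ as you claim. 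The $\min(\delta,\delta')(|\log\delta|+|\log\delta'|)$ structure only appears after integrating over $\nu^*$ (equivalently over $n_\Theta$), via $\int_{\mathbb S}\ell(n)\,dn=2\int_\cR|x-V|^{-1}\,dx$; this is precisely the mechanism that the paper's joint parametrization encodes cleanly.

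Relatedly, your explanation for the square $((A+1)/(2\alpha))^2$ --- ``one from amplification, one from measure/weight rescaling'' --- does not correspond to anything in your outlined computation. If you fix $\nu^*$ first, only \emph{one} power of $(A+1)/(2\alpha)$ arises (from pulling back $V'-V$); in the paper's approach the square is simply the Jacobian of the joint map. A corrected version of your route would in fact give a bound with a single power, which is stronger than the stated lemma and hence sufficient --- but the argument as written does not get there.
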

\begin{proof}
The first step of the proof consists in  deriving  a counterpart of Carleman's parametrization~(\ref{defcarleman}) by finding a change of measure from~$(v^*, \nu^* )$ to~$(V', v')$. 
We start by projecting~$v^*$ onto~$n^\perp_{ \Theta } $ and~$n_{ \Theta } $: this gives
$$  d v^* = 
d (v^* \cdot n_{ \Theta }  ) d (v^* \cdot n^\perp_ { \Theta }  ) 
$$
and then by translation invariance we can write
$$
d v^* = 
d\big((v^*-\alpha V_P)\cdot n_{ \Theta } \big) d\big((v^*-\alpha V)\cdot n^\perp_ { \Theta } \big) \,.
$$
The   formulas~(\ref{scatteringformulasappendix}) then give 
$$
d v^* = \frac{A+1}{2\alpha  }d (  |V'-V| ) d\big((v'-\alpha V)\cdot n^\perp_ { \Theta } \big) \,.
$$
Then we notice that
$$
 |V'-V| d (  |V'-V| ) dn_\Theta = d (V'-V) = dV'
$$
and
$$
 |V'-V| =  {2\alpha \over A+1} | (\alpha V_P - v^* ) \cdot  n_{ \Theta } |
$$
so finally there holds
$$
\begin{aligned}
 | (\alpha V_P - v^* ) \cdot  n_{ \Theta } | dn_{ \Theta }   d v^*  =  \left(\frac{A+1}{2\alpha  } \right)^2dV'  d \mu_{\alpha V} ( v') 
\end{aligned}$$
where~$ d \mu_{\alpha V}   $ is the Lebesgue measure on   the line  orthogonal to~$ n_{  \Theta }$  passing through  $\alpha V$.
It follows that~$ {4\alpha^2 \over (A+1)^2}| (\alpha V_P - v^*) \cdot  n_{ \Theta } | dv^* d n_{ \Theta }$ is mapped to~$dV' d \mu_{\alpha V} ( v')  $. Noticing that    $$
  dn_{ \Theta }  = - \kappa (\nu^* )   d\nu^*  \, ,
  $$
    where we recall that~$  \kappa (\nu )$ is the   curvature of the boundary of~$  \Sigma_\alpha$ at the point determined by the arc-length~$\nu $, the change of measure from~$ dn_{ \Theta } $ to~$ d\nu ^*$ has therefore  jacobian~$ \kappa (\nu^* ) ^{-1}$.
So finally we obtain
\begin{equation}
\label{formulacarlemanrigid body}
 \kappa (\nu^* )   | (\alpha V_P - v^* ) \cdot  n_{ \Theta } |  d\nu^*  d v^*  =  \left(\frac{A+1}{2\alpha  } \right)^2dV'  d \mu_{\alpha V} ( v') \,.
\end{equation}
Now let us turn to the proof of the lemma, following the proof of Lemma~\ref{aa-scattering} which can be found in~\cite{BGSR2}. 
Suppose $\delta' > \delta$.
Estimating the measure of the event $\{ V' \in \cR\}$ is straightforward by the change of variable 
\eqref{formulacarlemanrigid body}. Thus we are going to focus on the event $\{ v' \in \cR\}$ and first start by 
estimating the measure that $v'$ belongs to a small ball of given center, say~$w$ and radius~$\delta>0$.
This estimate will be used by covering the rectangle $\cR$ by such small balls. 
We distinguish two cases.


If  $|w - \alpha V| \leq \delta $, meaning that $\alpha V$ is itself in the same ball, then for any $V' \in B_R$, the intersection between the small ball and the line $ \alpha V+ \R n_{ \Theta } ^\perp$ is  a segment, the length of which is at most 
$\delta$. We therefore find 
$$ \int \indc_{|v'   -w|\leq \delta}  \,\big| (v^* -\alpha V_P ) \cdot  n_{\Theta } \big| \,    dv ^* d\nu ^* \leq  \frac C{\kappa_{min}  }  \left(\frac{A+1}{2\alpha  } \right)^2R^2 \delta \,.$$

\medskip

 If $|w -  \alpha V|  >\delta$, in order for  the intersection between the ball and the line $\alpha V + \R n_{ \Theta } ^\perp$ to be non empty, we have the additional condition that $\alpha V'-\alpha V$ has to be in an angular sector of size $\delta /| w-\alpha V|$. We therefore have 
$$ \int \indc_{|v'   -w|\leq \delta}  \, \big| (v^* -\alpha V_P ) \cdot  n_{\Theta } \big|  \,   dv ^* d\nu ^* \leq  
 \frac C{\kappa_{min}  }  \left(\frac{A+1}{2\alpha  } \right)^2R^2   {\delta^2\over  |w-\alpha V|}  \,\cdotp$$

Combining both estimates, we get finally
\begin{equation}
\label{preimage-sphere1}
\int \indc_{|v'  -w|\leq \delta}  \, \big| (v^* -\alpha V_P ) \cdot  n_{\Theta } \big|  \,    dv ^* d\nu ^* \leq  \frac C{\kappa_{min} }  \left(\frac{A+1}{2\alpha  } \right)^2 R^2 \delta \min \Big (1, {\delta\over |w-\alpha V|}\Big) \,.
\end{equation}

\medskip

Now let us prove Lemma~\ref{am-scattering}. We suppose to simplify that $\delta \leq \delta' \leq 1$.
We cover the rectangle $\cR$ into~$\lfloor \delta'/\delta \rfloor$ balls of radius~$2 \delta$.
Let $\omega$ be the axis of the rectangle  $ \cR$ and denote by $w_k = w_0 +  \delta k \, \omega$ the centers of the balls which are indexed by the integer $k \in \{ 0,  \dots,  \lfloor \delta'/\delta \rfloor \}$.
Applying \eqref{preimage-sphere1} to each ball, we get 
\begin{eqnarray}
 \int    \indc_{ v'  \in \cR} \; \big| (v^* -\alpha V_P ) \cdot  n_{\Theta } \big|  \,    dv ^* d\nu ^* 
&\leq&  \sum_{k = 0}^{ \lfloor \delta'/\delta \rfloor}
\int    \indc_{ |v'  - w_k| \leq 2 \delta}  \; \big| (v^* -\alpha V_P ) \cdot  n_{\Theta } \big|  \,    dv ^* d\nu ^* \nonumber
\\
&\leq&    \frac C{\kappa_{min} }  \left(\frac{A+1}{2\alpha  } \right)^2  R^2 \sum_{k = 0}^{ \lfloor \delta'/\delta \rfloor} \delta \min \Big( {\delta \over |w_k -\alpha V|}, 1 \Big) ,
\nonumber\\
&\leq&     \frac C{\kappa_{min}  }  \left(\frac{A+1}{2\alpha  } \right)^2   R^2 \delta \sum_{k = 0}^{ \lfloor \delta'/\delta \rfloor}   {\delta \over |w_k -\alpha V| + \delta }\nonumber\\
&\leq&     \frac C{\kappa_{min}  }  \left(\frac{A+1}{2\alpha  } \right)^2    R^2 \delta \Big( \log (\frac{\delta'}{\delta}) +1 \Big)\, , \nonumber
\end{eqnarray}  
where the log divergence in the last inequality follows by summing over $k$.
This completes the proof of the lemma.
\end{proof}

\subsection{Collision laws}
\label{prooflawsmotion}

In this section we  recall how relation \eqref{eq: collision laws 0} can be derived from the collision invariants.
First note that  the collision produces a force in the normal direction $n_\Theta$. Since this force is a Dirac mass in time  this produces jump conditions. 
The momenta after the collision become
\begin{align}
\label{eq: momenta change}
M V' - M V = - f n_\Theta
\quad \text{and} \quad
m \hat v' - m \hat v =  f n_\Theta \,,
\end{align}
where~$f$,  the amplitude of the force, is an unknown.
When the impact is at the point $\frac\eps\alpha r_\Theta$ of~$\frac\eps\alpha \Sigma$,
the angular momentum   changes as
\begin{align}
\label{eq: momenta angular}
\hat  I  \hat\Omega'  - \hat  I  \hat\Omega = 
  -   \frac\eps\alpha  f  \, n_\Theta \cdot r^\perp \quad \Rightarrow \quad
\hat\Omega'  -  \hat\Omega =  -  \frac\eps\alpha f   \hat  I^{-1} \, n_\Theta \cdot r^\perp   \,.
\end{align}
As the atom is a sphere, its angular momentum is unchanged (the force $f \, n_\Theta$ is collinear to the direction between the
center of the sphere and the collision point).
Finally, the conservation of the total energy provides a last equation
\begin{align}
\label{eq: energy}
\frac12 \big(m|\hat v'|^2 +M |V'|^2 \big) +\frac12 \hat  I  \hat\Omega '^2
=
\frac12 \big( m|\hat v|^2 +M |V|^2 \big) +\frac12 \hat I  \hat\Omega^2 \, .
\end{align}
Since the angular momentum of the atoms is constant, it is not taken into account in the
energy conservation.

In order to determine $(V',\Omega',f)$ from the previous equations, we first plug
\eqref{eq: momenta change} and \eqref{eq: momenta angular} in \eqref{eq: energy}
to identify $f$
\begin{align*}
- f (V \cdot n_\Theta) +  \frac{f^2}{2 M} + f (\hat v \cdot n_\Theta) +  \frac{f^2}{2 m}
- f  \frac\eps\alpha \, \hat \Omega    ( n_\Theta \cdot r^\perp) +
\frac{f^2}{2}   \left(\frac\eps\alpha  \right)^2 \, \hat  I^{-1} ( n_\Theta \cdot r^\perp   )^2 
= 0 \,.
\end{align*}
As $f \neq 0$, the solution is
\begin{align*}
f
=  \frac{2 m} {A +1}\big(V  + \frac\eps\alpha \hat\Omega  r_\Theta^\perp    -\hat  v \big)  \cdot  n_\Theta \, ,
\end{align*}
where $V  + \frac\eps\alpha \hat\Omega  r_\Theta^\perp$ is the velocity of the impact point in $\Sigma$ as defined in~\eqref{vitesse locale} and
$A$ is given in \eqref{eq: A 0}.
Since the force is in the normal direction, we get from \eqref{eq: momenta change} that the tangential components 
are constant.
 The normal component and the angular momentum
are deduced by the value of $f$.
This completes the derivation of the collision laws  \eqref{eq: collision laws 0}.

\bigskip

\noindent
{\bf Acknowledgements.} We would like to thank F. Alouges for enlightening discussions on solid reflection laws. T.B. would like to thank the support of ANR-15-CE40-0020-02 grant LSD.

\end{document}